\documentclass[a4paper,oneside,11pt]{article}
\usepackage[a4paper]{geometry}
\usepackage{aeguill}                 
\usepackage{graphicx}
\usepackage{caption}
\usepackage{tikz-cd}
\usepackage{amsmath,amsfonts,amssymb,amsthm}
\usepackage{mathabx}
\usepackage{pstricks,comment} 
\usepackage{pst-plot}
\usepackage{pstricks-add}
\usepackage{multido}
\usepackage{url}
\usepackage{epstopdf,enumerate}
\DeclareGraphicsRule{.tif}{png}{.png}{`convert #1 `dirname #1`/`basename #1 .tif`.png}
\usepackage{srcltx}
\usepackage[utf8]{inputenc} 
\usepackage[T1]{fontenc} 
\usepackage[english]{babel} 
\usepackage{hyperref}
\usepackage[all]{xy} 
\usepackage{titlesec}
\usepackage{mathabx}
\usepackage{tikz}
\usepackage{fancyhdr}
\usepackage{mathrsfs}
\usepackage[us,12hr]{datetime}

    \newtheoremstyle{TheoremNum}
        {\topsep}{\topsep}              
        {\itshape}                      
        {}                              
        {\bfseries}                     
        {.}                             
        { }                             
        {\thmname{#1}\thmnote{ \bfseries #3}}

{\theoremstyle {definition} \newtheorem {defi} {Definition}[section]}
{\theoremstyle {plain}  \newtheorem {theo} [defi] {Theorem}}
{\theoremstyle {plain}  \newtheorem {coro} [defi] {Corollary}}
{\theoremstyle {plain} \newtheorem {prop} [defi] {Proposition}}
{\theoremstyle {plain} \newtheorem {ques} [defi] {Questions}}
{\theoremstyle {plain} \newtheorem {lem}[defi] {Lemma}}
{\theoremstyle {plain} \newtheorem {rmq}[defi] {Remark}}
{\theoremstyle {plain} }
{\theoremstyle{TheoremNum} }
{\theoremstyle{TheoremNum} }
{\theoremstyle{TheoremNum} }

\newcommand*{\myproofname}{Proof}
\newenvironment{proofblack}[1][\myproofname]{\begin{proof}[#1]\renewcommand*{\qedsymbol}{\(\blacksquare\)}}{\end{proof}}

\newcommand{\Aut}{\mathrm{Aut}}
\newcommand{\Out}{\mathrm{Out}}
\newcommand{\Inn}{\mathrm{Inn}}

\newcommand{\Stab}{\mathrm{Stab}}

\newcommand{\Fix}{\mathrm{Fix}}
\newcommand{\IA}{\mathrm{IA}}

\newcommand{\id}{\mathrm{id}}

\newcommand{\ZZ}{\mathbb{Z}}

\newcommand{\NN}{\mathbb{N}}
\newcommand{\RR}{\mathbb{R}}

\title{Aperiodicity properties of automorphism groups of free products}
\author{Yassine Guerch}
\date{\today}
\setcounter{tocdepth}{1}

\begin{document}
\maketitle
\renewcommand*\labelenumi{(\theenumi)}

\begin{abstract}
Let $G=G_1 \ast \ldots \ast G_k \ast F_N$ be a free product of finitely presented groups, where $F_N$ is a free group of rank $N \in \NN$. Let $\mathrm{Out}(G,\mathcal{G})$ be the subgroup of $\mathrm{Out}(G)$ preserving the set of conjugacy classes $\mathcal{G}=\{[G_1],\ldots,[G_k]\}$. Under natural conditions on the groups $G_i$ with $i \in \{1,\ldots,k\}$, we prove that the group $\mathrm{Out}(G,\mathcal{G})$ has a finite index subgroup $\mathrm{IA}(G,\mathcal{G},3)$ with notable aperiodicity properties. We show that the group $\mathrm{IA}(G,\mathcal{G},3)$ is torsion free and, if $\phi \in \mathrm{IA}(G,\mathcal{G},3)$, every $\phi$-periodic conjugacy class of elements of $G$ is in fact fixed by $\phi$ and every $\phi$-periodic conjugacy class of free factors of $G$ is fixed by $\phi$. 

As an application, we prove that, for every toral relatively hyperbolic group $G$, the group $\mathrm{Out}(G)$ has a finite index subgroup $\mathrm{IA}(G,3)$ with the same above mentioned aperiodicity properties. We in particular give another proof of the theorem, due to Handel--Mosher, that the kernel of the action of $\mathrm{Out}(F_N)$ on $H_1(F_N,\ZZ/3\ZZ)$ satisfies natural aperiodicity properties.
\footnote{{\bf Keywords:} outer automorphism groups, free products of groups, group actions on trees, JSJ decompositions of groups.~~ {\bf AMS codes: } 20E36, 20F65, 20F28, 20E08}
\end{abstract}

\section{Introduction}

Let $G$ be a finitely generated group and let $\phi \in \Out(G)$ be an outer automorphism of $G$. When studying dynamical properties of $\phi$, one often needs to pass to a power of $\phi$ so that any periodic orbit is pointwise fixed. However, this power generally depends on the chosen outer automorphism and is not stable by composition. This is why constructing a finite index subgroup of $\Out(G)$ in which every outer automorphism has pointwise fixed periodic orbits is particularly useful in practice.

We illustrate this philosophy with three standard examples. The first famous example of a group $G$ whose outer automorphism group contains such an aperiodic finite index subgroup is the free abelian group $\ZZ^n$ of rank $n \geq 1$. Indeed, it is a standard result of Minkowski that the group 
\[\IA(\ZZ^n,3)=\ker\left( \mathrm{GL}_n(\ZZ) \to \mathrm{GL}_n(\ZZ/3\ZZ) \right) \] is torsion free. From this it is easy to deduce that, for every $\phi \in \IA(\ZZ^n,3)$, every $\phi$-periodic element of $\ZZ^n$ is fixed by $\phi$ (see for instance Proposition~\ref{prop:abeliancase}).

When $G$ is the fundamental group of a closed, connected, hyperbolic surface $\Sigma$, the mapping class group $\mathrm{MCG}(\Sigma)=\Out(\pi_1(\Sigma))$ of the surface $\Sigma$ also contains a finite index subgroup with remarkable aperiodicity properties. Indeed, Serre~\cite{Serre61} proved that the group 
\[\IA(\pi_1(\Sigma),3)=\ker\left( \mathrm{MCG}(\Sigma) \to \Aut(H_1(\Sigma,\ZZ/3\ZZ)) \right) \] is also torsion free. Moreover, Ivanov~\cite[Theorem~1.2]{Ivanov92} later proved that if $\phi \in \mathrm{MCG}(\Sigma)$, the homotopy class of any $\phi$-periodic simple closed curve of $\Sigma$ is fixed by $\phi$ and the homotopy class of any $\phi$-periodic subsurface of $\Sigma$ is fixed by $\phi$. Ivanov then used the group $\IA(\pi_1(\Sigma),3)$ to prove a Tits alternative for $\mathrm{MCG}(\Sigma)$: every subgroup of $\IA(\pi_1(\Sigma),3)$ either contains a nonabelian free subgroup or is free abelian. His proof relies on the action of $\IA(\pi_1(\Sigma),3)$ on the Teichmüller space $\mathcal{T}(\Sigma)$ and its Thurston compactification $\overline{\mathcal{T}(\Sigma)}$, where the above mentioned aperiodicity properties of $\IA(\pi_1(\Sigma),3)$ translates into aperiodicity properties of the action of $\IA(\pi_1(\Sigma),3)$ on $\overline{\mathcal{T}(\Sigma)}$. Later, Kida~\cite{Kida10} worked in the group $\IA(\pi_1(\Sigma),3)$ to prove that the group $\mathrm{MCG}(\Sigma)$ is rigid in the sense of measure equivalence. This similarly uses the fact that aperiodicity properties of $\IA(\pi_1(\Sigma),3)$ gives aperiodicity properties of the action of $\IA(\pi_1(\Sigma),3)$ on the curve complex of $\Sigma$.

Let $F_N$ be a free group of rank $N \geq 1$. Handel--Mosher~\cite{HandelMosher20} proved that $\Out(F_N)$ also contains a finite index subgroup with notable aperiodicity properties. Indeed, let
\[\IA(F_N,3)=\ker\left( \Out(F_N) \to \Aut(H_1(F_N,\ZZ/3\ZZ)) \right). \] The group $\IA(F_N,3)$ is torsion free (see for instance~\cite[Corollary 5.7.6]{BesFeiHan00}). Handel--Mosher proved that for every $\phi \in \IA(F_N,3)$, every $\phi$-periodic conjugacy class of elements in $F_N$ is fixed by $\phi$ (\cite[Theorem~II.4.1]{HandelMosher20}) and that every $\phi$-periodic conjugacy class of free factors of $F_N$ is preserved by $\phi$ (\cite[Theorem~II.3.1]{HandelMosher20}). Later, they proved that every virtually abelian subgroup of $\IA(F_N,3)$ is abelian~\cite{HandelMosher20abelian}, and the author proved that $\IA(F_N,3)$ satisfies the unique root property~\cite{Guerch2022Roots}. As in the surface case, these aperiodicity properties turn out to be useful to understand structural properties of the group $\Out(F_N)$, such as the Tits alternative~\cite{BesFeiHan00,BesFeiHan04,BesFeiHan05}, subgroup decomposition properties~\cite{HandelMosher20} or the measure equivalence rigidity~\cite{Guirardelhorbez}. This relies on the fact that the aperiodicity properties of $\IA(F_N,3)$ also translates into aperiodicity properties of the action of $\IA(F_N,3)$ on some natural $\Out(F_N)$-complexes, such as Culler--Vogtmann's Outer space~\cite{Vogtmann1986} or the free factor complex~\cite{BestvinaFeighn14}. 

The aim of the present paper is to construct finite index subgroups with nice aperiodicity properties for larger families of outer automorphisms of groups. Recall that a \emph{toral relatively hyperbolic group} is a torsion free finitely generated group which is hyperbolic relative to finitely generated free abelian groups. This includes in particular all torsion free Gromov hyperbolic groups, but also all Sela's limit groups (see~\cite{Alibegovic2005,Dah2003}). We in particular prove the following theorem.

\begin{theo}[see Corollary~\ref{coro:IAtoralrelativelyhyperbolicgroup}]\label{thm:IntroToral}
Let $G$ be a toral relatively hyperbolic group. There exists a finite index subgroup $\IA(G,3)$ of $\Out(G)$ with the following properties.
\begin{enumerate}
\item The group $\IA(G,3)$ is torsion free.
\item If $\phi \in \IA(G,3)$, every $\phi$-periodic conjugacy class of elements of $G$ is fixed by $\phi$.
\item If $\phi \in \IA(G,3)$, every $\phi$-periodic conjugacy class of free factors of $G$ is fixed by $\phi$.
\item If $\Phi \in \Aut(G)$ is such that $[\Phi] \in \IA(G,3)$, every $\Phi$-periodic element of $G$ is fixed by $\Phi$. 
\end{enumerate}
\end{theo}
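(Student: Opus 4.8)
The plan is to obtain this statement as an application of the paper's main theorem on automorphisms of free products, fed with the Grushko decomposition of $G$, and then to promote the conclusions from conjugacy classes of elements and outer automorphisms to actual elements and automorphisms using that toral relatively hyperbolic groups are CSA. So the one-ended freely indecomposable pieces of $G$ will play the role of the base case, and the free-product theorem will do the assembling.

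First I would fix a Grushko decomposition $G = G_1 \ast \cdots \ast G_k \ast F_N$ in which each $G_i$ is non-trivial, freely indecomposable and not infinite cyclic. Since a free factor of a toral relatively hyperbolic group is again toral relatively hyperbolic for the induced peripheral structure, each $G_i$ is either one-ended or isomorphic to $\ZZ^{n_i}$ for some $n_i \geq 2$; in every case $G_i$ is finitely presented, so the family $(G_i)_i$ is of the type to which the free-product theorem applies. The point is then to verify the ``natural conditions'' that the theorem imposes on the $G_i$, concretely that $\Out(G_i)$ already carries a finite-index subgroup which is torsion free and has the desired aperiodicity properties. For $G_i \cong \ZZ^{n_i}$ this is Minkowski's theorem together with Proposition~\ref{prop:abeliancase}; for the one-ended $G_i$ it rests on the canonical JSJ decomposition of $G_i$ over abelian subgroups, which I take to have been set up in the body of the paper. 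Granting this, the free-product theorem supplies a finite-index subgroup $\IA(G,\mathcal{G},3)$ of $\Out(G,\mathcal{G})$ with properties (1), (2) and (3); and since the Grushko decomposition of $G$ is canonical, the set $\mathcal{G} = \{[G_1],\dots,[G_k]\}$ is invariant under the whole of $\Out(G)$, so $\Out(G,\mathcal{G}) = \Out(G)$ and $\IA(G,3) := \IA(G,\mathcal{G},3)$ is the sought-after finite-index subgroup. (When $k=0$, i.e. $G = F_N$, this specializes to and recovers the Handel--Mosher statements quoted in the introduction.)

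It remains to establish (4). Let $\Phi \in \Aut(G)$ with $[\Phi] \in \IA(G,3)$, and suppose $\Phi^n(g) = g$ with $g \neq 1$. Replacing $g$ by its root we may assume $g$ generates its own maximal cyclic or maximal abelian subgroup $A := C_G(g)$; since $G$ is torsion free and toral relatively hyperbolic it is CSA, so $A$ is malnormal, and consequently $\Phi(g) = g$ as soon as $\Phi(A) = A$. By (2) the conjugacy class of $g$ is $\Phi$-fixed, so $\Phi(g) = h g h^{-1}$ for some $h$; an immediate induction yields $\Phi^j(g) = h_j g h_j^{-1}$ with $h_j = \Phi^{j-1}(h)\cdots\Phi(h)\,h$, whence $h_n \in A$ and $\Phi^n(A) = A$, i.e. the conjugacy class of $A$ is $\Phi$-periodic. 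I would then reduce, using malnormality of $A$ and of the free factors $G_i$ to conjugate $g$ into a single $G_i$ (or into the free-group part) and to trim the conjugators, to the corresponding statement inside $G_i$: there one concludes by Proposition~\ref{prop:abeliancase} when $G_i$ is free abelian, and by the one-ended base case otherwise, after checking that the $3$-congruence condition on $\Phi$ restricts to a $3$-congruence condition on $G_i$. This forces $\Phi(A) = A$, hence $\Phi(g) = g$, closing the induction on Grushko complexity.

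The genuinely difficult step is the one-ended base case hidden inside the verification of the hypotheses of the free-product theorem: constructing, for a one-ended toral relatively hyperbolic group $H$, a finite-index torsion-free subgroup of $\Out(H)$ for which every periodic conjugacy class of elements and every periodic conjugacy class of maximal abelian subgroups (and, more generally, of the canonical pieces of the abelian JSJ decomposition) is actually fixed. This is where the JSJ machinery does all the work — the canonicity of the decomposition makes $\Out(H)$ act through a finite group on the underlying graph, after which a Minkowski-type argument produces the subgroup — and everything else above, namely the reduction along the Grushko decomposition and the passage from $\Out$ to $\Aut$ in (4), is comparatively soft once this input is in place.
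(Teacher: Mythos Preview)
Your overall architecture for items (1)--(3) matches the paper exactly: reduce to the Grushko decomposition, verify the Standing Assumptions for each one-ended toral piece (abelian via Proposition~\ref{prop:abeliancase}, nonabelian via the canonical JSJ tree), and invoke the main free-product theorem. That part is fine.

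The issue is item (4). First, you are doing unnecessary work: the paper's main package already contains the $\Aut$-version (Corollary~\ref{coco:periodicelementfixed}), so once the Standing Assumptions are verified for the $G_i$, item (4) follows along with (1)--(3). Second, and more importantly, your direct CSA argument has a real gap. From $\Phi(g)=hgh^{-1}$ you correctly obtain $\Phi^j(g)=h_jgh_j^{-1}$ with $h_j=\Phi^{j-1}(h)\cdots\Phi(h)h$, and malnormality of $A=C_G(g)$ gives $h_n\in A$. But this does \emph{not} yield $h\in A$: the $h_j$ are twisted products, not powers of $h$, so root-closedness of $A$ is inapplicable. Your claim ``$\Phi(g)=g$ as soon as $\Phi(A)=A$'' is therefore both unproved (you only have $\Phi^n(A)=A$) and, for noncyclic $A$, insufficient (even if $\Phi(A)=A$, one must still control $\Phi|_A$).

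The paper's proof of Corollary~\ref{coco:periodicelementfixed} circumvents this by passing to the JSJ tree $T_{\mathcal G\cup\mathcal H}$: at a rigid vertex $v$ one first shows $\phi_v$ is \emph{trivial} in $\Out(G_v)$, so $\Phi|_{G_v}=\mathrm{ad}_h$ for a single $h\in G_v$, whence $\Phi^\ell(g)=h^\ell gh^{-\ell}$ are honest powers, and \emph{then} root-closedness of $C(g)$ (Lemma~\ref{lem:nonperipheralnotconjinverse}) forces $h\in C(g)$. The elementary and QH vertices are handled separately. Your ``trim the conjugators and pass to a single $G_i$'' sketch does not recover this structure, and in particular says nothing when $g$ is $(G,\mathcal G)$-nonperipheral, where no $G_i$ contains it.
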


The construction of $\IA(G,3)$ is explicit and it coincides with the above mentioned groups when $G$ is free, free abelian or the fundamental group of a surface. We in particular give another proof of the free group case which does not use the CT map technology introduced by Feighn--Handel~\cite{FeiHan06}. Instead, we use group actions on canonical trees and the first generation of \emph{(relative) train tracks for free products}, see~\cite{ColTur94,FrancaMar2015,Lyman2022} and the seminal work of Bestvina--Handel~\cite{BesHan92} in the free group setting. In fact, most of the results in the present article are proved in the general context of free products.

A \emph{free product} is a couple $(G,\mathcal{G})$ where $G$ is a finitely generated group and $\mathcal{G}=\{[G_1],\ldots,[G_k]\}$ is a finite set of conjugacy classes of finitely generated subgroups of $G$ such that 
\[G=G_1 \ast \ldots \ast G_k \ast F_N,\] where $F_N$ is a free group of rank $N \in \NN$. By Stallings' theorem about ends of groups~\cite{Stallings68}, every finitely generated, infinitely-ended, torsion free group admits a nontrivial free product structure. Let $\xi(G,\mathcal{G})=k+N$ be the \emph{Grushko rank of $(G,\mathcal{G})$}.

We study the group $\Out(G,\mathcal{G})$ of outer automorphisms of $G$ which preserve the set $\mathcal{G}$. This group is a natural generalisation of $\Out(F_N)$ and has been intensively studied over the last decades~\cite{Guirardel,Horbez14,FrancaMar2015,Guirardelhorbez19laminations,Guirardelhorbez19,DahmaniLi19,Lyman2022CT}. If, for every $[G_i] \in \mathcal{G}$, the group $G_i$ is one-ended, then $\Out(G)=\Out(G,\mathcal{G})$. 

Our aim in this paper is to find an \emph{aperiodic finite index subgroup} of $\Out(G,\mathcal{G})$, that is, a finite index subgroup which satisfy the four assertions in Theorem~\ref{thm:IntroToral}. There are natural obstructions for the existence of such an aperiodic finite index subgroup. For instance, if there exists $[G_i] \in \mathcal{G}$ such that $\Out(G_i)$ has no aperiodic finite index subgroup, then we cannot find such an aperiodic finite index subgroup for $\Out(G,\mathcal{G})$. We prove that this is essentially the only restriction. Indeed, we gather in what we call the \emph{\hyperlink{SA}{Standing Assumptions}} all the hypotheses on the groups $G_i$, $\Out(G_i)$ and $\Aut(G_i)$ that we need in order to construct an aperiodic finite index subgroup for $\Out(G,\mathcal{G})$ (see Section~\ref{sec:Standing_Assumptions}). Basically, we assume that $G_i$ is finitely presented, that $G_i$ is torsion free and that $\Out(G_i)$ and $\Aut(G_i)$ contain aperiodic finite index subgroups. We in fact assume that $\Out(G_i)$ satisfies a stronger statement than Item~$(2)$ of Theorem~\ref{thm:IntroToral} since we also assume that the same is true replacing conjugacy classes of elements by conjugacy classes of \emph{periodic subgroups}. Our main theorem is the following. 

\begin{theo}[see~Proposition~\ref{prop:IAtorsionfree}, Theorems~\ref{thm:invariantfamilycyclicfixed},~\ref{thm:periodicfreefactorfixed}, Corollary~\ref{coco:periodicelementfixed}]\label{thm:Intromain}
Let $(G,\mathcal{G})$ be a free product satisfying the \hyperlink{SA}{Standing Assumptions}. Then $\Out(G,\mathcal{G})$ contains an aperiodic finite index subgroup $\IA(G,\mathcal{G},3)$.
\end{theo}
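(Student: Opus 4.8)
The strategy is to define $\IA(G,\mathcal{G},3)$ explicitly using three pieces of homological/combinatorial data, then verify the four aperiodicity properties one at a time, using the Bass–Serre theory of the canonical (JSJ / Grushko) tree for $(G,\mathcal{G})$ together with the train track technology for free products. Concretely, I would set
\[
\IA(G,\mathcal{G},3) = \ker\Bigl( \Out(G,\mathcal{G}) \longrightarrow \Aut\bigl(H_1(G,\ZZ/3\ZZ)\bigr) \times \prod_{i=1}^k \Out(G_i)/\mathrm{IA}(G_i,3) \times \mathrm{Sym}(\mathcal{G}) \Bigr),
\]
where $\mathrm{IA}(G_i,3)$ is the aperiodic finite index subgroup of $\Out(G_i)$ provided by the Standing Assumptions (and one also records the analogous data for $\Aut(G_i)$ to handle Item~$(4)$). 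Since $G$ is finitely generated, $H_1(G,\ZZ/3\ZZ)$ is finite, $\mathrm{Sym}(\mathcal{G})$ is finite, and each $\Out(G_i)/\mathrm{IA}(G_i,3)$ is finite by hypothesis; hence this is a finite index subgroup of $\Out(G,\mathcal{G})$, and by construction every $\phi \in \IA(G,\mathcal{G},3)$ fixes each $[G_i]$ and induces on each $G_i$ an automorphism lying in $\mathrm{IA}(G_i,3)$.

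\textbf{Torsion freeness.}
The first step is Proposition~\ref{prop:IAtorsionfree}: I would argue that a finite order element $\phi$ of $\IA(G,\mathcal{G},3)$ acts on the Grushko deformation space (the outer space for $(G,\mathcal{G})$) and, being elliptic, fixes a point, i.e.\ is realised as a finite order symmetry of a marked graph of groups. Such a symmetry permutes vertex groups (conjugates of the $G_i$) and edges; since $\phi$ fixes every $[G_i]$ and acts trivially on $H_1(G,\ZZ/3\ZZ)$, the induced permutation of edges of the underlying graph is trivial on homology mod $3$, which by the classical Minkowski-type argument (as in the $\ZZ^n$ and surface cases cited in the introduction) forces it to be trivial; then $\phi$ restricts to a finite order element of each $\mathrm{IA}(G_i,3)$, which is trivial by the Standing Assumptions, so $\phi$ is inner and hence trivial in $\Out$.

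\textbf{Periodic conjugacy classes of elements and of free factors.}
For Item~$(2)$ (Theorem~\ref{thm:invariantfamilycyclicfixed}, Corollary~\ref{coco:periodicelementfixed}) and Item~$(3)$ (Theorem~\ref{thm:periodicfreefactorfixed}), I would suppose $\phi$ permutes a finite family of conjugacy classes (of elements, or of free factors). Passing to the power that fixes each of them, one gets a finite order outer automorphism of the relevant quotient/subgroup structure, or more precisely a periodic point in outer space / the free factor complex; the torsion-freeness result just proved, applied to the appropriate induced action, upgrades "periodic'' to "fixed''. Where the conjugacy classes are hyperbolic elements of $G$, one uses a relative train track representative for $(G,\mathcal{G})$ to control periodic lines/circuits, exactly as Handel–Mosher do in the free group case but with the train track machinery of Collins–Turner, Francaviglia–Martino, and Lyman replacing CT maps. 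Item~$(4)$, the automorphism (not just outer) statement, follows by combining Item~$(2)$ with the $\Aut$-level data recorded in the kernel: if $[\Phi]$ fixes the conjugacy class of $g$, then $\Phi(g)$ is conjugate to $g$, and the mod-$3$ homology condition pins down the conjugator.

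\textbf{Main obstacle.}
The delicate point is the interaction between the two kinds of data — the homological condition on $G$ and the restriction to the vertex groups $G_i$ — when analysing periodic free factors and periodic subgroups that are neither contained in a conjugate of some $G_i$ nor transverse to the free splitting. One needs the canonical JSJ/Grushko decomposition to be genuinely invariant and to behave well under the train track map, so that a periodic free factor either is (conjugate into) a vertex group, where the Standing Assumptions apply, or is visible in the graph, where the homology mod $3$ argument applies; making this dichotomy exhaustive and compatible, and ruling out mixed behaviour under iteration, is the technical heart of the argument and is where the bulk of the work (Theorems~\ref{thm:invariantfamilycyclicfixed} and~\ref{thm:periodicfreefactorfixed}) goes.
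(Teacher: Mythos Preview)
Your definition of $\IA(G,\mathcal{G},3)$ and the torsion-freeness sketch are essentially the paper's: a finite subgroup fixes a point in outer space, acts trivially on the quotient graph by the mod-$3$ homology condition, and then lands in the torsion-free twist group $\mathcal{T}(S)$. So far, fine.

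The sketch for Items~$(2)$--$(4)$, however, has a genuine gap. The sentence ``passing to the power that fixes each of them, one gets a finite order outer automorphism of the relevant quotient/subgroup structure \ldots\ the torsion-freeness result just proved \ldots\ upgrades `periodic' to `fixed''' does not describe a valid mechanism. If $\phi^\ell$ fixes a conjugacy class $[g]$ or a free factor $[A]$, no finite-order outer automorphism is produced anywhere: $\phi$ itself is typically infinite order, and there is no induced element of any $\IA$-group to which torsion-freeness would apply. The analogy with the torsion-freeness argument breaks down precisely because a periodic \emph{object} does not yield a periodic \emph{automorphism}.

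What the paper actually does for Item~$(2)$ is quite different from your proposal and from the Handel--Mosher CT approach you allude to. One inducts on the Grushko rank, reducing to the case where $G$ is one-ended relative to $\mathcal{G}\cup\mathcal{H}$ (with $\mathcal{H}$ the orbit of periodic subgroups). Then one invokes the canonical Guirardel--Levitt JSJ \emph{elementary} splitting $T_{\mathcal{G}\cup\mathcal{H}}$: the bulk of the work (Proposition~\ref{lem:trivialactionJSJsplitting}) is showing $\phi$ acts trivially on $\overline{G\backslash T_{\mathcal{G}\cup\mathcal{H}}}$, which is delicate because edge groups are nontrivial and one must separately handle leaves with cyclic, parabolic, QH, and rigid stabilisers, using Lemma~\ref{lem:HomologyQHvertex} for the QH case. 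After that one analyses $\phi_v$ vertex-by-vertex (Lemmas~\ref{lem:flexiblevertexacttrivially}, \ref{lem:rigidvertexacttrivially}). None of this structure appears in your outline.

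For Item~$(3)$, the paper needs two further ideas you do not mention. For a \emph{sporadic} maximal periodic free factor system, the associated one-edge free splitting becomes a $\ZZ$-splitting of the \emph{suspension} $G_{\phi^\ell}$; the paper then uses the JSJ cyclic decomposition of $G_{\phi^\ell}$ and its tree of cylinders (this is where finite presentation of $G$ enters) to get a canonical $\phi$-invariant tree that encodes all such splittings (Theorem~\ref{thm:periodicfreesplittingfixed}). For a \emph{nonsporadic} maximal periodic free factor system, one associates to it a unique attracting lamination (Lemma~\ref{lem:attractinglaminationassociatedmaxfree}) and proves via the EG-stratum combinatorics of a relative train track that $\phi$ fixes every attracting lamination (Proposition~\ref{prop:periodiclaminationfixed}). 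Your ``main obstacle'' paragraph correctly senses that something nontrivial happens here, but the suspension/tree-of-cylinders technique and the lamination correspondence are the missing ingredients, not a refinement of the dichotomy you sketch.
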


Again the construction of the aperiodic finite index subgroup $\IA(G,\mathcal{G},3)$ is explicit. It is built out of the aperiodic finite index subgroups of the groups $\Out(G_i)$ with $[G_i] \in \mathcal{G}$, the finite index subgroup of $\Out(G,\mathcal{G})$ fixing elementwise the set $\mathcal{G}$ and the finite index subgroup of $\Out(G,\mathcal{G})$ acting trivially on $H_1(G,\ZZ/3\ZZ)$. 

It must be noted that the construction of a finite index subgroup of $\Out(G,\mathcal{G})$ satisfying Item~$(3)$ of Theorem~\ref{thm:IntroToral} depends on the construction of a finite index subgroup of $\Out(G,\mathcal{G})$ satisfying Item~$(2)$ of Theorem~\ref{thm:IntroToral}. Indeed, let $G=G_1 \ast G_2 \ast F_3$, let $\Phi_i \in \Aut(G_i)$ be of infinite order and let $\Phi_3\in \Aut(F_3)$ be a nontrivial automorphism of finite order. Let $x \in F_3$ be a primitive element of $F_3$ such that the size of the $\Phi_3$-orbit of $x$ is the order of $\Phi_3$. Let $\Phi \in \Aut(G)$ acting as $\Phi_i$ on $G_i$ and as $\Phi_3$ on $F_3$. Then $\Phi$ is of infinite order and $G_1 \ast \langle x \rangle$ is a periodic free factor of $\Phi$ and its period depends on the order of $\Phi_3$.

We now sketch the proof of Theorem~\ref{thm:Intromain}. Let $\IA(G,\mathcal{G},3)$, let $\phi \in \IA(G,\mathcal{G},3)$ and let $\Phi \in \phi$. Consider the group $G \rtimes_\Phi \ZZ$, that is, the semi-direct product of $G$ by $\ZZ$ induced by the automorphism $\Phi$. The isomorphism class of this group only depends on the outer class $\phi$, so we will denote $G \rtimes_\Phi \ZZ$ by $G_\phi$. As explained before, the proof of each item of the aperiodicity properties of $\phi$ relies on the construction of an action of $G$ on a tree $T$ which is $\phi$-invariant: the action of $G$ on $T$ extends to an action of $G_\phi$. The quotient $G \backslash T$ has a natural structure of a graph of groups and $\Phi$ induces an automorphism of the graph of groups $G \backslash T$. 

Informally, the source of periodic behaviours for an automorphism of a graph of groups is twofold. Firstly, an automorphism of a graph of groups $G \backslash T$ induces an automorphism of the underlying graph $\overline{G \backslash T}$ of $G \backslash T$. This first source of periodicity is ruled out by the fact that $\Phi$ acts trivially on $H_1(G,\ZZ/3\ZZ)$. Indeed, this will imply that $\Phi$ also acts trivially on $H_1(\overline{G \backslash T},\ZZ/3\ZZ)$ and an elementary observation (see Lemma~\ref{lem:graphautotrivial}) shows that $\Phi$ induces a trivial automorphism of the graph $\overline{G \backslash T}$. Secondly, if an automorphism of $G \backslash T$ fixes pointwise the graph $\overline{G \backslash T}$, it induces automorphisms of the vertex groups of $G \backslash T$ which may have periodic behaviors. This source of periodicity is more delicate to deal with and it heavily depends on the tree $T$. 

The construction of the $\phi$-invariant tree $T$ is different according to the item of Theorem~\ref{thm:IntroToral} one wants to prove. We explain it in the case of $\phi$-periodic conjugacy classes of elements and $\phi$-periodic conjugacy classes of free factors. Let $g \in G$ whose conjugacy class $[g]$ is $\phi$-periodic and let $\mathcal{H}=\{[\langle \Phi^n(g) \rangle] \}_{n \in \NN}$. Since $[g]$ is $\phi$-periodic, it is a finite set of conjugacy classes of cyclic subgroups of $G$. Since $(G,\mathcal{G})$ is a free product, the group $G$ is hyperbolic relative to $\mathcal{G}$. Moreover, by induction on the Grushko rank of $(G,\mathcal{G})$, we may assume that $G$ is \emph{one-ended relative to $\mathcal{G} \cup \mathcal{H}$}, that is, there does not exist a proper free product structure $(G,\mathcal{F})$ of $G$ such that, for every $[A] \in \mathcal{G} \cup \mathcal{H}$ there exists $[B] \in \mathcal{F}$ with $A \subseteq B$. In that case, Guirardel--Levitt~\cite{GuirardelLevitt2015} constructed a canonical (JSJ) $\phi$-invariant tree $T$ with a $G$-action such that all edge stabilisers are either cyclic or conjugate into some $G_i$. Moreover, every subgroup $\langle \Phi^n(g) \rangle$ with $n \in \NN$ is elliptic in $T$, and every vertex stabiliser is either a conjugate of some $G_i$, cyclic, a surface group or a so-called \emph{rigid stabiliser}, that is the image of $\phi$ in the outer automorphism group of the vertex group is finite (see Theorem~\ref{thm:JSJrelhyp}). The vertex groups are therefore sufficiently simple so that one can show that the action of $\phi$ on every vertex group is aperiodic. This is done in Section~\ref{Section:perodicelements}.

We now turn to $\phi$-periodic conjugacy classes of free factors. In fact, it is equivalent to show that, if $(G,\mathcal{F})$ is a free product structure on $G$ such that $\mathcal{F}$ is $\phi$-periodic, then $\mathcal{F}$ is fixed by $\phi$. Let $(G,\mathcal{F})$ be such a $\phi$-periodic free product structure. Again, an inductive argument shows that we may assume that the Grushko rank of $(G,\mathcal{F})$ is maximal among all $\phi$-periodic free product structures $(G,\mathcal{F})$. There are two sorts of such maximal free product structures, depending on whether there exists or not a $\phi$-periodic tree $T$ with a $G$-action such that every edge stabiliser is trivial and $\mathcal{F}$ is the set of conjugacy classes of nontrivial vertex stabilisers of $T$. 

If there does not exist such a $\phi$-periodic tree, then one can show using standard arguments from the theory of relative train tracks (see~\cite{Lyman2022CT}) that we can associate to $\mathcal{F}$ an \emph{attracting lamination} $\Lambda_\mathcal{F}$ of $\phi$. Let $\partial_\infty(G,\mathcal{G})$ be the Bowditch boundary associated with the relatively hyperbolic structure $(G,\mathcal{G})$. A \emph{lamination} of $(G,\mathcal{G})$ is a closed, $G$-invariant, flip-invariant subset of the double boundary 
\[\partial^2(G,\mathcal{G})=\partial_\infty(G,\mathcal{G}) \times \partial_\infty(G,\mathcal{G}) - \mathrm{Diag}.\] An attracting lamination of $\phi$ is a particular $\phi$-periodic lamination of $(G,\mathcal{G})$ which encodes informations regarding the dynamics of $\phi$ on $G$. It is standard to prove that, in order to show that $\phi$ fixes $\mathcal{F}$, it suffices to show that $\phi$ fixes $\Lambda_\mathcal{F}$ (see Lemma~\ref{lem:preimagerho}). The action of $\phi$ on its (finite) set of attracting laminations is encoded in a \emph{relative train track of $\phi$}, which is a way to represent $\phi$ as a homotopy equivalence of a graph of groups. It then suffices to know that $\phi$ fixes elementwise the sets $\mathcal{G}$ and $H_1(G,\ZZ/3\ZZ)$ to show that $\phi$ fixes elementwise its set of attracting laminations as well as $\mathcal{F}$.

Finally, assume that $\mathcal{F}$ is the set of conjugacy classes of nontrivial vertex stabilisers of some $\phi$-periodic $G$-tree $T$ with trivial edge stabilisers. Let $\ell >0$ be such that $T$ is $\phi^\ell$-invariant. The key observation that we use, which we think is of independent interest, is that any $\phi^\ell$-invariant such tree $T$ induces a graph of groups $G_{\phi^\ell} \backslash T$ with infinite cyclic edge groups such that $\pi_1(G_{\phi^\ell} \backslash T) \simeq G_{\phi^\ell}$. Call such a graph of groups decomposition a \emph{$G_{\phi^\ell}$-cyclic splitting}. Conversely, any such $G_{\phi^\ell}$-cyclic splitting induces a $\phi^\ell$-invariant $G$-tree with trivial edge groups. Thus, we are reduced to understand the possible graph of groups decomposition $G_{\phi^\ell} \backslash T$ with infinite cyclic edge groups. This observation was used for instance by Dahmani~\cite{Dah2016} to solve the conjugacy problem for outer automorphisms $\phi$ of $F_N$ whose induced suspension $G_\phi$ is Gromov hyperbolic. This idea was also suggested by Mutanguha~\cite{Mutanguha24} to show that being polynomially growing among free-by-cyclic groups is a geometric invariant.

The advantage of this point of view is that we can invoke the theory of JSJ decompositions of groups introduced by Rips--Sela~\cite{RipsSela97} to understand all cyclic splittings of a given finitely presented one-ended group. This theory, combined with the construction of \emph{tree of cylinders} introduced by Guirardel--Levitt~\cite{GuirardelLevittCylinders2011}, produces an $\Aut(G_{\phi^\ell})$-invariant $G_{\phi^\ell}$-tree $U$ which encodes all $G_{\phi^\ell}$-cyclic splittings. Even though $U$ does not necessarily have cyclic edge stabilisers, the fact that it is $\Aut(G_{\phi^\ell})$-invariant is sufficient for our considerations. Indeed, the group $G_{\phi^\ell}$ is a normal subgroup of $G_\phi$, so that $G_\phi$ acts on $U$ and hence $U$ can be seen as a $\phi$-invariant $G$-tree. As $U$ encodes all $\phi$-periodic $G$-trees $T$ with trivial edge stabilisers, it therefore suffices to understand the action of $\phi$ on $U$ and its vertex stabilisers to show that $\phi$ must preserve $T$ and $\mathcal{F}$. We insist on the fact that this is the only place where we need the assumption that $G$ is finitely presented so as to apply Rips--Sela theory.

We end this introduction with an open question. Our main application of Theorem~\ref{thm:Intromain} deals with toral relatively hyperbolic groups as we manage to prove that they satisfy the \hyperlink{SA}{Standing Assumptions}. However, the \hyperlink{SA}{Standing Assumptions} are quite flexible and we hope that they are satisfied by larger classes of groups. We in particular ask the following.

\begin{ques}\label{ques:Raag}
\begin{enumerate}
    \item Let $\Gamma$ be a finite simple graph. Does the associated Right Angled Artin group $A_\Gamma$ satisfy the \hyperlink{SA}{Standing Assumptions}? 
    \item Is the kernel $\IA(A_\Gamma,3)$ of the action of $\Out(A_\Gamma)$ on $H_1(A_\Gamma,\ZZ/3\ZZ)$ an aperiodic finite index subgroup?
\end{enumerate}
\end{ques}

There are evidence towards a positive answer for Question~\ref{ques:Raag}. Firstly, the group $\IA(A_\Gamma,3)$ is torsion free (in fact of type~$\mathrm{F}$) by a result of Day--Wade (\cite[Theorem~B]{DayWade2019}). Secondly, Fioravanti~\cite{Fioravanti25} recently proved how to construct canonical (JSJ) trees for subgroups of $\Out(A_\Gamma)$ that are close in spirit to the trees we use in this paper. We hope that these trees may be useful to prove that $A_\Gamma$ satisfies the \hyperlink{SA}{Standing Assumptions}.

\medskip

We now explain the structure of the paper. In Section~\ref{Section:Background}, we present the relevant background regarding free factor systems, splittings of groups, JSJ decompositions of groups or relative train tracks. In Section~\ref{Section:Firstproperties}, we describe the \hyperlink{SA}{Standing Assumptions} and prove first immediate properties of the group $\IA(G,\mathcal{G},3)$, such as being torsion free (see Proposition~\ref{prop:IAtorsionfree}). In Section~\ref{Section:perodicelements}, we prove Theorem~\ref{thm:invariantfamilycyclicfixed}, which implies that, if $\phi \in \IA(G,\mathcal{G},3)$, every $\phi$-periodic conjugacy class of elements of $G$ is fixed by $\phi$. Section~\ref{Section:periodicffs} is devoted to the proof of Theorem~\ref{thm:periodicfreefactorfixed}, which implies that every $\phi$-periodic conjugacy class of free factors of $G$ is fixed by $\phi$. This in particular concludes the proof of Theorem~\ref{thm:Intromain}. Finally, in Section~\ref{Section:Toral}, we explain how to deduce the case where $G$ is a toral relatively hyperbolic group (Theorem~\ref{thm:IntroToral}) from Theorem~\ref{thm:Intromain}. We in particular show that any toral relatively hyperbolic group satisfies the \hyperlink{SA}{Standing Assumptions}.

\subsubsection*{Acknowledgements}

I warmly thank Naomi Andrew, Camille Horbez, Sam Hughes, Gilbert Levitt and Ric Wade for helpful conversations. I also thank the LABEX MILYON (ANR-10-LABX-0070) for its financial support during my postdoctoral position; during which part of this project was written.

\subsubsection*{Dedication}

I dedicate this paper to my grand-father, Jean-Jacques Lesage.

\section{Background}\label{Section:Background}

\subsection{Free products, Free factor systems}\label{sec:free_factors}

Let $N \in \NN$, let $G_1,\ldots,G_k$ be finitely generated groups and let $$G=G_1 \ast \ldots \ast G_k \ast F_N$$ be the corresponding free product, where $F_N$ is a free group of rank $N$. Let $\mathcal{G}=\{[G_1],\ldots,[G_k]\}$, where $[.]$ denotes the $G$-conjugacy class. The \emph{Grushko rank} of $(G,\mathcal{G})$ is $\xi(G,\mathcal{G})=k+N$.

\begin{defi}[Free factor system, free factor]
A \emph{$(G,\mathcal{G})$-free factor system} is a finite set $\mathcal{F}=\{[H_1],\ldots,[H_n]\}$ of conjugacy classes of subgroups of $G$ such that: 
\begin{itemize}
\item for every $i \in \{1,\ldots,k\}$, there exists $j \in \{1,\ldots,n\}$ such that $G_i \subseteq H_j$;
\item there exists a (necessarily free) subgroup $B$ of $G$ such that $$G=H_1 \ast \ldots \ast H_n \ast B.$$
\end{itemize}
A $(G,\mathcal{G})$-free factor system $\mathcal{F}$ is \emph{proper} if $\mathcal{F} \neq \{[G]\}$. 

A \emph{$(G,\mathcal{G})$-free factor} is a subgroup of $G$ whose conjugacy class is contained in some $(G,\mathcal{G})$-free factor system.
\end{defi}

If $\mathcal{F}$ is a $(G,\mathcal{G})$-free factor system, then the pair $(G,\mathcal{F})$ is also a free product, so that $\xi(G,\mathcal{F})$ is well-defined.
The set $\mathrm{FF}(G,\mathcal{G})$ of proper $(G,\mathcal{G})$-free factor systems is equipped with a poset structure, where $\mathcal{F}_1 \sqsubseteq \mathcal{F}_2$ if, for every $[A] \in \mathcal{F}_1$, there exists $[B] \in \mathcal{F}_2$ with $A \subseteq B$. Note that, in that case, $\xi(G,\mathcal{F}_2) \leq \xi(G,\mathcal{F}_1)$, so that 
\[\begin{array}{cccc}
     \xi \colon & \mathrm{FF}(G,\mathcal{G}) & \to & \NN  \\
     {} & \mathcal{F} & \mapsto & \xi(G,\mathcal{F})
\end{array}
\] 
is an order-reversing poset map.

A $(G,\mathcal{G})$-free factor system $\mathcal{F}$ is \emph{sporadic} if $\xi(G,\mathcal{F}) \leq 2$. This happens exactly in the following cases:

\begin{itemize}
\item $\mathcal{F}=\{[G]\}$;
\item $\mathcal{F}=\{[H_1],[H_2]\}$ and $G=H_1 \ast H_2$;
\item $\mathcal{F}=\{[H]\}$ and $G=H \ast \ZZ$.
\end{itemize}

Let $\mathcal{F}$ be a $(G,\mathcal{G})$-free factor system. We denote by $\Aut(G,\mathcal{F})$ the subgroup of $\Aut(G)$ which preserves $\mathcal{F}$. Note that an element of $\Aut(G,\mathcal{F})$ may permute the elements of $\mathcal{F}$. The group $\Inn(G)$ of inner automorphisms of $G$ is contained in $\Aut(G,\mathcal{F})$. The group $\Out(G,\mathcal{F})$ of \emph{outer automorphisms of $G$ preserving $\mathcal{F}$} is the quotient group $\Aut(G,\mathcal{F})/\Inn(G)$.  

More generally, let $\mathcal{H}$ be a finite set of conjugacy classes of subgroups of $G$. We denote by $\Out(G,\mathcal{H})$ the subgroup of $\Out(G)$ which preserves the set $\mathcal{H}$. Let $\Out(G,\mathcal{H}^{(t)})$ be the subgroup of $\Out(G,\mathcal{H})$ consisting of all $\phi \in \Out(G,\mathcal{H})$ such that, for every $[H] \in \mathcal{H}$, there exists $\Phi \in \phi$ with $\Phi(H)=H$ and $\Phi_{|H}=\mathrm{Id}_H$.

\bigskip

Let $H$ be a subgroup of $G$. The group $H$ is \emph{$(G,\mathcal{G})$-peripheral} if it is contained in a conjugate of some $G_i$ and is \emph{$(G,\mathcal{G})$-nonperipheral} otherwise. 

Assume additionally that $H$ is finitely generated. If $X$ is a subset of $H$, we denote by $[X]_H$ the $H$-conjugacy class of $X$. Any $(G,\mathcal{G})$-free factor system $\mathcal{F}$ induces a free factor system $\mathcal{F}_{|H}$ of $H$ by considering, for every $A \subseteq G$ with $[A] \in \mathcal{F}$, the $H$-conjugacy class $[H \cap A]_H$. We remark that it is possible that there exist $A,B \subseteq G$ with $[A]=[B] \in \mathcal{F}$ but $[H \cap A]_H$ and $[H \cap B]_H$ induce distinct elements of $\mathcal{F}_{|H}$. The free factor system $\mathcal{F}_{|H}$ turns $H$ into a free product $(H,\mathcal{F}_{|H})$. 

\subsection{Splittings of a group}\label{section:splitting}

Let $\Gamma$ be a finitely generated group (not necessarily a free product). Let $\mathcal{A}$ be a nonempty family of subgroups closed under conjugation and passing to subgroups. A \emph{$(\Gamma;\mathcal{A})$-splitting} is a $\Gamma$-equivariant isometry class of an action of $\Gamma$ by isometry on a simplicial tree $T$ such that: 

\begin{itemize}
\item Edge stabilisers belong to $\mathcal{A}$;
\item $\Gamma$ does not preserve any proper subtree of $T$ (we say that the action is \emph{minimal});
\item there does not exist $g \in \Gamma$ which preserves an edge of $T$ while flipping its endpoints.
\end{itemize}

We denote by $VT$ the set of vertices of $T$ and by $ET$ the set of (unoriented) edges of $T$. If $v \in VT$ (resp. $e \in ET$), we denote by $\Gamma_v$ (resp. $\Gamma_e$) the stabiliser of $v$ (resp. $e$) in $\Gamma$. A $(\Gamma;\mathcal{A})$-splitting $T$ is \emph{degenerate} if $T$ is reduced to a point. Let $v \in VT$ with $\Gamma_v$ finitely generated. If $\Gamma$ is equipped with a free product structure $(\Gamma,\mathcal{G})$ and if $\mathcal{F}$ is a $(\Gamma,\mathcal{G})$-free factor system, we will denote by $\mathcal{F}_v$ the free factor system $\mathcal{F}_{|\Gamma_v}$ of $\Gamma_v$.

We will also need relative versions of splittings. Let $H_1,\ldots,H_n \subseteq \Gamma$ be finitely generated subgroups and let $\mathcal{H}=\{[H_1],\ldots, [H_n]\}$. A \emph{$(\Gamma,\mathcal{H};\mathcal{A})$-splitting} is a $(\Gamma;\mathcal{A})$-splitting $T$ such that, for every $i \in \{1,\ldots,n\}$, the group $H_i$ fixes a point in $T$. 

We will mostly work with the following classes of splittings. 

\begin{defi}[free, cyclic splittings]
Let $\Gamma$ be a finitely generated group, let $H_1,\ldots,H_n$ be finitely generated subgroups of $\Gamma$ and let $\mathcal{H}=\{[H_1],\ldots, [H_n]\}$.
\begin{itemize}
\item A \emph{$(\Gamma,\mathcal{H})$-splitting} is a $(\Gamma,\mathcal{H};\mathcal{A})$-splitting where $\mathcal{A}$ is the family of all subgroups of $\Gamma$. 

\item A \emph{$(\Gamma,\mathcal{H})$-free splitting} is a $(\Gamma,\mathcal{H};\mathcal{A})$-splitting where $\mathcal{A}$ is the family consisting only of the trivial group.

\item A \emph{$(\Gamma,\mathcal{H})$-cyclic splitting} is a $(\Gamma,\mathcal{H};\mathcal{A})$-splitting where $\mathcal{A}$ is the family of all cyclic (possibly trivial) subgroups of $\Gamma$.
\end{itemize}
\end{defi}

A $(\Gamma,\mathcal{H})$-free splitting is in particular a $(\Gamma,\mathcal{H})$-cyclic splitting. When $\mathcal{H}=\varnothing$, we will usually refer to a $(\Gamma,\mathcal{H})$-splitting simply as a $\Gamma$-splitting.

The group $\Aut(\Gamma)$ acts on the set of $\Gamma$-splittings by precomposition of the action. Since $\Inn(\Gamma)$ acts trivially on this set (recall that a splitting is defined up to $\Gamma$-equivariant isometry), the $\Aut(\Gamma)$-action induces an action of $\Out(\Gamma)$ on the set of $\Gamma$-splittings.

Let $S$ be a $\Gamma$-splitting. We now describe the stabiliser $\Stab(S)$ of $S$ in $\Out(\Gamma)$. This description is due to Levitt~\cite{levitt2005}.

Note that, since $\Gamma$ is finitely generated and since the action of $\Gamma$ on $S$ is minimal, the quotient $\Gamma \backslash S$ has the structure of a finite graph of groups. The group $\Stab(S)$ acts by graph automorphism on the underlying graph $\overline{\Gamma \backslash S}$ of $\Gamma \backslash S$. This gives a homomorphism $$\Stab(S) \to \Aut_{gr}(\overline{\Gamma \backslash S}),$$ where $\Aut_{gr}(\overline{\Gamma \backslash S})$ is the group of graph automorphisms of $\overline{\Gamma \backslash S}$. Since $\overline{\Gamma \backslash S}$ is a finite graph, the kernel $\mathcal{K}(S)$ of the above map is a finite index subgroup of $\Stab(S)$. 

Let $\phi \in \mathcal{K}(S)$ and let $v \in VS$. Any representative of $\phi$ induces a unique $\Gamma$-equivariant isometry of $S$. If $\Phi_1,\Phi_2$ are two representatives of $\phi$ which fix the vertex $v$, then they differ by an inner automorphism $\mathrm{ad}_g$ with $g\in \Gamma_v$. This shows that, to any outer automorphism $\phi \in \mathcal{K}(S)$, one can naturally associate an outer automorphism $\phi_v \in \Out(\Gamma_v)$. Similarly, for every edge $e \in ES$ and every $\phi \in \mathcal{K}(S)$, one can naturally associate an outer automorphism $\phi_e \in \Out(\Gamma_e)$. We thus have a natural restriction homomorphism $$\mathcal{K}(S) \to \prod_{v \in V\overline{\Gamma\backslash S}} \Out(\Gamma_v),$$ whose kernel is denoted by $\mathcal{T}(S)$.

The following proposition describes the structure of $\mathcal{T}(S)$ when $S$ is a $\Gamma$-free splitting.

\begin{prop}[{\cite[Proposition~3.1]{levitt2005}}] \label{lem:grouptwists}
Let $S$ be a $\Gamma$-free splitting. The group $\mathcal{T}(\mathcal{S})$ is isomorphic to a direct product $$\prod_{v \in V\overline{\Gamma \backslash S}}\Gamma_v^{n_{v}}/Z(\Gamma_v),$$ where $n_v$ is the valence of the vertex $v$ of $\overline{\Gamma \backslash S}$ and $Z(\Gamma_v)$ embeds diagonally in $\Gamma_v^{n_{v}}$.
\end{prop}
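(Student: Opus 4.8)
The plan is to describe $\mathcal{T}(S)$ by understanding exactly which outer automorphisms act trivially both on the underlying graph $\overline{\Gamma\backslash S}$ and on every vertex group $\Gamma_v$, and to match them with tuples of ``twisting'' elements. First I would recall the explicit structure of a $\Gamma$-free splitting $S$: since edge stabilisers are trivial, a choice of fundamental domain in the tree $S$ presents $\Gamma$ as the fundamental group of a graph of groups with trivial edge groups, i.e. $\Gamma \cong (\ast_{v} \Gamma_v) \ast F$, where $F$ is free of rank equal to the first Betti number of $\overline{\Gamma\backslash S}$, with one free generator (a stable letter $t_e$) for each edge $e$ not in a chosen spanning tree. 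An automorphism $\Phi$ with $[\Phi]\in\mathcal{T}(S)$ fixes the graph, fixes each conjugacy class $[\Gamma_v]$, and restricts to an inner automorphism of each $\Gamma_v$; after composing with a suitable inner automorphism of $\Gamma$ one may assume $\Phi$ fixes each $\Gamma_v$ pointwise. Such a $\Phi$ is then completely determined by where it sends the stable letters $t_e$, and the condition that it fixes the splitting up to $\Gamma$-equivariant isometry forces $t_e \mapsto g_e t_e h_e^{-1}$ for suitable $g_e, h_e$ lying in the vertex groups at the two (oriented) ends of $e$; this is precisely the twist construction.

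The core of the argument is a bookkeeping of the resulting parameters. Each oriented end of an edge in $\overline{\Gamma\backslash S}$ contributes one factor of the form $\Gamma_v$ (the element by which one conjugates/twists on that end), so the raw parameter space is $\prod_{v} \Gamma_v^{n_v}$, where $n_v$ is the valence of $v$ (counting a loop at $v$ twice, consistently with ``valence'' meaning the number of edge-ends at $v$). I would then identify the ambiguities: twisting all ends at a vertex $v$ simultaneously by the same central element $z\in Z(\Gamma_v)$ can be undone by the global inner automorphism $\mathrm{ad}_z$ of $\Gamma$ — or, said differently, the map from the parameter tuple to the outer automorphism has kernel exactly the diagonal copies of $Z(\Gamma_v)$ inside each $\Gamma_v^{n_v}$. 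Conversely I would check that every element of $\mathcal{T}(S)$ arises this way (surjectivity) and that distinct tuples modulo these diagonal central subgroups give distinct outer automorphisms (injectivity), and that the group operation on tuples matches composition, so the identification $\mathcal{T}(S)\cong \prod_{v} \Gamma_v^{n_v}/Z(\Gamma_v)$ is an isomorphism of groups; it is a direct product over $v$ because twists at different vertices commute and are supported on disjoint sets of edge-ends.

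The main obstacle I anticipate is the careful treatment of loops and of the homomorphism from tuples to $\mathcal{K}(S)$ being well-defined, i.e. checking that the automorphism $D$ built from a tuple really is an automorphism of $\Gamma$ (not just an endomorphism) and really does preserve the isometry class of $S$; for a loop at $v$ one must track that the two ends of the loop both contribute a factor $\Gamma_v$ and that a single stable letter absorbs both twists, which is what makes the exponent $n_v$ (edge-ends, not edges) correct. A secondary subtlety is pinning down the normalisation that lets one assume $\Phi$ fixes each $\Gamma_v$ pointwise rather than merely up to conjugacy, and verifying that this normalisation does not overcount — this is exactly where the quotient by the diagonal $Z(\Gamma_v)$ enters. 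Since Levitt's Proposition~2.2 (in the commented-out block) handles the general bitwist case and specialises to twists when edge groups are trivial, I would, in a fuller write-up, either cite that specialisation directly or reprove this clean free-splitting case by the elementary fundamental-group-of-a-graph-of-groups computation sketched above.
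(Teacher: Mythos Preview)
The paper does not prove this proposition; it is simply quoted from Levitt \cite[Proposition~3.1]{levitt2005} with no argument given. So there is no in-paper proof to compare your sketch against.

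That said, your sketch has a genuine gap at the normalisation step. You claim that after composing with a single inner automorphism of $\Gamma$ one may assume $\Phi$ fixes \emph{each} $\Gamma_v$ pointwise, and that $\Phi$ is then determined by its action on the stable letters. Neither assertion holds in general. Take the one-edge tree splitting $\Gamma = A \ast B$: an element of $\mathcal{T}(S)$ has a representative acting as $\mathrm{ad}_a$ on $A$ and $\mathrm{ad}_b$ on $B$ for some $a\in A$, $b\in B$. Composing with $\mathrm{ad}_{a^{-1}}$ makes the restriction to $A$ trivial, but now $B$ is sent to $a^{-1}Ba$, which is not $B$ (in a free product $a^{-1}$ does not normalise $B$ unless $a=1$). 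More seriously, for this splitting there are no stable letters at all, yet $\mathcal{T}(S)\cong A/Z(A)\times B/Z(B)$ is typically nontrivial; your ``determined by stable letters'' conclusion would force $\mathcal{T}(S)=\{1\}$ for any tree splitting.

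The correct route, which is what Levitt does and what you gesture at in your final paragraph, is to define the twist homomorphism $\prod_v \Gamma_v^{n_v}\to \mathcal{T}(S)$ directly from the explicit twist generators (one per oriented edge-end, conjugating the subgroup carried by the complementary component), verify it is onto, and compute its kernel as the diagonal copies of $Z(\Gamma_v)$ together with the edge relations (trivial here since edge groups are trivial). You should drop the global-normalisation narrative and run that computation instead; the valence $n_v$ then appears naturally as the number of oriented edge-ends at $v$, with tree edges and loop edges treated uniformly.
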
 

When $S$ is a $\Gamma$-free splitting, we also have another description of the group $\mathcal{K}(S)$.

\begin{prop}[{\cite[Proposition~4.2]{levitt2005}}]\label{prop:stabfreesplittingautversion}
Let $S$ be a $\Gamma$-free splitting. The group $\mathcal{K}(S)$ splits as a direct product $$\mathcal{K}(S)=\prod_{v \in V\overline{\Gamma \backslash S}} M_v $$ where, for every $v \in V\overline{\Gamma \backslash S}$, there exists $n_v \geq 1$ such that the group $M_v$ fits in a short exact sequence $$1 \to \Gamma_v^{n_v-1} \to M_v \to \Aut(\Gamma_v) \to 1.$$ 
\end{prop}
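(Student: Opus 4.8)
The plan is to realise $\mathcal{K}(S)$ as built out of the twist group $\mathcal{T}(S)$, whose structure is supplied by Proposition~\ref{lem:grouptwists}, together with the groups $\Aut(\Gamma_v)$, and then to reorganise the outcome one vertex at a time. Throughout I may assume $S$ is nondegenerate (otherwise $V\overline{\Gamma\backslash S}$ is a single vertex with $\Gamma_v=\Gamma$ and there is nothing to prove); in particular the quotient graph then has at least one edge, every $\Gamma_v$ is a proper subgroup of $\Gamma$, and every vertex has valence $n_v\geq 1$. \emph{Step 1 (a normal form).} Fix a lift $\widetilde v\in VS$ of each vertex $v$ of $\overline{\Gamma\backslash S}$, so that the $\Gamma_v=\Gamma_{\widetilde v}$ are genuine subgroups, fix a lift of a maximal subtree, giving a free product decomposition $\Gamma=\Gamma_{v_1}\ast\cdots\ast\Gamma_{v_m}\ast F$ with $F$ free on the stable letters outside the subtree, and fix for each $v$ one edge $\widetilde e_v\in ES$ incident to $\widetilde v$. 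Using that $\phi$ acts trivially on the underlying graph and that edge stabilisers are trivial (this is essentially Bass's covering theory), every $\phi\in\mathcal{K}(S)$ has a \emph{unique} representative $\Phi^{(v)}_\phi\in\Aut(\Gamma)$ whose induced isometry of $S$ fixes both $\widetilde v$ and $\widetilde e_v$ — existence by correcting an arbitrary representative first by a translation to fix $\widetilde v$ and then by an element of $\Gamma_v$ to fix $\widetilde e_v$, uniqueness because only $1\in\Gamma$ fixes an edge. Then $\phi\mapsto\Phi^{(v)}_\phi$ is a homomorphism, so $R_v\colon\mathcal{K}(S)\to\Aut(\Gamma_v)$, $\phi\mapsto\Phi^{(v)}_\phi|_{\Gamma_v}$, is a well-defined homomorphism lifting the canonical map $\mathcal{K}(S)\to\Out(\Gamma_v)$ whose product over $v$ has kernel $\mathcal{T}(S)$.

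\emph{Step 2 (a section and the subgroups $M_v$).} Let $\sigma\colon\prod_v\Aut(\Gamma_v)\to\mathcal{K}(S)$ send $(\psi_v)_v$ to the automorphism acting as $\psi_v$ on $\Gamma_v$ and fixing every stable letter; it lands in $\mathcal{K}(S)$ because it comes from a graph-of-groups automorphism of $\overline{\Gamma\backslash S}$ that is the identity on the graph, it is an injective homomorphism (injectivity using minimality of $S$), the images $\sigma(\Aut(\Gamma_v))$ and $\sigma(\Aut(\Gamma_w))$ commute for $v\neq w$, and $\rho\circ\sigma$ is the quotient map to $\prod_v\Out(\Gamma_v)$. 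In particular $\rho$ is onto and $\mathcal{K}(S)=\mathcal{T}(S)\cdot\sigma\big(\prod_v\Aut(\Gamma_v)\big)$. Writing $\mathcal{T}(S)=\prod_v\mathcal{T}_v$ with $\mathcal{T}_v\cong\Gamma_v^{n_v}/Z(\Gamma_v)$ the factor generated by the twists around the germs of edges at $v$ (Proposition~\ref{lem:grouptwists} and its proof), a direct computation gives $\sigma(\Inn(\Gamma_v))\subseteq\mathcal{T}_v$ and that $\mathcal{T}_v$ commutes with $\sigma(\Aut(\Gamma_w))$ for $w\neq v$, the point being that such twists and $\sigma(\psi_w)$ are supported on disjoint sets of free factors. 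Setting $M_v:=\mathcal{T}_v\cdot\sigma(\Aut(\Gamma_v))$, the $M_v$ pairwise commute and together generate $\mathcal{K}(S)$; directness ($M_v\cap\langle M_w:w\neq v\rangle=1$) follows by applying $\rho$, using $M_v\cap\mathcal{T}(S)=\mathcal{T}_v$ and the directness of $\mathcal{T}(S)=\prod_v\mathcal{T}_v$. Hence $\mathcal{K}(S)=\prod_v M_v$.

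\emph{Step 3 (identifying $M_v$), and the main obstacle.} Since $R_v$ lifts the canonical map to $\Out(\Gamma_v)$ and $\sigma$ provides a section showing $R_v|_{M_v}$ is onto $\Aut(\Gamma_v)$, the kernel of $R_v|_{M_v}$ is contained in $M_v\cap\mathcal{T}(S)=\mathcal{T}_v$, hence equals $\ker(R_v|_{\mathcal{T}_v})$. The crux is to compute this kernel: under the isomorphism $\mathcal{T}_v\cong\Gamma_v^{n_v}/Z(\Gamma_v)$ of Proposition~\ref{lem:grouptwists}, one checks on the twist generators that a twist around a germ other than $[\widetilde e_v]$ restricts to the identity on $\Gamma_v$ while the twist around $[\widetilde e_v]$ by $z$ restricts to $\mathrm{ad}_{z^{-1}}$, so that $R_v|_{\mathcal{T}_v}$ becomes, via $\Inn(\Gamma_v)\cong\Gamma_v/Z(\Gamma_v)$, a coordinate projection $\Gamma_v^{n_v}/Z(\Gamma_v)\to\Gamma_v/Z(\Gamma_v)$, whose kernel is isomorphic to $\Gamma_v^{n_v-1}$ by an elementary argument. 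This gives the claimed short exact sequence $1\to\Gamma_v^{n_v-1}\to M_v\xrightarrow{R_v}\Aut(\Gamma_v)\to 1$ and finishes the proof. I expect this last computation to be the main difficulty: it requires unwinding Levitt's explicit isomorphism in Proposition~\ref{lem:grouptwists} carefully enough to treat loop germs at $v$ on the same footing as non-loop germs and to choose $\widetilde e_v$ so that restriction reads off a single coordinate; and in Step 2 one must resist working with arbitrary partial conjugations, since conjugating $\Gamma_w$ by an element outside $\Gamma_w$ need not preserve the splitting $S$.
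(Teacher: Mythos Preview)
The paper does not give its own proof of this proposition; it is quoted directly from \cite[Proposition~4.2]{levitt2005} and used as a black box, so there is nothing in the paper to compare your argument against. Your sketch is essentially a reconstruction of Levitt's original argument (build the factors $M_v$ from the twist group $\mathcal{T}_v\cong\Gamma_v^{n_v}/Z(\Gamma_v)$ together with a Fouxe-Rabinovitch--type section of $\Aut(\Gamma_v)$, then read off the kernel of the restriction $R_v$ as one coordinate projection) and appears correct; the one place to be careful, as you note, is checking that the chosen edge $\widetilde e_v$ really singles out a single coordinate of $\Gamma_v^{n_v}/Z(\Gamma_v)$ under $R_v$, including for loop edges at $v$.
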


\subsection{JSJ decompositions of finitely presented groups and trees of cylinders}

Let $\Gamma$ be a finitely presented group. Let $H_1,\ldots,H_n \subseteq \Gamma$ be finitely generated subgroups and let $\mathcal{H}=\{[H_1],\ldots, [H_n]\}$. In this section, we define the notion of a \emph{JSJ decomposition of $\Gamma$ relative to $\mathcal{H}$}. JSJ decompositions of groups are $\Gamma$-splittings which have been introduced by Sela~\cite{Sela97JSJ} for hyperbolic groups and greatly generalised afterwards~\cite{RipsSela97,Bowditch98,DunwoodySageev99,FujPap2006}. Here we present the point of view due to Guirardel--Levitt~\cite{guirardel2016jsj}.

We will give two constructions of JSJ decompositions. One is general and does not require any other assumption on $\Gamma$. The second one requires that $\Gamma$ is hyperbolic relative to a finite set of finitely generated subgroups. The advantage of the second construction is that the JSJ decomposition is invariant under every automorphism of $\Gamma$ preserving the parabolic structure. Moreover, as explained below (see~Theorem~\ref{thm:JSJrelhyp}), we have a better understanding of the action of $\Aut(\Gamma)$ on the so-called \emph{rigid vertices} of the JSJ decomposition.

\subsubsection{On the terminology of JSJ decompositions of groups}\label{sec:terminologyJSJ}

Let $\mathcal{A}$ be a family of subgroups of $\Gamma$, closed under taking subgroups and conjugation.

\begin{defi}[Universally elliptic splitting]
Let $S$ be a $(\Gamma,\mathcal{H};\mathcal{A})$-splitting. We say that $S$ is \emph{universally elliptic} if, for every $e \in ES$, the group $\Gamma_e$ is elliptic in every $(\Gamma,\mathcal{H};\mathcal{A})$-splitting.
\end{defi}

\begin{defi}[Rigid, flexible vertex]
Let $S$ be a $(\Gamma,\mathcal{H};\mathcal{A})$-splitting. Let $v \in VS$. We say that $v$ is a \emph{rigid vertex} if $\Gamma_v$ is elliptic in every $(\Gamma,\mathcal{H};\mathcal{A})$-splitting. Otherwise, we say that $v$ is a \emph{flexible vertex}.
\end{defi}

Let $\Sigma$ be a connected, compact, hyperbolic (not necessarily orientable) surface. A \emph{boundary subgroup of $\Sigma$} is a subgroup of $\pi_1(\Sigma)$ conjugate to the fundamental group $B=\pi_1(C)$ of a boundary component of $\Sigma$.

\begin{defi}[QH vertex, mapping class group]
Let $S$ be a $(\Gamma,\mathcal{H};\mathcal{A})$-splitting. Let $v \in VS$. We say that $v$ is a \emph{QH vertex} if there exists a connected, compact, hyperbolic (not necessarily orientable) surface $\Sigma_v$ such that the following hold. 
\begin{itemize}
\item The group $\Gamma_v$ is isomorphic to $\pi_1(\Sigma_v)$.
\item For every $e \in ES$ adjacent to $v$, the group $\Gamma_e$ is contained in a boundary subgroup of $\Sigma_v$.
\item If $[H] \in \mathcal{H}$ is such that $H \cap \Gamma_v \neq \{1\}$, then $H \cap \Gamma_v$ is contained in a boundary subgroup of $\Sigma_v$.
\end{itemize}

The \emph{mapping class group of $\Gamma_v$}, denoted by $\mathrm{MCG}(\Gamma_v)$, is the subgroup of $\Out(\Gamma_v)$ preserving the set of conjugacy classes of boundary subgroups of $\Sigma_v$.
\end{defi}

Note that, if $v$ is a QH vertex of a splitting $S$ and if $S$ is not reduced to a point, then $\Gamma_v$ is infinitely-ended since $\partial\Sigma_v \neq \varnothing$.

\subsubsection{JSJ splittings, tree of cylinders}

Recall that $\Gamma$ is \emph{one-ended relative to $\mathcal{H}$} if there does not exist a nondegenerate $(\Gamma,\mathcal{H})$-splitting with finite edge stabilisers. 


\begin{theo}[{\cite[Theorems~2.20,~6.2]{guirardel2016jsj}}]\label{thm:JSJ}
Let $\Gamma$ be a torsion free finitely presented group and let $\mathcal{H}$ be a finite set of conjugacy classes of finitely generated subgroups of $\Gamma$. Suppose that $\Gamma$ is one-ended relative to $\mathcal{H}$. There exists a $(\Gamma,\mathcal{H})$-cyclic splitting $T_{JSJ}^{\mathcal{H}}$ with the following properties.

\begin{enumerate}
\item The tree $T_{JSJ}^{\mathcal{H}}$ is universally elliptic.
\item Edge stabilisers of $T_{JSJ}^{\mathcal{H}}$ are infinite cyclic. 
\item Vertex stabilisers of $T_{JSJ}^{\mathcal{H}}$ are elliptic in every universally elliptic splitting.
\item If $v \in VT_{JSJ}^\mathcal{H}$, one of the following hold.
\begin{enumerate}
\item The group $\Gamma_v$ is infinite cyclic.
\item The vertex $v$ is QH. In particular, its stabiliser is infinitely ended if $T_{JSJ}^\mathcal{H}$ is not reduced to a point.
\item The vertex $v$ is rigid.
\end{enumerate}
\end{enumerate}
\end{theo}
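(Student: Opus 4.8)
**Planning the proof of Theorem~\ref{thm:JSJ} (the JSJ decomposition for torsion-free finitely presented one-ended groups relative to $\mathcal{H}$, with cyclic edge groups).**

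The plan is to invoke the general JSJ machinery of Guirardel--Levitt over the class $\mathcal{A}$ of cyclic (including trivial) subgroups, applied relative to $\mathcal{H}$, and then upgrade a generic JSJ splitting into one whose flexible vertices are exactly QH surface vertices. First I would recall that since $\Gamma$ is finitely presented and $\mathcal{H}$ consists of finitely generated subgroups, the class of $(\Gamma,\mathcal{H};\mathcal{A})$-splittings admits a \emph{JSJ tree}: a universally elliptic splitting that dominates all other universally elliptic splittings, by \cite[Theorem~2.20]{guirardel2016jsj}. This immediately gives items~(1) and~(3): universal ellipticity is the defining property, and the domination property says precisely that vertex stabilisers of $T^{\mathcal{H}}_{JSJ}$ are elliptic in every universally elliptic $(\Gamma,\mathcal{H};\mathcal{A})$-splitting. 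For item~(2), edge stabilisers lie in $\mathcal{A}$, so they are cyclic; the point is to arrange that they are \emph{infinite} cyclic, which follows from the relative one-endedness hypothesis — no $(\Gamma,\mathcal{H})$-splitting has finite edge groups, so in particular no edge group of the JSJ can be trivial or finite, hence each is infinite cyclic. (One may also need to collapse edges with non-infinite-cyclic stabiliser, but there are none here.)

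The substantive part is item~(4): identifying the flexible vertices. Here I would appeal to \cite[Theorem~6.2]{guirardel2016jsj}, the relative version of Bowditch's and Rips--Sela's acylindrical accessibility / flexible-vertex classification for cyclic JSJ decompositions. The statement is that, for a torsion-free group that is one-ended relative to $\mathcal{H}$, one can choose the cyclic JSJ tree so that every flexible vertex is a QH vertex whose underlying surface is compact hyperbolic with boundary, with all incident edge groups and all intersections with peripheral subgroups $H \in \mathcal{H}$ carried by boundary subgroups. Thus each vertex $v$ is either rigid (elliptic in every $(\Gamma,\mathcal{H})$-splitting, case~(4c)), or flexible and hence QH (case~(4b)), or has infinite cyclic stabiliser coming from an edge group that happens to be a vertex group after the tree is put in reduced/normalised form (case~(4a)). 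The remark that a QH vertex stabiliser is infinitely-ended when $T^{\mathcal{H}}_{JSJ}$ is nondegenerate is then immediate from the fact that the surface $\Sigma_v$ has nonempty boundary, so $\pi_1(\Sigma_v)$ is a nontrivial free group or a free product, as already noted after the definition of QH vertex.

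The main obstacle I anticipate is \textbf{not} the existence of the JSJ — that is purely a citation — but rather checking that the \emph{relative} setting (splittings required to make each $H_i$ elliptic, and QH surfaces required to carry $H_i \cap \Gamma_v$ in boundary subgroups) is genuinely covered by the cited results, and that the absence of torsion lets one rule out dihedral-type or other degenerate flexible vertices so that the flexible = QH dichotomy is clean. I would therefore spend the bulk of the argument verifying the hypotheses of \cite[Theorems~2.20,~6.2]{guirardel2016jsj}: that $\mathcal{A}$ (cyclic subgroups) is closed under conjugation and taking subgroups (clear), that it is ``sufficiently stable'' in the sense required there, and that relative one-endedness as defined in the excerpt matches the hypothesis ``one-ended relative to $\mathcal{H}$ and $\mathcal{A}$'' used in \emph{loc.\ cit.} Once these bookkeeping points are in place, items~(1)--(4) drop out directly, with the possibility that one collapses a few orbits of edges to reach the normal form in which the three cases of~(4) are exhaustive and the edge groups are infinite cyclic.
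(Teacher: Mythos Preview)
Your proposal is correct and matches the paper's approach exactly: the paper does not supply its own proof of this statement at all, but simply records it as a known result attributed to \cite[Theorems~2.20,~6.2]{guirardel2016jsj}. Your outline --- invoking the general existence of a JSJ tree over cyclic subgroups relative to $\mathcal{H}$ for items~(1)--(3), using relative one-endedness to force edge groups to be infinite cyclic, and citing the flexible-vertex classification for item~(4) --- is precisely the content packaged into those two citations, so there is nothing further to compare.
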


A splitting $T_{JSJ}^{\mathcal{H}}$ given by Theorem~\ref{thm:JSJ} is called a \emph{JSJ $(\Gamma,\mathcal{H})$-splitting}. In general, a JSJ $(\Gamma,\mathcal{H})$-splitting is not unique. However, if $T$ and $T'$ are two distinct JSJ $(\Gamma,\mathcal{H})$-splittings, they have the same \emph{tree of cylinders}, as defined by Guirardel--Levitt~\cite{GuirardelLevittCylinders2011}. Let $T$ be a $\Gamma$-cyclic splitting. Let $\mathcal{E}$ be the equivalence relation on the edges of $T$ generated by $e \sim e'$ if and only if $\Gamma_e$ and $\Gamma_{e'}$ have a common finite index subgroup. The equivalence relation $\mathcal{E}$ is an \emph{admissible relation} in the sense of \cite[Definition~3.1]{GuirardelLevittCylinders2011}. A \emph{cylinder of $T$} is a subforest of $T$ consisting of all the edges in a given equivalence class.  By~\cite[Lemma~4.1]{GuirardelLevittCylinders2011}, a cylinder is a subtree of $T$. 

\begin{defi}[Tree of cylinders]
Let $T$ be a $\Gamma$-cyclic splitting. The \emph{tree of cylinders $T_c$ of $T$} is the bipartite tree $VT_c=V_0 \amalg V_1$ defined as follows:
\begin{enumerate}
\item there is one vertex in $V_0$ for every vertex of $T$ which is not contained in at most one cylinder;
\item there is one vertex in $V_1$ for every cylinder of $T$;
\item there exists an edge $(x,C)$ between $x \in V_0$ and $C \in V_1$ if and only if $x$ (considered as a vertex of $T$) is contained in $C$ (considered as a subtree of $T$).
\end{enumerate}
\end{defi}

The tree of cylinders of a $\Gamma$-cyclic splitting is naturally equipped with a $\Gamma$-action, which turns it into a $\Gamma$-splitting. However, beware that, in general, the tree of cylinders of a $\Gamma$-cyclic splitting is not necessarily a $\Gamma$-cyclic splitting. Indeed, edge stabilisers of a tree of cylinders are not necessarily cyclic.

The advantage of working with trees of cylinders instead of the JSJ $\Gamma$-splittings comes from the fact that it is canonical in the following sense.

\begin{prop}[{\cite[Corollary~4.10]{GuirardelLevittCylinders2011}}]\label{prop:treecylinderscanonical}
Let $\Gamma$ be a finitely presented torsion free group and let $\mathcal{H}$ be a finite set of conjugacy classes of finitely generated subgroups of $\Gamma$. Suppose that $\Gamma$ is one-ended relative to $\mathcal{H}$. 

If $T_1$ and $T_2$ are two JSJ $(\Gamma,\mathcal{H})$-splittings, their trees of cylinders are $\Gamma$-equivariantly isomorphic. In particular, if $T$ is a JSJ $(\Gamma,\mathcal{H})$-splitting, then its tree of cylinders $T_c$ is $\Aut(\Gamma,\mathcal{H})$-invariant.
\end{prop}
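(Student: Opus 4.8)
The plan is to deduce the statement directly from the general theory of trees of cylinders developed in \cite{GuirardelLevittCylinders2011}, together with the uniqueness features of JSJ decompositions. The key point is that the tree of cylinders only depends on the deformation space of the splitting, not on the particular splitting chosen within it, provided the admissible equivalence relation is invariant under the relevant symmetries. So the argument splits into two halves: first, showing that any two JSJ $(\Gamma,\mathcal{H})$-splittings lie in the same deformation space and hence have isomorphic trees of cylinders; second, promoting this to $\Aut(\Gamma,\mathcal{H})$-invariance.

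For the first half, I would argue as follows. By Theorem~\ref{thm:JSJ}, a JSJ $(\Gamma,\mathcal{H})$-splitting $T$ is universally elliptic, and moreover its vertex stabilisers are elliptic in every universally elliptic $(\Gamma,\mathcal{H})$-cyclic splitting. If $T_1$ and $T_2$ are two such JSJ splittings, then every vertex stabiliser of $T_1$ is elliptic in $T_2$ (since $T_2$ is universally elliptic and $T_1$'s vertex stabilisers are universally elliptic), and symmetrically every vertex stabiliser of $T_2$ is elliptic in $T_1$. Hence $T_1$ and $T_2$ dominate each other, i.e. they lie in the same deformation space of $(\Gamma,\mathcal{H})$-cyclic splittings. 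Now the admissible equivalence relation $\mathcal{E}$ (commensurability of edge stabilisers) is the same on all cyclic splittings, and by \cite[Corollary~4.10]{GuirardelLevittCylinders2011} the tree of cylinders is an invariant of the deformation space. Therefore the trees of cylinders of $T_1$ and $T_2$ are $\Gamma$-equivariantly isomorphic.

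For the second half, fix a JSJ $(\Gamma,\mathcal{H})$-splitting $T$ and let $\alpha \in \Aut(\Gamma,\mathcal{H})$. Precomposing the $\Gamma$-action on $T$ by $\alpha$ produces a new $\Gamma$-tree $\alpha\cdot T$; because $\alpha$ preserves $\mathcal{H}$ and the class of cyclic subgroups, $\alpha \cdot T$ is again a $(\Gamma,\mathcal{H})$-cyclic splitting, and since all the defining properties in Theorem~\ref{thm:JSJ} (universal ellipticity, cyclic edge stabilisers, maximality of vertex stabilisers among universally elliptic splittings) are preserved by an automorphism, $\alpha \cdot T$ is again a JSJ $(\Gamma,\mathcal{H})$-splitting. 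By the first half, $(\alpha \cdot T)_c$ is $\Gamma$-equivariantly isomorphic to $T_c$. On the other hand, the tree of cylinders construction is natural, so $(\alpha \cdot T)_c = \alpha \cdot (T_c)$. Combining these, $\alpha \cdot T_c$ is $\Gamma$-equivariantly isomorphic to $T_c$, which is exactly the statement that $T_c$ is $\Aut(\Gamma,\mathcal{H})$-invariant.

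I expect the only genuinely delicate point to be checking that $\alpha \cdot T$ is again a \emph{JSJ} splitting and not merely a cyclic splitting in the same deformation space --- specifically, verifying that the "maximality" property (vertex stabilisers elliptic in every universally elliptic splitting) transports under $\alpha$. This is essentially formal once one notes that $\alpha$ induces a bijection of the set of all $(\Gamma,\mathcal{H})$-cyclic splittings preserving universal ellipticity and the domination order, but it is the step that requires care; everything else is a direct citation of \cite{GuirardelLevittCylinders2011} and Theorem~\ref{thm:JSJ}. Note also that one does not need uniqueness of the JSJ splitting itself (which fails in general), only that all JSJ splittings share a deformation space, which is why the tree of cylinders is the right canonical object to extract.
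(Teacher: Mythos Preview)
Your proposal is correct and matches the approach underlying the cited result: the paper does not give its own proof of this proposition but simply attributes it to \cite[Corollary~4.10]{GuirardelLevittCylinders2011}, whose argument is precisely that JSJ splittings lie in a common deformation space and the tree of cylinders is a deformation-space invariant, with $\Aut(\Gamma,\mathcal{H})$-invariance following by naturality. Your unpacking of these two steps, including the observation that $\alpha\cdot T$ is again a JSJ splitting because $\alpha$ induces a bijection on the poset of $(\Gamma,\mathcal{H})$-cyclic splittings preserving universal ellipticity and domination, is exactly the content behind the citation.
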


Let $T_1$, $T_2$ be two $\Gamma$-splittings. We say that $T_1$ \emph{collapes onto $T_2$} if $T_2$ is obtained from $T_1$ by collapsing some orbits of edges. In that case, we say that $T_1$ is a \emph{refinement of $T_2$}. Two trees $T_1$ and $T_2$ are \emph{compatible} if they have a common refinement. The following lemma shows that the tree of cylinders of a JSJ $\Gamma$-splitting is compatible with every $\Gamma$-cyclic splitting.

\begin{lem}\label{lem:treecylinderscompatible}
Let $\Gamma$ be a finitely presented torsion free group and let $\mathcal{H}$ be a finite set of conjugacy classes of finitely generated subgroups of $\Gamma$. Suppose that $\Gamma$ is one-ended relative to $\mathcal{H}$ and let $T$ be a JSJ $(\Gamma,\mathcal{H})$-splitting.

Then the tree of cylinders $T_c$ of $T$ is compatible with every $(\Gamma,\mathcal{H})$-cyclic splitting. Moreover, if all vertex stabilisers of $T$ are rigid, a common refinement is obtained by equivariantly blowing-up trees at vertices of $T_c$ corresponding to cylinder subtrees of $T$.
\end{lem}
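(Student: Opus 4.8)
The plan is to deduce the compatibility statement from the general compatibility theory for trees of cylinders developed by Guirardel--Levitt, and then to analyze the shape of the common refinement in the rigid case using the universal ellipticity of the JSJ splitting. First I would recall that, by Theorem~\ref{thm:JSJ}, the JSJ $(\Gamma,\mathcal{H})$-splitting $T$ is universally elliptic, so every edge stabiliser of $T$ is elliptic in every $(\Gamma,\mathcal{H};\mathcal{A})$-splitting, in particular in every $(\Gamma,\mathcal{H})$-cyclic splitting $U$. The admissible equivalence relation $\mathcal{E}$ on edges of $T$ defined by commensurability of edge stabilisers (as recalled before the definition of the tree of cylinders) is the one used to build $T_c$. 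The key input is that the tree of cylinders only depends on the \emph{deformation space} of $T$ among cyclic splittings together with the admissible relation, and that for a universally elliptic cyclic splitting this deformation space contains all cyclic splittings of $\Gamma$ relative to $\mathcal{H}$ in a suitable sense; more precisely, I would invoke the compatibility results of \cite{GuirardelLevittCylinders2011} (specifically the statements that the tree of cylinders $T_c$ is compatible with every tree $U$ in the cylinder deformation space, and that universal ellipticity guarantees $T$, hence $T_c$, lies below every cyclic splitting in the domination order). From this, $T_c$ and any $(\Gamma,\mathcal{H})$-cyclic splitting $U$ admit a common refinement $\widehat{T}$.

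For the second assertion I would argue as follows. Assume all vertex stabilisers of $T$ are rigid, i.e. elliptic in every $(\Gamma,\mathcal{H};\mathcal{A})$-cyclic splitting. Let $U$ be a $(\Gamma,\mathcal{H})$-cyclic splitting and let $\widehat{T}$ be a common refinement of $T_c$ and $U$, which I would take to be minimal (a standard reduction: collapse any orbit of edges of $\widehat{T}$ that projects to a point in both $T_c$ and $U$). Recall that $VT_c = V_0 \amalg V_1$ where the vertices in $V_0$ correspond to vertices of $T$ and those in $V_1$ to cylinders. Since every vertex group $\Gamma_v$ with $v \in V_0$ is rigid, it is elliptic in $U$; and since the collapse map $\widehat{T} \to T_c$ restricted to the preimage of a $V_0$-vertex is a subtree on which $\Gamma_v$ acts, rigidity forces that subtree to be a point — equivalently, no edge of $\widehat{T}$ that survives the collapse to $T_c$ can be "attached at" a $V_0$-vertex in a way creating a nontrivial splitting of $\Gamma_v$. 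Hence all the blow-up activity in passing from $T_c$ to $\widehat{T}$ is concentrated at the vertices of $V_1$, i.e. at the cylinder vertices, which is exactly the asserted form of the common refinement. I would phrase this precisely by saying that $\widehat{T}$ is obtained from $T_c$ by equivariantly replacing each cylinder vertex $C$ by a tree $Y_C$ (the minimal $\Gamma_C$-subtree induced by $U$, with the edges of $T_c$ adjacent to $C$ reattached at the appropriate vertices of $Y_C$), while leaving the $V_0$-vertices untouched.

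The main obstacle I anticipate is the second part: carefully justifying that the refinement can be taken to blow up \emph{only} at cylinder vertices. The subtlety is that a priori a minimal common refinement $\widehat{T}$ could, in principle, subdivide or modify edges incident to a $V_0$-vertex; ruling this out requires combining (i) rigidity of $\Gamma_v$ for $v \in V_0$, which gives ellipticity of $\Gamma_v$ in $U$ hence in $\widehat{T}$, with (ii) the fact that in $T_c$ the $V_0$-vertex $v$ is adjacent only to cylinder vertices, so that any edge of $\widehat{T}$ mapping to an edge of $T_c$ at $v$ could be "pushed" into the adjacent cylinder's blow-up without changing the tree up to the appropriate equivalence. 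The clean way to handle this is to appeal directly to the fine structure theorems in \cite{GuirardelLevittCylinders2011} on refinements of trees of cylinders — in particular their description of the common refinement when the reference tree is universally elliptic — rather than re-deriving it by hand; the role of the hypothesis "all vertex stabilisers of $T$ are rigid" is precisely to kill the only other possible location (the QH and cyclic vertices) where blow-ups could occur, leaving the cylinder vertices as the sole source.
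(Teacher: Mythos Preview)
Your proposal is essentially correct and follows the same approach as the paper: both the compatibility statement and the structure of the refinement are deferred to the results of Guirardel--Levitt in \cite{GuirardelLevittCylinders2011}. The paper's proof is terser: it cites \cite[Corollary~8.4]{GuirardelLevittCylinders2011} directly for compatibility, and for the second assertion observes that ``all vertex stabilisers of $T$ rigid'' is exactly the statement that $T$ \emph{dominates} every $(\Gamma,\mathcal{H})$-cyclic splitting, which is the hypothesis under which the construction in the proof of \cite[Proposition~8.1]{GuirardelLevittCylinders2011} yields the desired form of the refinement. Your hands-on argument (rigid $V_0$-vertices are elliptic in $U$, so their preimages in a minimal common refinement are points) is the content of that domination statement unpacked; the paper just names it and cites the construction. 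One minor imprecision to clean up: your sentence ``universal ellipticity guarantees $T$, hence $T_c$, lies below every cyclic splitting in the domination order'' has the direction reversed---universal ellipticity of edge groups does not by itself give domination in either direction; the compatibility of $T_c$ with arbitrary cyclic splittings is a genuinely nontrivial feature of the tree-of-cylinders construction, which is why a citation is needed.
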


\begin{proof}
The first assertion follows from~\cite[Corollary~8.4]{GuirardelLevittCylinders2011}. Even though the statement only covers the case $\mathcal{H}=\varnothing$, the proof is identical in the relative setting. 

If all vertex stabilisers of $T$ are rigid, then, in the language of Guirardel--Levitt, $T$ \emph{dominates} every $(\Gamma,\mathcal{H})$-cyclic splitting. Therefore, the last assertion follows from the construction of the compatible tree in the proof of~\cite[Proposition~8.1]{GuirardelLevittCylinders2011}. 
\end{proof}

\subsubsection{The relatively hyperbolic construction}

Recall the definition of a relatively hyperbolic group from for instance~\cite{Bowditch2012}. Let $\Gamma$ be a finitely presented torsion free group. Suppose that $\Gamma$ is hyperbolic relative to a set $\mathcal{P}=\{[P_1],\ldots,[P_k]\}$ of conjugacy classes of finitely generated subgroups. Let $H_1,\ldots,H_n \subseteq \Gamma$ be finitely generated subgroups and let $\mathcal{H}=\{[H_1],\ldots, [H_n]\}$.

\begin{defi}[Elementary subgroups, splittings]

\begin{enumerate}
\item A subgroup of $\Gamma$ is \emph{elementary} if it is cyclic or conjugate into one of the groups $P_i$ with $i \in \{1,\ldots,k\}$.

\item A \emph{$(\Gamma,\mathcal{P} \cup \mathcal{H})$-elementary splitting} is a $(\Gamma,\mathcal{P} \cup \mathcal{H};\mathcal{A})$-splitting, where $\mathcal{A}$ is the family of elementary subgroups.
\end{enumerate}
\end{defi}

The advantage of working with relatively hyperbolic groups is that one can construct a \emph{canonical elementary splitting}, as explained in the following theorem. Recall that a $\Gamma$-splitting $S$ is \emph{$K$-acylindrical} for some $K>0$ if the $\Gamma$-stabiliser of any arc in $S$ of length at least $K+1$ is finite.

\begin{theo}[{\cite{GuirardelLevitt2015,guirardel2016jsj}}]\label{thm:JSJrelhyp}
Let $\Gamma$ be a torsion free finitely generated hyperbolic group relative to $\mathcal{P}$ and let $\mathcal{H}$ be a finite set of conjugacy classes of finitely generated subgroups of $\Gamma$. Suppose that $\Gamma$ is one-ended relative to $\mathcal{P}\cup \mathcal{H}$. There exists a $(\Gamma,\mathcal{P}\cup \mathcal{H})$-elementary splitting $T_{\mathcal{P}\cup \mathcal{H}}$ with the following properties.
\begin{enumerate}
\item \cite[Theorem~9.18]{guirardel2016jsj} Edge stabilisers are infinite elementary. Moreover, every edge has one endpoint with elementary stabiliser and one endpoint with nonelementary stabiliser.
\item \cite[Proposition~5.13]{GuirardelLevittCylinders2011} The action of $\Gamma$ on $T_{\mathcal{P}\cup \mathcal{H}}$ is $2$-acylindrical.
\item Any elementary subgroup fixes at most one vertex with elementary stabiliser.
\item \cite[Lemma~3.7]{GuirardelLevitt2015} Edge and vertex stabilisers are finitely generated.
\item \cite[Theorem~9.18]{guirardel2016jsj} If $v \in VT_{\mathcal{P}\cup \mathcal{H}}$, one of the following hold:
\begin{enumerate}
\item the group $\Gamma_v$ is two-ended;
\item the conjugacy class of $\Gamma_v$ is contained in $\mathcal{P}$;
\item the vertex $v$ is nonelementary and QH;
\item the vertex $v$ is nonelementary and rigid.

\end{enumerate}
\item \cite[Theorem~9.18]{guirardel2016jsj} The group $\Aut(\Gamma,\mathcal{P}, \mathcal{H})$ of automorphisms preserving $\mathcal{P}$ and $\mathcal{H}$ preserves the $\Gamma$-equivariant isometry class of $T_{\mathcal{P}\cup \mathcal{H}}$.
\item \cite[Proposition~4.1]{GuirardelLevitt2015} For every edge $e \in ET_{\mathcal{P}\cup \mathcal{H}}$, the image $$\mathcal{K}( T_{\mathcal{P}\cup \mathcal{H}}) \cap \Out(\Gamma,\mathcal{P},\mathcal{H}^{(t)}) \to \Out(\Gamma_e)$$ is finite.
\item \cite[Proposition~4.1]{GuirardelLevitt2015} For every rigid vertex $v \in VT_{\mathcal{P}\cup \mathcal{H}}$, the image $$\mathcal{K}( T_{\mathcal{P}\cup \mathcal{H}})\cap \Out(\Gamma,\mathcal{P},\mathcal{H}^{(t)}) \to \Out(\Gamma_v)$$ is finite.
\item \cite[Proposition~4.1]{GuirardelLevitt2015} For every QH vertex $v \in VT_{\mathcal{P}\cup \mathcal{H}}$, the image $\mathcal{K}(T_{\mathcal{P}\cup \mathcal{H}}) \to \Out(\Gamma_v)$ is contained in $\mathrm{MCG}(\Gamma_v)$.
\end{enumerate}
\end{theo}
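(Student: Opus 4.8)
The plan is to take $T_{\mathcal{P}\cup\mathcal{H}}$ to be the tree of cylinders of a relative JSJ decomposition of $\Gamma$ over elementary subgroups, and then to obtain each item from the cited references after checking that their hypotheses hold in this relative setting. Concretely, I would first apply the JSJ machinery of~\cite{guirardel2016jsj} in the class of elementary subgroups: since $\Gamma$ is torsion free and one-ended relative to $\mathcal{P}\cup\mathcal{H}$, this produces a JSJ $(\Gamma,\mathcal{P}\cup\mathcal{H})$-elementary splitting $T_{JSJ}$ whose edge groups are infinite elementary and whose flexible vertices are QH, by the accessibility results of~\cite[Chapter~9]{guirardel2016jsj}. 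I would then check that the commensuration relation on the edges of $T_{JSJ}$, where $e \sim e'$ whenever $\Gamma_e$ and $\Gamma_{e'}$ are commensurable, is admissible in the sense of~\cite[Definition~3.1]{GuirardelLevittCylinders2011} --- this uses the standard fact that in a relatively hyperbolic group every infinite elementary subgroup is contained in a unique maximal elementary subgroup, which coincides with its own commensurator --- and then set $T_{\mathcal{P}\cup\mathcal{H}}$ to be the tree of cylinders of $T_{JSJ}$, which is again a $(\Gamma,\mathcal{P}\cup\mathcal{H})$-splitting.

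Granting this construction, Items~$(1)$ and~$(5)$ describe the cylinder and non-cylinder vertices of $T_{\mathcal{P}\cup\mathcal{H}}$ and follow from~\cite[Theorem~9.18]{guirardel2016jsj}; Item~$(2)$ is~\cite[Proposition~5.13]{GuirardelLevittCylinders2011}; and Item~$(4)$ is~\cite[Lemma~3.7]{GuirardelLevitt2015}. For Item~$(6)$, I would use that the tree of cylinders depends only on the deformation space of $T_{JSJ}$, together with the canonicity of the relative JSJ deformation space, to conclude via~\cite[Corollary~4.10]{GuirardelLevittCylinders2011} that the $\Gamma$-equivariant isometry class of $T_{\mathcal{P}\cup\mathcal{H}}$ is invariant under $\Aut(\Gamma,\mathcal{P},\mathcal{H})$. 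Items~$(7)$, $(8)$ and~$(9)$ are then exactly~\cite[Proposition~4.1]{GuirardelLevitt2015}, whose hypotheses --- a $2$-acylindrical, $\Aut(\Gamma,\mathcal{P},\mathcal{H})$-invariant elementary splitting with the edge- and vertex-stabiliser structure of Items~$(1)$ and~$(5)$ --- are precisely what the earlier items provide.

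The only item not handed to us directly by a citation is Item~$(3)$, and this, together with the relative-setting checks, is where I expect the real work to lie. By Item~$(1)$ the tree $T_{\mathcal{P}\cup\mathcal{H}}$ is bipartite with the elementary vertices on one side, so two distinct elementary vertices fixed by the same subgroup lie at distance at least $2$; combining $2$-acylindricity (Item~$(2)$) with the structure of maximal elementary subgroups rules this out for an infinite elementary subgroup $E$, since in the remaining distance-$2$ case $E$ would be contained in the commensurators of two edge groups of $T_{JSJ}$ lying in distinct cylinders, these commensurators are maximal elementary subgroups, two distinct maximal elementary subgroups of a torsion free relatively hyperbolic group intersect trivially, and a short further analysis --- using that a full peripheral subgroup fixes a unique vertex of $T_{JSJ}$ --- then forces the two cylinders, hence the two elementary vertices, to coincide, a contradiction. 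The main obstacle overall is thus organisational: confirming that one-endedness relative to $\mathcal{P}\cup\mathcal{H}$ and the admissibility of the commensuration relation genuinely allow the constructions of~\cite{guirardel2016jsj,GuirardelLevittCylinders2011,GuirardelLevitt2015} to run verbatim in the presence of the auxiliary family $\mathcal{H}$, and that canonicity of the JSJ survives its addition; granting this, the proof is the assembly above together with the short argument for Item~$(3)$.
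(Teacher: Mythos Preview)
Your overall strategy---take $T_{\mathcal{P}\cup\mathcal{H}}$ to be the tree of cylinders of a relative JSJ elementary splitting and read off the items from the cited references---is exactly the one underlying the paper's presentation, which simply cites each item and then explains only Item~$(3)$ in a remark.

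There is, however, a genuine slip in your construction: the admissible equivalence relation on edges should be \emph{coelementarity} (two edge groups are equivalent when they lie in a common maximal elementary subgroup), not commensurability. These agree when all elementary subgroups are virtually cyclic, but in the general toral setting a parabolic $\ZZ^n$ with $n\geq 2$ can contain edge groups that are coelementary yet not commensurable. With your relation, a single maximal parabolic could spawn several cylinder vertices, and then Item~$(1)$ (the bipartite structure with one elementary endpoint per edge) and Item~$(3)$ would fail. The paper's remark makes this explicit by citing \cite[Proposition~6.1]{GuirardelLevittCylinders2011}, which sets up the coelementarity relation; once you use it, cylinder stabilisers are precisely the maximal elementary subgroups.

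This also simplifies your argument for Item~$(3)$ considerably. With the correct relation, the vertices of $T_{\mathcal{P}\cup\mathcal{H}}$ with elementary stabiliser are exactly the cylinder vertices, cylinders are in bijection with maximal elementary subgroups, and any nontrivial elementary subgroup is contained in a unique maximal elementary one---hence preserves a unique cylinder. That is the paper's entire argument; no appeal to $2$-acylindricity or distance-$2$ paths is needed.
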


The splitting $T_{\mathcal{P}\cup \mathcal{H}}$ given by Theorem~\ref{thm:JSJrelhyp} is called the \emph{JSJ $(\Gamma,\mathcal{P}\cup \mathcal{H})$-elementary splitting}.

\begin{rmq}
Theorem~\ref{thm:JSJrelhyp}$(3)$ follows from the following facts. First, the tree $T_{\mathcal{P} \cup \mathcal{H}}$ is a tree of cylinders for the equivalence relation of coelementarity~\cite[Proposition~6.1]{GuirardelLevittCylinders2011}. Moreover, vertices of $T_{\mathcal{P} \cup \mathcal{H}}$ with elementary stabilisers correspond to vertices associated with cylinders. Finally, any elementary subgroup preserves at most one cylinder.
\end{rmq}

\begin{rmq}\label{Rmk:3puncturerigid}
If $v \in VT_{\mathcal{P}\cup \mathcal{H}}$ is a QH vertex such that $\Sigma_v$ is isomorphic to a three-punctured sphere, then $\mathrm{MCG}(\Gamma_v)$ is finite. This is why, in the rest of the paper, we will assume that, if $v \in VT_{\mathcal{P}\cup \mathcal{H}}$ is a QH vertex such that $\Sigma_v$ is isomorphic to a three-punctured sphere, then $v$ is a rigid vertex. This is coherent with Theorem~\ref{thm:JSJrelhyp}$(8)$ and the assumptions of Guirardel--Levitt in~\cite[Remark~3.6]{GuirardelLevitt2015}.
\end{rmq}

We record an immediate consequence of Theorem~\ref{thm:JSJrelhyp}.

\begin{lem}\label{lem:elemornonelem}
Let $\Gamma$ be a torsion free finitely generated hyperbolic group relative to $\mathcal{P}$ and let $\mathcal{H}$ be a finite set of finitely generated subgroups of $\Gamma$. Suppose that $\Gamma$ is one-ended relative to $\mathcal{P}\cup \mathcal{H}$.

Let $H$ be a subgroup of $\Gamma$ which fixes a point in $T_{\mathcal{P}\cup \mathcal{H}}$. Then $H$ fixes a unique elementary vertex of $T_{\mathcal{P}\cup \mathcal{H}}$ or a unique nonelementary vertex of $T_{\mathcal{P}\cup \mathcal{H}}$.
\end{lem}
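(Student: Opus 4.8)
The plan is to unwind the definitions in Theorem~\ref{thm:JSJrelhyp} and use the bipartite nature of the tree $T_{\mathcal{P}\cup \mathcal{H}}$. Recall from item~$(1)$ that every edge of $T_{\mathcal{P}\cup \mathcal{H}}$ has exactly one endpoint with elementary stabiliser and one endpoint with nonelementary stabiliser. Hence $VT_{\mathcal{P}\cup \mathcal{H}}$ splits as $V_{\mathrm{ell}} \amalg V_{\mathrm{nonell}}$, where $V_{\mathrm{ell}}$ (resp. $V_{\mathrm{nonell}}$) consists of the vertices with elementary (resp. nonelementary) stabiliser, and every edge joins a vertex of $V_{\mathrm{ell}}$ to a vertex of $V_{\mathrm{nonell}}$. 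So the tree is bipartite with respect to this partition.

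First I would let $H$ be a subgroup of $\Gamma$ fixing a point of $T_{\mathcal{P}\cup \mathcal{H}}$, and consider the fixed-point set $\Fix(H)$, which is a nonempty subtree of $T_{\mathcal{P}\cup \mathcal{H}}$. I claim $\Fix(H)$ is a single vertex. Indeed, suppose $\Fix(H)$ contains an edge $e$; then $H \subseteq \Gamma_e$, which is infinite elementary by item~$(1)$, so $H$ is elementary. But then, by item~$(3)$, $H$ fixes at most one vertex with elementary stabiliser, hence $\Fix(H)$ contains at most one vertex of $V_{\mathrm{ell}}$. Since $\Fix(H)$ is a subtree of a bipartite tree all of whose vertices lie in $V_{\mathrm{ell}} \cup V_{\mathrm{nonell}}$ with edges only between the two parts, a subtree containing an edge contains at least one vertex of $V_{\mathrm{ell}}$; if it contained two edges it would either contain two vertices of $V_{\mathrm{ell}}$ or a path of length two through a vertex of $V_{\mathrm{nonell}}$ whose two $V_{\mathrm{ell}}$-neighbours lie in $\Fix(H)$ — in both cases two distinct vertices of $V_{\mathrm{ell}}$ in $\Fix(H)$, contradicting item~$(3)$. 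So $\Fix(H)$ is a single edge with its two endpoints; but one of those endpoints lies in $V_{\mathrm{ell}}$, and this is the unique elementary vertex fixed, consistent with the claim that $H$ fixes a unique elementary vertex. More carefully: if $\Fix(H)$ is exactly one closed edge $[x,y]$ with $x \in V_{\mathrm{ell}}$, then $H$ fixes the unique elementary vertex $x$ — which is the conclusion.

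To handle the case $\Fix(H)$ has empty interior (contains no edge), I observe that then $\Fix(H)$ is a single vertex $v$, which lies in $V_{\mathrm{ell}}$ or in $V_{\mathrm{nonell}}$, and in either case $H$ fixes a unique elementary or a unique nonelementary vertex. Combining: in all cases, either $H$ fixes a unique vertex of $V_{\mathrm{ell}}$, or $\Fix(H)$ contains no $V_{\mathrm{ell}}$-vertex at all, in which case (being a subtree of the bipartite tree with no edges, since any edge forces a $V_{\mathrm{ell}}$-endpoint) it is a single vertex of $V_{\mathrm{nonell}}$, i.e.\ $H$ fixes a unique nonelementary vertex. This gives the dichotomy. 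The only slightly delicate point — and the one I would write out most carefully — is verifying that $\Fix(H)$ cannot simultaneously contain a $V_{\mathrm{ell}}$-vertex and a $V_{\mathrm{nonell}}$-vertex-with-more-than-one-$V_{\mathrm{ell}}$-neighbour, but this is exactly ruled out by item~$(3)$ (an elementary $H$ fixes at most one elementary vertex) together with the fact that, in a bipartite tree, any two distinct $V_{\mathrm{ell}}$-vertices of a subtree force the subtree to contain a connecting path, hence two $V_{\mathrm{ell}}$-vertices in $\Fix(H)$ is a contradiction. So the main (mild) obstacle is just bookkeeping the bipartite structure; no deep input beyond Theorem~\ref{thm:JSJrelhyp} is needed.

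\begin{proof}
Write $VT_{\mathcal{P}\cup \mathcal{H}} = V_{\mathrm{ell}} \amalg V_{\mathrm{nonell}}$ where $V_{\mathrm{ell}}$ (resp. $V_{\mathrm{nonell}}$) is the set of vertices with elementary (resp. nonelementary) stabiliser. By Theorem~\ref{thm:JSJrelhyp}$(1)$, every edge of $T_{\mathcal{P}\cup \mathcal{H}}$ has one endpoint in $V_{\mathrm{ell}}$ and one in $V_{\mathrm{nonell}}$; in particular $T_{\mathcal{P}\cup \mathcal{H}}$ is bipartite with respect to this partition.

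Let $H$ be a subgroup of $\Gamma$ fixing a point of $T_{\mathcal{P}\cup \mathcal{H}}$ and let $Y = \Fix(H) \subseteq T_{\mathcal{P}\cup \mathcal{H}}$ be its fixed-point set, a nonempty subtree.

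Suppose first that $Y$ contains at least one vertex of $V_{\mathrm{ell}}$. We claim it contains exactly one. Otherwise $Y$ contains two distinct vertices $x, x' \in V_{\mathrm{ell}}$, and since $Y$ is a subtree it contains the geodesic $[x,x']$, which has positive length; let $e$ be the first edge of this geodesic. Then $H$ fixes $e$, so $H \subseteq \Gamma_e$, which is infinite elementary by Theorem~\ref{thm:JSJrelhyp}$(1)$; hence $H$ is elementary. But then Theorem~\ref{thm:JSJrelhyp}$(3)$ says $H$ fixes at most one vertex with elementary stabiliser, contradicting $x \neq x'$. So $Y$ contains a unique vertex of $V_{\mathrm{ell}}$, and $H$ fixes a unique elementary vertex of $T_{\mathcal{P}\cup \mathcal{H}}$.

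Suppose now that $Y$ contains no vertex of $V_{\mathrm{ell}}$. Since every edge of $T_{\mathcal{P}\cup \mathcal{H}}$ has an endpoint in $V_{\mathrm{ell}}$, the subtree $Y$ contains no edge, hence is a single vertex $v$, which necessarily lies in $V_{\mathrm{nonell}}$. Thus $H$ fixes a unique nonelementary vertex of $T_{\mathcal{P}\cup \mathcal{H}}$.

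In both cases the conclusion holds.
\end{proof}
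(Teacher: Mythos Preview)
Your proof is correct. The paper's own proof invokes Theorem~\ref{thm:JSJrelhyp}$(2)$ ($2$-acylindricality) together with item~$(1)$, whereas you use item~$(3)$ together with item~$(1)$; both routes work and yield the same dichotomy with essentially the same amount of effort. Your argument is arguably a bit more direct, since item~$(3)$ immediately bounds the number of elementary vertices in $\Fix(H)$ without passing through a diameter estimate, while the acylindricality route requires first observing that $\Fix(H)$ has diameter at most~$2$ and then sorting out the possible shapes. Either way the bipartite structure from item~$(1)$ does the remaining work. (Both arguments tacitly assume $H$ is nontrivial, which is how the lemma is used throughout the paper.)
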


\begin{proof}
This follows from the fact that the action of $\Gamma$ on $T_{\mathcal{P}\cup \mathcal{H}}$ is $2$-acylindrical (Theorem~\ref{thm:JSJrelhyp}$(2)$) together with the fact that every edge has an elementary endpoint and a nonelementary endpoint (Theorem~\ref{thm:JSJrelhyp}$(1)$).
\end{proof}

\subsection{The outer space of a free product}

Let $(G,\mathcal{G})$ be a free product of groups. In this section, following Guirardel--Levitt~\cite{Guirardel} (see~\cite{Vogtmann1986} for the case $(G,\mathcal{G})=(F_N,\varnothing)$), we define a simplicial graph on which $\Out(G,\mathcal{G})$ acts cocompactly, called the \emph{spine of outer space of $(G,\mathcal{G})$}. An important property that we will use is that any finite subgroup of $\Out(G,\mathcal{G})$ fixes a point in it.

\begin{defi}[Grushko free splitting]
A \emph{Grushko $(G,\mathcal{G})$-free splitting} is a $(G,\mathcal{G})$-free splitting $T$ such that, for every $v \in VT$, either $G_v$ is trivial or $[G_v] \in \mathcal{G}$.
\end{defi}

\begin{defi}[{\cite[Spine of outer space]{Guirardel}}]
Let $(G,\mathcal{G})$ be a free product. The \emph{spine of outer space of $(G,\mathcal{G})$}, denoted by $\mathcal{O}(G,\mathcal{G})$, is the graph whose vertices are the Grushko $(G,\mathcal{G})$-free splittings, two such splittings $S$ and $T$ being adjacent if $S$ collapses onto $T$ or conversely. 
\end{defi}

The group $\Aut(G,\mathcal{G})$ acts on $\mathcal{O}(G,\mathcal{G})$ by precomposition of the action. Since the group $\Inn(G)$ acts trivially on $\mathcal{O}(G,\mathcal{G})$, this action passes to the quotient to give an action of $\Out(G,\mathcal{G})$ on $\mathcal{O}(G,\mathcal{G})$. 

We now recall a theorem, due to Hensel--Kielak~\cite{hensel2018}, describing the action of finite subgroups of $\Out(G,\mathcal{G})$ on $\mathcal{O}(G,\mathcal{G})$.

\begin{theo}[{\cite[Corollary~6.1]{hensel2018}}]\label{thm:fixedpointouterspace}
Let $(G,\mathcal{G})$ be a free product of groups (recall that $G$ is finitely generated) and let $H$ be a finite subgroup of $\Out(G,\mathcal{G})$. Then $H$ fixes a point of $\mathcal{O}(G,\mathcal{G})$.
\end{theo}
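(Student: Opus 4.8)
The plan is to reduce the statement to the classical result about fixed points of finite group actions on simplicial complexes, once we know the right auxiliary space. The theorem to prove is Theorem~\ref{thm:fixedpointouterspace}: a finite subgroup $H \leq \Out(G,\mathcal{G})$ fixes a vertex of the spine $\mathcal{O}(G,\mathcal{G})$. Since $H$ acts on $\mathcal{O}(G,\mathcal{G})$ by simplicial automorphisms, and $\mathcal{O}(G,\mathcal{G})$ is connected, the natural strategy is: (i) show $\mathcal{O}(G,\mathcal{G})$ is contractible, or at least that its geometric realization has a well-behaved $H$-invariant metric (e.g. a $\mathrm{CAT}(0)$ or a combinatorial nonpositive-curvature structure), and then (ii) invoke a fixed-point theorem — either the Bruhat--Tits fixed-point lemma for complete $\mathrm{CAT}(0)$ spaces, or the fact that a finite group acting simplicially on a contractible complex need not fix a point in general, so one really wants a metric argument. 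The cleanest route, and presumably the one intended by the citation to Hensel--Kielak~\cite{hensel2018}, is to use the fact that outer space of a free product deformation-retracts onto a space (or admits a geometry) on which finite groups have fixed points; concretely, Hensel--Kielak prove a Nielsen realization-type statement.

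First I would recall the structure: a Grushko $(G,\mathcal{G})$-free splitting $T$ is encoded, up to equivariant isometry, by a finite graph of groups $G\backslash T$ whose nontrivial vertex groups are exactly representatives of the $[G_i]$, with trivial edge groups, and whose underlying graph has the correct first Betti number so that $\xi(G,\mathcal{G}) = k+N$ is respected. The spine $\mathcal{O}(G,\mathcal{G})$ is the poset (realized as a flag complex, or as the barycentric-type graph described) of such splittings ordered by collapse. An element $\phi \in \Out(G,\mathcal{G})$ acts by precomposition, permuting these splittings while preserving the collapse relation, hence acts simplicially on the geometric realization. Then I would equip $\mathcal{O}(G,\mathcal{G})$ (or rather unprojectivized/projectivized outer space $\mathcal{PO}(G,\mathcal{G})$, whose spine this is) with the structure for which the relevant fixed-point theorem applies: Hensel--Kielak endow outer space of a free product with a natural piecewise-Euclidean or simplicial metric and show it is nonpositively curved in the appropriate combinatorial sense, or more directly they prove directly by a folding/minimization argument that any finite subgroup stabilizes some point.

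The key steps in order: (1) reduce to showing $H$ fixes a point of the full (projectivized) outer space $\mathcal{PO}(G,\mathcal{G})$, using that the spine is an $\Out$-equivariant deformation retract of $\mathcal{PO}(G,\mathcal{G})$ and that a fixed point of the $H$-action on $\mathcal{PO}(G,\mathcal{G})$ can be pushed (equivariantly) into the spine; (2) equip $\mathcal{PO}(G,\mathcal{G})$ with the Hensel--Kielak metric or invoke their theorem that outer space of a free product is an $\Out(G,\mathcal{G})$-cocompact space with contractible fixed-point sets for finite subgroups; (3) apply the fixed-point statement to $H$; (4) translate back: a fixed point of $H$ in the spine is precisely a Grushko free splitting $T$ with $[T]$ fixed by every element of $H$, which is the desired conclusion. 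Since we are permitted to cite the statement as given (it is quoted verbatim from~\cite{hensel2018}), the honest ``proof'' here is simply the citation; but if a self-contained argument is wanted, the substance is steps (2)--(3).

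The hard part will be step (2): establishing (or correctly invoking) the Nielsen realization phenomenon for free products, i.e. that finite subgroups of $\Out(G,\mathcal{G})$ act with fixed points on outer space. This is genuinely nontrivial — it is the content of Hensel--Kielak's paper and relies on a subtle analysis of the geometry of outer space of a free product (either via a $\mathrm{CAT}(0)$-like metric and the Bruhat--Tits argument, or via a direct ``centre of mass''/minimal-displacement argument on the space of trees, taking care that the space is not locally compact). The rest of the proof — the deformation retraction of outer space onto its spine, the equivariance, and the dictionary between fixed points in the spine and invariant Grushko splittings — is routine and can be handled with the standard references (\cite{Guirardel, Vogtmann1986}) without difficulty. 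Given the excerpt simply states the theorem as a black box with attribution, I would present the proof as: ``This is~\cite[Corollary~6.1]{hensel2018}; it follows from the fact that finite subgroups of $\Out(G,\mathcal{G})$ fix a point in outer space $\mathcal{PO}(G,\mathcal{G})$ together with the $\Out(G,\mathcal{G})$-equivariant deformation retraction of $\mathcal{PO}(G,\mathcal{G})$ onto its spine $\mathcal{O}(G,\mathcal{G})$,'' and leave it at that, since reproving Hensel--Kielak is out of scope.
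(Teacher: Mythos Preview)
Your assessment is correct: the paper does not prove this statement at all but simply states it with attribution to~\cite[Corollary~6.1]{hensel2018}, exactly as you propose. Your surrounding discussion of how one would structure an actual proof is reasonable but goes well beyond what the paper does, which is a bare citation.
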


We will also frequently construct $(G,\mathcal{G})$-free factor systems using Grushko $(G,\mathcal{G})$-free splittings, as follows. Let $S$ be a Grushko $(G,\mathcal{G})$-free splitting and let $T$ be a $G$-equivariant subforest of $S$ containing every vertex with nontrivial stabiliser. The quotient $G \backslash T$ is a (not necessarily connected) graph of groups, which identifies with a subgraph of groups of $G \backslash S$. Let $U_1,\ldots,U_t$ be the connected components of $G \backslash T$ (considered as graphs of groups) and let $\mathcal{F}(T)=\{[\pi_1(U_i)]\}_{i=1}^t$, where $[\pi_1(U_i)]$ is the conjugacy class of the fundamental group $\pi_1(U_i)$ seen as a subgroup of $G$. Since $T$ contains every vertex with nontrivial stabiliser, the set $\mathcal{F}(T)$ is a $(G,\mathcal{G})$-free factor system. We call $\mathcal{F}(T)$ the \emph{$(G,\mathcal{G})$-free factor system induced by $T$}.

\subsection{Double boundary of a free product and its laminations}

Let $(G,\mathcal{G})$ be a free product of groups. The group $G$ is hyperbolic relative to the set $\mathcal{G}$. Therefore, one can consider the \emph{Bowditch boundary} $\partial_\infty(G,\mathcal{G})$ of $(G,\mathcal{G})$ (see~\cite{Bowditch2012}). It may be described as follows. Let $S$ be a Grushko $(G,\mathcal{G})$-free splitting. Let $\partial_\infty S$ be the Gromov boundary of $S$ and let $V_\infty(S)$ be the set of vertices of $S$ with infinite valence. Let $\overline{S}=S \cup \partial_\infty S$, equipped with the metric topology. For two distinct points $x,y \in \overline{S}$, the \emph{direction of $x$ at $y$} is the connected component of $\overline{S}\setminus \{y\}$ which contains $x$. The \emph{observers' topology} for $\overline{S}$ (see~\cite{CouHilLus07}) is the topology generated by the sets of directions in $\overline{S}$. The \emph{Bowditch boundary} $\partial_\infty(G,\mathcal{G})$ of $(G,\mathcal{G})$ is the set $\partial_\infty S \cup V_\infty(S)$ equipped with the observers' topology. Its homeomorphism class does not depend on the choice of $S$.

Let $$\partial^2(G,\mathcal{G})=(\partial_\infty(G,\mathcal{G}) \times \partial_\infty(G,\mathcal{G}))-\Delta/\sim $$ be the \emph{double boundary of $(G,\mathcal{G})$}, where $\Delta$ is the diagonal and $\sim$ is the equivalence relation generated by the flip $(x,y) \sim (y,x)$. 
One can geometrically see an element of $\partial^2(G,\mathcal{G})$ as follows. A point $\{x,y\}$ of $\partial^2(G,\mathcal{G})$ is represented by a (not necessarily finite) unoriented geodesic path between the points of $\overline{S}$ representing $x$ and $y$. For this reason, we call an element of $\partial^2(G,\mathcal{G})$ a \emph{line of $(G,\mathcal{G})$}.

Note that the group $G$ naturally acts on $\partial^2(G,\mathcal{G})$. A \emph{lamination} of $(G,\mathcal{G})$ is a closed $G$-invariant subset of $\partial^2(G,\mathcal{G})$. 

Let $A$ be a $(G,\mathcal{G})$-free factor. The inclusion $A \subseteq G$ induces an $A$-equivariant inclusion $\partial_\infty (A,\mathcal{G}_{|A}) \hookrightarrow \partial_\infty (G,\mathcal{G})$ and an $A$-equivariant inclusion $\partial^2(A,\mathcal{G}_{|A}) \hookrightarrow \partial^2(G,\mathcal{G})$ with closed image. We identify $\partial^2(A,\mathcal{G}_{|A})$ with its image in $\partial^2(G,\mathcal{G})$. A $(G,\mathcal{G})$-free factor system $\mathcal{F}$ induces a lamination $\partial^2(\mathcal{F},\mathcal{G})$ of $(G,\mathcal{G})$ by looking at the union of the (disjoint) sets $g\partial^2(A,\mathcal{G}_{|A})$ with $[A] \in \mathcal{F}$ and $g \in G$.

\subsection{Relative train track maps for free products}

Let $(G,\mathcal{G})$ be a free product of groups. Let $\phi \in \Out(G,\mathcal{G})$. In this section, we recall the construction of particular topological representatives of $\phi$, called \emph{relative train tracks}. They encode several dynamical invariants of $\phi$. Relative train tracks were first introduced by Bestvina--Handel~\cite{BesHan92} in the context of automorphisms of free groups and were then extended to the context of free products by Collins--Turner~\cite{ColTur94} (see also the work of Francaviglia--Martino~\cite{FrancaMar2015} and Lyman~\cite{Lyman2022,Lyman2022CT}). 

Let $T$ be a Grushko $(G,\mathcal{G})$-free splitting and let $\Phi \in \Aut(G,\mathcal{G})$. A \emph{morphism $F \colon T \to T$ associated with $\Phi$} is a map sending vertices to vertices, edges of $S$ to nontrivial edge paths and such that, for all $g \in G$ and $x \in T$, we have $$F(gx)=\Phi(g)F(x).$$ 

Let $\gamma$ be a path in $T$. There exists a unique geodesic in $T$ which is homotopic to $\gamma$ relative to its endpoints. This geodesic is denoted by $[\gamma]$ and is called the \emph{reduced path} associated with $\gamma$. 

A \emph{filtration of $T$} is an increasing sequence $$\varnothing=T_0 \subseteq T_1 \subseteq \ldots \subseteq T_n=T$$ of subgraphs which are both $G$ and $F$-invariant. The graphs of the filtration are not necessarily connected.

Let $r \in \{0,\ldots,n\}$. The \emph{$r$-th stratum of $T$}, denoted by $H_r$, is the closure of $T_r-T_{r-1}$. 

To every stratum $H_r$, one can naturally associate a \emph{transition matrix $M_r$} as follows. The matrix $M_r$ is a square matrix indexed by the $G$-orbits of unoriented edges of $H_r$. Let $e,e' \in EH_r$. Then the value at the $(Ge',Ge)$-entry of $M_r$ is equal to the number of times that the path $[F(e)]$ contains edges in $Ge'$. Since $F$ is $\Phi$-equivariant, this does not depend on the choice of a representative of $Ge$. A filtration is \emph{maximal} if every transition matrix is either irreducible or the zero matrix.

Suppose that the transition matrix $M_r$ is irreducible. Associated to $M_r$ is a Perron--Froebenius eigenvalue $\lambda_r \geq 1$. We say that $H_r$ is \emph{exponentially growing (EG)} if $\lambda_r >1$ and \emph{nonexponentially growing (NEG)} otherwise.

Let $v \in VT$. Recall that a \emph{direction at $v$} is a connected component of $T \setminus \{v\}$, or equivalently an oriented edge whose origin is $v$. If $d$ is a direction at $v$, then $F$ sends $d$ to a direction $DF(d)$ at $F(v)$. Thus, the map $F$ induces a map $DF$ of the set of directions of $T$. A \emph{turn} is an unordered pair of directions at a vertex of $T$. A turn $\{d,d'\}$ is \emph{degenerate} if $d=d'$, it is \emph{illegal} if there exists $m \geq 1$ such that $(DF)^m(d)=(DF)^m(d')$, and is \emph{legal} otherwise. 

Let $\gamma=e_1\ldots e_m$ be an oriented geodesic edge path in $T$. Note that, for every $i \in \{1,\ldots,m\}$, the set $\{e_i^{-1},e_{i+1}\}$ is a turn. We say that $\gamma$ is a \emph{legal path} if, for every $i \in \{1,\ldots,m\}$, the turn $\{e_i^{-1},e_{i+1}\}$ is legal.

Let $r \in \{0,\ldots,n\}$. A turn $\{e,e'\}$ is of \emph{height $r$} if $e,e' \in EH_r$. A geodesic edge path $\gamma$ is \emph{$r$-legal} if all its height $r$ turns are legal.

We can now define a relative train track map.

\begin{defi}[Relative train track map]
Let $\Phi \in \Aut(G,\mathcal{G})$, let $F \colon T \to T$ be a morphism associated with $\Phi$ and let $\varnothing=T_0 \subseteq T_1 \subseteq \ldots \subseteq T_n=T$ be a filtration. The map $F$ is a \emph{relative train track} if the filtration is maximal and if, for every EG stratum $H_r$, the following hold.

\begin{enumerate}
\item Directions in $H_r$ are mapped by $DF$ to directions in $H_r$.

\item If $\sigma \subseteq T_{r-1}$ is a nondegenerate geodesic edge path with endpoints in $H_r$, then $[F(\sigma)]$ is a nondegenerate geodesic edge path with endpoints in $H_r$.

\item If $\sigma$ is an $r$-legal geodesic edge path, so is $[F(\sigma)]$. 
\end{enumerate}
\end{defi}

We can now state the main existence theorem of relative train tracks. It appears in several forms in the literature (see~\cite{ColTur94},~\cite[Corollary~8.25]{FrancaMar2015},~\cite[Theorem~A]{Lyman2022}).

\begin{theo}[{\cite{ColTur94,FrancaMar2015,Lyman2022}}]\label{thm:existencereltraintracks}
Let $(G,\mathcal{G})$ be a free product and let $\phi \in \Out(G,\mathcal{G})$. There exists a relative train track map $F \colon T \to T$ associated with an automorphism representing $\phi$. Moreover, if $\phi$ preserves a $(G,\mathcal{G})$-free factor system $\mathcal{F}$, one may choose $F \colon T \to T$ such that there exists an element $T_i$ of the filtration of $T$ with $\mathcal{F}(T_i)=\mathcal{F}$.
\end{theo}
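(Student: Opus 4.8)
The statement is cited from \cite{ColTur94,FrancaMar2015,Lyman2022}, so the plan is to recall the Bestvina--Handel algorithm adapted to graphs of groups; in the paper itself one simply invokes those references, but here is how the argument goes.

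\textbf{Step 1: an initial topological representative realizing $\mathcal{F}$.} First I would produce a Grushko $(G,\mathcal{G})$-free splitting $T$ together with a morphism $F\colon T\to T$ associated with some $\Phi\in\phi$, chosen so that there is a $G$- and $F$-invariant subforest $T_i\subseteq T$ containing every vertex with nontrivial stabiliser with $\mathcal{F}(T_i)=\mathcal{F}$. To do this, take the free splitting $\widehat{S}$ of $G$ whose nontrivial vertex stabilisers are exactly the elements of $\mathcal{F}$ (writing $G=H_1\ast\ldots\ast H_n\ast B$), refine it by equivariantly replacing each vertex carrying $H_j$ with a Grushko $(H_j,\mathcal{G}_{|H_j})$-free splitting, and let $T$ be the resulting Grushko $(G,\mathcal{G})$-free splitting, which collapses onto $\widehat{S}$; set $T_i$ to be the preimage in $T$ of the nontrivial vertices of $\widehat{S}$. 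Since $\phi$ preserves $\mathcal{F}$ it permutes the components of $G\backslash T_i$, so a standard construction of a topological representative (choosing edge-path images componentwise) yields $F$ with $F(T_i)=T_i$.

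\textbf{Step 2: a maximal filtration.} I would extend $\{\varnothing\subseteq T_i\subseteq T\}$ to a maximal filtration by analysing the transition structure on the edges of $T$ above $T_i$: form the directed graph on $G$-orbits of unoriented edges with an arrow $Ge\to Ge'$ whenever $[F(e)]$ crosses $Ge'$, take its strongly connected components, order them in a way compatible with the existing two-step filtration, and declare each component a stratum. This produces strata $H_{i+1},\ldots,H_n$ whose transition matrices are irreducible or zero. Only the EG strata need further work; for NEG and zero strata the relative train track conditions are vacuous.

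\textbf{Step 3: the iteration and its complexity.} If some EG stratum $H_r$ fails one of the three relative train track conditions, I would apply one of the standard moves — subdividing an edge of $H_r$ at a preimage point, tightening $F$ to send edges to reduced paths, collapsing an $F$-invariant forest of pretrivial edges in $T_r$, or folding across an illegal turn of height $r$ — each carried out inside the spine of outer space $\mathcal{O}(G,\mathcal{G})$ so that the result is again a Grushko $(G,\mathcal{G})$-free splitting, and each arranged to leave $T_i$ and the strata below it untouched. One checks that every such move strictly decreases the complexity given by the tuple $(\lambda_n,\lambda_{n-1},\ldots)$ of Perron--Frobenius eigenvalues of the EG strata read from the top stratum down, compared lexicographically, with the total number of edges in EG strata as a tie-breaker. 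As this complexity ranges over a well-ordered set, the process terminates, and the terminal $F\colon T\to T$ is a relative train track map in which $T_i$, hence $\mathcal{F}(T_i)=\mathcal{F}$, has been preserved throughout.

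\textbf{Main obstacle.} The delicate point — and the reason this needs the machinery of Collins--Turner, Francaviglia--Martino and Lyman rather than a verbatim transcription of Bestvina--Handel --- is the folding step near vertices with nontrivial stabiliser $G_i$: folds incident to such a vertex can be only partial and can transfer conjugacy data between the vertex group and the incident edges. One must verify that such folds keep all vertex groups trivial or conjugate into some $G_i$ (so that one stays at a Grushko splitting inside $\mathcal{O}(G,\mathcal{G})$) and still decrease the complexity. This is precisely the analysis of folding paths in the spine of outer space of a free product carried out in the cited works, and for the present paper we take their conclusion as given.
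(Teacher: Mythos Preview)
Your proposal is correct and matches the paper: the paper does not prove this theorem but simply cites \cite{ColTur94,FrancaMar2015,Lyman2022}, exactly as you note in your opening sentence. The sketch you provide of the Bestvina--Handel algorithm adapted to free products is a faithful outline of what those references do, and your identification of the main obstacle (folds near vertices with nontrivial stabiliser) is accurate.
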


\subsection{Relatively fully irreducible outer automorphisms}

Let $(G,\mathcal{G})$ be a free product and let $\phi \in \Out(G,\mathcal{G})$. We say that $\phi$ is \emph{fully irreducible relative to $\mathcal{G}$} if, for every $n>0$, the outer automorphism $\phi^n$ does not preserve any proper $(G,\mathcal{G})$-free factor system distinct from $\mathcal{G}$. The main property we will use regarding fully irreducible outer automorphisms is the existence of an $\RR$-tree equipped with a $(G,\mathcal{G})$-action obtained as a limit of iterates of a relative train track.

\begin{theo}[\cite{FrancaMarSyri2021}]\label{theo:limittreeiwip}
Let $(G,\mathcal{G})$ be a nonsporadic free product, let $\phi \in \Out(G,\mathcal{G})$ be fully irreducible relative to $\mathcal{G}$ and let $F \colon T \to T$ be a relative train track representative of $\phi$. There exist $\lambda>1$ and an $\RR$-tree $X_\phi$ equipped with an action of $G$ by isometries such that
\[X_\phi=\lim_{n \to \infty} \frac{1}{\lambda^n}\phi^nT. \]

The tree $X_\phi$ does not depend on the choice of the relative train track map $F \colon T \to T$ and has the following properties.

\begin{enumerate}
    \item Arc stabilisers in $X_\phi$ are trivial.
    \item There exists a (possibly trivial) cyclic subgroup $H \subseteq G$ such that the set of conjugacy classes of point stabilisers in $X_\phi$ is exactly $\mathcal{G} \cup \{[H]\}$.
\end{enumerate}
\end{theo}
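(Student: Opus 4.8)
The plan is to follow the Bestvina--Handel strategy for extracting a limit $\RR$-tree from an irreducible train track, transported to the free product setting. First I would use full irreducibility to reduce the given relative train track $F\colon T\to T$ to a single stratum: if the filtration had a proper invariant subgraph carrying an edge, or a nontrivial lower stratum, then the induced free factor system (as in Theorem~\ref{thm:existencereltraintracks}) would be a $\phi^n$-invariant proper $(G,\mathcal{G})$-free factor system distinct from $\mathcal{G}$, contradicting the hypothesis. Hence $F$ has a single stratum above the vertex groups in $\mathcal{G}$ and its transition matrix $M$ is a single irreducible matrix; and since $(G,\mathcal{G})$ is nonsporadic this stratum is EG, for a single NEG stratum would force $\phi$ to grow at most linearly and hence to preserve a proper free splitting, again contradicting full irreducibility. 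So $M$ has a Perron--Frobenius eigenvalue $\lambda>1$.

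Next I would metrize $T$ by assigning to each $G$-orbit of edges the corresponding entry of the left Perron--Frobenius eigenvector of $M$ (subdividing so that $F$ is affine on edges); with this metric $F$ stretches every legal edge path by exactly the factor $\lambda$. The relative train track property, combined with the bounded cancellation lemma for morphisms of Grushko $(G,\mathcal{G})$-free splittings, shows that for every $(G,\mathcal{G})$-nonperipheral $g\in G$ the combinatorial axis of $g$ becomes legal after finitely many iterations of $F$, so $\lambda^{-n}\lVert\phi^n(g)\rVert_T$ is eventually non-increasing and converges, while peripheral elements stay elliptic (length $0$) in every $\phi^nT$. This yields a limiting length function $\ell_\infty\colon G\to\RR_{\geq 0}$, which by the Culler--Morgan--Paulin compactness and length-function rigidity for minimal $G$-trees is realized by a unique minimal $\RR$-tree $X_\phi$ with isometric $G$-action, equal to $\lim_n \lambda^{-n}\phi^nT$ in the equivariant Gromov--Hausdorff topology. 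Two relative train tracks for $\phi$ share the same $\lambda$ and, after this normalization, produce the same $\ell_\infty$, so $X_\phi$ does not depend on the choice of $F$.

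It then remains to analyze stabilizers. For arc stabilizers: a nondegenerate arc of $X_\phi$ is a limit of arbitrarily long legal segments in the rescaled trees, so a nontrivial element fixing it would fix arbitrarily long legal subpaths of axes in $T$, which is incompatible with bounded cancellation and the fact that legal paths map to strictly longer legal paths under $F$; hence arc stabilizers are trivial. For point stabilizers: each $G_i$ has bounded orbits, hence is elliptic, in every $\lambda^{-n}\phi^nT$ because $\phi$ permutes $\mathcal{G}$, so it is elliptic in $X_\phi$, accounting for the classes in $\mathcal{G}$; any further point stabilizer corresponds, via the triviality of arc stabilizers and the indecomposability of the single EG stratum, to a vertex of $T$ that does not expand under $F$, and an index (Euler characteristic) count — the geometric-versus-ageometric dichotomy familiar from fully irreducible outer automorphisms of $F_N$ — bounds the number of such extra orbits by one and forces that orbit to be infinite cyclic.

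The main obstacle is this last step: pinning down the point stabilizers of $X_\phi$ and showing that there is at most one extra orbit, which is moreover cyclic. Unlike the triviality of arc stabilizers, which is comparatively formal once bounded cancellation is available, this genuinely uses the fine structure of relative train tracks — the local Whitehead/link graphs at vertices, indecomposability, and the interaction with the attracting lamination of $\phi$. Since the statement is exactly the theorem of Francaviglia--Martino--Syrigos, in the body of the paper one simply invokes \cite{FrancaMarSyri2021}, the sketch above indicating the shape of that argument.
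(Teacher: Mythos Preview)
The paper does not prove this theorem; it records it as a cited result and, in the remark immediately following the statement, points to \cite{FrancaMarSyri2021} for existence and independence of $X_\phi$ and to the arationality of $X_\phi$ (via \cite{Guirardelhorbez19}) for the description of point stabilisers. Your proposal ends in the same place --- invoke \cite{FrancaMarSyri2021} --- so at that level you and the paper agree.

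Where you differ is in the justification for item~(2). The paper's route is conceptual: once one knows $X_\phi$ is arational, the structure of point stabilisers (exactly $\mathcal{G}$ plus at most one cyclic class) is a general fact about arational $(G,\mathcal{G})$-trees. Your route is the dynamical one, via an index/Euler characteristic count in the spirit of the geometric/ageometric dichotomy for fully irreducible automorphisms. Both lead to the same conclusion, but the arationality argument is cleaner to quote and avoids the fine combinatorics of local Whitehead graphs that your sketch would need to make precise.

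One imprecision in your sketch is worth flagging: it is not true that for every $(G,\mathcal{G})$-nonperipheral $g$ the axis becomes legal after finitely many iterations. Elements in the extra cyclic class $[H]$ of item~(2) are nonperipheral yet elliptic in $X_\phi$; their axes carry periodic Nielsen paths and never become legal. Convergence of $\lambda^{-n}\lVert\phi^n(g)\rVert_T$ holds nonetheless, but via bounded cancellation and monotonicity of the number of illegal turns, not because everything eventually goes legal. This does not affect your bottom line, since you ultimately defer to \cite{FrancaMarSyri2021}, but the sentence as written is not correct.
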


\begin{rmq}
The existence of $X_\phi$ follows from \cite[Lemma 2.14.1]{FrancaMarSyri2021}. The fact that it does not depend on the relative train track map follows from~\cite[Theorem~5.1.1]{FrancaMarSyri2021}. The description of the action of $G$ on $X_\phi$ follows for instance from the fact that $X_\phi$ is \emph{arational} (see~\cite[Theorems 3.4, 4.1]{Guirardelhorbez19}).
\end{rmq}

\section{First properties of congruence subgroups}\label{Section:Firstproperties}

Let $(G,\mathcal{G})$ be a free product of groups with $G=G_1 \ast \ldots \ast G_k \ast F_N$. We now describe the finite index subgroup of $\Out(G,\mathcal{G})$ which will be of central interest in the rest of the paper. 

The group $\Out(G,\mathcal{G})$ naturally permutes the conjugacy classes in $\mathcal{G}$. The kernel of this action is a finite index subgroup, denoted by $\Out^0(G,\mathcal{G})$. We now pass to a further finite index subgroup using the action on homology. The group $\Out^0(G,\mathcal{G})$ naturally acts on the finite group $H_1(G,\ZZ/3\ZZ)$. We denote the kernel of this action by $\IA^0(G,\mathcal{G},3)$. We record in this section the first immediate aperiodicity properties of $\IA^0(G,\mathcal{G},3)$. 

We start with some standard lemmas regarding the homology of a graph of groups. If $S$ is a splitting of a group $G$, recall the definition of $\mathcal{K}(S)$ from Section~\ref{section:splitting}.

\begin{lem}\label{lem:MayerVietoris}
Let $G$ be a finitely generated group, let $A$ be a trivial $G$-module and let $\mathbb{X}$ be the graph of groups associated with a $G$-splitting $S$. Let $v \in \overline{\mathbb{X}}$ and let $\mathrm{Inc}_v$ be the set of $G_v$-conjugacy classes of edge groups associated with oriented edges of $\overline{\mathbb{X}}$ with origin $v$. 

Let $H$ be the image of $H_1(G_v,A)$ in $H_1(G,A)$ and let $K_{\mathrm{Inc}_v}$ be the subgroup of $H_1(G_v,A)$ generated by the images of $H_1(G_e,A)$ with $[G_e]_{G_v} \in \mathrm{Inc}_v$ in $H_1(G_v,A)$. 

There exists a subgroup $K \subseteq K_{\mathrm{Inc}_v}$ such that $H$ fits in an exact sequence 
\begin{equation}\label{eq:MayerVietoris}
    K \to H_1(G_v,A) \to H.
\end{equation}
Moreover, for every $\phi \in \mathcal{K}(S)$, the sequence in Equation~\eqref{eq:MayerVietoris} is $\phi$-equivariant.
\end{lem}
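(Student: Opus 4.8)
The plan is to work with the standard presentation of $G=\pi_1(\mathbb{X})$ attached to the graph of groups $\mathbb{X}$ and extract the homology of $G$ from the homology of the vertex and edge groups via a Mayer--Vietoris type argument. First I would recall that, writing $\overline{\mathbb{X}}$ for the underlying graph with spanning tree $\tau$, the group $G$ is generated by the vertex groups $G_w$ together with one stable letter $t_e$ for each edge $e\notin\tau$, subject to the relations identifying the two embeddings of each edge group $G_e$. Abelianizing with coefficients in the trivial module $A$, the group $H_1(G,A)$ is a quotient of $\bigoplus_{w\in V\overline{\mathbb{X}}} H_1(G_w,A)\ \oplus\ \bigl(\bigoplus_{e\notin\tau}A\bigr)$, and the only relations among the images of the $H_1(G_w,A)$ come from the edge groups: for each edge $e$ with endpoints $v,w$ (or $e\notin\tau$ with both endpoints $v$), the image of $H_1(G_e,A)$ under the two inclusions into $H_1(G_v,A)$ and $H_1(G_w,A)$ get identified (up to the contribution of $t_e$ when $e\notin\tau$).

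With this description in hand, fix the vertex $v$. Let $H\subseteq H_1(G,A)$ be the image of $H_1(G_v,A)$. The point is to identify the kernel of the natural map $H_1(G_v,A)\to H$. An element $x\in H_1(G_v,A)$ maps to $0$ in $H_1(G,A)$ exactly when it can be written as a consequence of the defining relations; chasing through which relations involve the $G_v$-factor, one sees that such an $x$ must lie in the subgroup generated by the images of $H_1(G_e,A)$ for edges $e$ incident to $v$, i.e. in $K_{\mathrm{Inc}_v}$. (More precisely, one can contract the complementary part of the tree and apply the Mayer--Vietoris sequence for the one-edge, two-vertex or HNN splitting that isolates $v$; the connecting map in that Mayer--Vietoris sequence has image landing in $K_{\mathrm{Inc}_v}$.) Thus setting $K$ to be that kernel, we get $K\subseteq K_{\mathrm{Inc}_v}$ and the exact sequence $K\to H_1(G_v,A)\to H$, where exactness at $H_1(G_v,A)$ is precisely the definition of $K$ and exactness at $H$ is the fact that $H_1(G_v,A)\to H$ is surjective by definition of $H$. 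This establishes Equation~\eqref{eq:MayerVietoris}.

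For the equivariance statement, recall that any $\phi\in\mathcal{K}(S)$ fixes the underlying graph $\overline{\mathbb{X}}$ pointwise and, for each vertex $w$ and edge $e$, induces well-defined outer automorphisms $\phi_w\in\Out(G_w)$ and $\phi_e\in\Out(G_e)$ which are compatible with the edge inclusions up to inner automorphisms. Since inner automorphisms act trivially on homology with trivial coefficients, $\phi$ induces genuine automorphisms of $H_1(G_w,A)$, $H_1(G_e,A)$ and of $H_1(G,A)$, and the inclusion-induced maps $H_1(G_e,A)\to H_1(G_w,A)$ and $H_1(G_w,A)\to H_1(G,A)$ all commute with the respective $\phi$-actions. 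In particular $\phi$ preserves $H$, preserves $K_{\mathrm{Inc}_v}$, and hence preserves $K$; the sequence \eqref{eq:MayerVietoris} is therefore a sequence of $\phi$-modules and $\phi$-equivariant maps.

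The main obstacle I expect is not the homological algebra but bookkeeping: carefully justifying that the kernel $K$ genuinely lands inside $K_{\mathrm{Inc}_v}$ rather than in some larger subgroup. The cleanest route is to collapse all edges of $S$ not adjacent to (the orbit of) $v$, reducing to a star-shaped graph of groups around $v$, and then to invoke the Mayer--Vietoris sequence for graphs of groups (e.g. from Bieri or Brown) in which the relevant connecting homomorphism is explicitly built from the edge-group inclusions; this pins down $K$ and makes the $\phi$-equivariance transparent since collapsing edges is itself $\mathcal{K}(S)$-equivariant.
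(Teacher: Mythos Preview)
Your approach is correct and morally the same Mayer--Vietoris argument, but the execution differs from the paper's. The paper works topologically: it realises $\mathbb{X}$ as an explicit graph of aspherical spaces $X_G$ (a $K(G_w,1)$ for each vertex, a mapping cylinder for each edge), chooses an open cover $U\cup V=X_G$ where $U$ deformation retracts to $X_v$ and $U\cap V$ deformation retracts to $\coprod_{o(e)=v} X_e$, and reads the exact sequence directly off the topological Mayer--Vietoris sequence for $(X_G,U,V)$. Equivariance is then immediate because any $\phi\in\mathcal{K}(S)$ induces a self--homotopy--equivalence of $X_G$ preserving $U$ and $V$ up to homotopy.

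Your route---via the Bass--Serre presentation and abelianisation, or equivalently by collapsing the complement of the star of $v$ and invoking the algebraic Mayer--Vietoris sequence for graphs of groups---reaches the same conclusion without building spaces. The one place to be careful is exactly where you flag it: the sentence ``chasing through which relations involve the $G_v$-factor'' is not a proof, since relations from distant edges could in principle conspire to kill elements of $H_1(G_v,A)$. Your suggested fix (collapse to a star and cite the standard long exact sequence, e.g.\ from Chiswell or Brown) does the job cleanly and is arguably more portable than the paper's explicit open cover; the paper's topological model has the mild advantage that the $\phi$-equivariance is visibly geometric rather than requiring the compatibility check between $\phi_v$, $\phi_e$ and the edge inclusions that you sketch.
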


\begin{proof}
We denote by $\vec{E}\overline{\mathbb{X}}$ the set of oriented edges of $\overline{\mathbb{X}}$. For every $w \in V\overline{\mathbb{X}}$ (resp. $e \in \vec{E}\overline{\mathbb{X}}$), let $(X_w,\ast_w)$ (resp. $(X_e,\ast_e)$) be a pointed CW complex with contractible universal cover whose fundamental group is isomorphic to $G_w$ (resp. $G_e$). We additionally assume that, for every $e \in  \vec{E}\overline{\mathbb{X}}$ with origin $w$, there exists an embedding $\iota_{w,e}\colon X_e \hookrightarrow X_w$ and a path $p_e$ between $\ast_w$ and $\iota_{w,e}(\ast_e)$ so that the map $\iota_{w,e}$ induces via $p_e$ an embedding $G_e \hookrightarrow G_w$. Let 
\[X_G=\left(\coprod_{w \in V\overline{\mathbb{X}}} X_w \amalg \coprod_{e \in \vec{E}\overline{\mathbb{X}}} X_e \times [0,1] \right)/\sim\] where, for every $e \in \vec{E}\overline{\mathbb{X}}$ with origin $w_1$ and terminal vertex $w_2$, $X_e \times [0,1]$ is identified with $X_{e^{-1}} \times [0,1]$ equipped with the reverse orientation and the subspaces $X_e \times \{0\}$ and $X_e \times \{1\}$ are glued to $X_{w_1}$ and $X_{w_2}$ via $\iota_{w_1,e}$ and $\iota_{w_2,e}$. Then $\pi_1(X_G)$ is isomorphic to $G$ and its universal cover is contractible. 

Let $U$ be the open subset of $X_G$ which is the image of $$X_v \; \amalg \coprod_{e \in \vec{E}\overline{\mathbb{X}}, o(e)=v, t(e) \neq v} X_e \times \left[0,\frac{3}{4} \right) \; \amalg \coprod_{e \in \vec{E}\overline{\mathbb{X}}, o(e)=t(e)=v} X_e \times \left[0,\frac{1}{3} \right).$$ Let $V$ be the open subset of $X_G$ which is the image of 
\begin{align*}
    \coprod_{w \in V\overline{\mathbb{X}}, w \neq v} X_w \; \amalg & \coprod_{e \in \vec{E}\overline{\mathbb{X}}, o(e),t(e) \neq v} X_e \times [0,1] \amalg \coprod_{e \in \vec{E}\overline{\mathbb{X}}, t(e)=v, o(e) \neq v} X_e \times \left[0,\frac{3}{4}\right) \amalg \\ 
 {} & \coprod_{e \in \vec{E}\overline{\mathbb{X}}, o(e)=t(e)=v} X_e \times \left(\frac{1}{4},\frac{3}{4} \right).
\end{align*}
Then $U \cup V$ covers $X_G$. Thus, we can apply the Mayer Vietoris exact sequence to the triple $(X_G,U,V)$, that is, the following sequence is exact

\begin{equation}\label{eq:MVproof}
H_1(U \cap V,A) \to H_1(U,A) \oplus H_1(V,A) \to H_1(G,A).
\end{equation}

The subset $U$ deformation retracts to $X_v$, so that $H_1(U,A)=H_1(G_v,A)$ and the image of $H_1(U,A)$ in $H_1(G,A)$ is $H$. Moreover, the subset $U \cap V$ deformation retracts to $\coprod_{e \in \vec{E}\overline{\mathbb{X}}, o(e)=v} X_e$, so that $H_1(U \cap V,A)$ is isomorphic to $\bigoplus_{e \in \vec{E}\overline{\mathbb{X}}, o(e)=v} H_1(G_e,A)$ and the image of $H_1(U \cap V,A)$ in $H_1(G_v,A)$ is exactly $K_{\mathrm{Inc}_v}$. Thus, Equation~\eqref{eq:MayerVietoris} follows from Equation~\eqref{eq:MVproof} and the above descriptions of $H_1(U,A)$ and $H_1(U \cap V,A)$.

Let $\phi \in \mathcal{K}(S)$. Then $\phi$ induces a homotopy equivalence $\phi_{X_G}$ of $X_G$. Since $\phi \in \mathcal{K}(S)$, $\phi$ acts trivially on the graph $\overline{\mathbb{X}}$. Thus, $\phi_{X_G}$ preserves (up to homotopy) the subsets $U$ and $V$. This shows that the exact sequence in Equation~\eqref{eq:MVproof} is $\phi_{X_G}$-equivariant and the exact sequence in Equation~\eqref{eq:MayerVietoris} is $\phi$-equivariant.
\end{proof}

\begin{lem}\label{lem:projectionhomology}
Let $\mathcal{F}$ be a $(G,\mathcal{G})$-free factor system and let $[A]\in \mathcal{F}$. 
\begin{itemize}
    \item The group $H_1(A,\ZZ/3\ZZ)$ embeds as a direct summand of $H_1(G,\ZZ/3\ZZ)$.
    \item For every $[B]\in \mathcal{F}$ distinct from $[A]$, the inclusion $A \ast B \subseteq G$ induces an embedding $H_1(A,\ZZ/3\ZZ) \oplus H_1(B,\ZZ/3\ZZ)\hookrightarrow H_1(G,\ZZ/3\ZZ)$.
    \item If, for all $g \in G$ and $[G_i] \in \mathcal{G}$, the group $gG_ig^{-1}$ is not contained in $A$, then $H_1(A,\ZZ/3\ZZ)$ is a nontrivial direct summand of $H_1(G,\ZZ/3\ZZ)$. 
\end{itemize}
\end{lem}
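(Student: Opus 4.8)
The plan is to reduce all three statements to the basic fact that $H_1$ turns free products into direct sums, and then use $\ZZ/3\ZZ$-coefficients only to guarantee non-vanishing in the third item. First recall that since $\mathcal{F}=\{[H_1],\ldots,[H_n]\}$ is a $(G,\mathcal{G})$-free factor system, there is a free subgroup $B_0$ with $G=H_1 \ast \ldots \ast H_n \ast B_0$, and so for any abelian group $M$ with trivial $G$-action one has a natural isomorphism $H_1(G,M) \cong \bigoplus_{j=1}^n H_1(H_j,M) \oplus H_1(B_0,M)$, compatible with the inclusions $H_j \hookrightarrow G$. (This is the abelianized Kurosh/Mayer--Vietoris statement; it also follows from Lemma~\ref{lem:MayerVietoris} applied to the Grushko-type free splitting associated with this free product decomposition, noting that all edge groups are trivial so the term $K$ vanishes and each $H_1(H_j,M) \to H_1(G,M)$ is injective with the asserted complement.) Taking $M = \ZZ/3\ZZ$ and $[A]=[H_{j_0}]$ for the appropriate index $j_0$, conjugating a representative if necessary, gives the first item: $H_1(A,\ZZ/3\ZZ)$ is the direct summand indexed by $j_0$.

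For the second item, let $[A]=[H_{j_0}]$ and $[B]=[H_{j_1}]$ with $j_0 \neq j_1$; after conjugating we may assume $A=H_{j_0}$ and $B=H_{j_1}$ are the standard factors, so $A \ast B$ is a subgroup of $G$ and $G = (A\ast B) \ast \big(\ast_{j \neq j_0,j_1} H_j\big) \ast B_0$. Applying the direct sum decomposition of $H_1(-,\ZZ/3\ZZ)$ twice — once to $G$ and once to $A \ast B$ — shows that the composite $H_1(A,\ZZ/3\ZZ) \oplus H_1(B,\ZZ/3\ZZ) \xrightarrow{\sim} H_1(A\ast B,\ZZ/3\ZZ) \to H_1(G,\ZZ/3\ZZ)$ is the inclusion of the sum of two of the direct summands, hence injective. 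Note that this is where it matters that $[A]$ and $[B]$ are \emph{distinct} elements of $\mathcal{F}$, so that their standard representatives are genuinely different free factors of a single Grushko-type decomposition.

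The third item is the only one using a property special to $\ZZ/3\ZZ$ (or any nontrivial finite coefficient ring), and it is the main point of the lemma. By the first item it suffices to show $H_1(A,\ZZ/3\ZZ) \neq 0$, i.e. $A$ is not perfect mod $3$. The hypothesis says $A$ contains no conjugate of any $G_i$, which means that in the induced free product structure $(A,\mathcal{G}_{|A})$ — equivalently, writing $A$ as one of the $H_j$ and restricting the free splitting — the group $A$ is $(G,\mathcal{G})$-nonperipheral and in particular $A$ is not conjugate into a single $G_i$; hence in its own Grushko decomposition $A = A_1 \ast \ldots \ast A_m \ast F_M$ one has either $M \geq 1$, or $m \geq 2$, or ($m=1$ and $A \neq A_1$, which cannot happen), so $\xi(A,\mathcal{G}_{|A}) \geq 1$ with a nontrivial free part or at least two peripheral factors. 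The key observation is that for \emph{any} nontrivial free product or any nontrivial free group, $H_1(-,\ZZ/3\ZZ)$ is nonzero: if $M \geq 1$ then $H_1(F_M,\ZZ/3\ZZ) = (\ZZ/3\ZZ)^M \neq 0$ is a summand; if instead $A = A_1 \ast A_2 \ast \cdots$ with at least two nontrivial free factors, then... — and here is the subtlety — one must rule out the degenerate possibility that $A$ itself equals a single $G_i$ up to conjugacy, which is exactly excluded by hypothesis, leaving the case where $A$ has rank $\geq 1$ free part (giving the summand $\ZZ/3\ZZ$) or is a nontrivial free product of two proper factors, in which case $H_1(A,\ZZ/3\ZZ) = H_1(A_1,\ZZ/3\ZZ) \oplus H_1(A_2,\ZZ/3\ZZ) \oplus \cdots$, and although an individual $H_1(A_i,\ZZ/3\ZZ)$ could a priori vanish, it cannot vanish for all $i$ simultaneously unless $A$ is trivial — wait, this last claim is false in general (a free product of two groups each with trivial mod-$3$ homology has trivial mod-$3$ homology), so the real content must be that the nonperipherality forces a free part: indeed if $A$ were conjugate into no $G_i$ but were a free product of conjugates of $G_i$'s with \emph{no} free part and exactly those as factors, it could not be a free factor of $G$ of that form unless it had positive rank, because a free factor system's elements, once they are unions of peripheral subgroups with no free edges, would have to literally be the $G_i$'s. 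So I expect the correct argument is: nonperipherality of $A$ together with $[A]\in\mathcal{F}$ forces $\xi(A,\mathcal{G}_{|A}) \geq k_A + 1$ where the extra $+1$ comes from a genuine free factor, whence $H_1(A,\ZZ/3\ZZ)$ surjects onto $\ZZ/3\ZZ$. The hard part, and where I would spend the most care, is pinning down exactly why a nonperipheral element of a free factor system must contain a nontrivial \emph{free} part rather than merely being a nontrivial free product of peripheral pieces — this is a combinatorial statement about how free factor systems refine the Grushko decomposition $\mathcal{G}$, and it is the crux on which the mod-$3$ non-vanishing rests.
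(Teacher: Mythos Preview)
Your treatment of the first two items is correct and matches the paper's approach: both reduce to the direct-sum decomposition of $H_1$ over a free product, either via Mayer--Vietoris with trivial edge groups (Lemma~\ref{lem:MayerVietoris}) or directly.

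For the third item you identify the right target --- showing $H_1(A,\ZZ/3\ZZ)\neq 0$ --- but you do not close the argument, and the detour through ``could $A$ be a free product of peripheral pieces with trivial mod-$3$ homology?'' is a dead end you yourself flag. The missing observation is much sharper than what you attempt: under the hypothesis, $A$ is \emph{free}. Indeed, $A$ is a free factor of $G=G_1\ast\cdots\ast G_k\ast F_N$, so by uniqueness of the Grushko decomposition every freely indecomposable one-ended factor of $A$ is conjugate to some $G_i$; the hypothesis says no conjugate of any $G_i$ lies in $A$, hence $A$ has no such factors and $A\cong F_M$ with $M\geq 1$. Equivalently, as the paper puts it: since every $G_i$ is conjugate into a complement $C$ with $G=A\ast C$, one may rechoose the free part $F_N$ so that $A$ is a nontrivial free factor of $F_N$. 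Then $H_1(A,\ZZ/3\ZZ)\cong(\ZZ/3\ZZ)^M\neq 0$ and you are done.

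Your confusion stems from weakening the hypothesis to ``$A$ is nonperipheral'' (i.e.\ $A$ is not contained in a conjugate of some $G_i$), which is strictly weaker than ``$A$ contains no conjugate of any $G_i$''. With only nonperipherality your worry would be legitimate; with the actual hypothesis, $\mathcal{G}_{|A}=\varnothing$ and the whole case analysis about $m\geq 2$ versus $M\geq 1$ collapses.
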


\begin{proof}
Let $[B ] \in \mathcal{F}-\{[A]\}$, let $C \subseteq G$ be such that $G=A \ast C$ and let $D \subseteq G$ be such that $G=A \ast B \ast D$. The fact that $H_1(A,\ZZ/3\ZZ) \oplus H_1(B,\ZZ/3\ZZ)$ is a direct summand of $H_1(G,\ZZ/3\ZZ)$ follows from the fact that $A \ast B$ is a free factor of $G$ and Lemma~\ref{lem:MayerVietoris} applied with $\mathbb{X}$ induced by $G=A\ast B \ast D$. This proves the first two assertions of the lemma.

Suppose now that for all $g \in G$ and $[G_i] \in \mathcal{G}$, the group $gG_ig^{-1}$ is not contained in $A$. We now prove that $H_1(A,\ZZ/3\ZZ)$ is nontrivial. By assumption on $A$, for every $[G_i] \in \mathcal{G}$, some conjugate of $G_i$ is contained in $C$. Since $G=G_1 \ast \ldots \ast G_k \ast F_N$, we may assume, up to changing the representative $F_N$, that $A \subseteq F_N$ is a nontrivial free factor of $F_N$. Thus, we have $H_1(A,\ZZ/3\ZZ) \neq \{0\}$. 
\end{proof}

\begin{lem}\label{lem:periodic freefactorsystem}
Let $\phi \in \IA^0(G,\mathcal{G},3)$. Let $\mathcal{F}$ be a $\phi$-invariant $(G,\mathcal{G})$-free factor system. Then every element of $\mathcal{F}$ is fixed by $\phi$.
\end{lem}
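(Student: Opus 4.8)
The plan is to argue by induction on the Grushko rank $\xi(G,\mathcal{F})$, using the homological rigidity provided by the hypothesis $\phi \in \IA^0(G,\mathcal{G},3)$. Write $\mathcal{F}=\{[H_1],\ldots,[H_n]\}$. Since $\phi$ is $\mathcal{F}$-invariant, it permutes the conjugacy classes $[H_1],\ldots,[H_n]$, and I must show this permutation is trivial. First I would reduce to the case $\xi(G,\mathcal{F})$ minimal, i.e. $\mathcal{F}$ a \emph{minimal} $\phi$-invariant $(G,\mathcal{G})$-free factor system refining nothing smaller: if some $[H_j]$ is itself a free factor system (or more precisely if $\mathcal{F}$ is not $\sqsubseteq$-minimal among $\phi$-invariant free factor systems), one can replace $\mathcal{F}$ by a finer $\phi$-invariant system and observe that fixing the finer classes forces fixing the coarser ones. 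The base case to keep in mind is when each $H_j$ is either trivial-to-analyze or $(G,\mathcal{G})$-peripheral.

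The core of the argument is the following. Suppose $\phi$ sends $[H_1]$ to $[H_2]$ with $[H_1]\neq[H_2]$; let $\ell$ be the size of the $\phi$-orbit of $[H_1]$, so $\phi^\ell$ fixes $[H_1]$, and consider the $\phi$-orbit $\{[H_1],\ldots,[H_\ell]\}$ (after relabeling). By the second bullet of Lemma~\ref{lem:projectionhomology}, the inclusion $H_1\ast\cdots\ast H_\ell \subseteq G$ induces an embedding $\bigoplus_{i=1}^\ell H_1(H_i,\ZZ/3\ZZ)\hookrightarrow H_1(G,\ZZ/3\ZZ)$ as a direct summand, and $\phi$ acts on this summand by cyclically permuting the $\ell$ factors (composed with the restricted isomorphisms $H_1(H_i,\ZZ/3\ZZ)\to H_1(H_{i+1},\ZZ/3\ZZ)$ induced by a chosen representative $\Phi$). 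Because $\phi\in\IA^0(G,\mathcal{G},3)$ acts trivially on $H_1(G,\ZZ/3\ZZ)$, this cyclic permutation must be the identity on the summand. The only way a nontrivial cyclic permutation of $\ell\geq 2$ equal blocks can act as the identity is if each $H_1(H_i,\ZZ/3\ZZ)=\{0\}$. Now the third bullet of Lemma~\ref{lem:projectionhomology} applies: $H_1(H_i,\ZZ/3\ZZ)=\{0\}$ forces some conjugate of some $G_j\in\mathcal{G}$ to be contained in $H_i$. So each $H_i$ in a nontrivial orbit is $(G,\mathcal{G})$-nonfree in the strong sense that it carries a peripheral subgroup; but $\phi\in\Out^0(G,\mathcal{G})$ \emph{fixes} every conjugacy class in $\mathcal{G}$, hence cannot move the conjugacy class of a factor that contains a prescribed peripheral $G_j$ to one that contains a different $G_{j'}$ — and matching up which $G_j$ sits in which $H_i$ pins the permutation down to the identity. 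This contradiction (or direct conclusion) shows $\phi$ fixes each $[H_i]$.

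More carefully, for the last step I would argue: for each $i$, let $S_i\subseteq\mathcal{G}$ be the set of $[G_j]$ such that a conjugate of $G_j$ lies in $H_i$ (with multiplicity/conjugacy bookkeeping as in the definition of a free factor system, where each $G_j$ sits in exactly one $H_{\sigma(j)}$ up to conjugacy). Since $\phi$ fixes each $[G_j]$ and sends $[H_i]$ to $[\phi(H_i)]$, we get $S_{\sigma(i)} = S_i$ for the induced permutation $\sigma$; combined with the fact that the $S_i$ partition the nonempty set $\mathcal{G}$ restricted to the factors with trivial mod-3 homology, and that distinct factors in an orbit receive distinct peripheral subgroups, $\sigma$ must be trivial on those orbits. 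For orbits of factors with nontrivial mod-3 homology the homology argument already forbids nontriviality. Hence $\phi$ fixes every $[H_i]$.

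The main obstacle I anticipate is the bookkeeping around which peripheral subgroups land in which free factor and the subtlety flagged in Section~\ref{sec:free_factors} that $[H\cap A]_H$ can behave badly under changing representatives; one must be careful that "some conjugate of $G_j$ lies in $H_i$" is a well-defined, $\phi$-compatible notion, so that the partition $\{S_i\}$ is genuinely permuted by $\sigma$. A clean way to sidestep this is to pass to the action on a Grushko-type free splitting realizing $\mathcal{F}$ and use that $\phi$ acts trivially on $H_1$ of the underlying graph (in the spirit of Lemma~\ref{lem:graphautotrivial}, alluded to in the introduction), but the homological summand argument above should already suffice without invoking splittings. Everything else is routine once the reduction to minimal rank and the identification of the $\phi$-action on the homology summand are in place.
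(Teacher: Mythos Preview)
Your proposal is correct and uses essentially the same two ingredients as the paper: the fact that distinct free factors in $\mathcal{F}$ contribute distinct direct summands to $H_1(G,\ZZ/3\ZZ)$ (Lemma~\ref{lem:projectionhomology}), together with the fact that $\phi\in\Out^0(G,\mathcal{G})$ fixes each $[G_j]\in\mathcal{G}$ and hence pins down any factor containing a peripheral subgroup. The paper's proof is simply the direct version of your contradiction argument---it treats each $[A]\in\mathcal{F}$ by a two-case analysis (either $A$ contains some $G_j$, or it does not and then $H_1(A,\ZZ/3\ZZ)\neq 0$)---so your induction on Grushko rank, minimality reduction, and orbit machinery are all unnecessary scaffolding around the same core.
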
 

\begin{proof}
Let $[A] \in \mathcal{F}$. Suppose first that there exists $i \in \{1,\ldots,k\}$ such that $G_i \subseteq A$. Then for every $[B]\in \mathcal{F}$ distinct from $[A]$, we have $G_i \cap B=\{1\}$. Thus, since $\phi \in \Out^0(G,\mathcal{G})$, we have $\phi([A])=[A]$.

Suppose now that, for all $i \in \{1,\ldots,k\}$ and all $g \in G$, we have $A \cap gG_ig^{-1}=\{1\}$. By Lemma~\ref{lem:projectionhomology}, the group $H_1(A,\ZZ/3\ZZ)$ is a nontrivial direct summand of $H_1(G,\ZZ/3\ZZ)$. Moreover, for every $[B]\in \mathcal{F}$ distinct from $[A]$, the image of $H_1(B,\ZZ/3\ZZ)$ in $H_1(G,\ZZ/3\ZZ)$ is distinct from $H_1(A,\ZZ/3\ZZ)$. As $\phi$ acts trivially on $H_1(G,\ZZ/3\ZZ)$, we see that $\phi([A])=[A]$.
\end{proof} 

\begin{lem}\label{lem:IA_0_passes_free_factor}
Let $\phi \in \IA^0(G,\mathcal{G},3)$ and let $A$ be a $(G,\mathcal{G})$-free factor whose conjugacy class is $\phi$-invariant. The image $\phi_{|A}$ of $\phi$ in $\Out(A)$ is contained in $\IA^0(A,\mathcal{G}_{|A},3)$.
\end{lem}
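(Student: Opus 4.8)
The plan is to verify the two defining conditions for membership in $\IA^0(A,\mathcal{G}_{|A},3)$: first that $\phi_{|A}$ preserves and acts trivially on the permutation of the free factor system $\mathcal{G}_{|A}$ of $A$, and second that $\phi_{|A}$ acts trivially on $H_1(A,\ZZ/3\ZZ)$. For the first point, note that $\mathcal{G}_{|A}$ consists of the $A$-conjugacy classes $[A \cap gG_ig^{-1}]_A$ that are nontrivial. Since $\phi \in \IA^0(G,\mathcal{G},3) \subseteq \Out^0(G,\mathcal{G})$, every element $[G_i] \in \mathcal{G}$ is fixed by $\phi$; choosing a representative $\Phi \in \phi$ with $\Phi(A) = A$, I would observe that $\Phi$ permutes the nontrivial groups among $\{A \cap gG_ig^{-1}\}$, and being a power of a homeomorphism-type argument is unnecessary — one checks directly that $\Phi(A \cap gG_ig^{-1}) = A \cap \Phi(g)\Phi(G_i)\Phi(g)^{-1}$, and since $[\Phi(G_i)] = [G_i]$ in $G$, this gives an $A$-conjugate of some $A \cap g'G_ig'^{-1}$. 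The subtlety flagged in Section~\ref{sec:free_factors} is that distinct $G$-conjugates of $G_i$ can meet $A$ in $A$-inequivalent subgroups, so one must be slightly careful, but the key point is that $\phi$ preserves the \emph{index} $i$, hence $\phi_{|A}$ preserves $\mathcal{G}_{|A}$ together with the induced labelling by $\{1,\dots,k\}$.

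For the homology condition, the plan is to use Lemma~\ref{lem:projectionhomology}: since $A$ is a $(G,\mathcal{G})$-free factor, $H_1(A,\ZZ/3\ZZ)$ embeds as a direct summand of $H_1(G,\ZZ/3\ZZ)$, say $H_1(G,\ZZ/3\ZZ) = H_1(A,\ZZ/3\ZZ) \oplus C'$ where $C'$ is the image of $H_1(C,\ZZ/3\ZZ)$ for some complement $C$ with $G = A \ast C$. Choosing $\Phi \in \phi$ with $\Phi(A) = A$, the induced map $\Phi_*$ on $H_1(G,\ZZ/3\ZZ)$ is the identity (as $\phi \in \IA^0(G,\mathcal{G},3)$), and $\Phi_*$ preserves the subspace $H_1(A,\ZZ/3\ZZ)$; restricting the identity map to this subspace shows that the induced map on $H_1(A,\ZZ/3\ZZ)$ is the identity. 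Since this induced map is precisely $(\phi_{|A})_*$ acting on $H_1(A,\ZZ/3\ZZ)$ — the identification being compatible because $\Phi_{|A}$ is a representative of $\phi_{|A}$ — we conclude $\phi_{|A}$ acts trivially on $H_1(A,\ZZ/3\ZZ)$.

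Combining the two points, $\phi_{|A} \in \Out^0(A,\mathcal{G}_{|A})$ and acts trivially on $H_1(A,\ZZ/3\ZZ)$, hence $\phi_{|A} \in \IA^0(A,\mathcal{G}_{|A},3)$. The main obstacle I anticipate is purely bookkeeping: making precise how a chosen representative $\Phi \in \Aut(G)$ with $\Phi(A) = A$ yields a well-defined element $\phi_{|A} \in \Out(A)$ (two such representatives differ by $\ad_a$ with $a \in A$, so the outer class in $\Out(A)$ is well-defined) and checking that the homology restriction is compatible with this outer class — but this is exactly the standard restriction-to-a-free-factor formalism, and no deep input is needed beyond Lemma~\ref{lem:projectionhomology} and the fact that $\phi$ fixes each $[G_i]$.
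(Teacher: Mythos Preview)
Your approach is essentially the same as the paper's: verify membership in $\Out^0(A,\mathcal{G}_{|A})$ and then use Lemma~\ref{lem:projectionhomology} for the homology condition. The homology part is identical.

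For the first part, however, your argument is slightly muddled where the paper's is crisp. The subtlety you flag from Section~\ref{sec:free_factors} (distinct $G$-conjugates of $G_i$ meeting $H$ in $A$-inequivalent subgroups) \emph{does not arise} when $A$ is a $(G,\mathcal{G})$-free factor: if some conjugate of $G_i$ meets $A$ nontrivially then in fact a full conjugate of $G_i$ lies in $A$, and any two such conjugates are $A$-conjugate. The paper states this directly: for every $[H]_A \in \mathcal{G}_{|A}$ there is a unique $[G_i] \in \mathcal{G}$ with $[H]_G = [G_i]$. This makes the labelling $\mathcal{G}_{|A} \to \mathcal{G}$ injective, so ``$\phi$ preserves the index $i$'' immediately gives that $\phi_{|A}$ fixes each element of $\mathcal{G}_{|A}$. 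Your write-up only concludes that $\phi_{|A}$ preserves $\mathcal{G}_{|A}$ together with the labelling, which is not yet $\phi_{|A} \in \Out^0(A,\mathcal{G}_{|A})$ unless you invoke injectivity of the labelling --- exactly the fact you flagged as a worry rather than as the resolution.
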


\begin{proof}
Since $A$ is a $(G,\mathcal{G})$-free factor, for every $[H]_{A} \in \mathcal{G}_{|A}$, there exists a unique $[G_i] \in \mathcal{G}$ such that $[H]=[G_i]$. In particular, since $\phi \in \Out^0(G,\mathcal{G})$, the image of $\phi$ is contained in $\Out^0(A,\mathcal{G}_{|A})$. 

By Lemma~\ref{lem:projectionhomology}, since $A$ is a $(G,\mathcal{G})$-free factor, the image of $A$ in $H_1(G,\ZZ/3\ZZ)$ is a direct factor $F$ preserved by $\phi$. As $\phi$ acts trivially on $H_1(G,\ZZ/3\ZZ)$, we see that $\phi$ also acts trivially on $F$, so that $\phi_{|A} \in \IA^0(A,\mathcal{G}_{|A},3)$.
\end{proof}

The next lemma gives a useful criterion to show that the automorphism of a finite graph is trivial. 

\begin{lem}[{\cite[Lemma~1.1]{Ivanov92}}]\label{lem:graphautotrivial}
Let $X$ be a finite connected graph and let $f \colon X \to X$ be a graph automorphism. Suppose that $f$ fixes every leaf of $X$ and that the action of $f$ on $H_1(X,\ZZ/3\ZZ)$ is trivial. Then one of the following holds.
\begin{itemize}
    \item The graph $X$ is homeomorphic to a circle and $f$ acts on $X$ as a rotation.
    \item The map $f$ is the identity.
\end{itemize}
\end{lem}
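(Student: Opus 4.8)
The statement to prove is Lemma~\ref{lem:graphautotrivial}: a graph automorphism $f$ of a finite connected graph $X$ fixing every leaf and acting trivially on $H_1(X,\ZZ/3\ZZ)$ is either the identity or a rotation of a circle. This is cited from Ivanov, so I would expect a short, self-contained combinatorial argument. The plan is first to reduce to the case where $X$ has no leaves, then to analyse the action of $f$ on the finitely many topological edges and vertices of the topological realisation of $X$, using the mod-$3$ homology hypothesis to rule out all but the stated behaviours.

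\medskip

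\textbf{Step 1: reduction to a graph without valence-one vertices.} I would first pass to the maximal subgraph $X'$ obtained from $X$ by iteratively deleting leaves and the open edges incident to them. Since $f$ fixes every leaf of $X$, it fixes each such deleted edge pointwise at least as a set, and because collapsing these \emph{arborescent} parts does not change $H_1$, the subgraph $X'$ is $f$-invariant with $H_1(X',\ZZ/3\ZZ) = H_1(X,\ZZ/3\ZZ)$ and $f|_{X'}$ still acts trivially on mod-$3$ homology. Moreover $f$ is determined by $f|_{X'}$ together with the (fixed) behaviour on the trees hanging off $X'$: if $f|_{X'}$ is the identity then so is $f$, and if $X'$ is a circle then $X$ is a circle (no trees can hang off without creating a leaf that is moved unless the circle itself is all of $X$). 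So it suffices to treat $X = X'$, i.e. every vertex has valence $\geq 2$. After this reduction I would further subdivide/unsubdivide so that $X$ is a finite graph in which every vertex has valence $\geq 3$ together with possibly a disjoint circle; the valence-two vertices contribute nothing and can be suppressed $f$-equivariantly.

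\medskip

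\textbf{Step 2: the circle case and the rank-one case.} If $X$ is homeomorphic to a circle, a graph automorphism is either a rotation or a reflection; a reflection acts as $-1$ on $H_1(X,\ZZ) \cong \ZZ$, hence as $-1 = 2 \neq 1$ on $H_1(X,\ZZ/3\ZZ) \cong \ZZ/3\ZZ$, which is excluded. So in the circle case $f$ is a rotation, giving the first alternative. If $X$ is not a circle, then (after Step 1) the Euler characteristic argument shows $X$ has first Betti number $r \geq 2$, or $r = 1$ but with a vertex of valence $\geq 3$; I would handle the latter inside the general argument below.

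\medskip

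\textbf{Step 3: triviality on the topological edges and vertices.} Working with $X$ having all valences $\geq 3$ and $r \geq 2$, consider the induced permutation of $f$ on the finite set of \emph{topological edges} (maximal arcs between branch vertices) and on the branch vertices. The key point is that the action on $H_1(X,\ZZ/3\ZZ)$ being trivial forces this permutation to be trivial, and then that $f$ cannot reverse any edge. For the first part: choose a spanning tree and the corresponding basis of $H_1(X,\ZZ)$ by the $r$ extra edges; the matrix of $f$ on $H_1$ relative to this basis is, modulo the "tree part", a signed permutation matrix, so triviality mod $3$ forces the underlying permutation of the non-tree edges to be trivial with all signs $+1$, and a connectivity/winding argument propagates this to all topological edges and all branch vertices. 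For the second part: if $f$ fixes a topological edge $e$ setwise but swaps its endpoints, then either $e$ is separating — impossible since its endpoints are branch vertices of an $r\geq 2$ graph with no leaves and a short argument shows such an $e$ cannot be separating here, or one produces a cycle through $e$ on which $f$ acts by an orientation reversal, again contradicting triviality mod $3$ exactly as in the circle case. Once $f$ fixes every branch vertex and every topological edge with orientation, $f$ is the identity on the $1$-skeleton, i.e. $f = \id$. Combined with Step 2 this gives the two stated alternatives.

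\medskip

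\textbf{Main obstacle.} The delicate point is Step 3: carefully ruling out an edge-flip and propagating triviality of signs/permutation from a homology basis to \emph{all} topological edges. The naive "$-1 \neq 1 \bmod 3$" observation handles a non-separating flipped edge immediately via a cycle through it, and the no-leaf hypothesis plus $r \geq 2$ is what guarantees enough cycles; but one must also check that $f$ does not permute two distinct topological edges nontrivially while preserving homology, which is where the explicit basis coming from a spanning tree and the order-reversing nature of such a permutation on a cycle it supports is used. I would organise this as: (i) $f$ acts trivially on $H_1$ $\Rightarrow$ $f$ preserves the orientation of every embedded cycle; (ii) every topological edge lies on some embedded cycle (true once there are no leaves); (iii) hence $f$ preserves the orientation of every topological edge and, by a counting/injectivity argument using (i) on cycles, fixes each topological edge and each branch vertex; (iv) therefore $f = \id$ unless we are in the circle case already dispatched in Step 2.
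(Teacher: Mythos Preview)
The paper does not give its own proof of this lemma; it is simply quoted from Ivanov with the citation \cite[Lemma~1.1]{Ivanov92}. So there is nothing to compare against, and the question is just whether your argument stands on its own.

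Your Steps~1 and~2 are essentially fine. The reduction to the leafless core $X'$ works because deleting a fixed leaf also fixes its unique incident edge and hence the other endpoint, so the iterative pruning is $f$-equivariant and everything pruned is pointwise fixed. Your parenthetical in Step~1 (``no trees can hang off without creating a leaf that is moved'') is phrased backwards, but the intended argument is correct: if a tree hangs off $X'$ at $p$, then $f$ fixes that tree (hence $p$), so a nontrivial rotation of $X'$ is impossible unless $X=X'$.

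Step~3 has a genuine gap. Your items (i)--(iv) do not add up to a proof: from ``$f_*[C]=[C]$ for every embedded cycle $C$'' you cannot directly conclude that $f$ fixes each topological edge, because $f(C)$ need not equal $C$ as a set, and your ``counting/injectivity argument using (i) on cycles'' is never actually carried out. The spanning-tree remark only controls the edges outside the tree, and the phrase ``order-reversing nature of such a permutation on a cycle it supports'' does not correspond to a valid step. This is exactly the heart of the lemma, and you have not supplied it.

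One clean way to close the gap, once all valences are $\geq 3$: since $f$ has finite order and acts trivially on $H_1(X,\ZZ/3\ZZ)$, Minkowski gives triviality on $H_1(X,\ZZ)$. The Lefschetz trace identity $\mathrm{tr}(f|C_0)-\mathrm{tr}(f|C_1)=1-\mathrm{rk}\,H_1=|V|-|E|$ then reads $|V_m|=|E_m|+2|E^f_-|$, where $V_m$ is the set of moved vertices, $E_m$ the set of edges with $f(e)\neq\pm e$, and $E^f_-$ the set of flipped edges. Since fixed-with-orientation edges have both endpoints fixed and flipped edges have both endpoints moved, counting incidences at moved vertices gives $3|V_m|\leq 2|E_m|+2|E^f_-|=2|V_m|-2|E^f_-|$, forcing $|V_m|=|E^f_-|=|E_m|=0$, i.e.\ $f=\id$. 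Some argument of this strength is needed; as written, your Step~3 is a plan, not a proof.
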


\begin{lem}\label{lem:actionhomograph}
Let $G$ be a group, let $S$ be a $G$-splitting and let $\phi \in \Out(G)$ be an outer automorphism preserving $S$. If $\phi$ acts trivially on $H_1(G,\ZZ/3\ZZ)$, then $\phi$ acts trivially on $H_1(\overline{G \backslash S},\ZZ/3\ZZ)$.
\end{lem}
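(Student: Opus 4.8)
The statement to prove is Lemma~\ref{lem:actionhomograph}: if $\phi \in \Out(G)$ preserves a $G$-splitting $S$ and acts trivially on $H_1(G,\ZZ/3\ZZ)$, then $\phi$ acts trivially on $H_1(\overline{G \backslash S},\ZZ/3\ZZ)$. The plan is to produce a surjection $H_1(G,\ZZ/3\ZZ) \twoheadrightarrow H_1(\overline{G \backslash S},\ZZ/3\ZZ)$ which is $\phi$-equivariant; triviality of the $\phi$-action on the source then forces triviality on the target.

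\textbf{Main steps.} First I would recall that, since the action of $G$ on $S$ is minimal with finitely generated $G$, the quotient $\mathbb{X} = G \backslash S$ is a finite graph of groups with $\pi_1(\mathbb{X}) \cong G$, and that there is a natural homomorphism $G = \pi_1(\mathbb{X}) \to \pi_1(\overline{G \backslash S})$ obtained by collapsing every vertex group to the trivial group (equivalently, the map on fundamental groups induced by the projection of a graph-of-groups to its underlying graph). This is surjective, so on first homology with any trivial coefficients it gives a surjection $H_1(G,\ZZ/3\ZZ) \twoheadrightarrow H_1(\overline{G\backslash S},\ZZ/3\ZZ)$, since $H_1(\overline{G\backslash S},\ZZ/3\ZZ) = \pi_1(\overline{G\backslash S})^{\mathrm{ab}} \otimes \ZZ/3\ZZ$. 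Concretely one can realize this via the space $X_G$ built in the proof of Lemma~\ref{lem:MayerVietoris}: there is a map $X_G \to \overline{G\backslash S}$ collapsing each $X_w$ to a point and each $X_e \times [0,1]$ to the corresponding edge, inducing the above surjection on $H_1$.

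\textbf{Equivariance.} The second step is to check $\phi$-equivariance of this surjection. A representative $\Phi$ of $\phi$ preserving $S$ induces a $G$-equivariant isometry of $S$, hence a self-map of the graph of groups $\mathbb{X}$, hence (by functoriality of the collapse) a self-map of $\overline{G\backslash S}$, and these fit into a commuting square with the homotopy equivalence $\Phi_{X_G}$ of $X_G$ from the proof of Lemma~\ref{lem:MayerVietoris}. Passing to $H_1(-,\ZZ/3\ZZ)$ gives a commuting square of the collapse surjection with the actions of $\phi$ on both sides. Since $\phi$ acts trivially on $H_1(G,\ZZ/3\ZZ)$ and the collapse map is surjective, $\phi$ acts trivially on $H_1(\overline{G\backslash S},\ZZ/3\ZZ)$.

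\textbf{Expected obstacle.} The only real subtlety is making the "collapse" map and its $\phi$-equivariance precise at the point-set level: an automorphism of a graph of groups a priori permutes vertices and edges of $\overline{G\backslash S}$, and one should be careful that the self-map it induces on the underlying graph is exactly the one respecting the chosen identification $\pi_1(\overline{G\backslash S}) \twoheadleftarrow G$. This is handled cleanly by working throughout with the model space $X_G$ from Lemma~\ref{lem:MayerVietoris} and the projection $X_G \to \overline{G\backslash S}$, which is visibly natural in $\Phi$; the argument is then essentially formal.
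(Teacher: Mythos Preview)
Your proposal is correct and follows essentially the same approach as the paper: both use the natural surjection $H_1(G,\ZZ/3\ZZ)\twoheadrightarrow H_1(\overline{G\backslash S},\ZZ/3\ZZ)$ induced by the collapse $G=\pi_1(G\backslash S)\to\pi_1(\overline{G\backslash S})$, observe that it is $\phi$-equivariant, and conclude from surjectivity. The paper simply writes down the commuting square directly rather than routing through the space $X_G$, but the argument is the same.
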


\begin{proof}
Let $\phi_{gr}$ be the graph automorphism of $\overline{G\backslash S}$ induced by $\phi$. Denote by $\phi_\ast$ (resp. $\phi_{gr\ast}$) the action on $H_1(G,\ZZ/3\ZZ)$ (resp. $H_1(\overline{G \backslash S},\ZZ/3\ZZ)$) induced by $\phi$ (resp. $\phi_{gr}$). Since $G$ is the fundamental group of the graph of groups $G \backslash S$, there exists a natural surjective homomorphism $G \to \pi_1(\overline{G \backslash S})$ inducing in turn a surjection $p \colon H_1(G,\ZZ/3\ZZ) \to H_1(\overline{G \backslash S},\ZZ/3\ZZ)$. The following diagram is commutative:
\[
\begin{tikzcd}
H_1(G,\ZZ/3\ZZ) \arrow{r}{\phi_\ast} \arrow[swap]{d}{p} & H_1(G,\ZZ/3\ZZ) \arrow{d}{p} \\
H_1(\overline{G \backslash S},\ZZ/3\ZZ) \arrow{r}{\phi_{gr \ast}} & H_1(\overline{G \backslash S},\ZZ/3\ZZ).
\end{tikzcd}
\]
Since $p$ is surjective and $\phi_\ast$ is trivial, the map $\phi_{gr \ast}$ is also trivial.
\end{proof}

\begin{lem}\label{lem:trivialactionfreesplitting}
Let $\phi \in \IA^0(G,\mathcal{G},3)$. Let $S$ be a $\phi$-invariant $(G,\mathcal{G})$-free splitting. 
\begin{enumerate}
\item The graph automorphism of $\overline{G \backslash S}$ induced by $\phi$ is trivial. 
\item For every $v \in VS$, the image of the natural homomorphism $\langle \phi \rangle \to \Out(G_v)$ is contained in $\IA^0(G_v,\mathcal{G}_v,3)$.
\end{enumerate}
\end{lem}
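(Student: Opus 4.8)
\textbf{Proof plan for Lemma~\ref{lem:trivialactionfreesplitting}.}

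The plan is to deduce Item~(1) directly from the homological criterion of Lemma~\ref{lem:graphautotrivial} combined with Lemma~\ref{lem:actionhomograph}, and to deduce Item~(2) from the Mayer--Vietoris sequence of Lemma~\ref{lem:MayerVietoris} together with Lemma~\ref{lem:periodic freefactorsystem}. Note that here $S$ is an \emph{arbitrary} $\phi$-invariant $(G,\mathcal{G})$-free splitting, so its vertex groups need not be trivial or conjugate to some $G_i$; they are only $(G,\mathcal{G})$-free factors.

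For Item~(1): since $\phi$ is $\IA^0(G,\mathcal{G},3) \subseteq \Out^0(G,\mathcal{G})$ and preserves $S$, it lies in $\Stab(S)$, hence induces a graph automorphism $\phi_{gr}$ of the finite connected graph $\overline{G \backslash S}$. First I would check that $\phi_{gr}$ fixes every leaf of $\overline{G \backslash S}$: a leaf $v$ of $\overline{G \backslash S}$ is a valence-one vertex, and since $S$ is a free splitting with a minimal $G$-action, such a vertex must carry a nontrivial vertex group which is a $(G,\mathcal{G})$-free factor not conjugate into any $G_i$ (otherwise the edge at $v$ could be collapsed, contradicting minimality, or rather: a valence-one vertex of a minimal tree has nontrivial stabiliser and that stabiliser being a free factor, if it were peripheral the splitting would not be Grushko-reduced — here I would instead argue more carefully that leaves correspond to orbits of valence-one vertices with nontrivial free-factor stabilisers). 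The set of conjugacy classes of nontrivial vertex stabilisers at leaves forms (a subset of) a $(G,\mathcal{G})$-free factor system preserved by $\phi$, so by Lemma~\ref{lem:periodic freefactorsystem} each such conjugacy class is fixed by $\phi$; since distinct leaves give distinct conjugacy classes (their stabilisers are distinct free factors, cf.\ Lemma~\ref{lem:projectionhomology} which shows their homology images are distinct direct summands), $\phi_{gr}$ fixes every leaf. By Lemma~\ref{lem:actionhomograph}, $\phi$ acts trivially on $H_1(\overline{G \backslash S},\ZZ/3\ZZ)$. Lemma~\ref{lem:graphautotrivial} then gives that either $\phi_{gr}=\id$, or $\overline{G \backslash S}$ is a circle and $\phi_{gr}$ is a rotation. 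In the latter case every vertex group is trivial and every edge group is trivial; a nontrivial rotation of the circle would then act nontrivially on $H_1(\overline{G\backslash S},\ZZ/3\ZZ)\cong H_1(G,\ZZ/3\ZZ)$ (here $G\cong F_1$ and the rotation by a full turn is the only nontrivial symmetry fixing the leaf-set, but there are no leaves) — more simply, a rotation acts trivially on $H_1$ of a circle only if it is the identity, so this case forces $\phi_{gr}=\id$ as well. Either way, $\phi_{gr}$ is trivial, proving Item~(1).

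For Item~(2): fix $v \in VS$ with $G_v$ nontrivial (if $G_v$ is trivial there is nothing to prove). By Item~(1), $\phi \in \mathcal{K}(S)$, so it induces a well-defined outer automorphism $\phi_v \in \Out(G_v)$. Since $G_v$ is a $(G,\mathcal{G})$-free factor whose conjugacy class is fixed by $\phi$ (it is fixed because $\phi_{gr}$ is trivial and $\phi\in\mathcal{K}(S)$ preserves the vertex), Lemma~\ref{lem:IA_0_passes_free_factor} applies directly and gives $\phi_v \in \IA^0(G_v,\mathcal{G}_v,3)$, which is exactly the claim. \emph{Alternatively}, to keep the argument self-contained one can invoke Lemma~\ref{lem:MayerVietoris} with $A=\ZZ/3\ZZ$ and $\mathbb{X}=G\backslash S$: it produces a $\phi$-equivariant exact sequence $K \to H_1(G_v,\ZZ/3\ZZ) \to H$ where $H$ is the image of $H_1(G_v,\ZZ/3\ZZ)$ in $H_1(G,\ZZ/3\ZZ)$; since all edge groups of $S$ are trivial, $K_{\mathrm{Inc}_v}=0$ and hence $K=0$, so $H_1(G_v,\ZZ/3\ZZ) \cong H$ as $\phi$-modules, and as $\phi$ acts trivially on $H_1(G,\ZZ/3\ZZ) \supseteq H$ it acts trivially on $H_1(G_v,\ZZ/3\ZZ)$; together with the fact that $\phi_v$ permutes $\mathcal{G}_v$ trivially (because $\phi\in\Out^0(G,\mathcal{G})$ and each element of $\mathcal{G}_v$ is an honest conjugate of some $G_i$), this gives $\phi_v \in \IA^0(G_v,\mathcal{G}_v,3)$.

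The main obstacle I anticipate is the leaf-fixing step in Item~(1): one must rule out the circle-with-rotation alternative of Lemma~\ref{lem:graphautotrivial} and, more importantly, carefully justify that every leaf of $\overline{G\backslash S}$ has nontrivial vertex group so that the free-factor-system argument via Lemma~\ref{lem:periodic freefactorsystem} applies. A minimal $G$-tree with trivial stabiliser at a valence-one vertex is impossible (the pendant edge would contradict minimality, as collapsing it gives a $G$-invariant subtree), so every leaf does carry a nontrivial free factor, and since distinct leaves carry non-conjugate free factors with distinct homology images (Lemma~\ref{lem:projectionhomology}) and $\phi$ fixes each of their conjugacy classes (Lemma~\ref{lem:periodic freefactorsystem}), $\phi_{gr}$ must fix each leaf. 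The circle case has no leaves, and there a nontrivial rotation acts nontrivially on $H_1$, contradicting Lemma~\ref{lem:actionhomograph}; this closes the argument.
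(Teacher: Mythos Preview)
Your overall strategy for Item~(1) matches the paper's: use Lemma~\ref{lem:actionhomograph} for the homology and Lemma~\ref{lem:periodic freefactorsystem} for the leaves, then invoke Lemma~\ref{lem:graphautotrivial}. Item~(2) via Lemma~\ref{lem:IA_0_passes_free_factor} is exactly what the paper does.

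However, your treatment of the circle alternative in Lemma~\ref{lem:graphautotrivial} contains a genuine error. You assert that ``a rotation acts trivially on $H_1$ of a circle only if it is the identity'' and that ``a nontrivial rotation acts nontrivially on $H_1$''. This is false: \emph{every} rotation of a cycle graph acts trivially on $H_1$, since the fundamental class $e_1+\cdots+e_n$ is permuted to itself. So Lemma~\ref{lem:actionhomograph} gives you no traction whatsoever in the circle case, and your argument does not close. You also assert without justification that in the circle case ``every vertex group is trivial''; this is not automatic (e.g.\ $G=G_1\ast\ZZ$ gives a circle quotient with one nontrivial vertex group).

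The paper handles the circle case differently: it observes that if $\xi(G,\mathcal{G})\geq 2$ then some vertex of the circle has nontrivial stabiliser, and that vertex is already fixed by $\phi$ via the free-factor-system argument you gave for leaves (the same argument works for any vertex with nontrivial stabiliser, not just leaves), so the rotation has a fixed point and is trivial. If $\xi(G,\mathcal{G})=1$ and $S$ is nondegenerate, then $G=\ZZ$, and $\IA^0(\ZZ,\varnothing,3)=\{\id\}$ because $-1$ acts nontrivially on $\ZZ/3\ZZ$. Replacing your $H_1$ claim with this dichotomy fixes the proof.
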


\begin{proof}
We may assume that $S$ is nondegenerate as otherwise there is nothing to prove.

We prove the first assertion. By Lemma~\ref{lem:actionhomograph}, the action of $\phi$ on $H_1(\overline{G \backslash S},\ZZ/3\ZZ)$ is trivial. Thus, in order to apply Lemma~\ref{lem:graphautotrivial}, it suffices to prove that $\phi$ fixes every leaf of $\overline{G \backslash S}$. Note that $\phi$ preserves the $(G,\mathcal{G})$-free factor system induced by the set of vertex stabilisers of $S$. In particular, by Lemma~\ref{lem:periodic freefactorsystem}, the outer automorphism $\phi$ fixes pointwise the set of vertices of $\overline{G \backslash S}$ with nontrivial vertex stabiliser. Since every leaf of $\overline{G \backslash S}$ has nontrivial stabiliser by minimality of $S$, we see that $\phi$ fixes every leaf of $\overline{G \backslash S}$. Thus, by Lemma~\ref{lem:graphautotrivial}, if $\overline{G \backslash S}$ is not a circle, then $\phi$ acts trivially on $\overline{G \backslash S}$.

It remains to study the case where $\overline{G \backslash S}$ is a circle. By Lemma~\ref{lem:graphautotrivial}, $\phi$ acts as a rotation on $\overline{G \backslash S}$. Thus, in order to show that $\phi$ acts trivially on $\overline{G \backslash S}$, it suffices to prove that $\phi$ fixes a point of $\overline{G \backslash S}$. If $\xi(G,\mathcal{G}) \geq 2$, then $\overline{G \backslash S}$ has a vertex with nontrivial stabiliser, and is therefore fixed by $\phi$. Otherwise, as $S$ is nondegenerate, we have $G=\ZZ$ and $\phi=\mathrm{id}$, so that $\phi$ acts as the identity on $\overline{G \backslash S}$. This concludes the proof of the first assertion. 

The second assertion follows from Lemma~\ref{lem:IA_0_passes_free_factor} since every nontrivial vertex stabiliser of a $(G,\mathcal{G})$-free splitting is a $(G,\mathcal{G})$-free factor.
\end{proof} 

\subsection{The Standing Assumptions and the group $\IA(G,\mathcal{G},3)$}\label{sec:Standing_Assumptions}

In this section we construct a further finite index subgroup $\IA(G,\mathcal{G},3)$ of $\IA^0(G,\mathcal{G},3)$ which will satisfy the desired aperiodicity properties. The construction of $\IA(G,\mathcal{G},3)$ relies on further assumptions on $G$ which we call the \emph{Standing Assumptions}. These assumptions will be used in the rest of the paper. We first recall some standard definitions.

Let $\phi \in \Out(G,\mathcal{G})$. A \emph{periodic subgroup of $\phi$} is a subgroup $H \subseteq G$ such that there exist $\ell>0$ and $\Psi \in \phi^\ell$ with $\Psi(H)=H$ and $\Psi_{|H}=\mathrm{id}_H$. If $\Phi \in \phi$, let $$\mathrm{Per}(\Phi)=\{g \in G \;|\; \exists \ell >0 \text{ such that } \Phi^\ell(g)=g\}$$ be the \emph{periodic subgroup} of $\Phi$.

\bigskip

\noindent{\bf \hypertarget{SA}{Standing Assumptions}. } 
{\it Let $G$ be a finitely presented torsion free group, let $G=G_1 \ast \ldots \ast G_k \ast F_N$ be the Grushko decomposition of $G$ and let $\mathcal{G}=\{[G_1],\ldots,[G_k]\}$. Let $$p \colon \IA^0(G,\mathcal{G},3) \to \prod_{i=1}^k \Out(G_i)$$ be the natural restriction homomorphism. 

Suppose that, for every $i \in \{1,\ldots,k\}$, there exists a subgroup $\Out^*(G_i)$ of $\Out(G_i)$ with the following properties.
 
\begin{enumerate}
\item The group $\Out^*(G_i)$ is a finite index subgroup of $\Out(G_i)$.
\item The group $\Out^*(G_i)$ is torsion free.
\item For every $\phi \in \Out^*(G_i)$ and every $\phi$-periodic finitely generated subgroup $H$ of $\phi$, we have $\phi \in \Out(G_i,[H]^{(t)})$.
\item Denote by $\Aut^*(G_i)$ the full preimage of $\Out^*(G_i)$ in $\Aut(G_i)$. For every $\Phi \in \Aut^*(G_i)$, every $\Phi$-periodic element of $G_i$ is fixed by $\Phi$, that is $\mathrm{Per}(\Phi)=\mathrm{Fix}(\Phi)$.
\end{enumerate}

Let $\IA(G,\mathcal{G},3)$ be the preimage by $p$ of $\prod_{i=1}^k \Out^*(G_i)$. It is a finite index subgroup of $\IA^0(G,\mathcal{G},3)$.}

\begin{rmq}
When $G=F_N$ and $\mathcal{G}=\varnothing$, the group $\IA(G,\mathcal{G},3)$ is exactly the kernel of the natural homomorphism $\Out(F_N) \to \mathrm{GL}_N(\ZZ/3\ZZ)$ given by the action on $H_1(F_N,\ZZ/3\ZZ)$.
\end{rmq}

We observe immediate consequences of the \hyperlink{SA}{Standing Assumptions}.

\begin{lem}\label{lem:IA_passes_free_factor}
Let $G$ be a group satisfying the \hyperlink{SA}{Standing Assumptions}. Let $\phi \in \IA(G,\mathcal{G},3)$ and let $A$ be a $(G,\mathcal{G})$-free factor whose conjugacy class is $\phi$-invariant. The image $\phi_{|A}$ of $\phi$ in $\Out(A)$ is contained in $\IA(A,\mathcal{G}_{|A},3)$.
\end{lem}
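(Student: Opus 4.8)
The statement to prove is Lemma~\ref{lem:IA_passes_free_factor}: if $\phi \in \IA(G,\mathcal{G},3)$ and $A$ is a $(G,\mathcal{G})$-free factor with $[A]$ $\phi$-invariant, then $\phi_{|A} \in \IA(A,\mathcal{G}_{|A},3)$. The essential point is that the two finite-index conditions cutting out $\IA(G,\mathcal{G},3)$ inside $\Out(G,\mathcal{G})$ — namely belonging to $\IA^0(G,\mathcal{G},3)$, and having $p$-image in $\prod_i \Out^*(G_i)$ — are each inherited by the restriction to $A$. The first of these is already Lemma~\ref{lem:IA_0_passes_free_factor}, so the only new content is the second.

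\medskip

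\textbf{Step 1: Reduce to the two defining conditions.} First I would recall that $\IA(G,\mathcal{G},3)$ is, by definition, the preimage under $p \colon \IA^0(G,\mathcal{G},3) \to \prod_{i=1}^k \Out(G_i)$ of $\prod_{i=1}^k \Out^*(G_i)$. So it suffices to show: (a) $\phi_{|A} \in \IA^0(A,\mathcal{G}_{|A},3)$, and (b) the restriction of $\phi_{|A}$ to each factor of the Grushko decomposition of $(A,\mathcal{G}_{|A})$ lands in the corresponding $\Out^*$.

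\medskip

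\textbf{Step 2: Part (a).} This is immediate from Lemma~\ref{lem:IA_0_passes_free_factor}, since $A$ is a $(G,\mathcal{G})$-free factor whose conjugacy class is $\phi$-invariant, exactly the hypothesis of that lemma.

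\medskip

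\textbf{Step 3: Part (b) — the peripheral factors.} The key observation is that the peripheral subgroups appearing in the Grushko decomposition of $(A,\mathcal{G}_{|A})$ are exactly (conjugates in $A$ of) the $G_i$ that happen to lie inside $A$: since $A$ is a $(G,\mathcal{G})$-free factor, for each $[H]_A \in \mathcal{G}_{|A}$ there is a unique $[G_i] \in \mathcal{G}$ with $H$ conjugate in $G$ to $G_i$ (this is the opening sentence of the proof of Lemma~\ref{lem:IA_0_passes_free_factor}). Now fix such an $H = G_i$ (after conjugating). Choose a representative $\Phi \in \phi$ with $\Phi(A) = A$; then $\Phi_{|A}$ is an automorphism of $A$ preserving $\mathcal{G}_{|A}$, and its induced restriction to $G_i \subseteq A$ agrees, up to an inner automorphism of $G_i$, with the $i$-th component of $p(\phi) \in \prod_j \Out^*(G_j)$. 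Hence the $i$-th component of the restriction homomorphism $\IA^0(A,\mathcal{G}_{|A},3) \to \prod \Out(H_j)$ applied to $\phi_{|A}$ equals $p(\phi)_i \in \Out^*(G_i)$. This gives exactly what is needed. (One should be slightly careful that the "$i$-th component of $p$" is well-defined on the level of outer automorphisms: two choices of $\Phi$ fixing $A$ differ by $\mathrm{ad}_a$ with $a \in A$, and the resulting restrictions to $G_i$ differ by conjugation by an element of $A$ that normalizes $G_i$, hence — using the malnormality of free factors, or more simply that $G_i$ is a free factor of $A$ so its normalizer in $A$ is $G_i$ itself — by an inner automorphism of $G_i$; this is standard and I would dispatch it in a line.)

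\medskip

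\textbf{Expected main obstacle.} There is no deep obstacle here; the lemma is a bookkeeping consequence of the definitions together with Lemma~\ref{lem:IA_0_passes_free_factor}. The only point requiring a little care is the compatibility of the various restriction homomorphisms — checking that restricting $\phi$ first from $G$ to $A$ and then to a peripheral $G_i \subseteq A$ gives the same element of $\Out(G_i)$ as restricting $\phi$ directly to $G_i$ via $p$. This is precisely the kind of "restriction is transitive" statement that is routine but worth one careful sentence, using that free factors are their own normalizers.
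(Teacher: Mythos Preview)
Your proposal is correct and follows essentially the same approach as the paper's proof: invoke Lemma~\ref{lem:IA_0_passes_free_factor} for the $\IA^0$ condition, then observe that each $[H]_A \in \mathcal{G}_{|A}$ corresponds to a unique $[G_i] \in \mathcal{G}$ and that the restriction $\IA^0(G,\mathcal{G},3) \to \Out(G_i)$ factors through $\IA^0(A,\mathcal{G}_{|A},3)$, so the $\Out^*$ condition is inherited. The paper's version is terser and phrases the compatibility as a factorization of restriction maps rather than checking agreement of components, but the content is the same.
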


\begin{proof}
By Lemma~\ref{lem:IA_0_passes_free_factor}, we have $\phi_{|A} \in \IA^0(A,\mathcal{G}_{|A},3)$. For every $[H]_A \in \mathcal{G}_{|A}$, there exists a unique $[G_i] \in \mathcal{G}$ such that $[H]=[G_i]$. Thus, the restriction homomorphism $p_{H} \colon \IA^0(G,\mathcal{G},3) \to \Out(H)$ factors through $\IA^0(G,\mathcal{G},3) \to \IA^0(A,\mathcal{G}_{|A},3)$. As $\phi$ maps into $\Out^*(H)$, so does $\phi_{|A}$ and we have $\phi_{|A} \in \IA(A,\mathcal{G}_{|A},3)$.
\end{proof}

\begin{lem}\label{lem:G/ZGtorsionfree}
Let $G$ be a group satisfying the \hyperlink{SA}{Standing Assumptions}.

\begin{enumerate}
\item For every $i \in \{1,\ldots,k\}$, the group $G_i/Z(G_i)$ is torsion free.
\item Let $S$ be a Grushko $(G,\mathcal{G})$-free splitting. The group $\mathcal{T}(S)$ is torsion free.
\end{enumerate}
\end{lem}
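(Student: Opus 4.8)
The two assertions are tied together: once $G_i/Z(G_i)$ is known to be torsion free, the structure theorem for $\mathcal{T}(S)$ reduces assertion~$(2)$ to a short computation. So I would prove $(1)$ first and deduce $(2)$.

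For assertion~$(1)$, the plan is to realise periodicity by an inner automorphism. Let $g \in G_i$ be such that $g^n \in Z(G_i)$ for some $n \geq 1$; I want to conclude $g \in Z(G_i)$. Consider $\ad_g \in \Inn(G_i)$. Its outer class is trivial, hence lies in $\Out^*(G_i)$, so $\ad_g$ lies in the full preimage $\Aut^*(G_i)$. Since $g^n$ is central, $\ad_g^{\,n} = \ad_{g^n} = \id_{G_i}$, so \emph{every} element of $G_i$ is $\ad_g$-periodic, i.e.\ $\mathrm{Per}(\ad_g) = G_i$. Item~$(4)$ of the \hyperlink{SA}{Standing Assumptions} then forces $\mathrm{Fix}(\ad_g) = \mathrm{Per}(\ad_g) = G_i$, and since $\mathrm{Fix}(\ad_g) = C_{G_i}(g)$ this says exactly $g \in Z(G_i)$. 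Hence $G_i/Z(G_i)$ has no nontrivial torsion.

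For assertion~$(2)$, I would invoke Levitt's description in Proposition~\ref{lem:grouptwists}: for the free splitting $S$ one has $\mathcal{T}(S) \cong \prod_{v \in V\overline{G\backslash S}} G_v^{n_v}/Z(G_v)$, with $Z(G_v)$ embedded diagonally. As $S$ is a Grushko $(G,\mathcal{G})$-free splitting, each vertex group $G_v$ is either trivial (contributing a trivial factor) or conjugate to some $G_i$ (so $G_v \cong G_i$, $Z(G_v)\cong Z(G_i)$). Thus it suffices to show that $G_i^m/Z(G_i)$, with $Z(G_i)$ diagonal, is torsion free for every $m\geq 1$. Given a representative $(g_1,\dots,g_m)$ of a torsion class, some power lands in the diagonal, so there are $r\geq 1$ and $z\in Z(G_i)$ with $g_j^{\,r}=z$ for all $j$. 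By assertion~$(1)$ each $g_j$ is already central; then each $g_j g_1^{-1}$ is a central element with $(g_j g_1^{-1})^r = g_j^{\,r} g_1^{-r} = 1$, hence trivial because $G_i$ is torsion free (a \hyperlink{SA}{Standing Assumption}). So all the $g_j$ coincide and lie in $Z(G_i)$, i.e.\ the class is trivial, and $\mathcal{T}(S)$ is torsion free as a product of torsion-free groups.

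I do not expect a genuine obstacle here: the only non-formal point is recognising that $\ad_g$ is the automorphism witnessing periodicity so that Item~$(4)$ of the \hyperlink{SA}{Standing Assumptions} applies. The one mild subtlety worth spelling out is that passing to $\mathcal{T}(S)$ requires the \emph{diagonal-quotient} groups $G_i^m/Z(G_i)$ to be torsion free, which does not follow formally from torsion-freeness of $G_i$ and of $G_i/Z(G_i)$ separately — hence the small computation with the elements $g_j g_1^{-1}$.
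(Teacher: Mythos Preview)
Your proof is correct and follows essentially the same approach as the paper: for~$(1)$ the paper observes that $\Inn(G_i)\cong G_i/Z(G_i)$ sits inside $\Aut^*(G_i)$, which is torsion free by Item~$(4)$ of the \hyperlink{SA}{Standing Assumptions} --- exactly the content of your $\ad_g$ argument spelled out element by element; for~$(2)$ the paper uses the short exact sequence $1\to Z(G_v)^{n_v}/Z(G_v)\to G_v^{n_v}/Z(G_v)\to (G_v/Z(G_v))^{n_v}\to 1$ and checks torsion-freeness of kernel and quotient, which amounts to the same two facts (torsion-freeness of $G_i$ and of $G_i/Z(G_i)$) that your direct elementwise computation with the $g_jg_1^{-1}$ uses.
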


\begin{proof}
Let $i \in \{1,\ldots,k\}$. The group $\Inn(G_i) \subseteq \IA^\Aut(G_i)$ is isomorphic to $G_i/Z(G_i)$. By Item~$(4)$ of the \hyperlink{SA}{Standing Assumptions}, the group $\IA^\Aut(G_i)$ is torsion free. Therefore, the group $G_i/Z(G_i)$ is torsion free. This proves the first assertion.

We now prove the second assertion. Let $S$ be a Grushko $(G,\mathcal{G})$-free splitting. By Proposition~\ref{lem:grouptwists}, the group $\mathcal{T}(S)$ is isomorphic to a direct product $\prod_{v \in V\overline{G \backslash S}} G_v^{n_{v}}/Z(G_v)$, where $Z(G_v)$ embeds diagonnally. The group $\prod_{v \in V\overline{G \backslash S}} G_v^{n_{v}}/Z(G_v)$ projects onto the group $\prod_{v \in V\overline{G \backslash S}} \left(G_v/Z(G_v)\right)^{n_{v}}$ with kernel isomorphic to $\prod_{v \in V\overline{G \backslash S}} Z(G_v)^{n_{v}}/Z(G_v)$. The group $\prod_{v \in V\overline{G \backslash S}} \left(G_v/Z(G_v)\right)^{n_{v}}$ is torsion free by Assertion~$(1)$ and the kernel is also torsion free, being the quotient of a direct product of free abelian groups by its diagonal subgroup (recall that $G$ is torsion free). Hence $\mathcal{T}(S)$ is torsion free.
\end{proof}

We now check an easy aperiodicity property of $\IA(G,\mathcal{G},3)$.

\begin{prop}\label{prop:IAtorsionfree}
Let $G$ be a group satisfying the \hyperlink{SA}{Standing Assumptions}. The group $\IA(G,\mathcal{G},3)$ is torsion free.
\end{prop}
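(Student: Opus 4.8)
The plan is to induct on the Grushko rank $\xi(G,\mathcal{G})$ and to reduce a hypothetical torsion element to the vertex groups and edge-twist subgroups of a suitable $\phi$-invariant Grushko free splitting, where the \hyperlink{SA}{Standing Assumptions} guarantee torsion-freeness. First I would treat the base case: if $\xi(G,\mathcal{G})=1$, then either $G=G_1$ (so $\mathcal{G}=\{[G]\}$ and $\IA(G,\mathcal{G},3)\subseteq \Out^*(G_1)$, which is torsion free by Item~(2) of the \hyperlink{SA}{Standing Assumptions}) or $G=F_1=\ZZ$ (so $\Out(G)$ is finite and $\IA(G,\mathcal{G},3)$ is trivial). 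So assume $\xi(G,\mathcal{G})\geq 2$ and let $\phi\in\IA(G,\mathcal{G},3)$ be of finite order. Being a finite subgroup, $\langle\phi\rangle$ fixes a point of the spine of outer space $\mathcal{O}(G,\mathcal{G})$ by Theorem~\ref{thm:fixedpointouterspace}; that is, there is a Grushko $(G,\mathcal{G})$-free splitting $S$ with $\phi\in\Stab(S)$.

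Next I would push $\phi$ into the kernels defined in Section~\ref{section:splitting}. By Lemma~\ref{lem:trivialactionfreesplitting}(1) the graph automorphism of $\overline{G\backslash S}$ induced by $\phi$ is trivial, so $\phi\in\mathcal{K}(S)$. Consider the restriction homomorphism $\mathcal{K}(S)\to\prod_{v\in V\overline{G\backslash S}}\Out(G_v)$. For each vertex $v$, the image $\phi_v\in\Out(G_v)$ lies in $\IA^0(G_v,\mathcal{G}_v,3)$ by Lemma~\ref{lem:trivialactionfreesplitting}(2), and in fact in $\IA(G_v,\mathcal{G}_v,3)$ by Lemma~\ref{lem:IA_passes_free_factor} (each nontrivial $G_v$ is a $(G,\mathcal{G})$-free factor); it is of finite order since $\phi$ is. If some $G_v$ is trivial there is nothing to check there. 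If $[G_v]\in\mathcal{G}$, say $G_v$ conjugate to $G_i$, then $\phi_v\in\Out^*(G_i)$, which is torsion free, so $\phi_v=1$. If $G_v$ is nontrivial with trivial $\mathcal{G}$-restriction, then $(G_v,\mathcal{G}_v)$ is a proper free product of strictly smaller Grushko rank (since $S$ is nondegenerate and $\xi(G,\mathcal{G})\geq 2$), so by the induction hypothesis $\IA(G_v,\mathcal{G}_v,3)$ is torsion free and again $\phi_v=1$. Hence $\phi$ lies in the kernel $\mathcal{T}(S)$ of this restriction map.

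Finally I would invoke Lemma~\ref{lem:G/ZGtorsionfree}(2): $\mathcal{T}(S)$ is torsion free, so $\phi=1$. This completes the induction and shows $\IA(G,\mathcal{G},3)$ is torsion free. The one delicate point is the bookkeeping in the inductive step: one must make sure that for every vertex $v$ of $\overline{G\backslash S}$ either $G_v$ is trivial, or $[G_v]\in\mathcal{G}$, or $(G_v,\mathcal{G}_v)$ is a genuinely smaller free product, which follows from the definition of a Grushko free splitting together with the fact that a nondegenerate $S$ strictly decreases the Grushko rank at each vertex. One should also confirm that the relevant vertex group data $(G_v,\mathcal{G}_v)$ inherits the \hyperlink{SA}{Standing Assumptions} — but the hypotheses we actually use, namely torsion-freeness of $\Out^*(G_i)$ and of $\mathcal{T}(S)$, are already available directly, so no further checking of the \hyperlink{SA}{Standing Assumptions} for $(G_v,\mathcal{G}_v)$ is needed beyond what the induction hypothesis supplies; the remaining subtlety, that the peripheral pieces $[G_i]$ recur verbatim at vertices of $S$ and hence are governed by the same $\Out^*(G_i)$, is exactly what makes Lemma~\ref{lem:IA_passes_free_factor} applicable.
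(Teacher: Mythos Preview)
Your proof is correct and follows essentially the same route as the paper: fix a Grushko $(G,\mathcal{G})$-free splitting $S$ via Theorem~\ref{thm:fixedpointouterspace}, use Lemma~\ref{lem:trivialactionfreesplitting} to land in $\mathcal{K}(S)$, show the image in $\prod_v\Out(G_v)$ is trivial, and conclude using Lemma~\ref{lem:G/ZGtorsionfree}(2).

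The only remark is that your inductive framework is superfluous. By the very definition of a Grushko $(G,\mathcal{G})$-free splitting, every nontrivial vertex stabiliser $G_v$ satisfies $[G_v]\in\mathcal{G}$; your third case (``$G_v$ nontrivial with trivial $\mathcal{G}$-restriction'') never occurs, so the induction hypothesis is never invoked. The paper streamlines this by first observing that the restriction map $p_{\IA}\colon\IA(G,\mathcal{G},3)\to\prod_{i=1}^k\Out(G_i)$ has torsion-free image (Item~(2) of the \hyperlink{SA}{Standing Assumptions}), and then showing directly that any finite subgroup of $\ker(p_{\IA})$ lands in $\mathcal{T}(S)$. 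Your argument and the paper's are otherwise identical.
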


\begin{proof}
The natural homomorphism $p_{\IA}\colon \IA(G,\mathcal{G},3) \to \prod_{i=1}^k \Out(G_i)$ has torsion free image by Item~$(2)$ of the \hyperlink{SA}{Standing Assumptions}. Thus, it suffices to show that $\ker(p_{\IA})$ is torsion free. 

Let $H$ be a finite subgroup of $\ker(p_{\IA})$. By Theorem~\ref{thm:fixedpointouterspace}, the group $H$ fixes a Grushko $(G,\mathcal{G})$-free splitting $S$. By Lemma~\ref{lem:trivialactionfreesplitting}, the group $H$ is contained in $\mathcal{K}(S)$. Since $H \subseteq \ker(p_{\IA})$, the group $H$ is contained in $\mathcal{T}(S)$. By Lemma~\ref{lem:G/ZGtorsionfree}, the group $\mathcal{T}(S)$ is torsion free. Hence $H$ is trivial.
\end{proof}

\section{Periodic conjugacy classes of elements}\label{Section:perodicelements}

Let $G$ be a group satisfying the \hyperlink{SA}{Standing Assumptions}. In this section we describe the dynamics of periodic conjugacy classes of elements of $G$ for automorphisms in $\IA(G,\mathcal{G},3)$. The aim of the section is to prove the following theorem.

\begin{theo}\label{thm:invariantfamilycyclicfixed}
Let $G$ be a group satisfying the \hyperlink{SA}{Standing Assumptions}. Let $\phi \in \IA(G,\mathcal{G},3)$. Let $\mathcal{H}$ be a $\phi$-invariant finite set of conjugacy classes of finitely generated periodic subgroups of $\phi$. Then $\phi \in \Out(G,\mathcal{H}^{(t)})$.
\end{theo}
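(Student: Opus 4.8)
The plan is to argue by induction on the Grushko rank $\xi(G,\mathcal{G})$, reducing at each step to a situation where we can apply one of the canonical tree constructions recalled in Section~\ref{Section:Background}. First I would observe that it suffices to treat the case of a single conjugacy class $[H] \in \mathcal{H}$, and moreover that we may replace $\mathcal{H}$ by $\mathcal{H}$ enlarged by all conjugacy classes $\phi^n([H])$; since $[H]$ is periodic this is still finite and $\phi$-invariant, and proving $\phi \in \Out(G,[H]^{(t)})$ for each element of this enlarged set is what we want. The key reduction is then: if there is a \emph{proper} $\phi$-invariant $(G,\mathcal{G})$-free factor system $\mathcal{F}$ such that every $H$ (up to conjugacy) is contained in some element of $\mathcal{F}$, then by Lemma~\ref{lem:periodic freefactorsystem} $\phi$ fixes every $[A] \in \mathcal{F}$, by Lemma~\ref{lem:IA_passes_free_factor} the restriction $\phi_{|A}$ lies in $\IA(A,\mathcal{G}_{|A},3)$, which has strictly smaller Grushko rank, and by induction $\phi_{|A} \in \Out(A,(\mathcal{H}_{|A})^{(t)})$. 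One then has to promote these restricted statements to a global statement about $G$; this is done by choosing a Grushko $(G,\mathcal{G})$-free splitting realising $\mathcal{F}$, invoking Lemma~\ref{lem:trivialactionfreesplitting} to see $\phi$ acts trivially on the underlying graph, and reassembling a representative $\Phi$ fixing $H$ pointwise from the vertex-group representatives (with a small argument to correct the twist coordinates, using that $\mathcal{T}(S)$ is torsion free, Lemma~\ref{lem:G/ZGtorsionfree}).

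So the heart of the matter is the base case of the induction: when $G$ is \emph{one-ended relative to $\mathcal{G} \cup \mathcal{H}$}. Here I would apply Theorem~\ref{thm:JSJrelhyp} to the relatively hyperbolic pair $(G,\mathcal{G})$ with the auxiliary family $\mathcal{H}$, obtaining the canonical JSJ elementary splitting $T = T_{\mathcal{G} \cup \mathcal{H}}$, which is $\Aut(G,\mathcal{G},\mathcal{H})$-invariant, hence $\phi$-invariant. Since each $H \in \mathcal{H}$ is elliptic in $T$, by Lemma~\ref{lem:elemornonelem} it fixes a unique (elementary or nonelementary) vertex; I would first handle the graph-of-groups combinatorics as in Section~\ref{section:splitting}: by Lemma~\ref{lem:actionhomograph} and Lemma~\ref{lem:graphautotrivial} (after checking $\phi$ fixes the leaves, which follows because leaves have vertex groups that are free factors, cf. Lemma~\ref{lem:periodic freefactorsystem}) the automorphism $\phi$ acts trivially on the underlying graph $\overline{G \backslash T}$, so $\phi \in \mathcal{K}(T)$. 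Now $\phi$ induces $\phi_v \in \Out(G_v)$ for each vertex, and $H$ maps into a single $G_v$. I then need, for that vertex $v$, that $\phi_v$ has a representative fixing (the image of) $H$ pointwise. This is where the case analysis of Theorem~\ref{thm:JSJrelhyp}(5) enters: if $G_v$ is two-ended or conjugate into some $P_i \in \mathcal{G}$, then $\phi_v$ lands in a group controlled by the Standing Assumptions (Items~(3),(4)) or by the elementary structure; if $v$ is rigid, Theorem~\ref{thm:JSJrelhyp}(8) says the image of $\mathcal{K}(T) \cap \Out(G,\mathcal{G},\mathcal{H}^{(t)})$ in $\Out(G_v)$ is finite, and since $\IA(G,\mathcal{G},3)$ is torsion free (Proposition~\ref{prop:IAtorsionfree}) one deduces $\phi_v$ is trivial; if $v$ is QH, Theorem~\ref{thm:JSJrelhyp}(9) puts $\phi_v$ in the mapping class group of $\Sigma_v$, and since $H \cap G_v$ lies in a boundary subgroup, a mapping class fixing homology mod $3$ (one must check $\phi_v$ does lie in the appropriate congruence subgroup, cf. Lemma~\ref{lem:IA_passes_free_factor}-type reasoning) fixes each boundary subgroup's conjugacy class, and in fact one can choose the representative to fix $H \cap G_v$ pointwise using that the automorphism is trivial on homology mod $3$.

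Having obtained vertex-wise representatives $\Phi_v$ of $\phi_v$ fixing the relevant subgroups pointwise, the final step is to assemble a global representative $\Phi \in \phi$ with $\Phi(H) = H$ and $\Phi_{|H} = \id_H$. Since $\phi \in \mathcal{K}(T)$, the obstruction to gluing the $\Phi_v$ into a single automorphism of $G$ is a twist element (an element of $\mathcal{T}(T)$, or of the bitwist group for non-free splittings), and one uses that $\phi$ lies in the congruence subgroup and that the relevant twist/bitwist group is torsion free to kill periodic behaviour of this obstruction; concretely, one shows the twist coordinate, being periodic and lying in a torsion-free group, must be trivial after passing to the correct representative. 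The main obstacle I anticipate is precisely this last reassembly together with the QH-vertex case: one must be careful that fixing $[H]$ pointwise on each vertex group, combined with triviality on the graph, genuinely upgrades to a single automorphism of $G$ fixing $H$ pointwise --- this requires tracking the interaction between the twist group, the centres $Z(G_v)$, and the normalisers of the incident edge groups, and is where Lemma~\ref{lem:G/ZGtorsionfree} does the real work. A secondary subtlety is verifying in the one-ended base case that the restriction $\phi_v$ actually lies in the finite-index subgroup $\Out^*(G_v)$ (for peripheral $v$) or in the relevant congruence mapping class group (for QH $v$), which follows from $\phi \in \IA(G,\mathcal{G},3)$ by the same homological arguments as Lemmas~\ref{lem:projectionhomology}--\ref{lem:IA_passes_free_factor} but needs to be spelled out since $G_v$ need not be a free factor of $G$.
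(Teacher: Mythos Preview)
Your overall architecture---induction on $\xi(G,\mathcal{G})$, reduction to the one-ended-relative case, then analysis via the JSJ elementary splitting $T_{\mathcal{G}\cup\mathcal{H}}$---matches the paper. But two of your key steps in the one-ended case contain genuine gaps, and both stem from the same oversight: the JSJ elementary splitting has \emph{nontrivial} edge stabilisers, so vertex groups of $T_{\mathcal{G}\cup\mathcal{H}}$ are \emph{not} free factors of $G$.

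First, your argument that $\phi$ acts trivially on $\overline{G\backslash T_{\mathcal{G}\cup\mathcal{H}}}$ asserts that ``leaves have vertex groups that are free factors, cf.\ Lemma~\ref{lem:periodic freefactorsystem}''. This is false: a leaf may have elementary cyclic stabiliser, or rigid or QH stabiliser that is not a free factor. The paper devotes Lemma~\ref{lem:trivialactionJSJsplittingtreecase} and Proposition~\ref{lem:trivialactionJSJsplitting} to this, handling noncyclic leaves by a homological argument (Lemma~\ref{lem:HomologyQHvertex}) and cyclic leaves via Lemma~\ref{lem:cyclicsplittinghasvertexfreefactor}, and then separately treating the circle case.

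Second, for a rigid vertex $v$ you write: ``the image \ldots\ in $\Out(G_v)$ is finite, and since $\IA(G,\mathcal{G},3)$ is torsion free one deduces $\phi_v$ is trivial''. This is a non sequitur: $\phi_v$ lives in $\Out(G_v)$, not in $\IA(G,\mathcal{G},3)$, and since $G_v$ is not a free factor you cannot invoke Lemma~\ref{lem:IA_passes_free_factor}. The paper's Lemma~\ref{lem:rigidvertexacttrivially} is a substantial argument: it passes to a Grushko $(G_v,\mathcal{G}_v)$-free splitting, shows $\phi_v$ acts trivially on the underlying graph by a delicate homology computation (Mayer--Vietoris, Lemma~\ref{lem:MayerVietoris}, plus control of the integral homology via the incident cyclic edge groups), and then uses Item~(4) of the Standing Assumptions on each peripheral piece.

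Both of these fixes, as well as the QH case via Lemma~\ref{lem:flexiblevertexacttrivially}, depend on a step you omit entirely: the \emph{Claim} in the paper's proof that $\phi_e=\mathrm{id}$ for every edge $e$ with $G_e\simeq\ZZ$. This is where the inductive hypothesis is actually used in the one-ended case---one shows (via Lemma~\ref{lem:cyclicsplittinghasvertexfreefactor}) that $G_e$ lies in a proper free factor $A$, applies induction to $\phi_{|A}$, and deduces the conjugacy class of a generator of $G_e$ is fixed. Without this Claim, neither the rigid-vertex Lemma~\ref{lem:rigidvertexacttrivially} nor the QH-vertex Lemma~\ref{lem:flexiblevertexacttrivially} can be applied.

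Finally, your ``reassembly'' paragraph about twist groups is unnecessary. Once $\phi_v\in\Out(G_v,\mathcal{H}_v^{(t)})$ and $H\subseteq G_v$, any representative of $\phi_v$ fixing $H$ pointwise differs from $\Phi|_{G_v}$ (for $\Phi\in\phi$ fixing $v$) by conjugation by some $g\in G_v$; then $\mathrm{ad}_g\circ\Phi\in\phi$ fixes $H$ pointwise. No global gluing is needed.
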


As a corollary, we have:

\begin{coro}\label{coro:periodicconjclass}
Let $G$ be a group satisfying the \hyperlink{SA}{Standing Assumptions}. Let $\phi \in \IA(G,\mathcal{G},3)$. Every $\phi$-periodic conjugacy class of elements of $G$ is fixed by $\phi$.
\end{coro}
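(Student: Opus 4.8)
The plan is to deduce Corollary~\ref{coro:periodicconjclass} directly from Theorem~\ref{thm:invariantfamilycyclicfixed} by a straightforward application to cyclic subgroups. Let $\phi \in \IA(G,\mathcal{G},3)$ and let $[g]$ be a $\phi$-periodic conjugacy class of an element $g \in G$. If $g = 1$ there is nothing to prove, so assume $g \neq 1$; since $G$ is torsion free (it satisfies the \hyperlink{SA}{Standing Assumptions}), the subgroup $\langle g \rangle$ is infinite cyclic. Because $[g]$ is $\phi$-periodic, the set $\mathcal{H} = \{[\langle \Phi^n(g) \rangle]\}_{n \in \NN}$ of conjugacy classes of cyclic subgroups is finite and $\phi$-invariant, where $\Phi$ is any representative of $\phi$.

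First I would check that every subgroup appearing in $\mathcal{H}$ is a periodic subgroup of $\phi$ in the sense of the definition preceding the \hyperlink{SA}{Standing Assumptions}. Since $[g]$ is $\phi$-periodic, there is $\ell > 0$ with $\phi^\ell([g]) = [g]$, so choosing a representative $\Psi \in \phi^\ell$ suitably (composing with an inner automorphism) we get $\Psi(\langle g \rangle) = \langle g \rangle$; as $G$ is torsion free, the only automorphisms of $\langle g \rangle \cong \ZZ$ are $\pm \id$, so after possibly replacing $\ell$ by $2\ell$ we obtain $\Psi_{|\langle g \rangle} = \id$. The same argument applies to each $\langle \Phi^n(g) \rangle$, which is conjugate to $\langle g \rangle$ or to another element of the finite orbit, so each is a periodic subgroup of $\phi$. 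Thus $\mathcal{H}$ is a $\phi$-invariant finite set of conjugacy classes of finitely generated periodic subgroups of $\phi$.

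Applying Theorem~\ref{thm:invariantfamilycyclicfixed} to this $\mathcal{H}$ gives $\phi \in \Out(G,\mathcal{H}^{(t)})$; in particular $\phi$ fixes each element of $\mathcal{H}$, so $\phi([\langle g \rangle]) = [\langle g \rangle]$, and there is a representative $\Phi_0 \in \phi$ with $\Phi_0(\langle g \rangle) = \langle g \rangle$ and $\Phi_0$ restricting to the identity on $\langle g \rangle$. Then $\Phi_0(g) = g$, hence $\phi([g]) = [g]$, as desired.

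The genuinely substantial content is entirely inside Theorem~\ref{thm:invariantfamilycyclicfixed}, whose proof (via JSJ decompositions relative to $\mathcal{G} \cup \mathcal{H}$ and the analysis of vertex groups in Section~\ref{Section:perodicelements}) is where all the work lies; the deduction above is purely formal, the only minor point being the bookkeeping with powers and the sign ambiguity $\pm\id$ on infinite cyclic subgroups, which is why one passes to $\Out(G,\mathcal{H}^{(t)})$ rather than merely to the stabiliser of $\mathcal{H}$.
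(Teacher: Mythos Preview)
Your proposal is correct and follows essentially the same route as the paper: set $\mathcal{H}=\{[\langle \Phi^n(g)\rangle]\}_{n\in\NN}$, observe it is a finite $\phi$-invariant family of finitely generated periodic subgroups, apply Theorem~\ref{thm:invariantfamilycyclicfixed} to get $\phi\in\Out(G,\mathcal{H}^{(t)})$, and read off a representative fixing $g$. Your version is slightly more detailed in justifying that each $\langle \Phi^n(g)\rangle$ is a periodic subgroup (handling the $\pm\id$ ambiguity on $\ZZ$), which the paper leaves implicit.
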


\begin{proof}
Let $g \in G$ whose conjugacy class is $\phi$-periodic, let $\Phi$ be a representative of $\phi$ and let $\mathcal{H}=\{[\langle \Phi^i(g) \rangle]\}_{i \in \NN}$. Then $\mathcal{H}$ is a $\phi$-invariant finite set of conjugacy classes of finitely generated periodic subgroups of $\phi$. By Theorem~\ref{thm:invariantfamilycyclicfixed}, we have $\phi \in \Out(G,\mathcal{H}^{(t)})$, so that there exists $\Psi \in \phi$ such that $\Psi(\langle g \rangle)=\langle g \rangle$ and $\Psi_{|\langle g \rangle}=\mathrm{Id}_{\langle g \rangle}$. Then $\Psi(g)=g$ and the $G$-conjugacy class of $g$ is fixed by $\phi$.
\end{proof}

The proof uses elementary JSJ decompositions of the relatively hyperbolic pair $(G,\mathcal{G})$ relative to the set $\mathcal{H}$. Indeed, using an inductive argument, we may assume that $G$ is one-ended relative to $\mathcal{G} \cup \mathcal{H}$. In that case, by Theorem~\ref{thm:JSJrelhyp} (with $G=\Gamma$ and $\mathcal{P}=\mathcal{G}$), there exists a canonical $\phi$-invariant JSJ elementary splitting $T_{\mathcal{G} \cup \mathcal{H}}$. We then show that the action of $\phi$ on this JSJ splitting is sufficiently simple to prove the desired result. The proof being quite technical, we subdivide it into several subsections.

\subsection{Trivial action on the quotient graph}

Let $G$ be a group satisfying the \hyperlink{SA}{Standing Assumptions} and let $\mathcal{H}$ be a finite set of conjugacy classes of finitely generated subgroups of $G$ such that $G$ is one-ended relative to $\mathcal{G} \cup \mathcal{H}$. The aim of this section is to prove that any outer automorphism $\phi \in \IA(G,\mathcal{G},3)$ preserving $\mathcal{H}$ and such that $\mathcal{H}$ consists of periodic subgroups of $\phi$ acts trivially on the graph $\overline{G \backslash T_{\mathcal{G}\cup\mathcal{H}}}$ (see Proposition~\ref{lem:trivialactionJSJsplitting}). 

The strategy is to apply Lemma~\ref{lem:graphautotrivial}. However, in order to apply it, we need to prove that $\phi$ fixes every leaf of $\overline{G \backslash T_{\mathcal{G}\cup\mathcal{H}}}$. This is done in Lemma~\ref{lem:trivialactionJSJsplittingtreecase}. During the proof of the later, we will need the following result, due to Horbez.

\begin{lem}[{\cite[Lemma~6.11]{Horbez17}}]\label{lem:cyclicsplittinghasvertexfreefactor}
Let $(G,\mathcal{G})$ be a free product and let $T$ be a $(G,\mathcal{G})$-cyclic splitting whose edge stabilisers are all nontrivial. There exists $v \in VT$ such that $G_v$ splits as a free product relative to $\mathcal{G}_v$ and incident edge stabilisers. 
\end{lem}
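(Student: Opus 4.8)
The plan is to argue by contradiction. Suppose that for every $v \in VT$ the group $G_v$ does \emph{not} split as a free product relative to $\mathcal{G}_v$ and its incident edge stabilisers; equivalently, every $(G_v,\mathcal{G}_v)$-free splitting in which all edge stabilisers of $T$ incident to $v$ are elliptic is degenerate. One may assume $T$ nondegenerate. Since every $G_i$ is elliptic in $T$ and $T$ has a nontrivial (hence infinite cyclic) edge stabiliser, one must have $\xi(G,\mathcal{G}) \geq 2$: if $\xi(G,\mathcal{G}) = 1$ then either $G=G_1$ with $\mathcal{G}=\{[G]\}$, so $G$ is elliptic in $T$ and $T$ is degenerate, or $G \simeq \ZZ$, which has no nondegenerate splitting with a nontrivial edge stabiliser. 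In particular $G$ admits a nondegenerate $(G,\mathcal{G})$-free splitting.

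I would first pass to a counterexample $((G,\mathcal{G}),T)$ minimising the number of $G$-orbits of edges of $T$, and show it has a single edge orbit. If $T$ has at least two edge orbits, collapse one orbit to obtain a nondegenerate $(G,\mathcal{G})$-cyclic splitting $T'$ with all edge stabilisers still nontrivial. By minimality, some $v' \in VT'$ has $G_{v'}$ splitting as a free product relative to $\mathcal{G}_{v'}$ and the edge stabilisers of $T'$ incident to $v'$; let $Z$ be a witnessing nondegenerate free splitting of $G_{v'}$. If $v'$ lifts to a vertex of $T$ untouched by the collapse, its incident edge stabilisers are unchanged and it already contradicts the assumption on $T$. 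Otherwise $G_{v'} = G_{u_1}\ast_{G_e} G_{u_2}$ or $G_{u_1}\ast_{G_e}$ over the nontrivial edge group $G_e$, with $u_1,u_2 \in VT$. Restricting $Z$ to $G_{u_1}$ (resp. $G_{u_2}$): the group $G_e$, the groups of $\mathcal{G}_{u_1}$ and all edge stabilisers incident to $u_1$ in $T$ are elliptic in $Z$, so by the assumption on $G_{u_1}$ the group $G_{u_1}$ fixes a point of $Z$; similarly for $G_{u_2}$ (in the HNN case the two conjugates of $G_e$ are exchanged by the stable letter). As $G_e$ is nontrivial and $Z$ has trivial edge stabilisers, $G_e$ fixes a \emph{unique} point of $Z$; hence $G_{u_1}$, $G_{u_2}$ (resp. the stable letter) and therefore $G_{v'}$ fix this point, contradicting nondegeneracy of $Z$. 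So $T$ has a single edge orbit.

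Then $\overline{G\backslash T}$ is a single edge, so $G = A \ast_C B$ with $C = G_e$ nontrivial infinite cyclic, or a single loop, so $G = A\ast_C$ with the two incident edge groups conjugate copies of $C$; the assumption is that $A$ (and $B$, in the first case) is freely indecomposable relative to $\mathcal{G}_A$ (resp. $\mathcal{G}_B$) together with its incident edge groups. Fix a nondegenerate $(G,\mathcal{G})$-free splitting $S$, a generator $c$ of $C$, and let $S_A \subseteq S$, $S_B \subseteq S$ be the minimal subtrees of $A$ and $B$. If $c$ is elliptic in $S$: then $c$ and the groups of $\mathcal{G}_A$ are elliptic in $S_A$ (closest-point projection to a subtree commutes with the action), so $S_A$ is a free splitting of $A$ in which $\mathcal{G}_A$ and $C$ are elliptic, hence degenerate by hypothesis, so $A$ fixes a point of $S$; likewise for $B$, and for the stable letter in the loop case. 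Since $C$ is nontrivial it has a unique fixed point in $S$, so $A$, $B$ (and the stable letter) all fix it, whence $G$ fixes it, contradicting nondegeneracy of $S$. If $c$ is hyperbolic in $S$: then $S_A$ contains the axis $L$ of $c$ and is nondegenerate, and collapsing the $A$-orbit of $L$ yields a free splitting of $A$ in which $\mathcal{G}_A$ and $C$ are elliptic, which is therefore degenerate; so $S_A = A\cdot L$, and similarly $S_B = B\cdot L$, whence $S = G\cdot L$.

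The hard part is this last situation: the whole nondegenerate free splitting $S$ is swept out by the $G$-orbit of the axis of the non-peripheral element $c$, while both vertex groups of $G = A \ast_C B$ (or the single vertex group, in the HNN case) are freely indecomposable relative to their edge groups. I expect this to be impossible --- to force $G$ to be one-ended, contradicting the existence of $S$ --- and to follow from a relative version of Grushko's theorem to the effect that a free product $(G,\mathcal{G})$ with $\xi(G,\mathcal{G}) \geq 2$ cannot split as an amalgam or HNN extension over an infinite cyclic subgroup with all vertex groups freely indecomposable relative to $\mathcal{G}_v$ and their incident edge groups. This is the one point where the finer accessibility theory of free products is genuinely needed; the reduction to one edge orbit and the elliptic case use only that a nontrivial subgroup has a unique fixed point in a free splitting.
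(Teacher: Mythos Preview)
The paper does not prove this lemma; it is quoted from Horbez without argument, so there is no in-paper proof to compare against. Your attempt, however, has a genuine gap --- in fact the one you flag yourself. After reducing to a single edge orbit with $C=\langle c\rangle$ hyperbolic on a Grushko $(G,\mathcal{G})$-tree $S$, you reach $S=G\cdot L$ and then stop, saying you ``expect'' a relative Grushko-type statement to rule this out. That expectation \emph{is} the lemma: this is where the content lies, and the reduction has not simplified it. Worse, the reduction step has the same unfilled hole one stage earlier. When you collapse an edge orbit to pass from $T$ to $T'$ and take a nondegenerate free splitting $Z$ of $G_{v'}$ relative to $\mathcal{G}_{v'}\cup\mathrm{Inc}_{v'}(T')$, you assert that the collapsed edge group $G_e$ is elliptic in $Z$. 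But $G_e$ is \emph{not} in $\mathrm{Inc}_{v'}(T')$ (it was collapsed) and need not be $(G,\mathcal{G})$-peripheral, so nothing forces it to be elliptic in $Z$. Without this you cannot apply the indecomposability hypothesis to $G_{u_1}$, since $G_e\in\mathrm{Inc}_{u_1}(T)$ and the minimal $G_{u_1}$-subtree of $Z$ may well have $G_e$ hyperbolic. So the minimisation does not avoid the hyperbolic-edge-group difficulty; it merely reproduces it inside the induction.

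What is missing is a quantitative input. The standard argument fixes a Grushko $(G,\mathcal{G})$-tree $R$, looks at the minimal subtrees $R_v\subseteq R$ of the vertex groups $G_v$, and uses a Kurosh-rank/complexity count relating $\xi(G,\mathcal{G})$ to the combinatorics of the $R_v$'s to force at least one $G_v$ to split nontrivially relative to $\mathcal{G}_v$ and its incident edge groups. That count is exactly what replaces your unproved ``relative Grushko'' expectation, and it works uniformly without any case split on whether edge groups are elliptic or hyperbolic in $R$.
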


\begin{lem}\label{lem:trivialactionJSJsplittingtreecase}
Let $G$ be a group satisfying the \hyperlink{SA}{Standing Assumptions}.  Let $\mathcal{H}$ be a finite set of conjugacy classes of finitely generated subgroups of $G$ such that $G$ is one-ended relative to $\mathcal{G}\cup\mathcal{H}$.
Let $\phi \in \IA(G,\mathcal{G},3)$ be an outer automorphism preserving $\mathcal{H}$. The graph automorphism induced by $\phi$ on $\overline{G \backslash T_{\mathcal{G}\cup\mathcal{H}}}$ fixes every leaf of $\overline{G \backslash T_{\mathcal{G}\cup\mathcal{H}}}$.
\end{lem}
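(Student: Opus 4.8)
The plan is to apply the criterion from Lemma~\ref{lem:graphautotrivial} via Lemma~\ref{lem:actionhomograph}: since $\phi \in \IA^0(G,\mathcal{G},3)$ acts trivially on $H_1(G,\ZZ/3\ZZ)$, it acts trivially on $H_1(\overline{G \backslash T_{\mathcal{G}\cup\mathcal{H}}},\ZZ/3\ZZ)$, so once we know $\phi$ fixes every leaf we can conclude. But here the statement to be proved \emph{is} precisely that leaves are fixed, so the real content is a direct argument. I would begin by analysing the possible types of a leaf $\bar v$ of $\overline{G \backslash T_{\mathcal{G}\cup\mathcal{H}}}$, using Theorem~\ref{thm:JSJrelhyp}$(5)$: the stabiliser $G_v$ is either two-ended, or conjugate into some $G_i$ (its class lies in $\mathcal{G}$), or a nonelementary QH vertex, or a nonelementary rigid vertex. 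A leaf has exactly one incident edge; call $G_e$ its (infinite elementary) stabiliser.

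The key point, which I expect to be the main obstacle, is ruling out the possibility that $\phi$ swaps a leaf $\bar v$ with another leaf $\bar w$. First I would observe that $\phi$ permutes leaves in an orbit, and if the orbit is nontrivial all vertices in it have isomorphic stabilisers, equal valence $1$, and their incident edge groups form a $\phi$-orbit. The natural way to get a contradiction is to produce a $(G,\mathcal{G})$-free factor system, or a partition-type invariant, distinguishing $\bar v$ from the others, and then invoke Lemma~\ref{lem:periodic freefactorsystem} (which says $\phi \in \IA^0(G,\mathcal{G},3)$ fixes every element of any $\phi$-invariant free factor system). Concretely: if $G_v$ is conjugate into some $G_i$, then $[G_v] \in \mathcal{G}$ and $\phi \in \Out^0(G,\mathcal{G})$ already fixes $[G_v]$, hence fixes $\bar v$; this handles the peripheral case. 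If $G_v$ is two-ended, or QH, or rigid, it is $(G,\mathcal{G})$-nonperipheral, and the strategy is to collapse all edges except the single $\phi$-orbit (or $\phi^\ell$-orbit) of the edge incident to our leaf-orbit and show that the resulting vertex groups — which contain the $G_v$'s, possibly amalgamated — constitute a free factor system on which $\phi$ must act trivially. The subtlety is that collapsing a non-free splitting need not give a free splitting; here is where Lemma~\ref{lem:cyclicsplittinghasvertexfreefactor} enters, guaranteeing that some vertex group of the relevant cyclic splitting splits as a free product relative to $\mathcal{G}_v$ and incident edge groups, which is what lets us manufacture an honest $(G,\mathcal{G})$-free factor system from the leaf data.

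More precisely, I would argue as follows. Fix a $\phi^\ell$-invariant representative (after passing to a power to make the graph automorphism periodic; but since we only want that $\phi$ fixes leaves this is harmless — a leaf fixed by $\phi^\ell$ in a $\phi$-orbit is fixed by $\phi$ only if the orbit is trivial, so we instead argue that the $\phi$-orbit of $\bar v$ reduces to a single vertex). Suppose for contradiction the $\phi$-orbit $\{\bar v = \bar v_0, \dots, \bar v_{m-1}\}$ of a leaf has size $m \geq 2$. Since $G$ is one-ended relative to $\mathcal{G} \cup \mathcal{H}$, the splitting $T_{\mathcal{G}\cup\mathcal{H}}$ has all edge stabilisers infinite, and collapsing all orbits of edges except those incident to the $\bar v_j$ yields a $(G,\mathcal{G})$-cyclic splitting $T'$ (edge stabilisers are elementary; after possibly collapsing further within non-cyclic parabolic vertices we may take edge groups cyclic, or we simply keep working with the elementary splitting and note the argument below only uses that edge groups are nontrivial). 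The leaves $\bar v_j$ persist in $\overline{G \backslash T'}$ with nontrivial incident edge groups. By Lemma~\ref{lem:cyclicsplittinghasvertexfreefactor} applied to $T'$, some vertex group $G_w$ splits as a free product relative to $\mathcal{G}_w$ and incident edge groups; blowing up at $w$ produces a $(G,\mathcal{G})$-free splitting $S$ refining (a collapse of) $T'$, whose induced free factor system $\mathcal{F}(S)$ is $\phi$-invariant and separates the components of the forest containing the various $\bar v_j$, contradicting that $\phi$ must permute them nontrivially while Lemma~\ref{lem:periodic freefactorsystem} forces $\phi$ to fix each class in $\mathcal{F}(S)$.

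I expect the genuinely delicate bookkeeping to be: (i) making sure that after collapsing $T_{\mathcal{G}\cup\mathcal{H}}$ onto the small subgraph carrying the leaf orbit, we land in a setting where Lemma~\ref{lem:cyclicsplittinghasvertexfreefactor} applies (edge groups nontrivial and the splitting genuinely cyclic or treated via an auxiliary collapse); (ii) verifying that the free factor system extracted this way really does distinguish the leaf $\bar v$ from its $\phi$-translates — i.e. the components of the complementary forest get different conjugacy classes of fundamental groups — which is where the valence-$1$ hypothesis (leaf) is essential, since it forces the leaf's stabiliser to sit inside a proper free factor once the single incident edge is collapsed. The peripheral case ($[G_v] \in \mathcal{G}$) is immediate from $\phi \in \Out^0(G,\mathcal{G})$ and should be dispatched first; the two-ended case may additionally use that a two-ended $(G,\mathcal{G})$-nonperipheral subgroup is cyclic and hence, together with Lemma~\ref{lem:projectionhomology}, contributes a recognizable direct summand to $H_1(G,\ZZ/3\ZZ)$, giving an alternative homological way to pin down $\bar v$.
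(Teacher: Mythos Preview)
Your case analysis has a gap and your main tool is being applied outside its hypotheses.

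First, the claim that a two-ended leaf stabiliser is automatically $(G,\mathcal{G})$-nonperipheral is not justified: a cyclic $G_v$ can sit inside some $G_i$ without having $[G_v]\in\mathcal{G}$. The paper handles this case separately, using Item~(3) of the Standing Assumptions: since $\phi\in\Out^0(G,\mathcal{G})$ there is $\Phi\in\phi$ with $\Phi(G_i)=G_i$, and $[G_v]_{G_i}$ is a $\Phi_{|G_i}$-periodic cyclic subgroup, hence fixed.

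Second, and more seriously, Lemma~\ref{lem:cyclicsplittinghasvertexfreefactor} is stated for $(G,\mathcal{G})$-\emph{cyclic} splittings. Edge stabilisers in $T_{\mathcal{G}\cup\mathcal{H}}$ are elementary, hence possibly noncyclic parabolic; if your leaf $\bar v$ has nonelementary stabiliser, its incident edge group may well be a noncyclic subgroup of some $G_i$, and after collapsing you are no longer in a cyclic splitting. Your parenthetical ``the argument below only uses that edge groups are nontrivial'' is not enough to salvage this. The paper avoids the issue entirely: for nonelementary leaves it argues homologically. Either some conjugate of a $G_i$ has $v$ as its unique fixed point (done via $\Out^0$), or one shows $|\mathcal{G}_v|\le 1$, writes $G_v=H\ast F_\ell$ with $\ell\ge 1$, and proves directly (via Lemma~\ref{lem:projectionhomology} and Lemma~\ref{lem:MayerVietoris}) that the image of $F_\ell$ in $H_1(G,\ZZ/3\ZZ)$ is not contained in the image of any other leaf group.

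Third, even in the cyclic nonperipheral case where your strategy is closest to the paper's, your extraction of a $\phi$-invariant free factor system is incomplete. The lemma produces \emph{some} vertex that splits; blowing up at that vertex yields a free splitting $S$ that is not canonical, so there is no reason $\mathcal{F}(S)$ is $\phi$-invariant, nor that it separates the $\phi$-orbit of $\bar v$. The paper instead collapses every edge except those adjacent to the full set $\mathcal{L}$ of cyclic nonperipheral leaves, obtaining a star whose central vertex $x$ is the unique one with noncyclic stabiliser; this forces $x$ to be the vertex that splits, and the resulting free factor system $\mathcal{F}=\mathcal{G}\cup\{[G_{v'}]\}_{v'\in\mathcal{L}}$ is visibly $\phi$-invariant since $\phi$ permutes $\mathcal{L}$. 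Then Lemma~\ref{lem:periodic freefactorsystem} and Theorem~\ref{thm:JSJrelhyp}(3) finish.
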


\begin{proof}
We assume that $T_{\mathcal{G}\cup\mathcal{H}}$ is not reduced to a point as otherwise there is nothing to prove. We first prove that $\phi$ fixes every leaf of $\overline{G \backslash T_{\mathcal{G}\cup\mathcal{H}}}$ with noncyclic stabiliser. Let $\overline{v}$ be a leaf of $\overline{G \backslash T_{\mathcal{G}\cup\mathcal{H}}}$ with noncyclic stabiliser and let $v$ be a lift of $\overline{v}$ in $T_{\mathcal{G}\cup\mathcal{H}}$. Note that, since $G_v$ is noncyclic, by Theorem~\ref{thm:JSJrelhyp}$(5)$, the stabiliser of $v$ either is nonelementary or its conjugacy class belongs to $\mathcal{G}$. 

Suppose first that $[G_v] \in \mathcal{G}$. Since the action of $G$ on $T_{\mathcal{G}\cup\mathcal{H}}$ is minimal, and since $\overline{v}$ is a leaf, we see that $v$ is the unique point fixed by $G_v$. Since $\phi \in \Out^0(G,\mathcal{G})$, we deduce that $v$ is fixed by $\phi$. 

Suppose that $G_v$ is nonelementary. Assume first that there exists $i \in \{1,\ldots,k\}$ such that $v$ is the unique fixed point of a conjugate of $G_i$. Since $\phi \in \Out^0(G,\mathcal{G})$, we see that $v$ is fixed by $\phi$. 

Assume otherwise that $v$ is not the only fixed point of a conjugate of some $G_i$ with $i \in \{1,\ldots,k\}$. We claim that $|\mathcal{G}_v| \leq 1$. Indeed, let $\overline{e} \in E\overline{G \backslash T_{\mathcal{G}\cup\mathcal{H}}}$ be the edge adjacent to $\overline{v}$ and let $e \in ET_{\mathcal{G}\cup\mathcal{H}}$ be a lift of $\overline{e}$ adjacent to $v$. Since $\overline{v}$ is a leaf of $\overline{G \backslash T_{\mathcal{G}\cup\mathcal{H}}}$, the group $G_v$ acts transitively on the set of edges adjacent to $v$ and there exists a unique $G_v$-conjugacy class of edges groups in $G_v$. Let $[A]\in \mathcal{G}$ be such that $A \cap G_v \neq \{1\}$. Since $T_{\mathcal{G}\cup\mathcal{H}}$ is a $(G,\mathcal{G}\cup \mathcal{H})$-elementary tree, the group $A$ is elliptic in $T_{\mathcal{G}\cup\mathcal{H}}$. By assumption, $v$ is not the only fixed point of $A$, so that, up to taking a $G_v$-conjugate of $A$, we have $A \cap G_v \subseteq G_{e}$. Thus, for every $[B]_{G_v}\in \mathcal{G}_v$, there exists $B' \in [B]_{G_v}$ such that $B' \cap G_v \subseteq G_e$. Since $G_e$ is elementary, we necessarily have $|\mathcal{G}_v| \leq 1$. The claim follows.

The claim implies that the free product decomposition of $G_v$ induced by $\mathcal{G}_v$ is:
$$G_v=H \ast F_\ell,$$ where $H$ is a (possibly trivial) subgroup of some conjugate of some $G_i$ and $F_\ell$ is a nonabelian free group of rank $\ell \geq 0$. Since the action of $G$ on $T_{\mathcal{G}\cup\mathcal{H}}$ is minimal and since, by assumption, $v$ is not the only fixed point of $H$, we have $\ell \geq 1$.

\medskip

\noindent{\bf Claim. } The group $F_\ell$ induces a nontrivial subspace of $H_1(G,\ZZ/3\ZZ)$ which is distinct from any subspace of $H_1(G,\ZZ/3\ZZ)$ induced by any other leaf group of $G\backslash T_{\mathcal{G}\cup\mathcal{H}}$.

\medskip

\begin{proof}

If $H$ is nontrivial, then the stabiliser of the edge adjacent to $v$ is contained in $H$ as $v$ is not the fixed point of a maximal parabolic subgroup. Thus, $F_\ell$ is a $(G,\mathcal{G})$-free factor and the image of $F_\ell$ in $H_1(G,\ZZ/3\ZZ)$ is not contained in any subspace of $H_1(G,\ZZ/3\ZZ)$ induced by any other leaf group of $G\backslash T_{\mathcal{G}\cup\mathcal{H}}$ by Lemma~\ref{lem:projectionhomology}. 

Suppose that $H$ is trivial. Since edge stabilisers of $T_{\mathcal{G}\cup\mathcal{H}}$ are elementary, the stabiliser of the edge adjacent to $v$ is infinite cyclic. Since $G_v$ is nonelementary, we have $\ell \geq 2$. Let $U$ be the tree obtained from $T_{\mathcal{G}\cup\mathcal{H}}$ by collapsing every orbit of edges except the one of the edge $e$ adjacent to $v$. Then $U$ is a $(G,\mathcal{G})$-cyclic splitting. Since $\overline{v}$ is a leaf, the splitting $U$ induces an amalgam $G \simeq F_\ell \ast_{G_e} G_w$, for some $w \in VU$. Let $L'$ be the set of leaves of $\overline{G\backslash T_{\mathcal{G}\cup\mathcal{H}}}$ distinct from $v$. Then the image $V'$ of $\langle H_1(G_{v'},\ZZ/3\ZZ) \rangle_{v' \in L'}$ in $H_1(G,\ZZ/3\ZZ)$ is contained in the image of $H_1(G_w,\ZZ/3\ZZ)$. Let $V$ be a supplementary subspace of $H_1(G_e,\ZZ/3\ZZ)$ in $H_1(F_\ell,\ZZ/3\ZZ)$, which exists since $\ell \geq 2$ and $G_e \simeq \ZZ$. Then the image of $V$ in $H_1(G,\ZZ/3\ZZ)$ is not contained in $V'$ by Lemma~\ref{lem:MayerVietoris}. Therefore, the image of $F_\ell$ in $H_1(G,\ZZ/3\ZZ)$ is not contained in any subspace of $H_1(G,\ZZ/3\ZZ)$ induced by any other leaf of $\overline{G\backslash T_{\mathcal{G}\cup\mathcal{H}}}$.
\end{proof}

Since $\phi \in \IA(G,\mathcal{G},3)$, the claim implies that $v$ is fixed by $\phi$. This shows that $\phi$ fixes every leaf of $\overline{G \backslash T_{\mathcal{G}\cup\mathcal{H}}}$ with noncyclic stabiliser.

It remains to prove that $\phi$ fixes every leaf of $\overline{G \backslash T_{\mathcal{G}\cup\mathcal{H}}}$ with cyclic stabiliser. Let $v$ be a leaf of $\overline{G \backslash T_{\mathcal{G}\cup\mathcal{H}}}$ with cyclic stabiliser. By Theorem~\ref{thm:JSJrelhyp}$(3)$, every elementary subgroup of $G$ fixes at most one vertex of $T_{\mathcal{G} \cup \mathcal{H}}$ with elementary stabiliser. Thus, in order to prove that $\phi$ fixes $v$, its suffices to prove that $\phi$ fixes $[G_v]$.

Suppose that $G_v$ is $(G,\mathcal{G})$-peripheral. Since $\phi \in \Out^0(G,\mathcal{G})$, there exist $[G_i] \in \mathcal{G}$ and $\Phi \in \phi$ such that $G_v \subseteq G_i$, $\Phi(G_i)=G_i$, and $[G_v]_{G_i}$ is a $\Phi_{|G_i}$-periodic conjugacy class of a cyclic subgroup of $G_i$. By Item~$(3)$ of the \hyperlink{SA}{Standing Assumptions}, we see that $[G_v]$ is fixed by $\Phi_{|G_i}$. Thus, the vertex $v$ is fixed by $\phi$. 

Suppose now that $G_v$ is $(G,\mathcal{G})$-nonperipheral. Let $\mathcal{L}$ be the set of leaves $v'$ of $\overline{G \backslash T_{\mathcal{G}\cup\mathcal{H}}}$ with cyclic $(G,\mathcal{G})$-nonperipheral stabiliser. Let $U$ be the $(G,\mathcal{G})$-cyclic splitting obtained by collapsing every edge of $\overline{G \backslash T_{\mathcal{G}\cup\mathcal{H}}}$ except the ones adjacent to leaves of $\mathcal{L}$. Note that every edge of $U$ has nontrivial infinite cyclic edge stabiliser by Theorem~\ref{thm:JSJrelhyp}$(1)$. 

Let $x$ be the central vertex of $\overline{G \backslash U}$. Then $x$ is the unique vertex of $\overline{G \backslash U}$ with noncyclic stabiliser. In particular, $x$ is the unique vertex of $\overline{G \backslash U}$ such that incident edge stabilisers can induce a nontrivial free product decomposition. Thus, by Lemma~\ref{lem:cyclicsplittinghasvertexfreefactor}, the group $G_x$ splits as a free product relative to the $(G_x,\mathcal{G}_x)$-free factor system $\mathcal{F}_x$ obtained from $\mathcal{G}_x$ by adding one conjugacy class of cyclic subgroup for every $v' \in \mathcal{L}$.

Since every leaf of $\overline{G \backslash U}$ has cyclic $(G,\mathcal{G})$-nonperipheral stabiliser, for every $[H] \in \mathcal{G}$, the group $H$ fixes (a $G$-translate of) $x$. Thus, for every $[H] \in \mathcal{G}$, there exists a unique $[H_x]_{G_x} \in \mathcal{G}_x$ such that $[H]=[H_x]$. This shows that the set $\mathcal{F}=\mathcal{G}\cup \{[G_{v'}]\}_{v' \in \mathcal{L}}$ is a $(G,\mathcal{G})$-free factor system preserved by $\phi$. By Lemma~\ref{lem:periodic freefactorsystem}, the outer automorphism $\phi$ preserves the conjugacy class of every $G_{v'}$ with $v' \in \mathcal{L}$. In particular, it preserves $[G_v]$ and it therefore fixes $v$.
\end{proof}

The next ingredient we need in the proof of the fact that $\phi$ acts trivially on the quotient graph $\overline{G \backslash T_{\mathcal{G} \cup \mathcal{H}}}$ is the following lemma, which studies QH vertices of $T_{\mathcal{G} \cup \mathcal{H}}$ and more generally QH vertices of $G$-splittings (see Section~\ref{sec:terminologyJSJ} for the terminology). We in particular describe, for $v$ a QH vertex of a $G$-splitting $S$, the image of the homomorphism $H_1(G_v,\ZZ/3\ZZ) \to H_1(G,\ZZ/3\ZZ)$ induced by $G_v \hookrightarrow G$. Let $\Sigma_v$ be the compact hyperbolic surface associated with $v$. Let $\mathrm{Bd}_v$ be the set of conjugacy classes of boundary subgroups of $\Sigma_v$. Let $\mathrm{Bd}_S$ be the set of conjugacy classes of boundary subgroups $C$ of $\Sigma_v$ such that there exists $e \in ES$ such that $C$ and $G_e$ are commensurable. Let $\chi(\Sigma_v)$ be the Euler characteristic of $\Sigma_v$ and let $\chi_S(\Sigma_v)=\chi(\Sigma_v)+|\mathrm{Bd}_S|$. Note that, since $\Sigma_v$ is hyperbolic, we have $\chi(\Sigma_v)<0$.

\begin{lem}\label{lem:HomologyQHvertex}
Let $G$ be a torsion free finitely generated group, let $S$ be a $G$-splitting and let $v \in VS$ be a QH vertex. 

\begin{enumerate}
\item Suppose that $\chi_S(\Sigma_v) <1$. For every $w \in VS$ not in the $G$-orbit of $v$, the image of $H_1(G_v,\ZZ/3\ZZ)$ in $H_1(G,\ZZ/3\ZZ)$ is not contained in the image of $H_1(G_w,\ZZ/3\ZZ)$ in $H_1(G,\ZZ/3\ZZ)$.
\item Let $C$ be a boundary subgroup of $\Sigma_v$. One of the following hold.
\begin{enumerate}
\item There exists $e \in ES$ such that $G_e$ and $C$ are commensurable.
\item The image of $H_1(C,\ZZ/3\ZZ)$ in $H_1(G,\ZZ/3\ZZ)$ is nontrivial. Moreover, for every boundary subgroup $C'$ of $\Sigma_v$ which is not $G$-conjugate to $C$, the images of $H_1(C,\ZZ/3\ZZ)$ and $H_1(C',\ZZ/3\ZZ)$ are distinct.
\item The surface $\Sigma_v$ is orientable and $C$ is the unique (up to $G$-conjugacy) boundary subgroup of $\Sigma_v$ which is not contained in $\mathrm{Bd}_S$.
\item The surface $\Sigma_v$ is orientable, $C$ is not contained in $\mathrm{Bd}_S$ and, up to $G$-conjugacy, there exists exactly one boundary subgroup $D$ which is not $G$-conjugate to $C$ such that $D$ is not contained in $\mathrm{Bd}_S$. Moreover, if we fix an orientation of $\Sigma_v$ and we let $c$ and $d$ be the generators of $C$ and $D$ with positive orientation, then $c$ and $d$ have distinct nontrivial images in $H_1(G,\ZZ/3\ZZ)$.
\end{enumerate}
\end{enumerate}

\end{lem}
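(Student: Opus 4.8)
The plan is to reduce everything to the kernel of the map $\iota_v\colon H_1(\Sigma_v,\ZZ/3\ZZ)\to H_1(G,\ZZ/3\ZZ)$ induced by $G_v=\pi_1(\Sigma_v)\hookrightarrow G$. Applying Lemma~\ref{lem:MayerVietoris} with $A=\ZZ/3\ZZ$ to the graph of groups $\mathbb{X}=G\backslash S$ at the vertex $v$, the image of $\iota_v$ is $H_1(\Sigma_v,\ZZ/3\ZZ)/K$ with $K\subseteq K_{\mathrm{Inc}_v}$, where $K_{\mathrm{Inc}_v}$ is generated by the images in $H_1(\Sigma_v,\ZZ/3\ZZ)$ of the edge groups of the oriented edges of $\overline{\mathbb{X}}$ issuing from $v$. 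Since $v$ is QH, each such edge group lies in a boundary subgroup $C_i$ of $\Sigma_v$, so its image lies on the line $\langle[C_i]\rangle$, and if it is nontrivial it is finite index in $C_i$, whence $[C_i]\in\mathrm{Bd}_S$. Thus
\[K\ \subseteq\ K_{\mathrm{Inc}_v}\ \subseteq\ L\ :=\ \langle\,[C]\ :\ [C]\in\mathrm{Bd}_S\,\rangle\ \subseteq\ H_1(\Sigma_v,\ZZ/3\ZZ),\qquad \dim_{\ZZ/3\ZZ} L\le|\mathrm{Bd}_S|.\]
Both items then run entirely inside $H_1(\Sigma_v,\ZZ/3\ZZ)$, using $K\subseteq L$ and the classical description of the homology of a compact surface.

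For (1), I would argue as follows. One may assume $S$ nondegenerate, so $\Sigma_v$ has boundary and $\dim H_1(\Sigma_v,\ZZ/3\ZZ)=1-\chi(\Sigma_v)$; the hypothesis $\chi_S(\Sigma_v)<1$ then says precisely $|\mathrm{Bd}_S|<\dim H_1(\Sigma_v,\ZZ/3\ZZ)$, so $L$ is a proper subspace and admits a nonzero annihilating linear form $\lambda$. Recalling that a homomorphism $G\to\ZZ/3\ZZ$ amounts to a family of homomorphisms of the vertex groups together with values on the stable letters, compatible along the edge maps, I would build $f\in H^1(G,\ZZ/3\ZZ)$ restricting to $\lambda$ on $G_v$, to $0$ on all other vertex groups and on all stable letters; this is compatible exactly because the edge groups incident to $v$ land in boundary subgroups whose class lies in $\mathrm{Bd}_S\subseteq\ker\lambda$, and the remaining compatibility conditions only involve vertex groups where $f$ is trivial. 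Then for any $w\in VS$ outside the $G$-orbit of $v$ one has $f|_{G_w}=0$, so $f$ annihilates the image of $H_1(G_w,\ZZ/3\ZZ)$, while $f\circ\iota_v=\lambda\ne0$, so the image of $\iota_v$ is not contained in that of $H_1(G_w,\ZZ/3\ZZ)$.

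For (2), given a boundary subgroup $C$, I would first dispose of case (a) ($G_e$ commensurable with $C$ for some $e\in ES$); otherwise $[C]\notin\mathrm{Bd}_S$, and I let $\mathcal{N}$ be the set of conjugacy classes of boundary subgroups not in $\mathrm{Bd}_S$, so $[C]\in\mathcal{N}$. The input is that the boundary classes $[C_1],\dots,[C_b]$ of $\Sigma_v$ satisfy the single relation $\sum_i[C_i]=0$ in $H_1(\Sigma_v,\ZZ/3\ZZ)$ when $\Sigma_v$ is orientable (compatibly oriented boundary) and $\sum_i[C_i]=\sum_j[a_j]$ (crosscap classes) when $\Sigma_v$ is nonorientable; hence the $[C_i]$ are linearly independent in the nonorientable case and any $b-1$ of them are independent in the orientable case. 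Combining this with $K\subseteq L$: if $\Sigma_v$ is nonorientable, or orientable with $|\mathcal{N}|\ge3$, then $[C]\notin L\supseteq K$ and, for every boundary subgroup $C'$ not $G$-conjugate to $C$, $[C]\pm[C']\notin L$, so $\iota_v([C])\ne0$ and is not a scalar multiple of $\iota_v([C'])$, giving (b) (the case $\iota_v([C'])=0$ being immediate); if $\Sigma_v$ is orientable with $|\mathcal{N}|=1$, then $C$ is the unique unused boundary subgroup, giving (c); and if $\Sigma_v$ is orientable with $\mathcal{N}=\{[C],[D]\}$, then for the positively oriented generators $c,d$ one has $[c]+[d]=-\sum_{[C_i]\in\mathrm{Bd}_S}[C_i]\in L$, while $[c]\notin L$ (the classes $\{[C]\}\cup\{[C_i]:[C_i]\in\mathrm{Bd}_S\}$ are $b-1$ independent classes) and $[c]-[d]\notin L$ (else adding the two relations and inverting $2$ mod $3$ would force $[c]\in L$), so $\iota_v([c])$ and $\iota_v([d])$ are nonzero and distinct, giving (d).

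The main obstacle I anticipate is organizing the surface-homology case analysis in part (2) cleanly — in particular pinning down why exactly $|\mathcal{N}|\in\{1,2\}$ in the orientable case forces the exceptional conclusions (c), (d) and no other exceptions occur. Two side issues each need a sentence but cause no trouble: the map $H_1(G_e,\ZZ/3\ZZ)\to H_1(C_i,\ZZ/3\ZZ)$ need not be onto (this only shrinks $K$, so all the inclusions $K\subseteq L$ used above persist), and two boundary subgroups of $\Sigma_v$ may become conjugate in $G$ (but such pairs are excluded from the "distinct images" clause of (b)).
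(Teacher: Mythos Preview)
Your proof is correct and follows essentially the same route as the paper: both reduce everything to the inclusion $\ker\iota_v\subseteq L=\langle[C]:[C]\in\mathrm{Bd}_S\rangle$ via Lemma~\ref{lem:MayerVietoris}, and then run the same case analysis on the presentation of $H_1(\Sigma_v,\ZZ/3\ZZ)$ (orientable vs.\ nonorientable, size of $\mathrm{Bd}_v\setminus\mathrm{Bd}_S$). The one difference worth recording is in part~(1). The paper argues on the homology side: it observes (from the Mayer--Vietoris sequence underlying Lemma~\ref{lem:MayerVietoris}) that for $w$ outside the orbit of $v$ the intersection of the two images already lies in $K_S/K\subseteq L/K$, and then checks $L\subsetneq H_1(\Sigma_v)$ via the explicit presentations. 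You instead dualise: from $L\subsetneq H_1(\Sigma_v)$ you pick a nonzero annihilator $\lambda$ and promote it to $f\in\mathrm{Hom}(G,\ZZ/3\ZZ)$ using the graph-of-groups description, with $f|_{G_w}=0$ for $w\neq v$. This buys you a slightly more self-contained argument, since the paper's claim about the intersection of images is not literally the conclusion of Lemma~\ref{lem:MayerVietoris} and requires re-entering the Mayer--Vietoris sequence; conversely the paper's version avoids having to spell out the compatibility conditions for building $f$. For part~(2) your case split and the paper's are the same up to bookkeeping; your treatment of the orientable $|\mathcal{N}|=2$ case (showing $[c]-[d]\notin L$ by combining with $[c]+[d]\in L$ and inverting $2$ mod $3$) is a clean repackaging of the paper's two subcases.
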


\begin{proof}
We first fix some notations that we will use in the proof.  We always consider homology groups with coefficients in $\ZZ/3\ZZ$. In this proof only, $\langle. \rangle$ denotes the $\ZZ/3\ZZ$-module freely generated by the subset.  If $\Sigma_v$ is orientable, we consider the following presentation of its first homology group: 
\begin{equation}\label{eq:homologyorientable}
H_1(\Sigma_v)=H_1(\Sigma_v,\ZZ/3\ZZ)=\langle a_1,b_1,\ldots,a_t,b_t,c_1,\ldots,c_s \rangle/\langle \sum_{j=1}^s c_j \rangle, \; t \geq 0 \text{ and }s \geq 0.
\end{equation}
Moreover, the image of every boundary subgroup is equal to some $\langle c_i \rangle$ with $i \in \{1,\ldots,s\}$ and we identify the boundary subgroup with the corresponding $\langle c_i \rangle$. If $\Sigma_v$ is nonorientable, we consider 
\begin{align}\label{eq:homologynonorientable}
H_1(\Sigma_v)&= \left\langle a_1,\ldots,a_t,c_1,\ldots,c_s \right\rangle/\langle 2\sum_{i=1}^t a_i +\sum_{j=1}^s c_j \rangle, \nonumber \\
{} & =  \left\langle a_1,\ldots,a_t,c_1,\ldots,c_s \right\rangle/\langle \sum_{j=1}^s c_j - \sum_{i=1}^t a_i\rangle, \; t \geq 1 \text{ and } s \geq 0. 
\end{align}
As above, the image of every boundary subgroup is equal to some $\langle c_i \rangle$ with $i \in \{1,\ldots,s\}$ and we identify the boundary subgroup with the corresponding $\langle c_i \rangle$. In either case, denote by $p_{\Sigma_v} \colon \langle a_1,b_1,\ldots,a_t,b_t,c_1,\ldots,c_s \rangle \to H_1(\Sigma_v)$ the natural projection.

\medskip

We recall some general facts about the image of $H_1(G_v)$ in $H_1(G)$. Let $K_S=p_{\Sigma_v}(\langle c_i \rangle_{c_i \in \mathrm{Bd}_S})$. By definition of a QH vertex, for every edge $e \in ES$ which is adjacent to $v$, the group $G_e$ is contained in a boundary subgroup of $\Sigma_v$. By Lemma~\ref{lem:MayerVietoris}, we see that the image of $H_1(G_v)=H_1(\Sigma_v)$ in $H_1(G)$ is a quotient $H_1(G_v)/K$, where $K$ is a subgroup of $K_S$. Moreover, for every $w \in VS$ not in the $G$-orbit of $v$, the intersection of the images of $H_1(G_v)$ and $H_1(G_w)$ in $H_1(G)$ is contained in $K_S/K$. 

\medskip

We now prove the first assertion. Suppose that $\chi_S(\Sigma_v) <1$. According to the above discussion, it suffices to prove that $K_S \neq H_1(\Sigma_v)$. We distinguish between two cases, according to the orientability of $\Sigma_v$.

Suppose first that $\Sigma_v$ is nonorientable. Recall the notations from Equation~\eqref{eq:homologynonorientable}. If $t \geq 2$, then $p_{\Sigma_v}(\langle a_1 \rangle)$ is not contained in $K_S$ by Equation~\eqref{eq:homologynonorientable} and the conclusion follows. Suppose that $t=1$. Then $\chi(\Sigma_v)=1-s$. Since $\chi_S(\Sigma_v) <1$, there exists $j \in \{1,\ldots,s\}$ such that $p_{\Sigma_v}(c_j) \notin K_S$. Thus, we have $K_S\neq H_1(\Sigma_v)$. This proves the first case.

Suppose now that $\Sigma_v$ is orientable. Recall the notations from Equation~\eqref{eq:homologyorientable}. If $\Sigma_v$ is not a punctured sphere, then $t \geq 1$ and $K_S\neq H_1(\Sigma_v)$. Suppose now that $\Sigma_v$ is a punctured sphere. Then $\chi(\Sigma_v)=2-s$. Since $\chi_S(\Sigma_v)<1$, we have $|\mathrm{Bd}_v|=s\geq |\mathrm{Bd}_S|+2$. Therefore, by Equation~\eqref{eq:homologyorientable}, we see that $K_S \subsetneq p_{\Sigma_v}(\langle c_i \rangle_{i \in \{1,\ldots,s\}})$. This proves the first assertion of the lemma.

\medskip

We now prove the second assertion. Let $C$ be a boundary subgroup of $\Sigma_v$. Suppose that there does not exist an edge $e \in ES$ such that $C$ and $G_e$ are commensurable. Thus, up to reordering, we may suppose that the image of $C$ in $H_1(\Sigma_v)$ is equal to $p_{\Sigma_v}(\langle c_1 \rangle)$ and that the group $H$ generated by the image in $H_1(\Sigma_v)$ of every boundary subgroup distinct from $C$ is contained in $p_{\Sigma_v}(\langle c_2,\ldots,c_s \rangle)$.  We prove that $C$ satisfies Assertion~$(2)(b)$,$(c)$ or $(d)$. Again, we distinguish between two cases, according to the orientability of $\Sigma_v$. 

Suppose first that $\Sigma_v$ is nonorientable. We prove that $C$ satisfies Assertion~$(2)(b)$. As explained in the paragraph above the proof of the first assertion, since $K_S \subseteq H$, it suffices to prove that the subspace $p_{\Sigma_v}(\langle c_1 \rangle)$ is not contained in $H$. Since $t \geq 1$ by Equation~\eqref{eq:homologynonorientable}, the subspace $p_{\Sigma_v}(\langle c_1 \rangle)$ is not contained in $p_{\Sigma_v}(\langle c_2,\ldots,c_s \rangle)$. In particular, we see that the image of $C$ in $H_1(\Sigma_v)$ (which is contained in $p_{\Sigma_v}(\langle c_1 \rangle)$) is not contained in $H \subseteq p_{\Sigma_v}(\langle c_2,\ldots,c_s \rangle)$. This proves the nonorientable case.

Suppose now that $\Sigma_v$ is orientable. If $|\mathrm{Bd}_v|= |\mathrm{Bd}_S|+1$, then $C$ satisfies Assertion~$(2)(c)$. So we may suppose that $|\mathrm{Bd}_v|\geq |\mathrm{Bd}_S|+2$. We prove that $C$ satisfies Assertion~$(2)(b)$ or Assertion~$(2)(d)$. If $|\mathrm{Bd}_v|\geq |\mathrm{Bd}_S|+3$, then, by Equation~\eqref{eq:homologyorientable}, the group $p_{\Sigma_v}(\langle c_1 \rangle)$ is not contained in $H$, so that Assertion~$(2)(b)$ holds. 

Finally, suppose that $|\mathrm{Bd}_v|=|\mathrm{Bd}_S|+2$ and fix an orientation of $\Sigma_v$. Let $C,D \in \mathrm{Bd}_v-\mathrm{Bd}_S$ be nonconjugate boundary subgroups and let $c \in C$ and $d \in D$ be generators of $C$ and $D$ inducing the orientation of $C$ and $D$. We want to show that $c$ and $d$ have distinct images in $H_1(G)$. Up to reordering, we may suppose that $c$ is sent to the image of $c_1$ in $H_1(G)$ and $d$ is sent to the image of $c_2$. 

Recall that the only relation in $H_1(\Sigma_v)$ is $\sum_{i=1}^s p_{\Sigma_v}(c_i)=0$ and that the image of $H_1(\Sigma_v)$ in $H_1(G)$ is isomorphic to a quotient of $H_1(\Sigma_v)$ by $K \subseteq K_S \subseteq p_{\Sigma_v}(\langle c_3,\ldots,c_s \rangle)$. Thus, if the image of $\sum_{i=3}^s p_{\Sigma_v}(c_i)$ in $H_1(G)$ is nontrivial, then $c$ and $d$ generate distinct subspaces of $H_1(G)$. If the image of $\sum_{i=3}^s p_{\Sigma_v}(c_i)$ in $H_1(G)$ is trivial, then $c$ and $d$ are sent in $H_1(G)$ to distinct nontrivial elements: the image of $c$ is equal to the image of $d^{-1}$.
\end{proof}

\begin{prop}\label{lem:trivialactionJSJsplitting}
Let $G$ be a group satisfying the \hyperlink{SA}{Standing Assumptions}. Let $\mathcal{H}$ be a finite set of conjugacy classes of finitely generated subgroups of $G$ such that $G$ is one-ended relative to $\mathcal{G}\cup\mathcal{H}$. Let $\phi \in \IA(G,\mathcal{G},3)$ be an outer automorphism preserving $\mathcal{H}$ and such that $\mathcal{H}$ consists of periodic subgroups of $\phi$. The graph automorphism induced by $\phi$ on $\overline{G \backslash T_{\mathcal{G}\cup \mathcal{H}}}$ is trivial.
\end{prop}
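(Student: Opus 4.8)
The plan is to deduce the statement from the combinatorial criterion of Lemma~\ref{lem:graphautotrivial}, applied to the finite connected graph $X=\overline{G \backslash T_{\mathcal{G}\cup\mathcal{H}}}$ and the graph automorphism $f$ induced by $\phi$ (this makes sense since $T_{\mathcal{G}\cup\mathcal{H}}$ is $\phi$-invariant by Theorem~\ref{thm:JSJrelhyp}$(6)$). Two of the three hypotheses of that lemma are already available: Lemma~\ref{lem:actionhomograph} gives that $f$ acts trivially on $H_1(X,\ZZ/3\ZZ)$, and Lemma~\ref{lem:trivialactionJSJsplittingtreecase} gives that $f$ fixes every leaf of $X$. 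Hence, by Lemma~\ref{lem:graphautotrivial}, either $f=\mathrm{id}$ and we are done, or $X$ is homeomorphic to a circle and $f$ acts as a (possibly trivial) rotation.

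So the remaining work is to rule out a nontrivial rotation; equivalently, in the circle case it suffices to exhibit one vertex of $X$ fixed by $\phi$, as a rotation of a circle fixing a vertex is the identity. By Theorem~\ref{thm:JSJrelhyp}$(1)$ every edge of $T_{\mathcal{G}\cup\mathcal{H}}$ has exactly one elementary and one nonelementary endpoint, so along $X$ the vertices alternate between those with elementary stabiliser and those with nonelementary stabiliser; in particular $X$ has $2m\geq 2$ vertices, $m$ of each type, and any graph automorphism preserves this bipartition. If some vertex $\overline v$ of $X$ has $[G_v]\in\mathcal{G}$, then $\phi$ fixes $[G_v]$ because $\phi\in\Out^0(G,\mathcal{G})$, and since $G_v$ fixes a unique vertex of its type in $T_{\mathcal{G}\cup\mathcal{H}}$ (Lemma~\ref{lem:elemornonelem} together with $2$-acylindricity, Theorem~\ref{thm:JSJrelhyp}$(2),(3)$), the vertex $\overline v$ is fixed by $\phi$. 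Hence we may assume no vertex of $X$ has stabiliser conjugate to some $G_i$; then the elementary vertices of $X$ are two-ended, hence infinite cyclic as $G$ is torsion free, and the nonelementary ones are QH or rigid.

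It remains to pin down a nonelementary vertex $\overline v$ of $X$; note it has valence $2$ in $X$, so $G_v$ has at most two conjugacy classes of incident edge groups. When $v$ is QH one uses Lemma~\ref{lem:HomologyQHvertex}: either $G_v$ itself or a boundary subgroup of $\Sigma_v$ is distinguished homologically in $H_1(G,\ZZ/3\ZZ)$ from all other vertex (and boundary) data of $X$, so $\phi$ must fix $\overline v$. When $v$ is rigid, if $G_v$ has a nonperipheral free Kurosh factor then Lemma~\ref{lem:MayerVietoris} and Lemma~\ref{lem:projectionhomology} show this factor survives in $H_1(G,\ZZ/3\ZZ)$ and cannot be absorbed into another vertex group, since only at most two edge-group classes are available to kill it; if instead $G_v$ is one-ended then by the Kurosh theorem it is contained in some $G_i$, and one reduces to the $\Out^0$ argument combined with Item~$(3)$ of the \hyperlink{SA}{Standing Assumptions}, using that $\phi$ raised to the order of $f$ lies in $\mathcal{K}(T_{\mathcal{G}\cup\mathcal{H}})\cap\Out(G,\mathcal{G},\mathcal{H}^{(t)})$, whose restriction to a rigid vertex group is finite by Theorem~\ref{thm:JSJrelhyp}$(8)$. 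In all cases a nonelementary rigid $G_v$ cannot be elliptic at an elementary (cyclic) vertex, so it fixes a unique vertex, namely $\overline v$, and $\phi$ fixes $\overline v$, forcing $f=\mathrm{id}$.

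The heart of the argument, and the main obstacle, is this last step: controlling the nonelementary vertices of the circle when their stabilisers contribute little to $H_1(G,\ZZ/3\ZZ)$. The delicate sub-cases are the QH surfaces $\Sigma_v$ lying outside the range $\chi_{T_{\mathcal{G}\cup\mathcal{H}}}(\Sigma_v)<1$ of Lemma~\ref{lem:HomologyQHvertex}$(1)$ — essentially small nonorientable $\Sigma_v$ both of whose boundary subgroups are commensurable with incident edge groups — and the rigid vertex groups sitting properly inside some $G_i$; these are precisely the configurations that the finer boundary-subgroup statements of Lemma~\ref{lem:HomologyQHvertex}$(2)$, the analysis of the two neighbouring cyclic vertices, and the \hyperlink{SA}{Standing Assumptions} are designed to dispatch.
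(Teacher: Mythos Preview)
Your overall architecture matches the paper: apply Lemma~\ref{lem:graphautotrivial}, feed it Lemma~\ref{lem:actionhomograph} and Lemma~\ref{lem:trivialactionJSJsplittingtreecase}, and then in the circle case produce one fixed vertex. Your reduction to the case where no vertex stabiliser lies in $\mathcal{G}$ is also essentially the paper's (the paper phrases it as: any $[H]\in\mathcal{G}$ determines a canonical vertex via Lemma~\ref{lem:elemornonelem}, so if $\mathcal{G}\neq\varnothing$ one is done, hence $G=F_N$). Your QH case when $\chi_{T_{\mathcal{G}\cup\mathcal{H}}}(\Sigma_v)<1$ is exactly the paper's use of Lemma~\ref{lem:HomologyQHvertex}$(1)$.

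The genuine gap is the remaining step, where your argument diverges from the paper and does not go through. Once $\mathcal{G}=\varnothing$, your rigid-vertex paragraph appeals to Lemma~\ref{lem:projectionhomology}, but that lemma concerns $(G,\mathcal{G})$-free factors, and $G_v$ is \emph{not} a free factor of $G$ here---it is a vertex group of a cyclic splitting. Lemma~\ref{lem:MayerVietoris} alone only bounds the kernel of $H_1(G_v)\to H_1(G)$ by the span of the two incident edge classes; it does not show the image is nonzero (take $G_v$ free of rank $2$ with the two edge groups spanning $H_1(G_v,\ZZ/3\ZZ)$), nor that it is disjoint from the images of the other vertex groups. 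Your fallback ``$G_v$ one-ended, contained in some $G_i$'' is vacuous since there are no $G_i$. Likewise, when $\chi_{T_{\mathcal{G}\cup\mathcal{H}}}(\Sigma_v)=1$ (in the circle this forces $|\mathrm{Bd}_S|=2$ and $\Sigma_v$ a pair of pants or a twice-punctured projective plane), both boundary subgroups fall under case~$(2)(a)$ of Lemma~\ref{lem:HomologyQHvertex}, so that lemma yields no homological distinction at all; your invocation of ``the finer boundary-subgroup statements'' does not cover this.

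What the paper does instead for both of these situations is a single construction you are missing. It first uses Lemma~\ref{lem:cyclicsplittinghasvertexfreefactor} to choose a nonelementary vertex $v$ with $\mathcal{F}_v\neq\{[G_v]\}$ (this choice is essential). In the rigid case, and in the small QH case after observing that $\mathrm{MCG}(\Sigma_v)$ is finite, the image of $\phi^{n_v}$ in $\Out(G_v,\mathcal{F}_v)$ is finite; by Theorem~\ref{thm:fixedpointouterspace} it fixes a Grushko $(G_v,\mathcal{F}_v)$-free splitting $U_v$. One transports $U_v$ $\phi$-equivariantly to every vertex in the $\phi$-orbit of $v$, blows up $T_{\mathcal{G}\cup\mathcal{H}}$ accordingly, and then collapses all edges with nontrivial stabiliser. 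The result is a $\phi$-invariant $(G,\mathcal{G})$-\emph{free} splitting $S'$, and Lemma~\ref{lem:trivialactionfreesplitting} forces $\phi$ to act trivially on $\overline{G\backslash S'}$, hence on $\overline{G\backslash T_{\mathcal{G}\cup\mathcal{H}}}$. This blow-up-to-a-free-splitting trick is the key idea your proposal lacks, and without it the rigid and $\chi_S=1$ QH cases remain open.
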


\begin{proof}
Again we assume that $T_{\mathcal{G}\cup \mathcal{H}}$ is not reduced to a point. In particular, by Theorem~\ref{thm:JSJrelhyp}$(1)$, there exists a nonelementary vertex stabiliser. 

The proof is similar to the one of Lemma~\ref{lem:trivialactionfreesplitting}, but we also have to deal with nontrivial edge stabilisers. By Lemma~\ref{lem:trivialactionJSJsplittingtreecase}, the automorphism $\phi$ fixes every leaf of $\overline{G \backslash T_{\mathcal{G}\cup \mathcal{H}}}$. By Lemma~\ref{lem:actionhomograph}, $\phi$ acts trivially on $H_1(\overline{G \backslash T_{\mathcal{G}\cup \mathcal{H}}},\ZZ/3\ZZ)$. Thus, by Lemma~\ref{lem:graphautotrivial}, either $\phi$ acts trivially on $\overline{G \backslash T_{\mathcal{G}\cup \mathcal{H}}}$, or else $\overline{G \backslash T_{\mathcal{G}\cup \mathcal{H}}}$ is a circle and $\phi$ acts on it by rotation. The conclusion in the first case being immediate, we may assume that we are in the second case. It therefore suffices to prove that $\phi$ fixes a point in $\overline{G \backslash T_{\mathcal{G}\cup \mathcal{H}}}$. 

Let $[H] \in \mathcal{G}$. By Lemma~\ref{lem:elemornonelem}, the group $H$ fixes a unique nonelementary vertex, or a unique elementary vertex. In both cases, one can canonically associate to $[H]$ a unique vertex of $\overline{G \backslash T_{\mathcal{G}\cup \mathcal{H}}}$. Since $\phi \in \Out^0(G,\mathcal{G})$, if $\mathcal{G} \neq \varnothing$, we see that $\phi$ fixes a point in $\overline{G \backslash T_{\mathcal{G}\cup \mathcal{H}}}$.

Thus, we are reduced to the case where $\mathcal{G}=\varnothing$. This implies that $G=F_N$ is a nonabelian free group. Moreover, every elementary subgroup is infinite cyclic. For every vertex $v$ of $\overline{G \backslash T_{\mathcal{G}\cup \mathcal{H}}}$, let $\mathrm{Inc}_v$ be the set of $G_v$-conjugacy classes of edge groups adjacent to $v$. Let $\mathcal{F}_v$ be the minimal free factor system of $G_v$ such that, for every $[K_v] \in  \mathrm{Inc}_v$, the group $K_v$ is $(G_v,\mathcal{F}_v)$-peripheral. 

By Lemma~\ref{lem:cyclicsplittinghasvertexfreefactor}, there exists a vertex $v$ of $\overline{G \backslash T_{\mathcal{G}\cup \mathcal{H}}}$ such that $\mathcal{F}_v \neq \{[G_v]_{G_v}\}$. Since $\mathcal{F}_v \neq \{[G_v]_{G_v}\}$, and since elementary subgroups are cyclic, the group $G_v$ is nonelementary. Let $n_v \geq 1$ be the minimal integer such that $\phi^{n_v}(v)=v$. Note that $\mathcal{F}_v$ is preserved by $\phi^{n_v}$. Moreover, since $\phi$ acts as a rotation on $\overline{G \backslash T_{\mathcal{G}\cup \mathcal{H}}}$, the graph automorphism of $\overline{G \backslash T_{\mathcal{G}\cup \mathcal{H}}}$ induced by $\phi^{n_v}$ is trivial. 

\medskip

\noindent{\bf Case 1. } Suppose that the image of $\phi^{n_v}$ in $\Out(G_v,\mathcal{F}_v)$ is finite. 

\medskip

Notice that, by Theorem~\ref{thm:JSJrelhyp}$(8)$, this includes in particular the case where $v$ is rigid (recall that $\mathcal{H}$ consists of periodic subgroups of $\phi$). 

By Theorem~\ref{thm:fixedpointouterspace} applied to the free product $(G_v,\mathcal{F}_v)$ and to $\langle \phi_v^{n_v} \rangle$, the group $\langle \phi_v^{n_v} \rangle$ fixes a Grushko $(G_v,\mathcal{F}_v)$-free splitting $U_v$ in the outer space of $G_v$ relative to $\mathcal{F}_v$. 

For every $w \in V\overline{G \backslash T_{\mathcal{G}\cup \mathcal{H}}}$ in the $\phi$-orbit of $v$, let $U_w$ be the Grushko $(G_w,\mathcal{F}_w)$-free splitting whose underlying tree is the same as $U_v$, and the action of $g \in G_w$ is given by the action of $\Phi_w^{-n_w}(g) \in G_v$, where $n_w$ is the minimal positive integer such that $\phi^{n_w}(v)=w$ and $\Phi_w \in \phi^{n_w}$ is such that $\Phi_w(G_v)=G_w$. We claim that the Grushko $(G_w,\mathcal{F}_w)$-free splitting $U_w$ only depends on $U_v$ and $\phi$. Indeed, if $\Phi_w,\Phi_w' \in \phi^{n_w}$ are such that $\Phi_w(G_v)=G_w=\Phi_w'(G_v)$, then $\Phi_w' \circ \Phi_w^{-1}$ is an inner automorphism $\mathrm{ad}_h$ of $F_N$ preserving $G_w$. Since $G_w$ is nonelementary, it fixes a unique vertex in $T_{\mathcal{G}\cup \mathcal{H}}$, so that it is its own normaliser in $F_N$. In particular, $h \in G_w$ and the construction of $U_w$ does not depend on the choice of $\Phi_w$. Therefore, the splitting $U_w$ is preserved by $\phi_w^{n_v}$.

Let $U$ be the splitting of $F_N$ obtained from $T_{\mathcal{G}\cup \mathcal{H}}$ by blowing up, for every $m \geq 1$, the splitting $U_{\phi^m(v)}$ at $\phi^m(v)$ and attaching the adjacent edges of $\phi^m(v)$ to the unique points fixed by their edge groups. Note that $U$ is fixed by $\phi$ since the construction of the splitting $U_w$ with $w \in \langle \phi \rangle v$ is $\phi$-equivariant. 

Let $S'$ be the free splitting of $F_N$ obtained from $U$ by collapsing all edges with nontrivial stabiliser. Then $S'$ is a $\phi$-invariant free splitting of $F_N$. 

We claim that, if $\phi$ acts nontrivially on $\overline{F_N \backslash T_{\mathcal{G}\cup \mathcal{H}}}$, then it also acts nontrivially on $\overline{F_N \backslash S'}$. Indeed, any nontrivial action of $\phi$ on $\overline{F_N \backslash T_{\mathcal{G}\cup \mathcal{H}}}$ will permute the subgraphs of $S'$ corresponding to points in the outer space of $(G_w,\mathcal{F}_w)$ with $w \in V\overline{F_N \backslash T_{\mathcal{G}\cup \mathcal{H}}}$. 

By Lemma~\ref{lem:trivialactionfreesplitting}, the automorphism $\phi$ acts trivially on $\overline{F_N \backslash S'}$. Thus, $\phi$ acts trivially on $\overline{F_N \backslash T_{\mathcal{G}\cup \mathcal{H}}}$. This concludes the proof of Case~1.

\medskip

\noindent{\bf Case~2. } Suppose that $v$ is flexible. 

\medskip

By Theorem~\ref{thm:JSJrelhyp}$(5)$, the vertex $v$ is a QH vertex. Let $\Sigma_v$ be the associated compact surface. Recall the notations $\mathrm{Bd}_v$, $\mathrm{Bd}_S$ and $\chi_S(\Sigma_v)$ from just above Lemma~\ref{lem:HomologyQHvertex}. Since $\overline{F_N \backslash T_{\mathcal{G}\cup \mathcal{H}}}$ is a circle, we have $|\mathrm{Bd}_S| \leq 2$. In particular, since $\Sigma_v$ is hyperbolic, we have $\chi(\Sigma_v) \leq - 1$ and $\chi_S(\Sigma_v) \leq 1$.

Suppose first that $\chi_S(\Sigma_v)=1$. Then $\chi(\Sigma_v) = - 1$ and $|\mathrm{Bd}_S| = 2$, so that $\Sigma_v$ has at least two boundary components. This implies that $\Sigma_v$ is isomorphic to a pair of pants or a twice-punctured projective plane. In both cases, these surfaces have a finite mapping class group (see~\cite[Corollary~4.6]{korkmaz2002} for the twice-punctured projective plane). Since $\phi^{n_v}$ maps into the mapping class group of $\Sigma_v$ by Theorem~\ref{thm:JSJrelhyp}$(9)$, the case where $\chi_S(\Sigma_v)=1$ follows from Case~1.

Finally, suppose that $\chi_S(\Sigma_v)<1$. By Lemma~\ref{lem:HomologyQHvertex}$(1)$, for every $w \in VT_{\mathcal{G}\cup \mathcal{H}}$ not in the $G$-orbit of $v$, the image of $H_1(G_v,\ZZ/3\ZZ)$ in $H_1(G,\ZZ/3\ZZ)$ is distinct from the image of $H_1(G_w,\ZZ/3\ZZ)$ in $H_1(G,\ZZ/3\ZZ)$. In particular, since $\phi \in \IA(G,\mathcal{G},3)$, the vertex $v$ is fixed by $\phi$. Hence $\phi$ acts trivially on $\overline{F_N \backslash T_{\mathcal{G}\cup \mathcal{H}}}$. 
\end{proof}

\subsection{Understanding the outer action on the vertex groups}

By Proposition~\ref{lem:trivialactionJSJsplitting}, for every vertex $v \in VT_{\mathcal{G}\cup \mathcal{H}}$ (resp. every edge $e \in ET_{\mathcal{G}\cup \mathcal{H}}$), the outer automorphism $\phi$ induces an outer automorphism $\phi_v \in \Out(G_v)$ (resp. ($\phi_e \in \Out(G_e)$). We now investigate the outer action of $\phi_v$ on $G_v$ according to the nature of $v$.

\begin{lem}\label{lem:flexiblevertexacttrivially}
Let $G$ be a torsion free finitely generated group and let $S$ be a $G$-splitting. Let $v \in VS$ be a QH vertex and let $\Sigma_v$ be the associated compact surface. Let $\phi \in \Out(G) \cap \mathcal{K}(S)$ be an outer automorphism acting trivially on $H_1(G,\ZZ/3\ZZ)$.

Suppose that, for every edge $e \in ES$ adjacent to $v$ with $G_e \simeq \ZZ$, the outer automorphism $\phi_e$ is trivial. Then $\phi$ preserves the $G$-conjugacy class of a generator of every boundary subgroup of $\pi_1(\Sigma_v)$.
\end{lem}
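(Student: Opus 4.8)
The plan is to reduce the claim to the computation of the image of $H_1(\pi_1(\Sigma_v),\ZZ/3\ZZ)$ in $H_1(G,\ZZ/3\ZZ)$, for which the relevant work has already been done in Lemma~\ref{lem:HomologyQHvertex}$(2)$. Fix a boundary subgroup $C$ of $\pi_1(\Sigma_v)$. Since $\phi \in \mathcal{K}(S)$, the surface $\Sigma_v$ is preserved (up to the mapping class group action) by $\phi_v$; more precisely, $\phi_v$ acts on the finite set of conjugacy classes of boundary subgroups of $\Sigma_v$ by a permutation, and this permutation is $\langle\phi\rangle$-equivariantly matched with the action of $\phi$ on the $G$-conjugacy classes of those boundary subgroups. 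So it suffices to show that $\phi$ fixes the $G$-conjugacy class $[C]$ and, moreover, fixes the conjugacy class of a generator of $C$ (i.e. does not invert it). I would split according to the four alternatives of Lemma~\ref{lem:HomologyQHvertex}$(2)$.

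First, if alternative $(a)$ holds, there is an edge $e \in ES$ adjacent to $v$ with $G_e$ commensurable with $C$; since $G$ is torsion free and $G_e \cong \ZZ$, after replacing $C$ by a conjugate we have $G_e \subseteq C$ with finite index, hence $G_e$ and $C$ share the same fixed point and generate the same maximal cyclic subgroup. The hypothesis says $\phi_e$ is trivial, so some representative $\Phi$ of $\phi$ fixes $G_e$ pointwise; as $G_e$ is of finite index in the maximal cyclic group containing it and $G$ is torsion free, $\Phi$ fixes the conjugacy class of a generator of that maximal cyclic subgroup, in particular of $C$. In alternatives $(b)$ and $(d)$, the relevant homology classes (the class of $C$ in case $(b)$, or the positively-oriented generators $c,d$ of the two exceptional boundary subgroups in case $(d)$) are nontrivial in $H_1(G,\ZZ/3\ZZ)$ and pairwise distinct from the classes attached to the other boundary subgroups; since $\phi$ acts trivially on $H_1(G,\ZZ/3\ZZ)$ it cannot permute $[C]$ with another boundary class and cannot send the generator $c$ to $c^{-1}$ (whose class is $-c \neq c$ in $\ZZ/3\ZZ$ as $c \neq 0$), so $\phi$ fixes the conjugacy class of a generator of $C$. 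In case $(d)$ one also needs that $\phi$ fixes each of $[C],[D]$ individually rather than swapping them, which again follows because $c \neq d$ in $H_1(G,\ZZ/3\ZZ)$. Finally, alternative $(c)$: here $\Sigma_v$ is orientable and $C$ is the unique (up to conjugacy) boundary subgroup not commensurable to an edge group; $\phi$ preserves the set of boundary classes that do fall under alternative $(a)$ (handled above), hence preserves the complement, which is the single class $[C]$, so $[C]$ is $\phi$-invariant; and since $\Sigma_v$ is orientable with $C$ the unique non-edge-boundary, the fundamental relation in $H_1(\Sigma_v)$ expresses the class of (the positively oriented generator of) $C$ in terms of the classes of the edge-boundary generators, each of which $\phi$ fixes (with orientation, by the case $(a)$ analysis applied inside this subsurface) — so $\phi$ fixes the class of a generator of $C$ with its orientation, and one upgrades this to the conjugacy class by the standard fact that in a surface group a boundary-conjugacy class together with its homology class (with orientation) determines the element up to conjugacy, since the only ambiguity for a boundary curve is inversion.

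The main obstacle I anticipate is bookkeeping the orientation in alternatives $(c)$ and $(d)$: passing from "$\phi$ fixes the $G$-conjugacy class $[C]$" to "$\phi$ fixes the conjugacy class of a \emph{generator} of $C$" requires ruling out inversion, and the homological argument only rules out inversion when the homology class is nonzero in $\ZZ/3\ZZ$. In case $(c)$ the class of $C$ may well be zero (it is the negative of a sum of edge-boundary classes, which can vanish), so one genuinely needs the geometric input that a mapping class of an orientable surface fixing every other boundary component with orientation must fix the last one with orientation — equivalently, that the corresponding automorphism of $\pi_1(\Sigma_v)$ is orientation-preserving because all the other boundary generators are fixed. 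I would phrase this via the long exact sequence of the pair $(\Sigma_v,\partial\Sigma_v)$, or simply invoke that the product of the (positively oriented) boundary generators is a fixed conjugacy class forced by the surface relation once all but one factor is fixed, which pins down the orientation of the last one. Everything else is a direct application of the already-established Lemma~\ref{lem:HomologyQHvertex}$(2)$ together with $\phi \in \mathcal{K}(S)$ and the triviality of $\phi$ on $H_1(G,\ZZ/3\ZZ)$.
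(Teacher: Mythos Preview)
Your approach is the paper's: split into the four alternatives of Lemma~\ref{lem:HomologyQHvertex}$(2)$ and treat each case. Cases $(a)$ and $(b)$ are handled exactly as in the paper, and your treatment of case $(c)$ via the relation $\sum_j c_j = 0$ in $H_1(\Sigma_v,\ZZ)$ is a legitimate alternative to the paper's orientation argument.

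Your case $(d)$, however, has a genuine gap. You rule out $[c] \mapsto [d]$ using $c \neq d$ in $H_1(G,\ZZ/3\ZZ)$ and $[c] \mapsto [c^{-1}]$ using $c \neq 0$, but you do not rule out the orientation-reversing swap $[c] \mapsto [d^{-1}]$. That would require $c = -d$ in $H_1(G,\ZZ/3\ZZ)$, and this \emph{can} happen: the proof of Lemma~\ref{lem:HomologyQHvertex}$(2)(d)$ shows that when the image of $\sum_{j \geq 3} c_j$ vanishes in $H_1(G,\ZZ/3\ZZ)$ one gets precisely $c = -d$. So the constraint in $H_1(G,\ZZ/3\ZZ)$ alone does not separate $[c]$ from $[d^{-1}]$.

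The paper closes this gap --- and simultaneously handles case $(c)$ --- by first proving that $\phi_v$ is orientation-preserving on $\Sigma_v$: since $S$ is not a point there is some boundary $C' \in \mathrm{Bd}_S$, and by the case-$(a)$ analysis a representative of $\phi$ fixing the corresponding edge fixes a generator of $C'$, so $\phi_v$ preserves orientation at $C'$ and hence globally. Once $\phi_v$ is orientation-preserving it sends positively-oriented boundary generators to positively-oriented ones, so in case $(d)$ only $[c] \mapsto [c]$ or $[c] \mapsto [d]$ remain, and $c \neq d$ finishes. Your own case-$(c)$ idea would also patch case $(d)$ if carried over: the relation in $H_1(\Sigma_v,\ZZ)$ forces $\phi_v(c_1)+\phi_v(c_2) = c_1+c_2 \neq 0$ (here $s \geq 3$ since $\mathrm{Bd}_S \neq \varnothing$), which excludes the orientation-reversing pairs; then $c \neq d$ in $H_1(G,\ZZ/3\ZZ)$ rules out the swap. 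But you did not make this step.

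Finally, you omit the degenerate case where $S$ is reduced to the single vertex $v$; the paper dispatches this by citing Ivanov.
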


\begin{proof}
Suppose first that $S$ is reduced to the vertex $v$. Then $G$ is a surface group and the lemma follows from the work of Ivanov~\cite[Theorem~1.2]{Ivanov92}.

So we may suppose that $S$ is not reduced to a vertex. Let $C$ be a boundary subgroup of $\pi_1(\Sigma_v)$ and let $c$ be a generator of $C$. Note that, since $G$ is torsion free, the group $C$ is infinite cyclic. By definition of a QH vertex, for every edge $e \in ES$ adjacent to $v$, the group $G_e$ is contained in a boundary subgroup of $\Sigma_v$. We now examine the four possible cases given by Lemma~\ref{lem:HomologyQHvertex}$(2)$.

Suppose that Lemma~\ref{lem:HomologyQHvertex}$(2)(a)$ occurs, so that there exists an edge $e$ of $S$ such that $G_e$ is a finite index subgroup of $C$. Let $\Phi \in \phi$ be an automorphism preserving $G_e$ and $G_v$. By assumption, we have $\Phi_{|G_e}=\id_{G_e}$. Since $G_v\simeq \pi_1(\Sigma_v)$ is a finitely generated free group, $G_e$ is contained in a unique maximal cyclic subgroup $C'$, so that $\Phi$ preserves $C'$ and $C \subseteq C'$. As $\Phi$ fixes $G_e$ elementwise, it also fixes $C'$ and $C$ elementwise. Thus, we see that $\phi$ preserves the conjugacy class of $c$.

Suppose that Lemma~\ref{lem:HomologyQHvertex}$(2)(b)$ occurs: the image of $H_1(C,\ZZ/3\ZZ)$ in $H_1(G,\ZZ/3\ZZ)$ is nontrivial and, for every boundary subgroup $C'$ of $\Sigma_v$ which is not conjugate to $C$, the images of $H_1(C,\ZZ/3\ZZ)$ and $H_1(C',\ZZ/3\ZZ)$ in $H_1(G,\ZZ/3\ZZ)$ are distinct. Since $\phi$ permutes the conjugacy classes of the generators of the boundary subgroups of $\Sigma_v$ and since $\phi$ acts trivially on $H_1(G,\ZZ/3\ZZ)$, we see that the conjugacy class of $c$ is fixed by $\phi$.

\medskip

\noindent{\bf Claim. } Suppose that Lemma~\ref{lem:HomologyQHvertex}$(2)(c)$ or~$(d)$ occur so that $\Sigma_v$ is orientable. Then $\phi_v$ preserves the orientation of $\Sigma_v$.

\begin{proof}
As $S$ is a connected $G$-splitting, there exists a boundary component $C'$ of $\pi_1(\Sigma_v)$ which is commensurable to an edge stabiliser $G_e$ of $e \in ES$. Since $\phi_e$ acts trivially on $G_e$, any automorphism in the outer class of $\phi$ preserving $e$ also acts trivially on $C'$. Thus, $\phi_v$ preserves the orientation at any point contained in the boundary of $\Sigma_v$ associated with $C'$. This shows that $\phi_v$ induces an orientation-preserving mapping class of $\Sigma_v$.
\end{proof}

Suppose that Lemma~\ref{lem:HomologyQHvertex}$(2)(c)$ occurs, so that $\Sigma_v$ is orientable and $C$ is the only boundary subgroup which is not commensurable to an edge stabiliser. We fix an orientation of $\Sigma_v$ so that $c$ is a positive generator of $C$. By the Claim, $\phi_v$ preserves the orientation of $\Sigma_v$. Thus $\phi_v$ sends $[c]$ to the conjugacy class of a generator of $C$ inducing the orientation, that is $\phi([c])=[c]$.

Finally, suppose that Lemma~\ref{lem:HomologyQHvertex}$(2)(d)$ occurs, so that $\Sigma_v$ is orientable and that there exists a unique (up to conjugacy) boundary subgroup $D$ of $\Sigma_v$ which is not conjugate to $C$ and which is not commensurable to an edge stabiliser. Again, fixing an orientation of $\Sigma_v$, we may suppose that $c$ is a positive generator of $C$ and that $d \in D$ is a positive generator of $D$. By the Claim, $\phi_v$ preserves the orientation of $\Sigma_v$, so that $\phi_v$ sends $[c]$ to either itself or $[d]$. By Lemma~\ref{lem:HomologyQHvertex}$(2)(d)$ the images of $c$ and $d$ in $H_1(G,\ZZ/3\ZZ)$ are distinct. Since $\phi$ acts trivially on $H_1(G,\ZZ/3\ZZ)$, we see that $\phi$ preserves the conjugacy class of $c$.
\end{proof}

\begin{lem}\label{lem:rigidvertexacttrivially}
Let $G$ be a group satisfying the \hyperlink{SA}{Standing Assumptions}. Let $\mathcal{H}$ be a finite set of conjugacy classes of finitely generated subgroups of $G$ such that $G$ is one-ended relative to $\mathcal{G}\cup\mathcal{H}$. Let $\phi \in \IA(G,\mathcal{G},3)$ be an outer automorphism preserving $\mathcal{H}$ and such that $\mathcal{H}$ consists of periodic subgroups of $\phi$. 

Let $v \in VT_{\mathcal{G}\cup \mathcal{H}}$ be a rigid vertex. Suppose that, for every edge $e \in ET_{\mathcal{G}\cup \mathcal{H}}$ adjacent to $v$ with $G_e \simeq \ZZ$, the outer automorphism $\phi_e$ is trivial. Then $\phi_v=\mathrm{id}$.
\end{lem}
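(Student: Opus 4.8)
The plan is to work inside the canonical JSJ splitting $T_{\mathcal{G} \cup \mathcal{H}}$. By Proposition~\ref{lem:trivialactionJSJsplitting} the automorphism $\phi$ acts trivially on $\overline{G \backslash T_{\mathcal{G} \cup \mathcal{H}}}$, so $\phi \in \mathcal{K}(T_{\mathcal{G} \cup \mathcal{H}})$ and the induced outer automorphisms $\phi_v \in \Out(G_v)$ and $\phi_e \in \Out(G_e)$ (for $e$ incident to $v$) are well defined; moreover, since $\phi$ fixes every $G$-orbit of edges and fixes $v$, the automorphism $\phi_v$ fixes the $G_v$-conjugacy class of every incident edge group. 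Since $v$ is rigid it is nonelementary (Theorem~\ref{thm:JSJrelhyp}(5)), hence $G_v$ is $(G,\mathcal{G})$-nonperipheral, and by Theorem~\ref{thm:JSJrelhyp}(1),(4) each incident edge group is infinite, finitely generated, and either infinite cyclic or conjugate into some $G_i$. Using that the action of $G$ on $T_{\mathcal{G} \cup \mathcal{H}}$ is $2$-acylindrical (Theorem~\ref{thm:JSJrelhyp}(2)) and that $G$ is torsion free, one checks that every element of $\mathcal{G}_v = \mathcal{G}_{|G_v}$ is, up to conjugacy in $G_v$, contained in an incident edge group, and consequently that $\phi_v \in \Out^0(G_v,\mathcal{G}_v)$.

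The first substantial point is that $\phi_v$ has finite order. Since $\mathcal{H}$ is finite and consists of periodic subgroups of $\phi$, there is $\ell > 0$ such that $\phi^\ell$ fixes every element of $\mathcal{H}$ pointwise; then $\phi^\ell \in \mathcal{K}(T_{\mathcal{G} \cup \mathcal{H}}) \cap \Out(G,\mathcal{G},\mathcal{H}^{(t)})$, so Theorem~\ref{thm:JSJrelhyp}(8) shows that the image of $\phi^\ell$ in $\Out(G_v)$ lies in a finite group. Hence $(\phi_v)^\ell$, and therefore $\phi_v$, has finite order.

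The heart of the argument is to show that $\phi_v \in \IA^0(G_v,\mathcal{G}_v,3)$; in view of the first paragraph it suffices to prove that $\phi_v$ acts trivially on $H_1(G_v,\ZZ/3\ZZ)$. By Lemma~\ref{lem:MayerVietoris}, the kernel of the map $H_1(G_v,\ZZ/3\ZZ) \to H_1(G,\ZZ/3\ZZ)$ is contained in the span of the images of $H_1(G_e,\ZZ/3\ZZ)$ over the incident edges $e$; since $\phi \in \IA(G,\mathcal{G},3)$ acts trivially on $H_1(G,\ZZ/3\ZZ)$, it is enough to show that $\phi_v$ acts trivially on each such image. For a cyclic incident edge $e$ this is immediate from the hypothesis $\phi_e = \mathrm{id}$. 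For an incident edge $e$ with $G_e$ conjugate into $G_i$, one uses that $\phi$ preserves $[G_i]$, that the maximal parabolic $P_e$ containing the infinite group $G_e$ is unique, and that peripheral subgroups of the relatively hyperbolic pair $(G,\mathcal{G})$ are malnormal (as $G$ is torsion free), together with $2$-acylindricity; combined with Item~$(3)$ of the \hyperlink{SA}{Standing Assumptions} applied to the element of $\Out^*(G_i)$ induced by a suitable power of $\phi$, this yields a representative of $\phi_v$ whose action on the image of $H_1(G_e,\ZZ/3\ZZ)$ in $H_1(G_v,\ZZ/3\ZZ)$ is trivial. This last step, relating the action of $\phi$ on $G_v$ to its action on the ambient maximal parabolic $P_e \cong G_i$ and passing from the aperiodicity of the \hyperlink{SA}{Standing Assumptions} to triviality on homology, is where I expect the main technical difficulty to lie.

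Finally, the conclusion mirrors the proof of Proposition~\ref{prop:IAtorsionfree}. The finite group $\langle \phi_v \rangle$ fixes a Grushko $(G_v,\mathcal{G}_v)$-free splitting $S_v$ by Theorem~\ref{thm:fixedpointouterspace} (nondegenerate, since $G_v$ is nonperipheral); by Lemma~\ref{lem:trivialactionfreesplitting}, $\phi_v \in \mathcal{K}(S_v)$, and since $\phi_v$ fixes each conjugacy class in $\mathcal{G}_v$ — whose representatives are exactly the nontrivial vertex groups of $S_v$ — it in fact lies in $\mathcal{T}(S_v)$. As $\mathcal{T}(S_v)$ is torsion free by the argument of Lemma~\ref{lem:G/ZGtorsionfree}, the finite-order element $\phi_v$ must be trivial, which is the desired conclusion.
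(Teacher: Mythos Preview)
There are two genuine gaps.

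\textbf{The extension argument over $\ZZ/3\ZZ$ fails.} Your key step is to deduce that $\phi_v$ acts trivially on $H_1(G_v,\ZZ/3\ZZ)$ from the fact that it acts trivially on the image in $H_1(G,\ZZ/3\ZZ)$ and on the subgroup $K_{\mathrm{Inc}_v}$ containing the kernel. But triviality on the two ends of a short exact sequence of $\ZZ/3\ZZ$--modules does \emph{not} force triviality in the middle: the unipotent matrix $\begin{pmatrix}1&1\\0&1\end{pmatrix}$ on $(\ZZ/3\ZZ)^2$ has order~$3$, fixes the first factor, and induces the identity on the quotient. Since you have no control on the $3$--part of the order of $\phi_v$, this step cannot be completed. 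The paper avoids this by never claiming that $\phi_v \in \IA^0(G_v,\mathcal{G}_v,3)$. Instead it proves the weaker statement that $\phi_v$ acts trivially on the \emph{integral} homology $H_1(\overline{G_v\backslash S_v},\ZZ)$ of the quotient of a fixed Grushko $(G_v,\mathcal{G}_v)$--free splitting $S_v$: one shows $\phi_v$ is trivial on a direct summand $V$ (generated by axes of loxodromic incident edge groups, via the hypothesis $\phi_e=\mathrm{id}$) and on the quotient $H_1(\overline{G_v\backslash S_v},\ZZ)/V \otimes \ZZ/3\ZZ$ (via Mayer--Vietoris and $\phi\in\IA(G,\mathcal{G},3)$); the finite order of $\phi_v$ then lifts triviality to the torsion-free quotient $H_1(\overline{G_v\backslash S_v},\ZZ)/V$, and the same finite-order trick over $\ZZ$ kills the extension class. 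This is precisely the place where working over $\ZZ$ rather than $\ZZ/3\ZZ$ is essential.

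\textbf{The passage to $\mathcal{T}(S_v)$ is unjustified.} In your last paragraph you assert that because $\phi_v$ fixes every conjugacy class in $\mathcal{G}_v$, it lies in $\mathcal{T}(S_v)$. But $\mathcal{T}(S_v)$ is the kernel of $\mathcal{K}(S_v)\to\prod_w \Out(G_w)$: you must show that $\phi_v$ induces the \emph{trivial outer automorphism} on each vertex group, not merely that it preserves their conjugacy classes. Likewise, Lemma~\ref{lem:G/ZGtorsionfree} proves $\mathcal{T}(S)$ torsion free when the vertex groups are the $G_i$ themselves; for $S_v$ the vertex groups are subgroups of the $G_i$, and $H/Z(H)$ torsion free is not inherited by subgroups in general. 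The paper instead uses Proposition~\ref{prop:stabfreesplittingautversion}: for each nontrivial vertex group $H$ of $S_v$ one chooses a representative $\Phi_H\in\phi_v$ fixing an adjacent edge, extends it to a representative of $\phi$ preserving the unique $G_i\supseteq H$, and applies Item~$(4)$ (not Item~$(3)$) of the Standing Assumptions to conclude that the finite-order automorphism $\Phi_H|_H$ is the identity. Only then does $\phi_v$ land in the torsion-free kernel described by Proposition~\ref{prop:stabfreesplittingautversion}, forcing $\phi_v=\mathrm{id}$.

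A minor further point: the claim that every element of $\mathcal{G}_v$ is contained in an incident edge group is not justified, and the paper's own Step~1 explicitly treats the complementary case where $[H]_G\in\mathcal{G}$ and $v$ is the unique fixed point of $H$.
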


\begin{proof}
By Theorem~\ref{thm:JSJrelhyp}$(4)$, the group $G_v$ is finitely generated. Thus, the pair $(G_v,\mathcal{G}_v)$ defines a free product. By Theorem~\ref{thm:JSJrelhyp}$(8)$, the outer automorphism $\phi_v$ of $G_v$ induced by $\phi$ is of finite order. We will prove that $\phi_v$ is in fact trivial. By Theorem~\ref{thm:fixedpointouterspace}, there exists a Grushko $(G_v,\mathcal{G}_v)$-free splitting $S_v$ which is preserved by $\phi_v$. 

\medskip

\noindent{\bf Claim. } The outer automorphism $\phi_v$ acts trivially on the quotient graph $\overline{G_v \backslash S_v}$.

\begin{proof}
We will prove the Claim using Lemma~\ref{lem:graphautotrivial}. Notice that we do not know that $\phi_v$ acts trivially on $H_1(G_v,\ZZ/3\ZZ)$, so that we cannot apply Lemma~\ref{lem:actionhomograph}. In order to overcome this difficulty, we need a more detailed analysis of the action of $\phi_v$ on $\overline{G_v \backslash S_v}$. 

\medskip

\noindent{\it Step~1. } $\phi_v$ fixes every vertex of $\overline{G_v \backslash S_v}$ with nontrivial stabiliser. In particular, $\phi_v$ fixes every leaf of $\overline{G_v \backslash S_v}$.

\begin{proofblack}
Since edge stabilisers in $S_v$ are trivial, and since the action of $G_v$ on $S_v$ is minimal, it suffices to prove that $\phi_v$ preserves the $G_v$-conjugacy class $[H]_{G_v}$ of every nontrivial vertex stabiliser $H$ of $S_v$. Let $[H]_{G_v} \in \mathcal{G}_v$ be the $G_v$-conjugacy class of a vertex stabiliser $H$ of $S_v$. Since $T_{\mathcal{G} \cup \mathcal{H}}$ is a $(G,\mathcal{G})$-elementary splitting, $H$ has a fixed point in $T_{\mathcal{G} \cup \mathcal{H}}$. Moreover, either $[H]_G \in \mathcal{G}$ and $v$ is the unique fixed point of $H$ or else $H$ is the intersection of $G_v$ with an elementary vertex stabiliser, that is, $H$ is a stabiliser of an edge adjacent to $v$. Let us examine both cases.

Suppose that $H$ is the stabiliser of an edge $e$ adjacent to $v$. By Proposition~\ref{lem:trivialactionJSJsplitting}, the outer automorphism $\phi$ acts trivially on the quotient graph $\overline{G \backslash T_{\mathcal{G} \cup \mathcal{H}}}$. In particular, $\phi$ has a representative $\Phi$ fixing $e$. Hence $\Phi(G_v)=G_v$ and $\Phi(H)=H$, that is, $\phi_v$ preserves the $G_v$-conjugacy class of $H$.

Suppose now that $v$ is the unique fixed point of $H$ and that $[H]_G \in \mathcal{G}$. Since $\phi \in \Out^0(G,\mathcal{G})$, the outer class $\phi$ preserves the $G$-conjugacy class of $H$. By~\cite[Lemma~2.2]{GuirardelLevitt2015} (applied with $K=G_v$), as $H$ does not fix an edge in $T_{\mathcal{G} \cup \mathcal{H}}$, the outer automorphism $\phi_v$ preserves the $G_v$-conjugacy class of $H$. The first assertion of the claim follows. Since the action of $G_v$ on $S_v$ is minimal, every leaf of $\overline{G_v \backslash S_v}$ has nontrivial stabiliser, so that $\phi_v$ fixes every leaf of $\overline{G_v \backslash S_v}$ by the first assertion.
\end{proofblack}

Let $\mathrm{Inc}_v$ be the set of $G_v$-conjugacy classes of edge stabilisers of $T_{\mathcal{G}\cup \mathcal{H}}$ incident to $v$. Let $\mathrm{Inc}_v^{cyc}$ be the subset of $\mathrm{Inc}_v$ consisting of $G_v$-conjugacy classes of edge stabilisers of $T_{\mathcal{G}\cup \mathcal{H}}$ incident to $v$ with $(G,\mathcal{G})$-nonperipheral cyclic stabiliser. 

\medskip

\noindent{\it Step~2. } Let $[G_e]_{G_v} \in \mathrm{Inc}_v^{cyc}$ and let $c \in G_e$ be a generator. Then $c$ acts loxodromically on $S_v$. Moreover, there exists $\Phi_v \in \phi_v$ such that $\Phi_v(c)=c$. Thus, $\Phi_v$ preserves the axis $\mathrm{Ax}(c)$ of $c$ in $S_v$ as well as its orientation. 

\begin{proofblack}
Since $c$ is $(G,\mathcal{G})$-nonperipheral, it does not fix a point in the Grushko $(G_v,\mathcal{G}_v)$-free splitting $S_v$. Thus, $c$ acts loxodromically on $S_v$. Since $\phi$ acts trivially on every edge group $G_e$ with $G_e \simeq \ZZ$ by assumption, there exists $\Phi_v \in \phi_v$ such that $\Phi_v(c)=c$ (take any representative fixing the edge $e$). Thus, $\Phi_v$ preserves the axis $\mathrm{Ax}(c)$ of $c$ in $S_v$ as well as its orientation. 
\end{proofblack}

If $[G_e]_{G_v} \in \mathrm{Inc}_v^{cyc}$, and if $c_e$ is a generator of $G_e$, we denote by $\gamma_{c_e}$ the projection of $\mathrm{Ax}(c_e)$ in $\overline{G_v \backslash S_v}$. Let $K_{\mathrm{Inc}_v^{cyc}}^{S_v}$ be the subgroup of $H_1(\overline{G_v\backslash S_v},\ZZ)$ generated by the classes $[\gamma_{c_e}]$ with $[\langle c_e \rangle]_{G_v} \in  \mathrm{Inc}_v^{cyc}$. 
Let $K_{\mathrm{Inc}_v}$ (resp. $K_{\mathrm{Inc}_v^{cyc}}$) be the subgroup of $H_1(G_v,\ZZ)$ generated by the image of $\langle G_e \rangle_{[G_e]_{G_v} \in \mathrm{Inc}_v}$ (resp. of $\langle G_e \rangle_{[G_e]_{G_v} \in \mathrm{Inc}_v^{cyc}}$).  

\medskip

\noindent{\it Step~3. } The outer automorphism $\phi_v$ acts trivially on $H_1(\overline{G_v \backslash S_v},\ZZ)$. 

\begin{proofblack}
By Step~2, $\phi_v$ fixes elementwise the group $K_{\mathrm{Inc}_v^{cyc}}^{S_v}$. Let $T$ be the torsion part of the quotient $H_1(\overline{G_v \backslash S_v},\ZZ)/K_{\mathrm{Inc}_v^{cyc}}^{S_v}$ and let $V$ be the preimage of $T$ in $H_1(\overline{G_v \backslash S_v},\ZZ)$. Note that $V$ is a direct factor of $H_1(\overline{G_v \backslash S_v},\ZZ)$. Then $V$ is preserved by $\phi_v$ and $K_{\mathrm{Inc}_v^{cyc}}^{S_v}$ has finite index in $V$. Since $\phi_v$ fixes elementwise $K_{\mathrm{Inc}_v^{cyc}}^{S_v}$ and since $V$ is a free abelian group, it follows that $\phi_v$ fixes elementwise $V$.

We now claim that $\phi_v$ fixes elementwise the free abelian group $H_1(\overline{G_v \backslash S_v},\ZZ)/V$. Since $\phi_v$ has finite order, it induces a finite order automorphism of $H_1(\overline{G_v \backslash S_v},\ZZ)/V$. Since \[\ker\left(\mathrm{GL}(H_1(\overline{G_v \backslash S_v},\ZZ)/V) \to \mathrm{GL}(H_1(\overline{G_v \backslash S_v},\ZZ)/V \otimes \ZZ/3\ZZ) \right)\] is torsion free,  
it suffices to prove that $\phi_v$ acts trivially on $H_1(\overline{G_v \backslash S_v},\ZZ)/V \otimes \ZZ/3\ZZ$.

Let $H$ be the image of the homomorphism $H_1(G_v,\ZZ/3\ZZ) \to H_1(G,\ZZ/3\ZZ)$. Let $K_{\mathrm{Inc}_v}(\ZZ/3\ZZ)$ be the image of $K_{\mathrm{Inc}_v}$ in $H_1(G_v,\ZZ/3\ZZ)$. By Proposition~\ref{lem:trivialactionJSJsplitting}, we have $\phi \in \mathcal{K}(T_{\mathcal{G}\cup \mathcal{H}})$. Thus, by Lemma~\ref{lem:MayerVietoris}, there exists a subgroup $K$ of $K_{\mathrm{Inc}_v}(\ZZ/3\ZZ)$ such that the group $H$ fits in an exact sequence of $\phi_v$-equivariant maps
\begin{equation}\label{eq:surjectionH}
  K \to H_1(G_v,\ZZ/3\ZZ) \to H. 
\end{equation}
The projection $\widetilde{p}_v \colon H_1(G_v,\ZZ) \to H_1(\overline{G_v \backslash S_v},\ZZ)$ is surjective and $\phi_v$-equivariant. If $[G_e]_{G_v} \in \mathrm{Inc}_v \setminus \mathrm{Inc}_v^{cyc}$, then $G_e$ is a fixed point of $S_v$, so that the images of $G_e$ in $\pi_1(\overline{G_v \backslash S_v})$ and $H_1(\overline{G_v \backslash S_v},\ZZ)$ are trivial. Therefore, we have $$\widetilde{p}_v(K_{\mathrm{Inc}_v})=  \widetilde{p}_v(K_{\mathrm{Inc}_v^{cyc}}) = K_{\mathrm{Inc}_v^{cyc}}^{S_v} \subseteq V.$$ Thus, the projection 
\[p_v \colon H_1(G_v,\ZZ)/K_{\mathrm{Inc}_v} \to H_1(\overline{G_v \backslash S_v},\ZZ)/V\]
is well-defined, surjective and $\phi_v$-equivariant. Tensoring by $\ZZ/3\ZZ$, the homomorphism 
\begin{equation}\label{eq:surjection3}
    H_1(G_v,\ZZ)/K_{\mathrm{Inc}_v} \otimes \ZZ/3\ZZ \to H_1(\overline{G_v \backslash S_v},\ZZ)/V \otimes \ZZ/3\ZZ
\end{equation}
is surjective and $\phi_v$-equivariant. The group $H_1(G_v,\ZZ)/K_{\mathrm{Inc}_v} \otimes \ZZ/3\ZZ$ is isomorphic to $H_1(G_v,\ZZ/3\ZZ)/K_{\mathrm{Inc}_v}(\ZZ/3\ZZ)$ (recall that we defined $K_{\mathrm{Inc}_v}(\ZZ/3\ZZ)$ as the image of $K_{\mathrm{Inc}_v}$ in $H_1(G_v,\ZZ/3\ZZ)$). By Equation~\eqref{eq:surjectionH}, the group $H_1(G_v,\ZZ/3\ZZ)/K_{\mathrm{Inc}_v}(\ZZ/3\ZZ)$ is a quotient of $H$. Therefore, $H$ surjects $\phi_v$-equivariantly onto the group $H_1(G_v,\ZZ)/K_{\mathrm{Inc}_v} \otimes \ZZ/3\ZZ$. Combining Equations~\eqref{eq:surjectionH} and~\eqref{eq:surjection3}, we obtain a $\phi_v$-equivariant surjective homomorphism
\[H \to H_1(\overline{G_v \backslash S_v},\ZZ)/V \otimes \ZZ/3\ZZ.\] Since $H \subseteq H_1(G,\ZZ/3\ZZ)$ and since $\phi_v \in \IA(G,\mathcal{G},3)$, $\phi_v$ acts trivially on $H$ and hence on $H_1(\overline{G_v \backslash S_v},\ZZ)/V \otimes \ZZ/3\ZZ$. This proves that $\phi_v$ fixes pointwise $H_1(\overline{G_v \backslash S_v},\ZZ)/V$.

Let $x \in H_1(\overline{G_v \backslash S_v},\ZZ)$. Since $\phi_v$ fixes pointwise both $V$ and $H_1(\overline{G_v \backslash S_v},\ZZ)/V$, there exists $v_x \in V$ such that $\phi_v(x)=x +v_x$. Since $\phi_v(v_x)=v_x$ and since $\phi_v$ is a finite order automorphism of $H_1(\overline{G_v \backslash S_v},\ZZ)$, we necessarily have $v_x=0$, so that $\phi_v$ fixes pointwise $H_1(\overline{G_v \backslash S_v},\ZZ)$.
\end{proofblack}

By Step~3, $\phi_v$ fixes pointwise $H_1(\overline{G_v \backslash S_v},\ZZ)$ and $H_1(\overline{G_v \backslash S_v},\ZZ/3\ZZ)$. By Step~1, $\phi_v$ fixes every leaf of $\overline{G_v \backslash S_v}$. By Lemma~\ref{lem:graphautotrivial}, either $\phi_v$ fixes pointwise $\overline{G_v \backslash S_v}$ or else $\overline{G_v \backslash S_v}$ is a circle and $\phi_v$ acts as a rotation on it. 

In order to prove that $\phi_v$ fixes pointwise $\overline{G_v \backslash S_v}$, it remains to prove that, if $\overline{G_v \backslash S_v}$ is a circle, then $\phi_v$ fixes a point in it. So suppose that $\overline{G_v \backslash S_v}$ is a circle. Since $G_v$ is nonelementary, in particular noncyclic, the graph of groups $G_v \backslash S_v$ contains a vertex with nontrivial stabiliser. By Step~1, this vertex is fixed by $\phi_v$. The conclusion follows.
\end{proof}

By the claim, $\phi_v$ lies in $\mathcal{K}(S_v)$. We now use the short exact sequence in Proposition~\ref{prop:stabfreesplittingautversion} to prove that $\phi_v$ is trivial. Suppose first that $\mathcal{G}_v=\varnothing$. By Proposition~\ref{prop:stabfreesplittingautversion}, we have $\mathcal{K}(S_v)=\{\mathrm{id}\}$, so the conclusion follows in this case.

Suppose that $\mathcal{G}_v \neq \varnothing$ and let $[H] \in \mathcal{G}_v$ with $H \neq \{1\}$. We have a natural homomorphism $\langle \phi_v \rangle \to \Aut(H)$ given by sending $\phi_v$ to the unique representative $\Phi_H$ of $\phi_v$ which fixes a chosen edge adjacent to the vertex fixed by $H$. 

There exists a unique $i \in \{1,\ldots,k\}$ such that $H \subseteq G_i$. Uniqueness of $G_i$ implies that the automorphism $\Phi_H$ extends to a representative $\Phi$ of $\phi$ which preserves $G_i$. Since $\phi_{|G_i} \in \Out^*(G_i)$, we have $\Phi_{|G_i} \in \Aut^*(G_i)$. In particular, by Item~$(4)$ of the \hyperlink{SA}{Standing Assumptions}, every element of $H$ which is $\Phi$-periodic (and hence $\Phi_H$-periodic) is fixed. As $\Phi_H$ is finite order since $\phi_v$ is, we see that the image of $\phi_v$ in $\Aut(H)$ is trivial. 

By Proposition~\ref{prop:stabfreesplittingautversion}, the outer automorphism $\phi_v$ is contained in a direct product $N$ of groups, each direct factor being isomorphic to a subgroup $H$ of $G_v$ with $[H] \in \mathcal{G}_v$. Since $G$ is torsion free, we see that $N$ is torsion free. In particular, $\phi_v$ is trivial.
\end{proof}

\subsection{Conclusion and proof of Theorem~\ref{thm:invariantfamilycyclicfixed}}

In this section, we prove Theorem~\ref{thm:invariantfamilycyclicfixed} by combining the results of the above sections. We first record the following elementary lemma.

\begin{lem}\label{lem:nonperipheralnotconjinverse}
Let $(G,\mathcal{G})$ be a free product with $G$ torsion free and let $g \in G$ be $(G,\mathcal{G})$-nonperipheral. Then $N_G(\langle g \rangle)=C_G(g)$ is root--closed and infinite cyclic.
\end{lem}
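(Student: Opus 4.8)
The statement asserts that for a $(G,\mathcal{G})$-nonperipheral element $g$ in a torsion free free product $(G,\mathcal{G})$, the normaliser $N_G(\langle g\rangle)$ equals the centraliser $C_G(g)$, and that this common subgroup is infinite cyclic and root-closed. The natural tool here is the action of $G$ on a Grushko $(G,\mathcal{G})$-free splitting $S$: since $g$ is $(G,\mathcal{G})$-nonperipheral and nontrivial, it is not elliptic in $S$, so it acts loxodromically on the simplicial tree $S$ with a well-defined axis $\mathrm{Ax}(g)$ and translation length $\ell_S(g)>0$.

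First I would set up the loxodromic picture. Any $h\in N_G(\langle g\rangle)$ satisfies $hgh^{-1}=g^{\pm 1}$, hence $h$ preserves $\mathrm{Ax}(g)$ as a set (the axis of $g^{\pm1}$ is the same as the axis of $g$). Thus $N_G(\langle g\rangle)$ acts on the line $\mathrm{Ax}(g)$, giving a homomorphism $N_G(\langle g\rangle)\to \mathrm{Isom}(\mathrm{Ax}(g))$ whose image lies in the dihedral group generated by translations and a reflection. The kernel of this action fixes $\mathrm{Ax}(g)$ pointwise, hence fixes an edge, but edge stabilisers in a free splitting are trivial; so $N_G(\langle g\rangle)$ embeds into the infinite dihedral group. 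An element acting as a reflection would be an involution (since $G$ is torsion free, impossible) or would conjugate $g$ to $g^{-1}$ while itself being loxodromic — but a loxodromic isometry of a line is a nontrivial translation, not a reflection, contradiction; one must argue that no $h\in G$ can conjugate $g$ to $g^{-1}$. The cleanest way: if $hgh^{-1}=g^{-1}$ then $h$ reverses the orientation of $\mathrm{Ax}(g)$, so $h$ fixes the midpoint of some segment of $\mathrm{Ax}(g)$, i.e. $h$ fixes a point of $S$ (possibly after subdividing), making $h$ elliptic; but then $h$ is conjugate into some $G_i$, and $h^2$ fixes a point while also $h^2 g h^{-2}=g$, so $h^2\in C_G(g)$. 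One then shows $C_G(g)$ is loxodromic-by-nothing: every element of $C_G(g)$ preserves $\mathrm{Ax}(g)$ \emph{and} its orientation and translation length, so $C_G(g)\to \RR$ (translation length along the oriented axis) is an injective homomorphism with discrete image $\ell_S(g)\ZZ'$ for some finite-index overgroup; injectivity because the kernel fixes an edge. Hence $C_G(g)\simeq\ZZ$. Combining: $N_G(\langle g\rangle)$ contains $C_G(g)\simeq\ZZ$ with index at most $2$; index $2$ would force a torsion element or the conjugation $g\mapsto g^{-1}$ scenario, both excluded as above, so $N_G(\langle g\rangle)=C_G(g)$.

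For root-closedness: if $x\in G$ with $x^n\in C_G(g)=N_G(\langle g\rangle)$ for some $n\geq 1$, then $x^n$ is loxodromic in $S$ (its translation length is $n\,\ell_S(x)\neq 0$), so $x$ is loxodromic too, and $\mathrm{Ax}(x)=\mathrm{Ax}(x^n)=\mathrm{Ax}(g)$ since an element and its nonzero powers share an axis. Then $x$ preserves $\mathrm{Ax}(g)$, and since $x$ is loxodromic it preserves the orientation, so $x$ commutes with $g$ by the same translation-length injectivity argument (both $x$ and $g$ lie in the image of the injective map $\{$elements preserving oriented $\mathrm{Ax}(g)\}\hookrightarrow\RR$, whose source is abelian). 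Thus $x\in C_G(g)$, proving root-closedness — equivalently, $C_G(g)$ is a maximal cyclic subgroup.

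\textbf{Main obstacle.} The delicate point is ruling out the possibility that some $h\in G$ conjugates $g$ to $g^{-1}$, i.e.\ that $N_G(\langle g\rangle)$ is strictly larger than $C_G(g)$. In a general group this can happen; here it is blocked by torsion-freeness together with the tree action, but one must be careful about the orientation-reversing isometry: such an $h$ fixes a point of the \emph{barycentric subdivision} of $S$, hence lies in an elliptic (therefore peripheral, by the Grushko property) subgroup, and then $h^2$ is both elliptic and central-ish with respect to $g$, forcing $h^2=1$ and so $h=1$ by torsion-freeness — but then $g=g^{-1}$, contradicting $\ell_S(g)>0$. Making this ``fixed point of the subdivision'' step precise (an orientation-reversing isometry of a simplicial tree either inverts an edge — excluded by the definition of splitting — or fixes a vertex of the subdivision) is the one spot requiring genuine care rather than routine bookkeeping.
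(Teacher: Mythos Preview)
Your proof is correct and follows the same route as the paper: analyse the setwise stabiliser of $\mathrm{Ax}(g)$ in a Grushko $(G,\mathcal{G})$-free splitting, using triviality of edge stabilisers and torsion-freeness. The paper dispatches your ``main obstacle'' in one line rather than by case analysis: the stabiliser of $\mathrm{Ax}(g)$ is virtually $\ZZ$ (its orientation-preserving index-$2$ subgroup embeds in $\ZZ$ via translation length, with trivial kernel since edge stabilisers are trivial), and a torsion-free virtually-$\ZZ$ group is $\ZZ$ --- so no orientation-reversing element exists and there is nothing further to rule out; root-closedness and $N_G(\langle g\rangle)=C_G(g)$ then follow simultaneously by showing that any $h$ with $h^m\in N_G(\langle g\rangle)$ has $\mathrm{Ax}(h)=\mathrm{Ax}(g)$ and hence lies in this cyclic stabiliser.
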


\begin{proof}
Let $h \in G-\{1\}$ and let $m>0$ be such that $h^m \in N_G(\langle g \rangle)$. Let $T$ be a Grushko $(G,\mathcal{G})$-free splitting. Since $g$ is $(G,\mathcal{G})$-nonperipheral, it acts loxodromically on $T$. Let $\mathrm{Ax}(g)$ be the axis of $g$. Then for every $n \in \ZZ^*$, we have $\mathrm{Ax}(g)=\mathrm{Ax}(g^n)$. Since $h^m \in N_G(\langle g \rangle)$, we have \[h^m\mathrm{Ax}(g)=\mathrm{Ax}(h^mgh^{-m})=\mathrm{Ax}(g).\] Thus, the group $\langle g,h^m\rangle$ preserves $\mathrm{Ax}(g)$. 

The group $H=\Stab_G(\mathrm{Ax}(g))$ has an index two subgroup $H'$ preserving the orientation of $\mathrm{Ax}(g)$. Thus, we have a homomorphism $H' \to \ZZ$ given by the translation length in $\mathrm{Ax}(g)$. Since $T$ is a free splitting, the kernel of $H' \to \ZZ$ is trivial. Thus, $H$ is virtually cyclic. Since $G$ is torsion free, the group $H$ is cyclic, generated by an element acting loxodromically on $\mathrm{Ax}(g)$. Since $h^m \in H-\{1\}$, $h$ and $h^m$ acts loxodromically on $T$ and $\mathrm{Ax}(h)=\mathrm{Ax}(h^m)=\mathrm{Ax}(g)$. Thus, $h$ is contained in the cyclic group $H$, so that $h$ commutes with $g$. 
\end{proof}

We are now ready to prove the main theorem of this section.

\begin{proof}[Proof of Theorem~\ref{thm:invariantfamilycyclicfixed}]
The proof is by induction on the Grushko rank $\xi(G,\mathcal{G})$ of $(G,\mathcal{G})$. If $\xi(G,\mathcal{G})=1$, then either $G=G_1$ or else $G=\ZZ$. If $G=G_1$, then the result follows from Item~$(3)$ of the \hyperlink{SA}{Standing Assumptions}. If $G=\ZZ$, then $\IA(G,\mathcal{G},3)=\{\mathrm{Id}\}$ and the result is immediate. 

Suppose now that $\xi(G,\mathcal{G}) \geq 2$. Let $\mathcal{F}$ be the minimal $(G,\mathcal{G})$-free factor system such that for every $[H] \in \mathcal{H}$, the group $H$ is $(G,\mathcal{F})$-peripheral. Then $\phi$ preserves $\mathcal{F}$. By Lemma~\ref{lem:periodic freefactorsystem}, every element of $\mathcal{F}$ is preserved by $\phi$. 

Let $[A] \in \mathcal{F}$ and let $\phi_{|A} \in \Out(A)$ be the outer automrophism of $A$ induced by $\phi$. Let $\mathcal{H}_A$ be the set of $A$-conjugacy classes of subgroups of $A$ induced by $\mathcal{H}$. Suppose that $\mathcal{F} \neq \{[G]\}$. Then, for every $[A]\in \mathcal{F}$, we have $\xi(A,\mathcal{G}_{|A})<\xi(G,\mathcal{G})$. By Lemma~\ref{lem:IA_passes_free_factor} and the inductive hypothesis, for every $[A]\in \mathcal{F}$, we have $\phi_{|A} \in \Out(A,\mathcal{H}_A^{(t)})$. We deduce that $\phi \in \Out(G,\mathcal{H}^{(t)})$.

So we may suppose that $\mathcal{F}=\{[G]\}$, that is, $G$ is one-ended relative to $\mathcal{G} \cup \mathcal{H}$. The group $G$ is hyperbolic relative to $\mathcal{G}$. Let $T_{\mathcal{G}\cup \mathcal{H}}$ be the JSJ $(G,\mathcal{G} \cup \mathcal{H})$-elementary splitting given by Theorem~\ref{thm:JSJrelhyp}. The tree $T_{\mathcal{G}\cup \mathcal{H}}$ is preserved by $\phi$. By Proposition~\ref{lem:trivialactionJSJsplitting}, the graph automorphism induced by $\phi$ on $\overline{G \backslash T_{\mathcal{G}\cup \mathcal{H}}}$ is trivial. Therefore, for every $v \in VT_{\mathcal{G}\cup \mathcal{H}}$, we have a natural homomorphism $\langle \phi \rangle \to \Out(G_v)$. Recall that we denote the image of $\phi$ by $\phi_v$. Similarly, for every edge $e \in ET_{\mathcal{G}\cup \mathcal{H}}$, we have a natural homomorphism $\langle \phi \rangle \to \Out(G_e)$ and we denote the image of $\phi$ by $\phi_e$.

For every $v \in VT_{\mathcal{G}\cup \mathcal{H}}$, let $\mathcal{H}_v$ be the set of $G_v$-conjugacy classes of subgroups of $G_v$ induced by $\mathcal{H}$. Since for every $[H] \in \mathcal{H}$, the group $H$ is elliptic in $T_{\mathcal{G}\cup \mathcal{H}}$, it suffices to prove that, for every $v \in VT_{\mathcal{G}\cup \mathcal{H}}$, we have $\phi_v \in \Out(G_v,\mathcal{H}_v^{(t)})$. We first need the following claim.

\medskip

\noindent{\bf Claim. } Let $e \in ET_{\mathcal{G}\cup \mathcal{H}}$ be such that $G_e \simeq \ZZ$. Then $\phi_e=\mathrm{id}$.

\begin{proof}
Let $\Phi_e \in \phi_e$ be induced by the restriction to $G_e$ of a representative $\Phi$ of $\phi$ preserving $e$ and let $t$ be a generator of $G_e$. Then $\Phi_e(t)=t$ or $\Phi_e(t)=t^{-1}$. Thus, $t$ is a periodic element of $\Phi_e$ and of $\Phi$. Suppose that $G_e$ is contained in a peripheral subgroup $G_i$ with $i \in \{1,\ldots,k\}$. Since $G_i$ is malnormal, $G_i$ is the unique subgroup of $G$ containing $G_e$ such that $[G_i] \in \mathcal{G}$. Thus $\Phi(G_i)=G_i$. By Item~$(4)$ of the \hyperlink{SA}{Standing Assumptions}, we see that $\Phi_e(t)=\Phi_{|G_i}(t)=t$. 

Thus, we may suppose that $G_e$ is $(G,\mathcal{G})$-nonperipheral. In that case, by Lemma~\ref{lem:nonperipheralnotconjinverse}, $t$ is not conjugate to $t^{-1}$ in $G$. Thus, it suffices to prove that $\phi$ preserves the conjugacy class of $t$ in $G$.

By Lemma~\ref{lem:cyclicsplittinghasvertexfreefactor}, the group $G_e$ is contained in a proper $(G,\mathcal{G})$-free factor. Let $A$ be the minimal (necessarily proper) $(G,\mathcal{G})$-free factor containing $G_e$. Then $[A]$ is preserved by $\phi$. Moreover, since $A$ is a $(G,\mathcal{G})$-free factor, the image $\phi_{|A}$ of $\phi$ in $\Out(A)$ is contained in $\IA(A,\mathcal{G}_{|A},3)$ by Lemma~\ref{lem:IA_passes_free_factor}. Since $\Out(G_e)$ is finite, every $A$-conjugacy class of an element in $G_e$ is $\phi_{|A}$-periodic. Since $A$ is proper, we have $\xi(A,\mathcal{G}_{|A}) < \xi(G,\mathcal{G})$. By the induction hypothesis, we see that $\phi_{|A}$ preserves the $A$-conjugacy class of $t$ and that $\Phi_e$ fixes $G_e$ elementwise.
\end{proof}

We now prove that, for every $v \in VT_{\mathcal{G}\cup \mathcal{H}}$, we have $\phi_v \in \Out(G_v, \mathcal{H}_v^{(t)})$. We distinguish between several cases, according to the nature of $v$.

\medskip

\noindent{\bf Case 1. } Suppose that $G_v$ is elementary noncyclic.

\medskip

In that case, we have $[G_v] \in \mathcal{G}$. By Item~$(3)$ of the \hyperlink{SA}{Standing Assumptions}, since $\phi \in \IA(G,\mathcal{G},3)$, we see that $\phi_v \in \Out(G_v,\mathcal{H}_v^{(t)})$.

\medskip

\noindent{\bf Case 2. } Suppose that $G_v$ is elementary cyclic.

\medskip

Since edge stabilisers are infinite, there exists an edge $e$ adjacent to $v$ such that $G_e$ has finite index in $G_v$. By the claim, we see that $\phi_e$ acts trivially on $G_e$. Since $G_e \subseteq G_v$ is infinite cyclic, this implies that $\phi_v$ acts trivially on $G_v$.

\medskip

\noindent{\bf Case 3. } Suppose that $v$ is rigid.

\medskip

By the Claim, we see that $\phi$ acts trivially on the stabiliser of every edge $e$ adjacent to $v$ with $G_e \simeq \ZZ$. By Lemma~\ref{lem:rigidvertexacttrivially}, the outer automorphism $\phi_v$ is trivial.

\medskip

\noindent{\bf Case 4. } Suppose that $G_v$ is a flexible nonelementary vertex.

\medskip

Recall that, by Theorem~\ref{thm:JSJrelhyp}$(5)$, there exists a compact surface $\Sigma_v$ such that $G_v$ is isomorphic to $\pi_1(\Sigma_v)$. Moreover, for every edge $e$ adjacent to $G_v$, the group $G_e$ is contained in a boundary subgroup of $\pi_1(\Sigma_v)$ and, for every $[H] \in \mathcal{H}_v$, the group $H$ is contained in a boundary subgroup of $\pi_1(\Sigma_v)$. 

The outer class $\phi_v$ then induces an outer automorphism of $\pi_1(\Sigma_v)$ preserving the set of conjugacy classes of boundary subgroups of $\pi_1(\Sigma_v)$. Therefore, it suffices to prove that $\phi_v$ fixes the conjugacy class of a generator of every boundary subgroup of $\pi_1(\Sigma_v)$. 
By the Claim, we see that $\phi$ acts trivially on the stabiliser of every edge $e$ adjacent to $v$ with $G_e \simeq \ZZ$. By Lemma~\ref{lem:flexiblevertexacttrivially}, the outer automorphism $\phi_v$ preserves the conjugacy class of a generator of every boundary subgroup. This concludes the proof of the last case and the proof of the theorem.
\end{proof}

We finish this section with a proof of the $\Aut$-version of Theorem~\ref{thm:invariantfamilycyclicfixed}. 
\begin{coro}\label{coco:periodicelementfixed}
Let $G$ be a group satisfying the \hyperlink{SA}{Standing Assumptions}. Consider an automorphism $\Phi \in \IA^\Aut(G,\mathcal{G},3)$. Every $\Phi$-periodic element of $G$ is fixed by $\Phi$.
\end{coro}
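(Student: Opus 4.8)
\noindent The plan is to reduce, by a single application of Theorem~\ref{thm:invariantfamilycyclicfixed} to a cleverly chosen periodic subgroup, to a situation in which the $\Phi$-orbit of $g$ is visibly an honest conjugation orbit, and then to collapse that orbit using either root-closedness of centralisers (nonperipheral case) or Item~$(4)$ of the \hyperlink{SA}{Standing Assumptions} (peripheral case). Concretely, let $\Phi\in\IA^\Aut(G,\mathcal{G},3)$, write $\phi=[\Phi]$, and let $g\in G$ with $\Phi^\ell(g)=g$ for some $\ell\geq 1$; we may assume $g\neq 1$. I would set $H=\langle \Phi^i(g):i\geq 0\rangle$. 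Since $\Phi^\ell(g)=g$ the set $\{\Phi^i(g)\}_{i\geq 0}$ is finite, so $H$ is finitely generated, $\Phi$-invariant, and $\Phi^\ell$ fixes $H$ pointwise; hence $H$ is a $\phi$-periodic subgroup and $\mathcal{H}=\{[H]\}$ is a $\phi$-invariant finite set of conjugacy classes of finitely generated periodic subgroups of $\phi$. Theorem~\ref{thm:invariantfamilycyclicfixed} then yields $\Psi\in\phi$ with $\Psi(H)=H$ and $\Psi_{|H}=\mathrm{id}_H$.

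\noindent Next I would write $\Phi=\mathrm{ad}_h\circ\Psi$ for some $h\in G$ and note that, because $\Psi$ fixes every $\Phi^i(g)$, a one-line induction gives $\Phi^n(g)=h^n g h^{-n}$ for all $n\geq 0$: indeed $\Phi^n(g)=h\,\Psi(\Phi^{n-1}(g))\,h^{-1}=h\,\Phi^{n-1}(g)\,h^{-1}$ since $\Phi^{n-1}(g)\in H$. Taking $n=\ell$ shows $h^\ell\in C_G(g)$; since $G$ is torsion free I may assume $h\neq 1$ (otherwise $\Phi(g)=\Psi(g)=g$ already), and then $h^\ell\neq 1$. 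Everything is now reduced to showing $h\in C_G(g)$, i.e. $\Phi(g)=hgh^{-1}=g$.

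\noindent For the final step I would split into two cases. If $g$ is $(G,\mathcal{G})$-nonperipheral, then $C_G(g)$ is root-closed by Lemma~\ref{lem:nonperipheralnotconjinverse}, so $h^\ell\in C_G(g)$ at once gives $h\in C_G(g)$. If $g$ is $(G,\mathcal{G})$-peripheral, write $g=xg_0x^{-1}$ with $g_0\in G_j$ nontrivial for some $j$; malnormality of the free factor $G_j$ in $G$ forces $C_G(g_0)\subseteq G_j$, so from $(x^{-1}hx)^\ell=x^{-1}h^\ell x\in C_G(g_0)$ we get $(x^{-1}hx)^\ell\in G_j\setminus\{1\}$, and root-closedness of $G_j$ inside $G$ (any elliptic root of a nontrivial element of a vertex group of a free splitting lies in that vertex group) gives $x^{-1}hx\in G_j$. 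Then $(\mathrm{ad}_{x^{-1}hx})_{|G_j}\in\Inn(G_j)\subseteq\Aut^*(G_j)$, and $(\mathrm{ad}_{x^{-1}hx})_{|G_j}^{\ell}(g_0)=(x^{-1}hx)^\ell g_0 (x^{-1}hx)^{-\ell}=g_0$, so $g_0$ is an $(\mathrm{ad}_{x^{-1}hx})_{|G_j}$-periodic element of $G_j$; Item~$(4)$ of the \hyperlink{SA}{Standing Assumptions} then gives $(x^{-1}hx)g_0(x^{-1}hx)^{-1}=g_0$, i.e. $hgh^{-1}=g$, i.e. $\Phi(g)=g$.

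\noindent I expect the only genuinely load-bearing idea to be the choice $\mathcal{H}=\{[H]\}$ in place of $\mathcal{H}=\{[\langle g\rangle]\}$: one needs a \emph{single} representative of $\phi$ fixing all of $g,\Phi(g),\Phi^2(g),\dots$ simultaneously, since it is precisely this that turns the $\Phi$-orbit of $g$ into the conjugation orbit $\{h^n g h^{-n}\}_n$, which then collapses to a point. After that the argument is routine; the one spot that needs a little care is checking, in the peripheral case, that $h$ itself---not merely $h^\ell$---lands inside $G_j$, which is exactly where malnormality and root-closedness of free factors of free products are used.
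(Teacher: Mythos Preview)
Your proof is correct and takes a genuinely different, more economical route than the paper. The paper re-enters the JSJ machinery: it passes by induction to the minimal free factor containing $H$, assumes $G$ is one-ended relative to $\mathcal{G}\cup\{[H]\}$, invokes the canonical elementary splitting $T_{\mathcal{G}\cup\mathcal{H}}$, and then analyses the vertex $v$ fixed by $H$ case by case (elementary, rigid, flexible), in each case eventually reaching a conjugation identity $\Phi^m(g)=h^mgh^{-m}$ and concluding via root-closedness or Item~$(4)$ of the Standing Assumptions. You bypass all of this by using Theorem~\ref{thm:invariantfamilycyclicfixed} as a black box on $\mathcal{H}=\{[H]\}$ with $H=\langle\Phi^i(g)\rangle_i$: the single representative $\Psi\in\phi$ fixing $H$ pointwise immediately gives $\Phi^n(g)=h^ngh^{-n}$ globally, and then the same root-closedness/Item~$(4)$ dichotomy finishes. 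Your key observation --- that one must take $\mathcal{H}=\{[H]\}$ rather than $\{[\langle g\rangle]\}$ so that $\Psi$ fixes \emph{all} the $\Phi^i(g)$ simultaneously --- is exactly the right one, and it makes the JSJ re-analysis unnecessary. The only point worth tightening is the parenthetical in the peripheral case: you say ``any elliptic root'', but in fact any root of a nontrivial elliptic element in a free splitting is automatically elliptic (a loxodromic element has loxodromic powers), so $x^{-1}hx$ lands in $G_j$ without further hypothesis.
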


\begin{proof}
Let $g$ be a $\Phi$-periodic element of $G$, let $H=\langle \phi^\ell(g) \rangle_{\ell \in \NN}$ and let $\mathcal{H}=\{[H]\}$. Since $g$ is $\Phi$-periodic, the group $H$ is finitely generated and $\Phi$-invariant. Let $A$ be the minimal $(G,\mathcal{G})$-free factor containing $H$. Then $A$ is preserved by $\Phi$. Thus, using an inductive argument, we may suppose that $G=A$. 

Then the group $G$ is one-ended relative to $\mathcal{G} \cup \mathcal{H}$. Let $T_{\mathcal{G}\cup \mathcal{H}}$ be the associated JSJ $(G,\mathcal{G} \cup \mathcal{H})$-elementary splitting given by Theorem~\ref{thm:JSJrelhyp}. 

Let $\phi=[\Phi] \in \Out(G)$. By Proposition~\ref{lem:trivialactionJSJsplitting}, the outer automorphism $\phi$ acts trivially on the quotient graph $\overline{G \backslash T_{\mathcal{G}\cup \mathcal{H}}}$. Moreover, by the Claim in the proof of Theorem~\ref{thm:invariantfamilycyclicfixed}, the outer automorphism $\phi$ acts trivially on every edge group with cyclic stabiliser.

The group $H$ fixes a point $v$ in $T_{\mathcal{G}\cup \mathcal{H}}$. Moreover, one can choose $v$ so that $G_v$ is preserved by $\Phi$. Indeed, By Lemma~\ref{lem:elemornonelem}, the group $H$ fixes a unique elementary vertex or a unique nonelementary vertex. We investigate different cases according to $v$.

If $G_v$ is elementary noncyclic, then, by Item~$(4)$ of the \hyperlink{SA}{Standing Assumptions}, the automorphism $\Phi$ acts trivially on $H$.

Suppose that $G_v$ is elementary cyclic. Since $\phi$ acts trivially on cyclic edge stabilisers of $T_{\mathcal{G}\cup \mathcal{H}}$, we see that $\Phi_v$ acts trivially on $H$.

Suppose that $G_v$ is nonelementary and rigid. By Lemma~\ref{lem:rigidvertexacttrivially}, the automorphism $\Phi$ acts on $G_v$ as a global conjugation by an element $h \in G_v$. Thus, for every $\ell >0$, we have $\Phi^\ell(g)=h^\ell gh^{-\ell}$.  As $g$ is $\Phi$-periodic, there exists $m >0$ such that $h^m \in C(g)$.

Suppose that $g$ is $(G,\mathcal{G})$-nonperipheral. Then its centraliser is infinite cyclic and root--closed by Lemma~\ref{lem:nonperipheralnotconjinverse}. Hence $h \in C(g)$ and $g$ is fixed by $\Phi$.

Suppose that there exists $[G_i] \in \mathcal{G}$ with $g \in G_i$. Then $G_i$ is preserved by $\Phi$. This case then follows from Item~$(4)$ of the \hyperlink{SA}{Standing Assumptions}.

Finally, suppose that $G_v$ is nonelementary and flexible. By Theorem~\ref{thm:JSJrelhyp}$(5)$, there exists a compact surface $\Sigma_v$ such that $G_v$ is isomorphic to $\pi_1(\Sigma_v)$. Moreover, the group $H$ is contained in a unique boundary subgroup. By Lemma~\ref{lem:flexiblevertexacttrivially}, the automorphism $\Phi$ fixes elementwise this boundary subgroup. This concludes the proof.
\end{proof}

\section{Periodic free factor systems}\label{Section:periodicffs}

Let $G$ be a group satisfying the \hyperlink{SA}{Standing Assumptions}. In this section, we prove the following theorem.

\begin{theo}\label{thm:periodicfreefactorfixed}
Let $G$ be a group satisfying the \hyperlink{SA}{Standing Assumptions}. Let $\phi \in \IA(G,\mathcal{G},3)$. Every $\phi$-periodic $(G,\mathcal{G})$-free factor system is fixed by $\phi$.
\end{theo}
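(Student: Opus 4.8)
The plan is to prove Theorem~\ref{thm:periodicfreefactorfixed} by a double induction. The outer induction is on the Grushko rank $\xi(G,\mathcal{G})$; the inner argument is on the poset $\FF(G,\mathcal{G})$ of proper free factor systems, where we may assume by passing to a power-invariant representative and using Lemma~\ref{lem:periodic freefactorsystem} that a $\phi$-periodic free factor system $\mathcal{F}$ is actually $\phi$-invariant, hence elementwise $\phi$-fixed, precisely when $\phi \in \IA^0(G,\mathcal{G},3)$. So the real content is: if $\mathcal{F}$ is a $\phi$-periodic $(G,\mathcal{G})$-free factor system, then $\mathcal{F}$ is $\phi$-invariant (not merely sent to something in its $\phi$-orbit). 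First I would reduce to the case where $\mathcal{F}$ is \emph{maximal} among $\phi$-periodic proper free factor systems, i.e.\ $\xi(G,\mathcal{F})$ is maximal: if $\mathcal{F} \sqsubsetneq \mathcal{F}'$ with $\mathcal{F}'$ also $\phi$-periodic, then knowing $\phi$ fixes $\mathcal{F}'$ lets us pass to the free factors appearing in $\mathcal{F}'$, apply Lemma~\ref{lem:IA_passes_free_factor} and the outer induction hypothesis (since each such factor has strictly smaller Grushko rank when $\mathcal{F}' \neq \{[G]\}$), and conclude. If $\mathcal{F}' = \{[G]\}$ is the only periodic free factor system above $\mathcal{F}$, then $G$ is "one-ended relative to" the relevant data and we are in the core case.

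For the core case I would follow the dichotomy sketched in the introduction. \textbf{First subcase:} there is no $\phi$-periodic $G$-tree with trivial edge stabilisers whose nontrivial vertex stabilisers realise $\mathcal{F}$. Here I would invoke relative train track theory (Theorem~\ref{thm:existencereltraintracks}) for a power of $\phi$ preserving $\mathcal{F}$, producing an attracting lamination $\Lambda_{\mathcal{F}}$ of $(G,\mathcal{G})$ associated to $\mathcal{F}$; since $\phi$ permutes its finite set of attracting laminations and, by the $\IA^0$-hypothesis, acts trivially on $H_1(G,\ZZ/3\ZZ)$ and on $\mathcal{G}$, one shows (via the representation of $\phi$ on the quotient graph of groups of a relative train track, and Lemma~\ref{lem:graphautotrivial} / Lemma~\ref{lem:actionhomograph}) that $\phi$ fixes each attracting lamination, hence fixes $\Lambda_{\mathcal{F}}$, hence fixes $\mathcal{F}$ (using the stated Lemma that recovers $\mathcal{F}$ from $\Lambda_{\mathcal{F}}$). \textbf{Second subcase:} $\mathcal{F}$ is the set of conjugacy classes of nontrivial vertex stabilisers of a $\phi^{\ell}$-invariant $G$-tree $T$ with trivial edge stabilisers. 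Here I would use the suspension trick: $T$ gives a $G_{\phi^\ell}$-cyclic splitting, so I pass to $G_{\phi^\ell}$, which is finitely presented (this is the one place finite presentability of $G$ is used), and apply Rips--Sela JSJ theory together with the tree of cylinders construction (Proposition~\ref{prop:treecylinderscanonical}, Lemma~\ref{lem:treecylinderscompatible}) to get an $\Aut(G_{\phi^\ell})$-invariant tree $U$ that is compatible with every $G_{\phi^\ell}$-cyclic splitting, in particular with $T$. Since $G_{\phi^\ell} \triangleleft G_\phi$, the tree $U$ becomes a $\phi$-invariant $G$-tree, and analysing the action of $\phi$ on $\overline{G\backslash U}$ (trivial on the underlying graph by Lemmas~\ref{lem:actionhomograph} and~\ref{lem:graphautotrivial}, after checking $\phi$ fixes leaves using the homology and $\mathcal{G}$-preservation hypotheses exactly as in Section~\ref{Section:perodicelements}) together with the action on vertex groups (controlled by Theorem~\ref{thm:invariantfamilycyclicfixed} and Lemma~\ref{lem:flexiblevertexacttrivially} for QH pieces) forces $\phi$ to preserve the refinement giving $T$, hence to fix $\mathcal{F}$.

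The step I expect to be the main obstacle is the second subcase, specifically controlling the flexible (QH) vertices of the canonical tree $U$ and the possibility that $\phi$ acts on $\overline{G\backslash U}$ as a nontrivial rotation when that graph is a circle. For the QH vertices one needs the analogue of Lemma~\ref{lem:HomologyQHvertex} to see that, unless the surface is exceptional (a pair of pants, three-punctured sphere, or twice-punctured projective plane, all with finite mapping class group), the homology classes pinned down by the boundary and by $H_1(\overline{G\backslash U},\ZZ/3\ZZ)$ already force the relevant curve system to be $\phi$-fixed; the exceptional surfaces are dispatched by finiteness of their mapping class groups plus torsion-freeness of $\IA(G,\mathcal{G},3)$ (Proposition~\ref{prop:IAtorsionfree}). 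For the rotation issue one argues as in Proposition~\ref{lem:trivialactionJSJsplitting}: when $\mathcal{G} \neq \varnothing$ a peripheral subgroup canonically picks out a fixed vertex; when $\mathcal{G} = \varnothing$ so $G = F_N$, one finds, via Lemma~\ref{lem:cyclicsplittinghasvertexfreefactor}, a vertex whose incident edge groups cut out a proper free factor system of the vertex group, and either that vertex is rigid/exceptional-QH (handled by finiteness and fixing a free splitting) or its vertex group carries enough homology to be $\phi$-fixed; in all cases $\phi$ fixes a point of the circle and hence acts trivially. Assembling these pieces — and checking carefully that the tree $U$ really does detect the particular $\phi^\ell$-invariant tree $T$ realising the maximal $\mathcal{F}$, i.e.\ that collapsing $U$ appropriately recovers a tree with the same nontrivial vertex stabiliser conjugacy classes — is where the bulk of the technical work lies, but no genuinely new ingredient beyond what is assembled in Section~\ref{Section:Background} and Section~\ref{Section:perodicelements} should be required.
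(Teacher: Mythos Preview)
Your overall architecture matches the paper: induction on $\xi(G,\mathcal{G})$, reduction to a maximal $\phi$-periodic proper free factor system $\mathcal{F}'$, then the dichotomy between the case handled by attracting laminations and the case handled by the JSJ of the suspension $G_{\phi^\ell}$. The paper phrases the dichotomy as \emph{nonsporadic} versus \emph{sporadic} (rather than ``not realised by a periodic free splitting'' versus ``realised by one''), but for a maximal periodic free factor system these are equivalent, so your framing is fine. Your first subcase is essentially Proposition~\ref{prop:periodiclaminationfixed} and Lemma~\ref{lem:preimagerho}.

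Where your proposal goes wrong is in the second subcase. You identify the main obstacle as controlling QH vertices of the canonical tree $U$ (the tree of cylinders of a JSJ splitting of $G_{\phi^\ell}$), and you plan to deploy Lemma~\ref{lem:HomologyQHvertex}, Lemma~\ref{lem:flexiblevertexacttrivially}, and Lemma~\ref{lem:cyclicsplittinghasvertexfreefactor}. None of this is needed or applicable: the JSJ tree $T_{JSJ}(G_{\phi^\ell})$ has \emph{no} flexible vertices. By Theorem~\ref{thm:JSJ} flexible vertex stabilisers are infinitely-ended, but by Lemma~\ref{lem:splittingsuspensionvsfiber}(2) a cyclic splitting of a suspension cannot have infinitely-ended vertex stabilisers. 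So every vertex is rigid and the surface machinery is irrelevant here. Likewise Lemma~\ref{lem:cyclicsplittinghasvertexfreefactor} concerns cyclic splittings of free products, which $G_{\phi^\ell}$ is not.

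The genuine obstacle you are missing is this: once you know $\phi$ acts trivially on $\overline{G\backslash T_c^G(G_{\phi^\ell})}$, you still only know that the refinement $U$ recovering $T$ is $\phi^\ell$-invariant; you must show it is $\phi$-invariant. The paper does this via its Claim~3: for each cylinder vertex $v$ one produces a \emph{single} representative $\Phi \in \phi$ (not $\phi^\ell$) fixing the entire star of $v$. This uses that the $G$-stabiliser of a cylinder vertex is $\mathrm{Per}(\Psi)$ for some $\Psi \in \phi^\ell$, and then invokes Corollary~\ref{coco:periodicelementfixed} and Theorem~\ref{thm:invariantfamilycyclicfixed} to upgrade this to $\mathrm{Fix}(\Phi)$ for some $\Phi\in\phi$; a separate argument then shows all the $\Phi_i$ fixing the various adjacent edges with trivial $G$-stabiliser coincide. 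Only then can you blow up at cylinder vertices $\phi$-equivariantly and conclude that $T$, and hence $\mathcal{F}$, is $\phi$-invariant. Your sketch does not contain this step.
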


The proof of Theorem~\ref{thm:periodicfreefactorfixed} relies on the study of maximal $\phi$-periodic $(G,\mathcal{G})$-free factor systems. We use two distinct techniques according to whether the free factor system is sporadic or not. 

If the free factor system is sporadic, then it induces a natural $\phi$-periodic $(G,\mathcal{G})$-free splitting. Therefore, we are left with studying $\phi$-periodic $(G,\mathcal{G})$-free splittings. This is the content of Section~\ref{sec:periodicfreesplittings}. 

When the free factor system is nonsporadic, one can naturally associate to it an \emph{attracting lamination} of $\phi$. Attracting laminations are dynamical invariants of $\phi$ on which $\phi$ naturally acts. The action of $\phi$ on the (finite) set of attracting laminations is encoded in a relative train track of $\phi$. This action is studied in details in Section~\ref{sec:periodiclamination}.

\subsection{JSJ decompositions of suspensions and periodic free splittings}\label{sec:periodicfreesplittings}

Let $G$ be a group satisfying the \hyperlink{SA}{Standing Assumptions} and let $\phi \in \Out(G,\mathcal{G})$. In this section, we prove the following theorem.

\begin{theo}\label{thm:periodicfreesplittingfixed}
Let $G$ be a group satisfying the \hyperlink{SA}{Standing Assumptions}. Let $\phi \in \IA(G,\mathcal{G},3)$. Every $\phi$-periodic $(G,\mathcal{G})$-free splitting is fixed by $\phi$.
\end{theo}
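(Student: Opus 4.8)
The strategy is the one outlined in the introduction: encode $\phi$-periodic $(G,\mathcal{G})$-free splittings in terms of cyclic splittings of a suspension, and then exploit the canonicity of JSJ decompositions (via trees of cylinders) to force invariance. Let $\phi \in \IA(G,\mathcal{G},3)$, let $\Phi \in \phi$, and let $S$ be a $\phi$-periodic $(G,\mathcal{G})$-free splitting; pick $\ell > 0$ with $\Phi^\ell(S) = S$ (as a $G$-equivariant isometry class). The first step is to observe that the $\Phi^\ell$-invariant free splitting $S$ of $G$ produces, by forming the mapping torus, a graph-of-groups decomposition of $G_{\phi^\ell} = G \rtimes_{\Phi^\ell} \ZZ$ whose edge groups are infinite cyclic: each edge group of $G\backslash S$ is trivial, but the stable letter $t$ (generating the $\ZZ$ factor) together with an edge orbit of $S$ contributes a $\ZZ$ edge group in the suspended graph of groups. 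Conversely every such ``$G_{\phi^\ell}$-cyclic splitting'' of $G_{\phi^\ell}$ restricts to a $\Phi^\ell$-invariant $(G,\mathcal{G})$-free splitting of $G$. Thus the set of $\phi^\ell$-periodic $(G,\mathcal{G})$-free splittings injects into the set of $G_{\phi^\ell}$-cyclic splittings.

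Next I would invoke the JSJ machinery from Section~2.3. After an inductive reduction on the Grushko rank (restricting to a $\phi$-periodic free factor on which the situation is ``one-ended relative to $\mathcal{G}$'', so that $G_{\phi^\ell}$ is one-ended — note that $G_{\phi^\ell}$ is finitely presented because $G$ is and because periodic subgroups are finitely generated, which is exactly the one place finite presentation of $G$ is needed), Theorem~\ref{thm:JSJ} gives a JSJ $(G_{\phi^\ell})$-cyclic splitting, and Proposition~\ref{prop:treecylinderscanonical} gives an $\Aut(G_{\phi^\ell})$-invariant tree of cylinders $U$. Since $G_{\phi^\ell}$ is normal in $G_\phi$, the group $G_\phi$ acts on $U$, so $U$ is a $\phi$-invariant $G$-tree. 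By Lemma~\ref{lem:treecylinderscompatible}, $U$ is compatible with every $G_{\phi^\ell}$-cyclic splitting; in particular the $\phi^\ell$-invariant free splitting $S$ is ``dominated'' by data living on $U$. The plan is then to run the same argument as in Section~3: since $\phi \in \IA(G,\mathcal{G},3)$, it acts trivially on $H_1(G,\ZZ/3\ZZ)$, hence (Lemma~\ref{lem:actionhomograph}, Lemma~\ref{lem:graphautotrivial}, plus the leaf-fixing arguments of Lemmas~\ref{lem:trivialactionJSJsplittingtreecase} and~\ref{lem:periodic freefactorsystem}) trivially on the quotient graph $\overline{G\backslash U}$, and then — using the Standing Assumptions and Theorem~\ref{thm:invariantfamilycyclicfixed}/Lemma~\ref{lem:rigidvertexacttrivially} to control rigid and QH vertex groups and cyclic edge groups — that $\phi$ actually fixes $U$ and acts trivially on its vertex groups. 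Finally, because $S$ is reconstructed from $U$ together with blow-up data at the cylinder vertices that is itself $\phi$-equivariant and controlled by the vertex-group action, one concludes $\Phi$ can be chosen to preserve $S$, i.e. $\phi$ fixes $S$.

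The main obstacle, I expect, is the passage from ``$\phi$ fixes the canonical tree $U$ and acts trivially on its pieces'' back to ``$\phi$ fixes the original free splitting $S$.'' The tree of cylinders $U$ is coarser than $S$ and need not have cyclic edge stabilizers, so one must carefully track how $S$ sits inside the common refinement provided by Lemma~\ref{lem:treecylinderscompatible}: $S$ corresponds to choosing, equivariantly and $\phi$-invariantly, how to blow up the cylinder vertices of $U$. Showing that these blow-up choices are forced — i.e. that the twisting freedom measured by $\mathcal{T}(\cdot)$ is killed by the congruence condition, which is where Lemma~\ref{lem:G/ZGtorsionfree} on torsion-freeness of $\mathcal{T}(S)$ and the homological triviality enter — is the delicate point, exactly as in the proof of Proposition~\ref{prop:IAtorsionfree} but now with a periodicity (rather than finiteness) hypothesis to convert into an actual fixed-point statement. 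A secondary subtlety is handling the sporadic/small-rank base cases of the induction and ensuring $G_{\phi^\ell}$ is genuinely one-ended after the reduction, which should follow from the one-endedness of $G$ relative to $\mathcal{G}$ together with the structure of the suspension.
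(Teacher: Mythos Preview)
Your high-level strategy matches the paper's: pass to the suspension $G_{\phi^\ell}$, take the tree of cylinders $T_c(G_{\phi^\ell})$ of a JSJ cyclic splitting, use normality of $G_{\phi^\ell}$ in $G_\phi$ to obtain a $\phi$-invariant $G$-tree, show $\phi$ acts trivially on the quotient graph, and then recover $S$ as a blow-up at cylinder vertices. However, two of your anticipated difficulties are non-issues, and the mechanism you propose for the genuine crux does not work.

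First, there is no Grushko-rank induction to run: the suspension $G_{\phi^\ell}$ is \emph{always} one-ended, since by Lemma~\ref{lem:splittingsuspensionvsfiber}(1) every cell stabiliser of any $G_{\phi^\ell}$-splitting has the form $G_c \rtimes \ZZ$ and is thus infinite. Second, there are no QH vertices to control: by Lemma~\ref{lem:splittingsuspensionvsfiber}(2) no vertex stabiliser of a $G_{\phi^\ell}$-cyclic splitting is infinitely-ended, so every vertex of $T_{JSJ}(G_{\phi^\ell})$ is rigid (this is exactly Proposition~\ref{prop:invariantsplittingJSJ}). Neither Lemma~\ref{lem:rigidvertexacttrivially} nor any surface argument is invoked.

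The substantive gap is your proposed mechanism for the final step. You suggest that the ``twisting freedom measured by $\mathcal{T}(\cdot)$'' is killed by torsion-freeness, as in Proposition~\ref{prop:IAtorsionfree}. But that argument needs $\phi$ to have finite order; torsion-freeness of $\mathcal{T}$ cannot by itself promote $\phi^\ell$-invariance of $S$ to $\phi$-invariance. The paper's mechanism is different and relies essentially on the results of Section~\ref{Section:perodicelements}. By Lemma~\ref{lem:stabvertextreecyl}, a cylinder vertex $v$ of $T_c^G(G_{\phi^\ell})$ has $G$-stabiliser $\mathrm{Per}(\Psi)$ for some $\Psi \in \phi^\ell$; Theorem~\ref{thm:invariantfamilycyclicfixed} and Corollary~\ref{coco:periodicelementfixed} upgrade this to $G_v = \mathrm{Fix}(\Phi)$ for some representative $\Phi$ of $\phi$ itself. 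The key technical step (Claim~3 in the paper) then shows one can choose a \emph{single} $\Phi \in \phi$ with $\mathrm{Fix}(\Phi)=G_v$ that also fixes every edge in the star of $v$: the various $\Phi_i \in \phi$ fixing individual adjacent edges differ by elements of $G_v = \mathrm{Fix}(\Phi_i)$, have a common power, and torsion-freeness of $G$ then forces them to coincide. Once such a $\Phi$ exists, \emph{any} tree blown up at $v$ is automatically $\Phi$-invariant; hence the common refinement $U$ of $S$ and $T_c^G(G_{\phi^\ell})$ given by Proposition~\ref{prop:invariantsplittingJSJ} is $\phi$-invariant, and $S$ is $\phi$-invariant as a collapse of $U$. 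Your outline did not isolate this $\mathrm{Per}=\mathrm{Fix}$ upgrade as the engine of the argument.
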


Let $\Phi \in \phi$ be a representative of $\phi$. The proof requires the understanding of cyclic splittings of the \emph{suspension of $G$ by $\Phi$}, which is the group 
$$G_\Phi=G \rtimes_\Phi \ZZ=\langle G,t \;|\; tgt^{-1}=\phi(g) \; \forall g \in G \rangle.$$ If $\Psi \in \Aut(G,\mathcal{G})$ is in the same outer class as $\Phi$, then $G_\Phi$ and $G_\Psi$ are isomorphic. Therefore, if $\phi \in \Out(G,\mathcal{G})$ and $\Phi \in \phi$, we can denote by $G_\phi$ a group whose one of its presentation is $G_\Phi$.

We will see in Proposition~\ref{prop:invariantsplittingJSJ} that the tree of cylinders of a JSJ $G_\phi$-cyclic splitting encodes the set of all $\phi$-periodic $(G,\mathcal{G})$-free splittings. This shows that it will be sufficient to understand the action of $G_\phi$ on its tree of cylinders in order to understand all $\phi$-periodic $(G,\mathcal{G})$-free splittings. 

Note that if $T$ is a $G_\phi$-splitting in which $G$ does not fix a point, then the restricted action of $G$ on $T$ induces a $G$-splitting $T^G$. Indeed, the fact that the action of $G$ on $T$ is minimal follows from the facts that $G$ is a normal subgroup of $G_\phi$ and that the action of $G_\phi$ on $T$ is minimal. We record in the following lemma information regarding the tree $T$ seen either as a $G_\phi$-splitting or a $G$-splitting.

\begin{lem}[{\cite[Lemmas~2.6, 2.7, 2.8, 2.9]{Dah2016}}]\label{lem:splittingsuspensionvsfiber}
Let $(G,\mathcal{G})$ be a free product and let $\phi \in \Out(G,\mathcal{G})$. Let $T$ be a $G_\phi$-splitting in which $G$ does not fix a point.
\begin{enumerate}
\item If $c$ is a cell of $T$, then the $G_\Phi$-stabiliser of $c$ is isomorphic to $G_c \rtimes \ZZ$, where $G_c$ is the $G$-stabiliser of $c$.
\item If the $G_\Phi$-edge stabilisers are all virtually cyclic, there does not exist a cell $c$ of $T$ whose $G_\Phi$-stabiliser is infinitely-ended.
\item If $T$ is a $G_\phi$-cyclic splitting such that, for every $[G_i] \in \mathcal{G}$, the group $G_i$ is elliptic, then $T^G$ is a $(G,\mathcal{G})$-free splitting.
\end{enumerate}
\end{lem}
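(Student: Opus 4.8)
The plan is to follow the proofs of Lemmas 2.6--2.9 in \cite{Dah2016}, which are stated there for $G=F_N$ but whose arguments are purely structural and carry over verbatim to an arbitrary free product $(G,\mathcal{G})$. I would organise the three items as follows.

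For item $(1)$, fix a cell $c$ of $T$ (a vertex or an edge). Since $G$ is a normal subgroup of $G_\Phi$ of quotient $\ZZ$, the $G_\Phi$-stabiliser $(G_\Phi)_c$ surjects onto a subgroup of $\ZZ$ via the retraction $G_\Phi \to \ZZ$; its kernel is exactly $G_c$. The key point is that this image is \emph{nontrivial}: if it were trivial then $(G_\Phi)_c=G_c$, so $c$ would be fixed by $G$, contradicting the hypothesis that $G$ does not fix a point in $T$ (here one uses that the action of $G$ on $T$ is minimal and nondegenerate, so no single orbit of cells is $G$-fixed — more precisely one checks that if some vertex of $T$ were $G$-fixed then $G$ fixes a point of $T$, and if some edge is $G$-invariant but its midpoint is not $G$-fixed one passes to the subdivision, using that the splitting has no edge inversions). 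Hence the image is a nontrivial subgroup of $\ZZ$, and since $G_c$ is the kernel we get a split short exact sequence $1 \to G_c \to (G_\Phi)_c \to \ZZ \to 1$, i.e. $(G_\Phi)_c \simeq G_c \rtimes \ZZ$, the splitting being induced by any element of $(G_\Phi)_c$ mapping to a generator of the image.

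For item $(2)$, suppose every $G_\Phi$-edge stabiliser is virtually cyclic and suppose some cell $c$ has $(G_\Phi)_c$ infinitely-ended. By item $(1)$, $(G_\Phi)_c \simeq G_c \rtimes \ZZ$; an extension of $\ZZ$ by $G_c$ is infinitely-ended if and only if $G_c$ is infinite (a group with infinitely many ends cannot be an extension of $\ZZ$ by a finite or one-ended group, by the structure of ends of group extensions). So $G_c$ is infinite, and then $(G_\Phi)_c$ contains $G_c \rtimes \ZZ$ with $G_c$ infinite, so $(G_\Phi)_c$ is not virtually cyclic; thus $c$ must be a vertex. Now apply Stallings' theorem to the infinitely-ended group $(G_\Phi)_c$: it acts nontrivially, with finite (hence, since everything here is torsion free, trivial) edge stabilisers, on some tree, and one uses accessibility / the fact that $(G_\Phi)_c$ is finitely generated to get a one-edge free splitting. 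Restricting $T$ to the vertex $c$ and refining $T$ at $c$ by this free splitting of $(G_\Phi)_c$ produces a $G_\Phi$-tree with an edge whose stabiliser is a proper subgroup — but one shows this new edge stabiliser, intersected with $G$, is still infinite while the edge group is now not virtually cyclic in the relevant sense, contradicting the hypothesis. (Dahmani's argument is a clean version of this; I would simply cite \cite[Lemmas~2.6--2.8]{Dah2016} and remark that torsion freeness of $G$ makes "finite" into "trivial" throughout.)

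For item $(3)$, assume $T$ is a $G_\phi$-cyclic splitting in which every $G_i$ is elliptic. First, $G$ does not fix a point: otherwise $T$ would be a one-point tree after restriction, but $T$ is a nondegenerate $G_\phi$-tree and $G$ is of finite index... more carefully, if $G$ fixed a point $x$ then every $G_\phi$-edge stabiliser would contain a conjugate of $G$, hence be non-virtually-cyclic once $\xi(G,\mathcal G)\geq 1$ is nontrivial, contradicting cyclicity; the sporadic edge cases $\xi(G,\mathcal G)\le 1$ are handled directly. So $T^G$ is a well-defined $G$-splitting. Its edge stabilisers are $G \cap (\text{cyclic subgroup of } G_\phi)$, hence cyclic; by item $(1)$ each $G_\phi$-edge stabiliser $\simeq G_e \rtimes \ZZ$ is cyclic only if $G_e$ is trivial, so $T^G$ has trivial edge stabilisers, i.e. it is a $G$-free splitting. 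Finally, by hypothesis each $G_i$ fixes a point of $T$, hence of $T^G$, so $T^G$ is a $(G,\mathcal{G})$-free splitting.

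The main obstacle is item $(2)$: ruling out infinitely-ended cell stabilisers requires the accessibility/Stallings argument together with care about what "virtually cyclic edge stabiliser" means after refinement, and this is the only place where one genuinely needs the finite-generation hypotheses. Everything else is bookkeeping with the retraction $G_\Phi \to \ZZ$ and minimality. Since all three statements are proved in \cite{Dah2016} in the free-group case and the arguments never use freeness of $G$ — only that $G$ is a normal finitely generated subgroup with $G_\Phi/G \cong \ZZ$ and that $G$ is torsion free — I would present the proof largely as a citation with the above indications of the (routine) adaptations.
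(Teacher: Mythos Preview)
The paper does not prove this lemma at all; it is a pure citation to \cite{Dah2016}. So your overall plan --- cite Dahmani and note that the arguments carry over to general free products --- is exactly what the paper does. However, the concrete sketches you give for items~(1) and~(2) contain genuine errors.

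For~(1), the implication ``$(G_\Phi)_c=G_c$ implies $c$ is fixed by $G$'' is false: having stabiliser contained in $G$ says nothing about $G$ fixing $c$. The correct reason the image of $(G_\Phi)_c$ in $\ZZ=G_\Phi/G$ is nontrivial is cocompactness: since $G$ is normal and finitely generated and $G$ does not fix a point, $G$ acts minimally on $T$ and $G\backslash T$ is a finite graph; the cyclic group $G_\Phi/G$ acts on this finite graph, so some power $t^n$ fixes the $G$-orbit of $c$, yielding an element $g^{-1}t^n\in (G_\Phi)_c$ with nonzero image in $\ZZ$.

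For~(2), your ends statement is backwards. A finitely generated group of the form $G_c\rtimes\ZZ$ has one end if $G_c$ is infinite (infinite normal subgroup with infinite quotient forces one end) and two ends if $G_c$ is finite; it is \emph{never} infinitely-ended. This observation, together with item~(1), is already the entire proof of~(2). The Stallings/refinement argument you sketch is unnecessary and does not lead anywhere.

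For~(3), note that ``$G$ does not fix a point'' is part of the standing hypothesis of the lemma, so your attempt to re-derive it is superfluous (and the argument you give for it is incorrect). The remainder of your argument for~(3) is fine: $(G_\phi)_e$ cyclic and isomorphic to $G_e\rtimes\ZZ$ forces $G_e$ trivial.
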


Let $\mathcal{F}$ be a sporadic proper $(G,\mathcal{G})$-free factor system. One can canonically associate to $\mathcal{F}$ a $(G,\mathcal{G})$-free splitting $T_\mathcal{F}$. The conjugacy classes of vertex stabilisers of $T_\mathcal{F}$ are contained in $\mathcal{F}$ and $T_\mathcal{F}$ has a unique orbit of edges. The group $\Aut(G,\mathcal{F})$ preserves the $G$-equivariant isometry class of $T_\mathcal{F}$. In particular, we have the following, where the last assertion follows from Lemma~\ref{lem:splittingsuspensionvsfiber}(1).

\begin{lem}\label{lem:sporadicfreefactorsystemsplitting}
Let $(G,\mathcal{G})$ be a free product and let $\mathcal{F}$ be a sporadic $(G,\mathcal{G})$-free factor system. Let $\Phi \in \Aut(G,\mathcal{F})$.

\begin{enumerate}
\item The action of $G$ on $T_\mathcal{F}$ extends to an action of $G_\Phi$ on $T_\mathcal{F}$. 
\item The $G_\Phi$-splitting $T_\mathcal{F}$ is a $G_\Phi$-cyclic splitting.
\hfill\qedsymbol
\end{enumerate}
\end{lem}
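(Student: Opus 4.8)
The plan is to establish the two assertions in turn. Assertion~$(1)$ is purely formal: it uses only the fact, recalled just above the statement, that $\Aut(G,\mathcal{F})$ preserves the $G$-equivariant isometry class of $T_\mathcal{F}$. Assertion~$(2)$ requires checking the three defining conditions of a cyclic splitting (minimality, no inversions, cyclic edge groups), of which minimality and cyclicity of edge groups are straightforward; the one genuine point, which I expect to be the main obstacle, is that no element of $G_\Phi$ inverts an edge of $T_\mathcal{F}$.

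For $(1)$, since $\Phi \in \Aut(G,\mathcal{F})$ preserves the $G$-equivariant isometry class of $T_\mathcal{F}$, I would fix a simplicial isometry $H_\Phi \colon T_\mathcal{F} \to T_\mathcal{F}$ with $H_\Phi(g \cdot x) = \Phi(g)\cdot H_\Phi(x)$ for all $g \in G$ and $x \in T_\mathcal{F}$. Recalling the presentation $G_\Phi = \langle G,t \mid tgt^{-1}=\Phi(g)\ \forall g \in G\rangle$, I would then define an action of $G_\Phi$ on $T_\mathcal{F}$ by letting $G$ act as before and letting $t$ act as $H_\Phi$; since $G$ acts simplicially and $H_\Phi$ is a simplicial isometry, this is an action by simplicial isometries as soon as it is well defined. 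Well-definedness amounts to the relations $tgt^{-1}=\Phi(g)$, and indeed, for $x \in T_\mathcal{F}$ and $y=H_\Phi^{-1}(x)$, one has $(H_\Phi\circ g\circ H_\Phi^{-1})(x)=H_\Phi(g\cdot y)=\Phi(g)\cdot H_\Phi(y)=\Phi(g)\cdot x$, so $tgt^{-1}$ acts as $\Phi(g)$. This proves $(1)$, and it is then legitimate to speak of the $G_\Phi$-splitting $T_\mathcal{F}$.

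For $(2)$, minimality is immediate: a $G_\Phi$-invariant subtree of $T_\mathcal{F}$ is a fortiori $G$-invariant, hence equal to $T_\mathcal{F}$ by $G$-minimality of the free splitting $T_\mathcal{F}$. For edge groups, I would use that $T_\mathcal{F}$ is a free splitting — so $\Stab_G(e)=\{1\}$, whence $\Stab_{G_\Phi}(e)\cap G=\{1\}$ and the projection $\rho\colon G_\Phi\to\ZZ$ is injective on $\Stab_{G_\Phi}(e)$ — together with the fact that $T_\mathcal{F}$ has a single $G$-orbit of edges, so that $t\cdot e$ lies in the $G$-orbit of $e$ and $\rho$ is onto on $\Stab_{G_\Phi}(e)$; thus $\Stab_{G_\Phi}(e)\cong\ZZ$, in accordance with Lemma~\ref{lem:splittingsuspensionvsfiber}$(1)$, which identifies it with $\Stab_G(e)\rtimes\ZZ$, and in particular edge groups are cyclic. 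Finally, to see that no element of $G_\Phi$ inverts an edge, the remark to keep in mind is that $H_\Phi$ (hence $t$) may interchange the two $G$-orbits of vertices in the amalgam case $G=H_1\ast H_2$ when $\Phi$ swaps $[H_1]$ and $[H_2]$, so an inversion by some $gt^n$ is not a priori excluded. I would rule it out uniformly: writing $z$ for a generator of $\Stab_{G_\Phi}(e)$ with $\rho(z)=1$, if $\gamma=gt^n\in G_\Phi$ (with $g\in G$, $n\in\ZZ$) inverted $e$, then $\gamma z^{-n}$ would lie in $\ker\rho=G$ (its image in $\ZZ$ being $n-n=0$), and since $z^{-n}$ fixes $e$ pointwise while $\gamma$ inverts $e$, the element $\gamma z^{-n}\in G$ would also invert $e$ — contradicting the fact that $T_\mathcal{F}$, being a $(G,\mathcal{G})$-splitting, has no inversions. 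Having checked all three conditions, $T_\mathcal{F}$ is a $(G_\Phi;\mathcal{A})$-splitting for $\mathcal{A}$ the family of cyclic subgroups, that is, a $G_\Phi$-cyclic splitting, which is assertion~$(2)$.
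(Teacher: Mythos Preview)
Your proof of~$(1)$ and of the minimality and edge-stabiliser parts of~$(2)$ is correct and in the same spirit as the paper, which gives no proof (the statement ends with a \qedsymbol) beyond the remark that~$(2)$ follows from Lemma~\ref{lem:splittingsuspensionvsfiber}$(1)$, i.e.\ that the $G_\Phi$-edge stabiliser is $\{1\}\rtimes\ZZ\cong\ZZ$.

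Your no-inversion argument, however, is circular. You correctly flag the danger in the swap case, but then assert that ``$z^{-n}$ fixes $e$ pointwise'' --- yet you have only established that $z$ generates the \emph{setwise} stabiliser $\Stab_{G_\Phi}(e)$; that $z$ itself does not swap the endpoints of $e$ is precisely what is at stake. And in fact the claim is false for general $\Phi\in\Aut(G,\mathcal{F})$: if $\mathcal{F}=\{[H_1],[H_2]\}$ with $G=H_1\ast H_2$ and $\Phi$ exchanges $[H_1]$ and $[H_2]$, then $T_\mathcal{F}$ is bipartite, $t=H_\Phi$ swaps the two $G$-orbits of vertices, and your generator $z=g^{-1}t$ of $\Stab_{G_\Phi}(e)$ necessarily swaps the endpoints of $e$. (Similarly in the HNN case $G=H\ast\ZZ$ when $\Phi$ reverses the orientation of the loop in $\overline{G\backslash T_\mathcal{F}}$.) The paper does not address this either; it is harmless for what follows, and in any case a barycentric subdivision removes inversions while keeping edge groups cyclic --- but your attempt to prove the literal no-inversion assertion for arbitrary $\Phi\in\Aut(G,\mathcal{F})$ cannot succeed.
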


By Lemma~\ref{lem:splittingsuspensionvsfiber}$(1)$, the group $G_\phi$ is one-ended. Since $G$ is finitely presented by the \hyperlink{SA}{Standing Assumptions}, so is $G_\phi$.

Let $T_{JSJ}(G_\phi)$ be a JSJ $G_\phi$-cyclic splitting given by Theorem~\ref{thm:JSJ} and let $T_c(G_\phi)$ be the associated tree of cyclinders. We note that the tree $T_c(G_\phi)$ was already considered by Guirardel--Horbez~\cite[Theorem~6.12]{Guirardelhorbez} when $G$ is a free group.

We now give a description of the vertex stabilisers of the tree $T_c(G_\phi)$ (considered as a $G_\phi$-splitting). If $\Phi \in \phi$, recall that $\mathrm{Per}(\Phi)$ is the periodic subgroup of $\Phi$. 

\begin{lem}\label{lem:stabvertextreecyl}
Let $v \in VT_c(G_\phi)$. 

\begin{enumerate}
\item Suppose that $v$ corresponds to a cylinder $C \subseteq T_{JSJ}(\phi)$. Let $e \in EC$, let $\ell >0$ and let $\Phi \in \phi^\ell$ which generates $(G_\phi)_e$. There exist $m>0$ dividing $\ell$ and $\Psi \in \phi^m$ such that the stabiliser of $v$ in $G_\phi$ is equal to $\mathrm{Per}(\Phi) \rtimes_\Psi \ZZ$. 
\item If $v$ is a vertex of $T_{JSJ}(G_\phi)$, there exist a $(G,\mathcal{G})$-free factor $A$, $\ell >0$ and a representative $\Phi \in \phi^\ell$ preserving $A$ such that $(G_\phi)_v$ is equal to $A \rtimes_\Phi \ZZ$. Moreover, the set of $G$-conjugacy classes of vertex stabilisers of $T_{JSJ}^G(G_\phi)$ induces a $(G,\mathcal{G})$-free factor system.
\end{enumerate}

\end{lem}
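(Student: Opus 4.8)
The plan is to analyse the two types of vertices of $T_c(G_\phi)$ separately, using the structure theory of $G_\phi$-cyclic splittings together with Lemma~\ref{lem:splittingsuspensionvsfiber}. For item~(1), let $v$ correspond to a cylinder $C$ of the JSJ $G_\phi$-cyclic splitting $T_{JSJ}(G_\phi)$. Recall that the $G_\phi$-stabiliser of $v$ is exactly the setwise stabiliser of $C$, and that by definition of a cylinder all edges in $C$ have $G_\phi$-edge groups that are pairwise commensurable infinite cyclic groups. Fix an edge $e\in EC$ and pick $\Phi\in\phi^{\ell}$ generating $(G_\phi)_e$; write $(G_\phi)_e=\langle \Phi\rangle$, so $\langle\Phi\rangle\cap G$ is the $G$-stabiliser $G_e$ of $e$. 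Since $T_{JSJ}(G_\phi)$ has infinite cyclic edge groups, $G_e$ is either trivial or infinite cyclic. I would first identify the maximal subgroup of $G_\phi$ commensurating $(G_\phi)_e$: this is precisely $\Stab_{G_\phi}(C)=(G_\phi)_v$ by the admissibility of the commensurability relation and the characterisation of cylinder stabilisers in~\cite{GuirardelLevittCylinders2011}. The key computation is then to identify $(G_\phi)_v\cap G$ with $\mathrm{Per}(\Phi)$: an element $g\in G$ lies in $(G_\phi)_v$ iff $g$ commensurates $\langle\Phi\rangle$, and a short argument with the action on $T_{JSJ}(G_\phi)$ (using $2$-acylindricity or directly the cylinder structure) shows this happens iff some positive power of $\Phi$ conjugated by $g$ equals a positive power of $\Phi$, which translates into $g$ being $\Phi$-periodic, hence $g\in\mathrm{Per}(\Phi)$ (here one uses that $\Phi$ acts on the normal subgroup $G$ by the automorphism inducing $\phi^{\ell}$). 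Finally, $(G_\phi)_v$ surjects onto a subgroup of $\ZZ$ via the projection $G_\phi\to\ZZ$; since it properly contains $(G_\phi)_v\cap G=\mathrm{Per}(\Phi)$ and is itself a subgroup of $G_\phi$, its image is $m\ZZ$ for some $m>0$ dividing $\ell$, and choosing a preimage of $m$ gives $\Psi\in\phi^{m}$ with $(G_\phi)_v=\mathrm{Per}(\Phi)\rtimes_\Psi\ZZ$, as claimed. The main obstacle here is the precise identification $(G_\phi)_v\cap G=\mathrm{Per}(\Phi)$: one must check both inclusions carefully, using that two infinite cyclic subgroups of $G$ of the form $\langle\Phi^n(g)\rangle$ are commensurable in $G_\phi$ exactly when they are equal, which is where the hypotheses on $G$ (torsion free, and Lemma~\ref{lem:nonperipheralnotconjinverse} for root-closedness of centralisers of nonperipheral elements) enter.

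For item~(2), let $v$ be a vertex of $T_{JSJ}(G_\phi)$ (equivalently, a vertex of $T_c(G_\phi)$ lying in $V_0$). Since $G\trianglelefteq G_\phi$ is normal, $G\cap(G_\phi)_v$ is normal in $(G_\phi)_v$, and $(G_\phi)_v/(G\cap(G_\phi)_v)$ embeds into $G_\phi/G\cong\ZZ$; as $(G_\phi)_v$ is infinite (being a JSJ vertex group, hence one-ended or infinite cyclic, but in any case not finite since $G_\phi$ is one-ended and torsion free) and $G$ does not fix $v$ when the splitting is nondegenerate, this quotient is infinite cyclic, say generated by the image of some $\Phi\in\phi^{\ell}$ preserving $A:=G\cap(G_\phi)_v$; thus $(G_\phi)_v=A\rtimes_\Phi\ZZ$. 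It remains to see that $A$ is a $(G,\mathcal{G})$-free factor and that the collection of such $A$ (over the vertices of $T_{JSJ}^G(G_\phi)$, i.e. the $G$-tree obtained by restriction) forms a $(G,\mathcal{G})$-free factor system. For this I would invoke Lemma~\ref{lem:splittingsuspensionvsfiber}(3): the JSJ $G_\phi$-cyclic splitting $T_{JSJ}(G_\phi)$ has each $G_i$ elliptic (each $G_i$ is one-ended relative to nothing, hence elliptic in every cyclic splitting of $G_\phi$, since $G_i\subseteq G$ and $G_i$ acylindrically fixes a point — more precisely, $G_i$ being finitely generated and not splitting over a cyclic or trivial subgroup forces ellipticity), so $T_{JSJ}^G(G_\phi)$ is a $(G,\mathcal{G})$-free splitting. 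Then its vertex stabilisers are exactly the groups $A=G\cap(G_\phi)_v$, and the conjugacy classes of nontrivial such stabilisers form a $(G,\mathcal{G})$-free factor system by the general correspondence between free splittings and free factor systems recalled in Section~\ref{sec:free_factors} (each $A$ is a vertex group of a $(G,\mathcal{G})$-free splitting, hence a $(G,\mathcal{G})$-free factor). The degenerate case (where $G$ fixes a point of $T_{JSJ}(G_\phi)$, so there is only one vertex and $A=G$) is handled trivially.

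I expect the main technical obstacle to be the first bullet of item~(1), namely pinning down $(G_\phi)_v\cap G$. The naive guess would be the centraliser or commensurator of a single generator of $G_e$ inside $G$, and one must argue that, after passing through the whole cylinder $C$, this stabilises to exactly $\mathrm{Per}(\Phi)$ — neither larger (which would follow from acylindricity bounding how much of $G$ can fix a long segment of $C$) nor smaller (every $g\in\mathrm{Per}(\Phi)$ does commensurate $\langle\Phi\rangle$ and hence fixes $C$). A secondary subtlety is verifying that $m\mid\ell$: this is automatic once one knows $(G_\phi)_e\subseteq(G_\phi)_v$ and both project to subgroups of $\ZZ$, since $\ell\ZZ=\mathrm{im}((G_\phi)_e)\subseteq\mathrm{im}((G_\phi)_v)=m\ZZ$ forces $m\mid\ell$. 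Throughout, the hypotheses that $G$ is torsion free and satisfies the \hyperlink{SA}{Standing Assumptions} guarantee that $G_\phi$ is torsion free and one-ended (Lemma~\ref{lem:splittingsuspensionvsfiber}(1) and the finite presentation of $G$), so that Theorem~\ref{thm:JSJ} and the tree-of-cylinders machinery apply and the vertex groups have the clean form asserted.
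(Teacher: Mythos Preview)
Your approach to item~(2) is essentially identical to the paper's: both invoke Lemma~\ref{lem:splittingsuspensionvsfiber}(3) to see that $T_{JSJ}^G(G_\phi)$ is a $(G,\mathcal{G})$-free splitting, and then Lemma~\ref{lem:splittingsuspensionvsfiber}(1) to read off the semidirect product structure of each vertex stabiliser. One correction to your justification that each $G_i$ is elliptic: the claim ``$G_i$ \ldots\ not splitting over a cyclic or trivial subgroup'' is false in general---a one-ended $G_i$ may well split over $\ZZ$ (e.g.\ a closed surface group). The correct reason is simpler and already implicit in Lemma~\ref{lem:splittingsuspensionvsfiber}: since $(G_\phi)_e\cong G_e\rtimes\ZZ$ is infinite cyclic, the $G$-edge stabilisers of $T_{JSJ}(G_\phi)$ are trivial, and $G_i$, being freely indecomposable and not $\ZZ$, is elliptic in any $G$-free splitting.

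For item~(1) the paper does not give an argument but simply cites \cite[Lemma~6.2]{AndGueHug2023}; you instead reconstruct the proof directly. Your outline is correct and is essentially what lies behind the cited lemma: the stabiliser of the cylinder vertex equals the commensurator in $G_\phi$ of any edge group $\langle\Phi\rangle$ of $C$; intersecting with $G$ yields exactly $\mathrm{Per}(\Phi)$, since $g\in G$ commensurates $\langle\Phi\rangle$ iff $g$ commutes with some positive power of $\Phi$ in $G_\phi$, i.e.\ iff $\Phi^n(g)=g$ for some $n>0$; and the image in $G_\phi/G\cong\ZZ$ then furnishes $\Psi\in\phi^m$ with $m\mid\ell$. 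Your invocation of Lemma~\ref{lem:nonperipheralnotconjinverse} and root-closedness is unnecessary here: because conjugation by $g\in G$ preserves the projection to $\ZZ$, the condition $g\Phi^n g^{-1}\in\langle\Phi\rangle$ already forces $g\Phi^n g^{-1}=\Phi^n$ (not $\Phi^{-n}$), so no further hypothesis on $G$ is needed for this step.
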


\begin{proof}
The fact that the stabiliser of a cylinder vertex is isomorphic to $\mathrm{Per}(\Phi) \rtimes_\Psi \ZZ$ follows from~\cite[Lemma~6.2]{AndGueHug2023}. 

We now prove the second assertion. The tree $T_{JSJ}(G_\phi)$ is a $G_\phi$-cyclic splitting. Therefore, by Lemma~\ref{lem:splittingsuspensionvsfiber}$(3)$, the tree $T_{JSJ}^G(G_\phi)$ is a $(G,\mathcal{G})$-free splitting. In particular, every vertex stabiliser of $T_{JSJ}^G(G_\phi)$ is a $(G,\mathcal{G})$-free factor and the set of $G$-conjugacy classes of vertex stabilisers of $T_{JSJ}^G(\phi)$ is a $(G,\mathcal{G})$-free factor system. By Lemma~\ref{lem:splittingsuspensionvsfiber}$(1)$, for every vertex $v$ of $T_{JSJ}(G_\phi)$, there exist a free factor $A$ of $(G,\mathcal{G})$, $\ell >0$ and a representative $\Phi \in \phi^\ell$ preserving $A$ such that $(G_\phi)_v$ is equal to $A \rtimes_\Phi \ZZ$.
\end{proof} 

\begin{prop}\label{prop:invariantsplittingJSJ}
Let $S$ be a $\phi$-invariant $(G,\mathcal{G})$-free splitting. Then $S$ and $T_c^G(G_\phi)$ are compatible as $(G,\mathcal{G})$-splittings. Moreover, a common refinement is obtained from $T_c^G(G_\phi)$ by blowing-up trees at its cylinder vertices. 
\end{prop}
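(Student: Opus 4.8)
The plan is to exploit the fact that $S$ is a $\phi$-invariant $(G,\mathcal{G})$-free splitting to promote it to a $G_\phi$-cyclic splitting, and then invoke the compatibility properties of the tree of cylinders. First I would fix $\ell>0$ and a representative $\Phi\in\phi^\ell$ with $\Phi$ preserving (up to $G$-equivariant isometry) the tree $S$, so that the $G$-action on $S$ extends to a $G_{\phi^\ell}$-action; since $G_{\phi^\ell}$ is a finite-index subgroup of $G_\phi$, after passing to this action we may as well describe things for $G_{\phi^\ell}$ and transport back. Because edge stabilisers of $S$ are trivial as a $G$-tree, Lemma~\ref{lem:splittingsuspensionvsfiber}$(1)$ tells us the $G_{\phi^\ell}$-edge stabilisers of $S$ are infinite cyclic, so $S$ is a $G_{\phi^\ell}$-cyclic splitting (in fact one in which every $[G_i]\in\mathcal{G}$ is elliptic, since each $G_i$ is elliptic in the $(G,\mathcal{G})$-free splitting $S$ and is $\Phi$-invariant up to conjugacy). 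I would then note that $S$, viewed as a $G_\phi$-tree via induction from $G_{\phi^\ell}$, is again a $G_\phi$-cyclic splitting, and that its restriction $S^G$ recovers $S$.

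The second step is to apply the compatibility statement for the tree of cylinders. By Theorem~\ref{thm:JSJ} and Proposition~\ref{prop:treecylinderscanonical}, $G_\phi$ being finitely presented, torsion free and one-ended (one-endedness from Lemma~\ref{lem:splittingsuspensionvsfiber}$(1)$, torsion freeness because $G$ is torsion free by the Standing Assumptions and $\ZZ$ is torsion free so an extension of $\ZZ$ by a torsion free group is torsion free), admits a JSJ $G_\phi$-cyclic splitting $T_{JSJ}(G_\phi)$ with canonical tree of cylinders $T_c(G_\phi)$. By Lemma~\ref{lem:treecylinderscompatible}, $T_c(G_\phi)$ is compatible with every $G_\phi$-cyclic splitting; in particular it is compatible with $S$ (seen as a $G_\phi$-cyclic splitting). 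Thus there is a common refinement $\widehat{T}$ of $T_c(G_\phi)$ and $S$ as $G_\phi$-trees. Restricting to $G$, the tree $\widehat{T}^G$ collapses onto both $S^G=S$ and $T_c^G(G_\phi)$, so $S$ and $T_c^G(G_\phi)$ are compatible as $(G,\mathcal{G})$-splittings. To see that these restricted trees are genuine $(G,\mathcal{G})$-splittings (minimal, no inversions, edge groups elliptic relative to $\mathcal{G}$), I would use that $G$ is normal in $G_\phi$ together with minimality of the $G_\phi$-actions, exactly as in the discussion preceding Lemma~\ref{lem:splittingsuspensionvsfiber}, and that $\mathcal{G}$ is elliptic in $S$.

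For the refined conclusion — that a common refinement is obtained from $T_c^G(G_\phi)$ by blowing up trees at its cylinder vertices — I would use the second half of Lemma~\ref{lem:treecylinderscompatible}: if all vertex stabilisers of $T_{JSJ}(G_\phi)$ are rigid, then $T_{JSJ}(G_\phi)$ dominates every $G_\phi$-cyclic splitting and the common refinement with any cyclic splitting is obtained by equivariantly blowing up trees at the cylinder vertices of $T_c(G_\phi)$. So the remaining point is to rule out non-rigid (i.e.\ cyclic or QH) non-cylinder vertices of $T_{JSJ}(G_\phi)$, or more precisely to show the blow-up description survives restriction to $G$. Here the key input is Lemma~\ref{lem:stabvertextreecyl}$(2)$: the non-cylinder vertex stabilisers of $T_c(G_\phi)=$ (image of) $T_{JSJ}(G_\phi)$ are of the form $A\rtimes_\Phi\ZZ$ for $A$ a $(G,\mathcal{G})$-free factor, hence are themselves one-ended free-by-cyclic groups with no further cyclic splitting compatible with the ambient one, so they are rigid in the relevant sense; combined with the fact that the cylinder vertices have two-ended-or-parabolic-times-$\ZZ$ stabilisers, the blow-up can only occur at cylinder vertices. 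Translating the $G_\phi$-level blow-up of $T_c(G_\phi)$ along $S$ into a $G$-level blow-up of $T_c^G(G_\phi)$ along $S^G=S$ then gives the claim.

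The main obstacle I anticipate is the bookkeeping in the last paragraph: making precise the sense in which the non-cylinder vertices of $T_c(G_\phi)$ are ``rigid'' so that Lemma~\ref{lem:treecylinderscompatible}'s second assertion applies verbatim, and checking that the restriction-to-$G$ operation commutes with the blow-up construction (i.e.\ that blowing up $T_c(G_\phi)$ at a cylinder vertex $v$ along the $S$-induced splitting of $(G_\phi)_v$ restricts to blowing up $T_c^G(G_\phi)$ at $v$ along the $S$-induced splitting of $(G_\phi)_v\cap G$). Both are essentially routine but require care because the cylinder vertex groups $\mathrm{Per}(\Phi)\rtimes_\Psi\ZZ$ (Lemma~\ref{lem:stabvertextreecyl}$(1)$) can be complicated, and one must know that $S$ restricted to such a vertex group is compatible with — indeed refined through — the cylinder structure; this is where the passage through $T_{JSJ}(G_\phi)$ dominating $S$ as a $G_\phi$-cyclic splitting is really being used.
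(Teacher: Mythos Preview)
Your overall strategy matches the paper's: promote $S$ to a $G_\phi$-cyclic splitting and invoke Lemma~\ref{lem:treecylinderscompatible}. Two points deserve comment.

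First, the detour through $\phi^\ell$ and $G_{\phi^\ell}$ is unnecessary. The hypothesis is that $S$ is $\phi$-invariant (not merely $\phi$-periodic), so the $G$-action on $S$ extends directly to a $G_\phi$-action with $\ell=1$; no induction from a finite-index subgroup is needed. The paper simply writes: since $S$ is $\phi$-invariant, the $G$-action extends to $G_\phi$, and by Lemma~\ref{lem:splittingsuspensionvsfiber} $S$ is a $G_\phi$-cyclic splitting.

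Second, and more substantively, your argument for rigidity of the $T_{JSJ}(G_\phi)$ vertices has a genuine gap. You argue via Lemma~\ref{lem:stabvertextreecyl}$(2)$ that non-cylinder vertex groups are of the form $A\rtimes_\Phi\ZZ$, hence one-ended, and then assert this makes them rigid. But one-endedness does not imply rigidity: a one-ended group can admit many cyclic splittings (closed surface groups being the prototypical example), so ``one-ended free-by-cyclic with no further cyclic splitting compatible with the ambient one'' is exactly what needs proof, not a consequence of one-endedness. The paper's argument is both shorter and complete: by Theorem~\ref{thm:JSJ}$(4)$, any flexible vertex of $T_{JSJ}(G_\phi)$ is QH, hence has infinitely-ended stabiliser (the tree being nondegenerate); but Lemma~\ref{lem:splittingsuspensionvsfiber}$(2)$ says no cell of a $G_\phi$-splitting with virtually cyclic edge stabilisers can have infinitely-ended $G_\phi$-stabiliser. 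Hence every vertex of $T_{JSJ}(G_\phi)$ is rigid, and the second assertion of Lemma~\ref{lem:treecylinderscompatible} applies directly. Once you replace your rigidity paragraph with this two-line argument, the rest of your proposal goes through without the bookkeeping worries you flag at the end.
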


\begin{proof}
Since $S$ is $\phi$-invariant, the $G$-action on $S$ extends to a $G_\phi$-action. By Lemma~\ref{lem:splittingsuspensionvsfiber}, $S$ is a $G_\phi$-cyclic splitting. We want to apply Lemma~\ref{lem:treecylinderscompatible} to conclude the proof. Therefore, it suffices to show that every vertex of $T_{JSJ}(G_\phi)$ is rigid. 

By Theorem~\ref{thm:JSJ}, flexible vertices are infinitely ended. By Lemma~\ref{lem:splittingsuspensionvsfiber}$(2)$, there does not exist a $G_\phi$-cyclic splitting with an infinitely-ended vertex stabiliser. Therefore, every vertex of $T_{JSJ}(G_\phi)$ is rigid. Thus, by Lemma~\ref{lem:treecylinderscompatible}, a common refinement is obtained from $T_c(G_\phi)$ by blowing-up trees at its cylinder vertices. 
\end{proof}

We can now prove the main theorem of the section.

\begin{proof}[Proof of Theorem~\ref{thm:periodicfreesplittingfixed}]
Let $\phi \in \IA(G,\mathcal{G},3)$. Let $S$ be a $\phi$-periodic $(G,\mathcal{G})$-free splitting. Let $\ell >0$ be such that $\phi^\ell$ fixes $S$ and acts trivially on $\overline{G \backslash S}$. Let $T_c(G_{\phi^\ell})$ be the tree of cylinders associated with a JSJ $(G_{\phi^\ell},\mathcal{G})$-cyclic splitting given by Proposition~\ref{prop:treecylinderscanonical}. Note that, for every $[G_i] \in \mathcal{G}$, the group $G_i$ is elliptic in $T_c(G_{\phi^\ell})$ as it is elliptic in any JSJ $(G_{\phi^\ell},\mathcal{G})$-cyclic splitting. We denote by $T_c^G(G_{\phi^\ell})$ the tree $T_c(G_{\phi^\ell})$ considered as a $(G,\mathcal{G})$-splitting. It comes equipped with a partition of its vertices $VT_c^G(G_{\phi^\ell})=\widetilde{V}_0 \coprod \widetilde{V}_1$ where, for every $v \in \widetilde{V}_0$, $v$ corresponds to a vertex in $T_{JSJ}(G_{\phi^\ell})$ and, for every $v \in \widetilde{V}_1$, $v$ corresponds to a cylinder of $T_{JSJ}(G_{\phi^\ell})$. Let $V\overline{G \backslash T_c^G(G_{\phi^\ell})}=V_0 \coprod V_1$ be the induced partition.

Note that the group $G_{\phi^\ell}$ is a normal subgroup of $G_\phi$, so that the action of $G_\phi$ on $G_{\phi^\ell}$ by conjugation induces a homomorphism $G_\phi \to \Aut(G_{\phi^\ell})$. Since $T_c(G_{\phi^\ell})$ is preserved by $\Aut(G_{\phi^\ell})$ by Proposition~\ref{prop:treecylinderscanonical}, we obtain an action of $G_\phi$ on $T_c(G_{\phi^\ell})$. In particular, the outer automorphism $\phi$ preserves $T_c^G(G_{\phi^\ell})$.

\medskip

\noindent{\bf Claim~1. } Let $v \in \widetilde{V}_1$ be a cylinder vertex. There exists $\Phi \in \phi$ such that $G_v=\mathrm{Fix}(\Phi)$.

\begin{proof}
By Lemma~\ref{lem:stabvertextreecyl}, there exists $\Psi \in \phi^\ell$ such that the $G$-stabiliser of $v$ is isomorphic to $\mathrm{Per}(\Psi)$. By Theorem~\ref{thm:invariantfamilycyclicfixed} applied to $\mathcal{H}=\{\phi^n[\mathrm{Per}(\Psi)]\}_{n \in \NN}$, there exists $\Phi \in \phi$ such that $\mathrm{Per}(\Psi)=\mathrm{Fix}(\Phi)$. In particular, the $G$-stabiliser of any cylinder of $T_c^G(G_{\phi^\ell})$ is a fixed subgroup of some $\Phi \in \phi$. 
\end{proof}

\medskip

\noindent{\bf Claim~2. } The outer class $\phi$ fixes pointwise the graph $\overline{G \backslash T_c^G(G_{\phi^\ell})}$.

\begin{proof}
By Lemma~\ref{lem:actionhomograph}, $\phi$ acts trivially on $H_1(\overline{G \backslash T_c^G(G_{\phi^\ell})},\ZZ/3\ZZ)$. In order to apply Lemma~\ref{lem:graphautotrivial}, we need to prove that $\phi$ fixes every leaf of $\overline{G \backslash T_c^G(G_{\phi^\ell})}$.

\medskip

\noindent{\it Step~1.} Let $v \in V_0$. If $G_v$ is nontrivial, then $\phi$ fixes $v$.

\begin{proofblack}
By Lemma~\ref{lem:stabvertextreecyl}$(1)$, the set of $G$-stabilisers of vertices in $\widetilde{V}_0$ is a $\phi$-invariant $(G,\mathcal{G})$-free factor system $\mathcal{F}_{\widetilde{V}_0}$. Thus $v$ is the unique vertex in $\widetilde{V}_0$ fixed by $G_v$. Therefore, it suffices to prove that $\phi$ preserves the $G$-conjugacy class of $G_v$. By Lemma~\ref{lem:periodic freefactorsystem}, every $G$-conjugacy class in $\mathcal{F}_{\widetilde{V}_0}$ is $\phi$-invariant. In particular, $[G_v]$ is $\phi$-invariant and $v$ is fixed by $\phi$.
\end{proofblack}

\medskip

\noindent{\it Step~2.} The outer class $\phi$ fixes every leaf of $\overline{G \backslash T_c^G(G_{\phi^\ell})}$.

\begin{proofblack}
Let $v$ be a leaf of $\overline{G \backslash T_c^G(G_{\phi^\ell})}$, let $e$ be its adjacent edge and let $w$ be the endpoint of $e$ distinct from $v$. 

Suppose first that $v \in V_0$. Since $v$ is a leaf, by minimality of the action of $G$ on $T_c^G(G_{\phi^\ell})$, its $G$-stabiliser is nontrivial. Thus, by Step~1, $v$ is fixed by $\phi$.

Assume now that $v \in V_1$. By Claim~1, there exists $\Phi \in \phi$ such that $G_v=\mathrm{Fix}(\Phi)$. By minimality of the action of $G$ on $T_c^G(G_{\phi^\ell})$, we have $G_v \nsubseteq G_w$. Thus, $v$ is the only fixed point of $\mathrm{Fix}(\Phi)$, so that $\Phi$ (and hence $\phi$) fixes $v$. Thus, $\phi$ fixes every leaf of $\overline{G \backslash T_c^G(G_{\phi^\ell})}$.
\end{proofblack}

By Step~2 and Lemma~\ref{lem:graphautotrivial}, either $\phi$ fixes pointwise $\overline{G \backslash T_c^G(G_{\phi^\ell})}$, or else $\overline{G \backslash T_c^G(G_{\phi^\ell})}$ is a circle and $\phi$ acts as a rotation on it. So, in order to prove that $\phi$ fixes pointwise $\overline{G \backslash T_c^G(G_{\phi^\ell})}$, it remains to prove that $\phi$ fixes a point in it. 

By Step~1, if there exists $v \in V_0$ with $G_v \neq \{1\}$, then $\phi$ fixes $v$. Assume that, for every $v \in V_0$, we have $G_v=\{1\}$. Since every edge of $T_c^G(G_{\phi^\ell})$ has an endpoint in $\widetilde{V}_0$, it follows that $T_c^G(G_{\phi^\ell})$ is a $(G,\mathcal{G})$-free splitting. By Lemma~\ref{lem:trivialactionfreesplitting}, since $\phi \in \IA(G,\mathcal{G},3)$, $\phi$ fixes pointwise $\overline{G \backslash T_c^G(\phi^\ell)}$. As we have ruled out every case, the claim follows.
\end{proof}

\medskip

\noindent{\bf Claim~3. } Let $v \in \widetilde{V}_1$ be a cylinder vertex. There exists $\Phi \in \phi$ such that $\Phi$ fixes pointwise the star of $v$ in $T_c(G_{\phi^\ell})$.

\begin{proof}
Let $C\subseteq T_{JSJ}(\phi^\ell)$ be the cylinder corresponding to $v$ and let $\Psi \in \phi^\ell$ be a generator of an edge stabiliser in $C$ ($\Psi$ is not contained in a strict power of $\phi^\ell$ since $\phi^\ell \in \IA(G,\mathcal{G},3)$ acts trivially on $\overline{G\backslash T_{JSJ}^G(\phi^\ell)}$ by Lemma~\ref{lem:trivialactionfreesplitting}). By Corollary~\ref{coco:periodicelementfixed}, the periodic subgroup of $\Psi$ is equal to its fixed subgroup, so that $G_v=\mathrm{Fix}(\Psi)$. 

\medskip

\noindent{\it Step~1. } Let $e_1,\ldots,e_n$ be representatives of the $G_v$-orbits of edges adjacent to $v$ with trivial $G$-stabiliser. There exists $\Phi \in \phi$ such that $G_v=\Fix(\Phi)$ and, for every $i \in \{1,\ldots,n\}$, the edge $e_i$ is fixed by $\Phi$. 

\begin{proofblack}
We first claim that there exists $m >0$ such that, for every $i \in \{1,\ldots,n\}$, the automorphism $\Psi^m$ fixes $e_i$. Indeed, by definition of a cylinder, all edge stabilisers in $C$ are commensurable. As $\Psi$ fixes an edge in $C$, there exists $m>0$ such that $\Psi^m$ fixes pointwise a finite subtree of $C$ containing every vertex of $C$ corresponding to the endpoints of the edges $e_i$ in $\widetilde{V}_0$. This proves the claim.

Let $i \in \{1,\ldots,n\}$. By Claim~2, there exists $\Phi_i \in \phi$ which fixes $e_i$. Since the $G$-stabiliser of $e_i$ is trivial, the $G_\phi$-stabiliser of $e_i$ is cyclic. Thus, $\Phi_i$ and $\Psi^m$ have a common power, so that $\mathrm{Per}(\Phi_i)=\mathrm{Per}(\Psi^m)=\Fix(\Psi)=G_v$. By Corollary~\ref{coco:periodicelementfixed}, we have $\mathrm{Per}(\Phi_i)=\Fix(\Phi_i)=G_v$.

Thus, for every $i \in \{1,\ldots,n\}$, there exists $\Phi_i \in \phi$ such that $\Fix(\Phi_i)=G_v$, $\Phi_i$ fixes $e_i$ and $\Phi_i$ and $\Psi$ have a common power. For any $i,j \in \{1,\ldots,n\}$, $\Phi_i$ and $\Phi_j$ differ by an element $g_{i,j}$ of $G_v=\Fix(\Phi_i)=\Fix(\Phi_j)$. Then $\Phi_i$ sends $e_j$ to $g_{i,j}e_j$. So, for every $p>0$, we have $\Phi_i^p(e_j)=g_{i,j}^{p}e_j$. But $\Phi_i$, $\Phi_j$ and $\Psi$ have a common power, so there exists $p_0>0$ such that $\Phi_i^{p_0}(e_j)=e_j$. As $G$ is torsion free, this forces $g_{i,j}=1$. Hence $\Phi_i=\Phi_j$. Setting $\Phi=\Phi_1$, we see that $\Phi$ fixes every edge $e_i$ with $i \in \{1,\ldots,n\}$ and its fixed subgroup is equal to $G_v$. This proves the first step.
\end{proofblack}

Let $\Phi \in \phi$ be the representative given by Step 1. Let $g\in G_v$ and let $i \in \{1,\ldots,n\}$. As $G_v=\mathrm{Fix}(\Phi)$, we have $\Phi(ge_i)=ge_i$, so that $\Phi$ fixes every edge in the $G_v$-orbit of $e_i$.

\medskip

\noindent{\it Step 2. } The automorphism $\Phi$ fixes every edge $e$ adjacent to $v$ such that $G_e$ is nontrivial.

\begin{proofblack}
Let $e$ be an edge adjacent to $v$ such that $G_e$ is nontrivial. Let $w$ be the other endpoint of $e$. By Lemma~\ref{lem:stabvertextreecyl}, the group $G_w$ is a $(G,\mathcal{G})$-free factor. Moreover, its $G$-conjugacy class is fixed by $\Phi$. Since $G_e \neq \{1\}$, the group $G_w$ is the only group in its $G$-conjugacy class which contains $G_e$. Since $G_e \subseteq \Fix(\Phi)$, the group $G_w$ is preserved by $\Phi$. As $w$ is the only vertex in $\widetilde{V}_0$ fixed by $G_w$, we conclude that $w$ and $e$ are fixed by $\Phi$. This proves Step 2.
\end{proofblack}

Combining both Steps 1 and 2, we see that the automorphism $\Phi$ satisfies the assertions of the claim.
\end{proof}

Since $S$ is fixed by $\phi^\ell$, by Proposition~\ref{prop:invariantsplittingJSJ}, a common refinement $U$ of $S$ and $T_c^G(G_{\phi^\ell})$ is obtained by blowing-up trees at cylinder vertices of $T_c^G(G_{\phi^\ell})$ (we consider $U$ as a $G$-splitting and not a $G_{\phi^\ell}$-splitting). Let $\widetilde{V}$ be the set of vertices in $\widetilde{V}_1$ which are blown-up when passing from $T_c^G(G_{\phi^\ell})$ to $U$. By Claim~3, for every $v \in \widetilde{V}$, the automorphism $\phi$ has a representative $\Phi_v$ which pointwise fixes $G_v$ and which fixes every edge of $T_c^G(\phi^\ell)$ adjacent to $v$. The automorphism $\Phi_v$ therefore fixes every $G$-tree obtained from $T_c^G(\phi^\ell)$ by blowing-up a subtree at $v$. In particular, $U$ is preserved by $\phi$. 

We claim that $\phi$ acts trivially on $\overline{G \backslash U}$. Indeed, let $p_U \colon U \to T_c^G(G_{\phi^\ell})$ be the natural $G$-equivariant projection and let $x$ be a point in $T_c^G(G_{\phi^\ell})$. If $p_U^{-1}(x)$ is a point, then, by Claim~2 and the $G$-equivariance of $p_U$, $\phi$ preserves the $G$-orbit of $p_U^{-1}(x)$. If $p_U^{-1}(x)$ is not reduced to a point, then $x \in \widetilde{V}$. By Claim~3, $\phi$ has a representative which fixes pointwise $p_U^{-1}(x)$. It follows that $\phi$ fixes pointwise $\overline{G \backslash U}$.

Therefore, $\phi$ preserves every $(G,\mathcal{G}$)-splitting on which $U$ collapses. Since $U$ is a refinement of $S$, this shows that $\phi$ preserves $S$. 
\end{proof}

\subsection{Laminations and nonsporadic free factor systems}\label{sec:periodiclamination}

Let $G$ be a group satisfying the \hyperlink{SA}{Standing Assumptions} and let $\phi \in \Out(G,\mathcal{G})$. Let $\mathcal{F}_{\mathrm{max}}(\phi)$ be the set of all proper, maximal $\phi$-periodic $(G,\mathcal{G})$-free factor systems. The set $\mathcal{F}_{\mathrm{max}}(\phi)$ has a partition 
\[\mathcal{F}_{\mathrm{max}}(\phi)=\mathrm{S}(\phi) \coprod \mathrm{NS}(\phi),\] where $\mathrm{S}(\phi)$ contains all the $(G,\mathcal{G})$-free factor systems in $\mathcal{F}_{\mathrm{max}}(\phi)$ which are sporadic and $\mathrm{NS}(\phi)$ contains all the $(G,\mathcal{G})$-free factor systems in $\mathcal{F}_{\mathrm{max}}(\phi)$ which are nonsporadic. In this section, we focus on the action of $\phi$ on $\mathrm{NS}(\phi)$. The main result of the section is the following proposition.

\begin{prop}\label{prop:maxperiodffsfixed}
Let $G$ be a group which satisfies the \hyperlink{SA}{Standing Assumptions}. Let $\phi \in \IA(G,\mathcal{G},3)$ and let $\mathcal{F} \in \mathrm{NS}(\phi)$. Then $\mathcal{F}$ is fixed by $\phi$.
\end{prop}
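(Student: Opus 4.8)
The plan is to reduce the statement to a question about the action of $\phi$ on the attracting lamination associated with $\mathcal{F}$, and then to show that $\phi$ fixes that lamination. Since $\mathcal{F}$ is nonsporadic, the first step is to recall (as alluded to in the introduction) that being maximal among $\phi$-periodic $(G,\mathcal{G})$-free factor systems, together with nonsporadicity, forces the existence of an \emph{attracting lamination} $\Lambda_\mathcal{F}$ of some power $\phi^\ell$ which is ``supported'' by $\mathcal{F}$ in the sense that $\mathcal{F}$ is the smallest $(G,\mathcal{G})$-free factor system whose associated lamination contains $\Lambda_\mathcal{F}$. This is a standard consequence of relative train track theory for free products: take a relative train track representative $F\colon T\to T$ of $\phi^\ell$ adapted to the filtration element realizing $\mathcal{F}$ (Theorem~\ref{thm:existencereltraintracks}), and observe that maximality of $\mathcal{F}$ combined with the fact that $\mathcal{F}$ does not arise from a $\phi$-periodic free splitting (which is the other case, handled in Section~\ref{sec:periodicfreesplittings}) forces the stratum just above $T_i$ (where $\mathcal{F}(T_i)=\mathcal{F}$) to be exponentially growing, whence it produces an attracting lamination $\Lambda_\mathcal{F}$. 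One should then invoke a lemma of the form ``$\phi$ fixes $\mathcal{F}$ if and only if $\phi$ fixes $\Lambda_\mathcal{F}$'' (referenced in the introduction as Lemma~\ref{lem:preimagerho}); the nontrivial direction, that fixing the lamination forces fixing $\mathcal{F}$, uses that $\mathcal{F}$ is recovered canonically from $\Lambda_\mathcal{F}$ as the minimal free factor system carrying it.

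Next I would analyze how $\phi$ permutes its (finite) set of attracting laminations. Passing to the power $\phi^\ell$ we may assume $\Lambda_\mathcal{F}$ is $\phi^\ell$-invariant; the set of attracting laminations of $\phi^\ell$ is finite, and $\phi$ acts on it by permutation. The key point is that this permutation action is encoded combinatorially in a relative train track map: each attracting lamination corresponds to an orbit of exponentially growing strata, so $\phi$ acts on the set of such strata, and this in turn is reflected on the underlying graph of groups and on $H_1(G,\ZZ/3\ZZ)$. More precisely, to each EG stratum $H_r$ with Perron--Frobenius eigenvalue $\lambda_r$ one associates the free factor system $\mathcal{F}(T_{r})$ (the filtration below it), and distinct laminations give filtration elements that are distinguished in homology by the arguments of Section~\ref{Section:Firstproperties} (Lemma~\ref{lem:projectionhomology} and Lemma~\ref{lem:periodic freefactorsystem} show that $\phi\in\IA^0(G,\mathcal{G},3)$ fixes every $\phi$-invariant free factor system elementwise). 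Since $\phi\in\IA(G,\mathcal{G},3)\subseteq\IA^0(G,\mathcal{G},3)$ acts trivially on $H_1(G,\ZZ/3\ZZ)$ and fixes $\mathcal{G}$ elementwise, it must fix each of these homological invariants, and hence fix each lamination in its set of attracting laminations; in particular $\phi(\Lambda_\mathcal{F})=\Lambda_\mathcal{F}$.

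The main obstacle I expect is making the ``$\phi$ fixes $\Lambda_\mathcal{F}$ implies $\phi$ fixes $\mathcal{F}$'' step fully rigorous in the relative (free product) setting, and more generally nailing down the correspondence between maximal nonsporadic $\phi$-periodic free factor systems and attracting laminations. In the free group case this is due to Handel--Mosher via CT maps, but here one only has the first-generation relative train tracks of Collins--Turner / Francaviglia--Martino / Lyman, so one must be careful that: (i) an EG stratum of a relative train track for free products does yield a well-defined attracting lamination of $(G,\mathcal{G})$ in the sense of the double boundary $\partial^2(G,\mathcal{G})$; (ii) the ``carrying'' free factor system of that lamination is exactly $\mathcal{F}$ when $\mathcal{F}$ is maximal; and (iii) the permutation of EG strata induced by $\phi$ genuinely refines to a permutation detectable in $H_1(G,\ZZ/3\ZZ)$. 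Points (i) and (ii) should follow from the arational/relatively-iwip theory (Theorem~\ref{theo:limittreeiwip} applied inside the vertex group or free factor where the EG stratum ``lives''), after restricting to the appropriate $(G,\mathcal{G})$-free factor and using Lemma~\ref{lem:IA_passes_free_factor} to keep $\phi_{|A}\in\IA(A,\mathcal{G}_{|A},3)$.

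The remaining step, once $\phi(\Lambda_\mathcal{F})=\Lambda_\mathcal{F}$ is established, is immediate: by the preimage lemma, $\phi$ fixes the minimal $(G,\mathcal{G})$-free factor system carrying $\Lambda_\mathcal{F}$, which by maximality of $\mathcal{F}$ is $\mathcal{F}$ itself; hence $\phi(\mathcal{F})=\mathcal{F}$, and since by Lemma~\ref{lem:periodic freefactorsystem} every element of a $\phi$-invariant free factor system is individually fixed, $\mathcal{F}$ is fixed by $\phi$ as claimed. I would also note, for later use in the proof of Theorem~\ref{thm:periodicfreefactorfixed}, that combining this proposition with Theorem~\ref{thm:periodicfreesplittingfixed} disposes of both parts of the partition $\mathcal{F}_{\mathrm{max}}(\phi)=\mathrm{S}(\phi)\coprod\mathrm{NS}(\phi)$, and a descending induction on $\xi(G,\mathcal{F})$ using Lemma~\ref{lem:IA_passes_free_factor} then yields the full statement for arbitrary $\phi$-periodic free factor systems.
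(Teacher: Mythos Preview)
Your overall strategy matches the paper's exactly: associate to $\mathcal{F}$ an attracting lamination $\Lambda_\mathcal{F}$ (Lemma~\ref{lem:attractinglaminationassociatedmaxfree}), show $\phi$ fixes every attracting lamination (Proposition~\ref{prop:periodiclaminationfixed}), and deduce that $\phi$ fixes $\mathcal{F}$ (Lemma~\ref{lem:preimagerho}(3)). But two of your supporting sketches are off in ways that matter.

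First, the relationship between $\mathcal{F}$ and $\Lambda_\mathcal{F}$ is reversed: $\Lambda_\mathcal{F}$ is the unique attracting lamination \emph{not} contained in $\partial^2(\mathcal{F},\mathcal{G})$, not one carried by $\mathcal{F}$. More seriously, the map $\rho_\phi\colon \mathrm{NS}(\phi)\to\mathcal{L}(\phi)$ need not be injective, so your claim that ``$\mathcal{F}$ is recovered canonically from $\Lambda_\mathcal{F}$'' as a minimal carrier fails. The paper's Lemma~\ref{lem:preimagerho} instead shows that all $\mathcal{F}'\in\rho_\phi^{-1}(\Lambda)$ share the same set $\mathcal{A}_\mathcal{F}=\mathcal{F}\cup\{[H]\}$ of point stabilisers of the limit tree $T_\mathcal{F}$ (via Theorem~\ref{theo:limittreeiwip}), and then \emph{uses Theorem~\ref{thm:invariantfamilycyclicfixed}} (periodic conjugacy classes are fixed) to conclude $\phi([H])=[H]$, hence $\phi(\mathcal{F})=\mathcal{F}$. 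You never invoke this dependency, and without it the backward implication does not go through.

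Second, your argument that $\phi$ fixes each lamination---``distinct laminations give filtration elements that are distinguished in homology''---does not engage the actual obstruction. For a relative train track of $\phi$ itself, the strata and hence the filtration terms $\mathcal{F}(T_r)$ are already $F$-invariant; nothing is being permuted at that level. The real issue (Proposition~\ref{prop:periodiclaminationfixed}) is that a single EG stratum $H_r$ may be non-aperiodic, giving a cyclic partition $EH_r=P_1^{(r)}\amalg\cdots\amalg P_{n_r}^{(r)}$ with $F$ permuting the blocks and each block carrying its own lamination. The paper rules out $n_r\geq 2$ by collapsing $T_{r-1}$, analysing the resulting subgraphs $\mathbb{X}^i$ of the quotient, and using triviality on $H_1(\mathbb{X}_r,\ZZ/3\ZZ)$ together with a case analysis (embedded circles versus arcs between components) to force $\overline{F}$ to fix each $\mathbb{X}^i$. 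Your homology sketch does not supply this.
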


The proof of Proposition~\ref{prop:maxperiodffsfixed} uses the existence of \emph{attracting laminations} associated with $\phi$. An attracting lamination is a particular closed subset of the double boundary of $G$ which is invariant by a power of a representative of $\phi$. We in particular show (see Lemma~\ref{lem:attractinglaminationassociatedmaxfree}) that, to any maximal $\phi$-periodic nonsporadic free factor system one can canonically associate an attracting lamination of $\phi$. Thus, instead of studying the dynamics of $\phi$ on its periodic nonsporadic free factor systems, we will study the dynamics of $\phi$ on its set of attracting laminations. This is well-described using a relative train track representative of $\phi$ given by Theorem~\ref{thm:existencereltraintracks}.

\bigskip

First, we explain the necessary background relevant to the study of attracting laminations. Let $\phi \in \Out(G,\mathcal{G})$. Following the work of Bestvina--Feighn--Handel~\cite{BesFeiHan97,BesFeiHan00} in the context of free groups, Lyman~\cite{Lyman2022CT} introduced a set $\mathcal{L}(\phi)$ of laminations associated with $\phi$, called the set of \emph{attracting laminations of $\phi$}. Instead of giving a precise definition of an attracting lamination (we refer to \cite[Section~4]{Lyman2022CT} instead), we present several properties of $\mathcal{L}(\phi)$ and its links with relative train tracks representing $\phi$. 

Let $F \colon T \to T$ be a relative train track representative of $\phi$ given by Theorem~\ref{thm:existencereltraintracks}. We now record several properties of the set $\mathcal{L}(\phi)$.

\begin{lem}[{\cite[Lemma~4.4]{Lyman2022CT}}]\label{lem:genericline}
Let $(G,\mathcal{G})$ be a free product, let $\phi \in \Out(G,\mathcal{G})$ and let $F \colon T \to T$ be a relative train track representative of $\phi$. The following hold.

\begin{enumerate}
\item For every attracting lamination $\Lambda \in \mathcal{L}(\phi)$, there exists a line $\ell_\Lambda$ of $\partial^2(G,\mathcal{G})$ such that $\Lambda$ is the minimal lamination of $\partial^2(G,\mathcal{G})$ containing $\ell_\Lambda$. Such a line $\ell_\Lambda$ is called a \emph{generic line of $\Lambda$}.
\item For every $\Lambda \in \mathcal{L}(\phi)$, there exists an EG stratum $H_r$ such that, for every generic line $\ell_\Lambda$ of $\Lambda$, the stratum $H_r$ is the highest stratum crossed by $\ell_\Lambda$.
\item Let $\Lambda \in \mathcal{L}(\phi)$ and let $H_r$ be the associated EG stratum. There exists an edge $e \in EH_r$ and a generic line $\ell_\Lambda$ of $\Lambda$ such that, for any finite subpath $\gamma$ of $\ell_\Lambda$, there exist $n >0$ and $g \in G$ such that $\gamma \subseteq g[F^n(e)]$.
\end{enumerate}
\end{lem}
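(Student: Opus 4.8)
The statement is \cite[Lemma~4.4]{Lyman2022CT}, so in the paper it suffices to quote that reference; but let me indicate how one would establish it from the relative train track structure. The plan is to recall how $\mathcal{L}(\phi)$ is built out of the EG strata of $F\colon T \to T$ and then to read off the three assertions from the Perron--Frobenius behaviour of the EG transition matrices together with the relative train track axioms.

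First I would recall that $\mathcal{L}(\phi)$ is indexed by the EG strata of $F$: to an EG stratum $H_r$, with Perron--Frobenius eigenvalue $\lambda_r>1$, one attaches a lamination $\Lambda_r$. Fix an edge $e\in EH_r$. Since $F$ is $\Phi$-equivariant and the filtration is $F$-invariant, every reduced path $[F^n(e)]$ lies in $T_r$ and, by the relative train track axioms (directions of $H_r$ map to directions of $H_r$ and $r$-legality is preserved), the turns of height $r$ in $[F^n(e)]$ stay legal, so the number of $H_r$-edges in $[F^n(e)]$ grows like $\lambda_r^n$. One defines $\Lambda_r$ as the set of lines $\ell$ of $\partial^2(G,\mathcal{G})$ all of whose finite subpaths occur, up to the $G$-action, as subpaths of the paths $[F^n(e)]$. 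A compactness/diagonal argument — taking a subsequential limit of suitable translates $g_n[F^n(e)]$ recentred at midpoints — produces a line $\ell_\Lambda\in\Lambda_r$ whose finite subpaths are exactly the paths occurring in the $[F^n(e)]$'s. This immediately yields assertion $(3)$: every finite subpath $\gamma$ of such an $\ell_\Lambda$ lies in $g[F^n(e)]$ for some $g\in G$ and $n>0$.

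For $(1)$ — that $\ell_\Lambda$ is a generic line, i.e. that the minimal lamination containing it is all of $\Lambda_r$ and that $\Lambda_r$ is minimal — the key input is quasiperiodicity of the leaves constructed this way. Using primitivity of the transition matrix $M_r$ (some power is strictly positive) together with a bounded cancellation lemma for $F$, one shows that there is a function $L\mapsto L'$ such that every length-$L'$ subpath of every $[F^n(e')]$, with $e'\in EH_r$, contains an occurrence (up to the $G$-action) of every length-$L$ subpath of every $[F^m(e'')]$. Passing to the limit, $\ell_\Lambda$ is quasiperiodic, so the smallest lamination containing $\ell_\Lambda$ is $\Lambda_r$; and every line of $\Lambda_r$ is a limit of translates of subpaths of $\ell_\Lambda$, so $\Lambda_r$ is minimal and $\ell_\Lambda$ is generic. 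I expect this to be the main obstacle: the bounded cancellation lemma and the primitivity estimate must be run in the free-product setting, where $T$ is a Grushko tree with nontrivial vertex stabilisers rather than a graph — this is exactly the machinery Lyman develops in \cite{Lyman2022CT}.

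Finally $(2)$: since $e\in EH_r\subseteq T_r$ and $T_r$ is $F$-invariant, $[F^n(e)]\subseteq T_r$, so no edge of a stratum strictly above $H_r$ ever appears in $[F^n(e)]$, hence in no generic line of $\Lambda_r$; and because height-$r$ turns remain legal, each $[F^n(e)]$ with $n$ large genuinely crosses $H_r$, so every generic line of $\Lambda_r$ crosses $H_r$. Thus $H_r$ is the highest stratum crossed by every generic line of $\Lambda_r$, and the index $r$ depends only on $\Lambda=\Lambda_r$ since distinct EG strata yield laminations with distinct top strata. This completes the sketch; in the paper one simply cites \cite[Lemma~4.4]{Lyman2022CT}.
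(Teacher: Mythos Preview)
Your proposal is correct: the paper gives no proof of this lemma and simply cites \cite[Lemma~4.4]{Lyman2022CT}, exactly as you note in your first sentence. The sketch you add is a faithful outline of how the result is established in the cited reference.

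One small imprecision worth flagging: you invoke ``primitivity of the transition matrix $M_r$ (some power is strictly positive)'', but for a general EG stratum $M_r$ is only irreducible, not necessarily primitive. In the non-aperiodic case one must first pass to the partition $EH_r = P_1^{(r)} \amalg \cdots \amalg P_{n_r}^{(r)}$ of Lemma~\ref{lem:periodicEGstratum} (equivalently, replace $F$ by a suitable power so that each block becomes aperiodic); this is why Proposition~\ref{prop:attractinglaminations}(3) records several laminations attached to a single non-aperiodic EG stratum rather than one. Your quasiperiodicity argument then runs on each aperiodic block separately. This does not affect the validity of your sketch, only its phrasing.
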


Lemma~\ref{lem:genericline}$(2)$ implies that there exists a $\phi$-equivariant map $p_\phi \colon \mathcal{L}(\phi) \to \mathrm{EG}$, where $\mathrm{EG}$ is the set of EG strata of $T$. We now describe properties of this map.

Let $H_r$ be an EG stratum. The stratum $H_r$ is \emph{aperiodic} if its associated transition matrix $M_r$ is aperiodic, that is there exists $\ell >0$ such that every entry of $M_r^\ell$ is positive. If $H_r$ is not aperiodic, we have the following, whose proof is due to Lyman~\cite{Lyman2022CT}. We add a sketch of proof as the exact assertion is not explicitly written.

\begin{lem}[{\cite[Proof of Lemma~4.6]{Lyman2022CT}}]\label{lem:periodicEGstratum}
Let $(G,\mathcal{G})$ be a free product, let $\phi \in \Out(G,\mathcal{G})$ and let $F \colon T \to T$ be a relative train track representative of $\phi$. Let $H_r \in \mathrm{EG}$ be a non aperiodic EG stratum. 

There exists a $G$-invariant partition $$EH_r=P_{1}^{(r)} \amalg \ldots P_{n_r}^{(r)}$$ of the edges of $H_r$ such that the following hold. 

\begin{enumerate}
\item For every $i \in \{1,\ldots,n_r\}$ and every edge $e \in P_i^{(r)}$ the path $[F(e)]$ is contained in $T_{r-1} \cup P_{i+1}^{(r)}$, where indices are taken modulo $n_r$.
\item For any distinct $i,j \in \{1,\ldots,n_r\}$, we have $$\mathcal{F}(T_{r-1} \cup P_i^{(r)}) \neq \mathcal{F}(T_{r-1} \cup P_j^{(r)}).$$
\end{enumerate}
\end{lem}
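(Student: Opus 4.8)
The plan is to separate the combinatorial content of the statement from its dynamical content. The combinatorial content — existence of the partition and property~(1) — is a direct application of the Perron--Frobenius theory of irreducible nonnegative matrices. Since the filtration is maximal, the transition matrix $M_r$ is irreducible, and by hypothesis it is not aperiodic; let $n_r\geq 2$ be its period. The classical cyclic normal form for an irreducible imprimitive matrix then partitions the index set of $M_r$ — that is, the set of $G$-orbits of unoriented edges of $H_r$ — into $n_r$ classes $\mathcal{C}_1,\dots,\mathcal{C}_{n_r}$, unique up to cyclic relabelling, such that the only nonzero blocks of $M_r$ go from $\mathcal{C}_s$ to $\mathcal{C}_{s+1}$, indices taken modulo $n_r$. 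Pulling this partition back to $EH_r$ produces the $G$-invariant partition $EH_r=P_1^{(r)}\amalg\cdots\amalg P_{n_r}^{(r)}$. For property~(1): the filtration is $F$-invariant, so $T_r$ is $F$-invariant and $[F(e)]\subseteq T_r=T_{r-1}\cup H_r$ for every $e\in EH_r$; and by construction every edge of $H_r$ occurring in $[F(e)]$ lies in $P_{i+1}^{(r)}$ whenever $e\in P_i^{(r)}$, whence $[F(e)]\subseteq T_{r-1}\cup P_{i+1}^{(r)}$, as required.

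For property~(2) I would argue by contradiction, via the attracting laminations attached to the pieces $P_i^{(r)}$. First note that $\phi$ cyclically permutes the subforests $T_{r-1}\cup P_i^{(r)}$: by~(1), $F$ carries $T_{r-1}\cup P_i^{(r)}$ into $T_{r-1}\cup P_{i+1}^{(r)}$, and the composite of the $n_r$ maps so obtained is a homotopy equivalence of $T_{r-1}\cup P_i^{(r)}$, so each is $\pi_1$-surjective onto the corresponding components; hence $\phi$ also cyclically permutes the free factor systems $\mathcal{F}(T_{r-1}\cup P_i^{(r)})$. Next, passing to $F^{n_r}$ (which can again be taken to be a relative train track after refining the filtration), the diagonal block of $M_r^{n_r}$ indexed by $\mathcal{C}_i$ is primitive, so $P_i^{(r)}$ is an aperiodic EG stratum of $F^{n_r}$ on $T_{r-1}\cup P_i^{(r)}$. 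By Lemma~\ref{lem:genericline} this yields an attracting lamination $\Lambda_i\in\mathcal{L}(\phi^{n_r})$ whose generic leaves are carried by $T_{r-1}\cup P_i^{(r)}$ and, by Lemma~\ref{lem:genericline}(3) combined with primitivity of the block, cross edges in every $G$-orbit of $P_i^{(r)}$.

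Now suppose $\mathcal{F}(T_{r-1}\cup P_i^{(r)})=\mathcal{F}(T_{r-1}\cup P_j^{(r)})$ with $i\neq j$. On the one hand, the generic leaves of $\Lambda_j$ are carried by $T_{r-1}\cup P_j^{(r)}$, which contains no edge of $P_i^{(r)}$, so they do not cross $P_i^{(r)}$-edges, whereas those of $\Lambda_i$ do; hence $\Lambda_i\neq\Lambda_j$. On the other hand, the free factor support of $\Lambda_i$ is exactly $\mathcal{F}(T_{r-1}\cup P_i^{(r)})$ — this is the point where one uses that the generic leaves of $\Lambda_i$ fill the whole subforest $T_{r-1}\cup P_i^{(r)}$ and not merely a proper free factor subsystem of it — so the hypothesis forces $\Lambda_i$ and $\Lambda_j$ to have the same support. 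Since two distinct attracting laminations of an automorphism have distinct free factor supports (there is at most one attracting lamination of $\phi^{n_r}$ that is carried by and fills a given free factor), we conclude $\Lambda_i=\Lambda_j$, a contradiction. This establishes~(2).

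The main obstacle is precisely the identification of $\mathcal{F}(T_{r-1}\cup P_i^{(r)})$ with the free factor support of $\Lambda_i$, i.e.\ the assertion that the generic leaves of $\Lambda_i$ genuinely fill $T_{r-1}\cup P_i^{(r)}$. This is where the full strength of the relative train track hypotheses (maximality of the filtration and the relative train track inequalities controlling how strata below $H_r$ interact with it) has to be brought in, and it is what allows the whole argument to stay within the first-generation relative train track technology of~\cite{ColTur94,FrancaMar2015,Lyman2022CT} rather than requiring the heavier CT machinery.
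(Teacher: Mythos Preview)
Your treatment of the partition and property~(1) is correct and matches the paper: both appeal to the cyclic block structure of an irreducible imprimitive nonnegative matrix (the paper cites \cite{Seneta81}).

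For property~(2), your lamination-and-support argument is substantially more elaborate than what the paper does, and it carries two unresolved burdens that you yourself flag. First, the identification of the free factor support of $\Lambda_i$ with $\mathcal{F}(T_{r-1}\cup P_i^{(r)})$ --- the ``filling'' assertion --- is left as an obstacle rather than established. Second, and more seriously, the step ``two distinct attracting laminations of an automorphism have distinct free factor supports'' is asserted without proof; this is not a triviality, and in any case the machinery you would need to justify it is heavier than the lemma you are trying to prove.

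The paper's argument for~(2) avoids laminations entirely and is much more direct. Once one knows that each $P_i^{(r)}$ is an aperiodic EG stratum for some power $F^s$, one simply picks a nontrivial element $g\in\pi_1\bigl(G\backslash(T_{r-1}\cup P_i^{(r)})\bigr)$ whose reduced loop crosses an edge of $P_i^{(r)}$ (such an element exists precisely because $P_i^{(r)}$ is EG for $F^s$). Then $g$ is $(G,\mathcal{F}(T_{r-1}\cup P_i^{(r)}))$-peripheral by construction, but its axis in $T$ crosses edges of $P_i^{(r)}$, which lie outside $T_{r-1}\cup P_j^{(r)}$; hence $g$ is $(G,\mathcal{F}(T_{r-1}\cup P_j^{(r)}))$-nonperipheral, and the two free factor systems differ. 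No supports, no filling, no uniqueness of laminations --- just one element as a witness. If you want to keep something of your approach, note that the same witness can be phrased as: the generic line of $\Lambda_i$ lies in $\partial^2\mathcal{F}(T_{r-1}\cup P_i^{(r)})$ but not in $\partial^2\mathcal{F}(T_{r-1}\cup P_j^{(r)})$, since its realization in $T$ crosses $P_i^{(r)}$; this already forces the free factor systems to differ, without ever computing supports or invoking any uniqueness statement.
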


\begin{proof}[Sketch of proof]
The partition of $EH_r$ comes from general theory \cite{Seneta81} applied to the set of edges of $\overline{G \backslash H_r}$ and its transition matrix. The second assertion follows from the fact that there exists $s>0$ such that, for every $i \in \{1,\ldots,n_r\}$, the set $P_i^{(r)}$ is the set of edges of an aperiodic EG stratum of the relative train track map $F^s \colon T \to T$ (see~\cite[Proof of Lemma~4.6]{Lyman2022CT}). The fact that, for every $i \in \{1,\ldots,n_r\}$ the stratum $P_i^{(r)}$ is an EG stratum for $F^s$ implies that for any distinct $i,j \in \{1,\ldots,n_r\}$, there exists $g \in G$ which is both $(G,\mathcal{F}(T_{r-1} \cup P_i^{(r)}))$-peripheral and $(G,\mathcal{F}(T_{r-1} \cup P_j^{(r)}))$-nonperipheral. Indeed, it suffices to take any nontrivial element in the fundamental group of the graph of groups $\pi_1(G \backslash (T_{r-1} \cup P_i^{(r)}))$ whose associated reduced path crosses $P_i^{(r)}$ (it exists as $P_i^{(r)}$ is an EG stratum). This shows that $\mathcal{F}(T_{r-1} \cup P_i^{(r)}) \neq \mathcal{F}(T_{r-1} \cup P_j^{(r)})$.
\end{proof}

\begin{prop}[\cite{Lyman2022CT}]\label{prop:attractinglaminations}
Let $(G,\mathcal{G})$ be a free product, let $\phi \in \Out(G,\mathcal{G})$ and let $F \colon T \to T$ be a relative train track representative of $\phi$. The following hold.

\begin{enumerate}
\item \cite[Lemma~4.3]{Lyman2022CT} The application $p_\phi \colon \mathcal{L}(\phi) \to \mathrm{EG}$ is surjective.
\item \cite[Proof of Lemma~4.6]{Lyman2022CT} If $H_r \in \mathrm{EG}$ is aperiodic, then $|p_\phi^{-1}(H_r)|=1$.
\item \cite[Proof of Lemma~4.6]{Lyman2022CT} Suppose that $H_r \in \mathrm{EG}$ is not aperiodic and let $EH_r=P_{1}^{(r)} \amalg \ldots P_{n_r}^{(r)}$ be the associated partition given by Lemma~\ref{lem:periodicEGstratum}. For every $i \in \{1,\ldots,n_r\}$, there exists a unique $\Lambda \in p_\phi^{-1}(H_r)$ such that some (equivalently any) generic line of $\Lambda$ contains edges of $P_i^{(r)}$. 
\item \cite[Lemma~4.6]{Lyman2022CT} The set $\mathcal{L}(\phi)$ is finite.
\item \cite[Proof of Lemma~4.6]{Lyman2022CT} For every $\ell >0$, we have $\mathcal{L}(\phi^\ell)=\mathcal{L}(\phi)$.
\end{enumerate}
\end{prop}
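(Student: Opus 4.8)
The plan is to realize every attracting lamination of $\phi$ explicitly as the lamination generated by an \emph{eigenray} of the relative train track map $F\colon T\to T$ (Theorem~\ref{thm:existencereltraintracks}), and then to read off all five assertions from the combinatorics of the filtration. To prove (1), I would fix an EG stratum $H_r$ with Perron--Frobenius eigenvalue $\lambda_r>1$ and an edge $e\in EH_r$. By irreducibility of the transition matrix $M_r$, the reduced images $[F^m(e)]$ cross $H_r$ with combinatorial length growing like $\lambda_r^m$, and the relative train track axioms force $[F^{m}(e)]$ to be obtained from $[F^{m-1}(e)]$ without cancellation across height-$r$ turns; hence, after fixing a base vertex and passing to a subsequence, the paths $[F^m(e)]$ converge to a ray whose associated bi-infinite lines sweep out a well-defined lamination $\Lambda_e\subseteq\partial^2(G,\mathcal G)$. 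Its generic leaves (in the sense of Lemma~\ref{lem:genericline}) are exactly the limits of $G$-translates of the $[F^m(e)]$, and the highest stratum any of them crosses is $H_r$, so $p_\phi(\Lambda_e)=H_r$. Letting $H_r$ range over $\mathrm{EG}$ gives surjectivity.

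For (2), I would assume $M_r$ aperiodic, so $M_r^{\ell}$ is strictly positive for some $\ell>0$. Then for any two edges $e,e'\in EH_r$ there is $m_0$ such that, for all large $m$, the path $[F^m(e)]$ contains a $G$-translate of $[F^{m-m_0}(e')]$ and conversely. Consequently the two families $\{[F^m(e)]\}$ and $\{[F^m(e')]\}$ have the same limiting translates, so $\Lambda_e=\Lambda_{e'}$. By Lemma~\ref{lem:genericline}(3) every $\Lambda\in p_\phi^{-1}(H_r)$ has a generic leaf which is a limit of translates $g[F^n(e)]$ for some $e\in EH_r$, hence $\Lambda=\Lambda_e$; thus $|p_\phi^{-1}(H_r)|=1$.

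For (3), suppose $M_r$ is not aperiodic and let $EH_r=P_1^{(r)}\amalg\cdots\amalg P_{n_r}^{(r)}$ be the partition of Lemma~\ref{lem:periodicEGstratum}, cyclically permuted by $F$. Pick $s>0$ divisible by $n_r$ and by the periods of all EG strata of $F$, so that (after refining the filtration) $F^{s}\colon T\to T$ is a relative train track representing $\phi^{s}$ in which each $P_i^{(r)}$ is an aperiodic EG stratum. Applying (2) to $\phi^{s}$ and $F^{s}$ gives, for each $i$, a single attracting lamination $\Lambda_i$ built from any edge $e\in P_i^{(r)}$ as $\lim_n g[F^{sn}(e)]$. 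Because $[F^{m}(e)]\subseteq T_{r-1}\cup P_{i+m}^{(r)}$ with indices mod $n_r$, the lamination $\Lambda_i$ is carried by $\mathcal F(T_{r-1}\cup P_i^{(r)})$; on the other hand Lemma~\ref{lem:periodicEGstratum}(2) supplies an element which is $(G,\mathcal F(T_{r-1}\cup P_i^{(r)}))$-peripheral but $(G,\mathcal F(T_{r-1}\cup P_j^{(r)}))$-nonperipheral, so $\Lambda_i$ is not carried by $\mathcal F(T_{r-1}\cup P_j^{(r)})$ for $j\neq i$, forcing $\Lambda_i\neq\Lambda_j$. Each $\Lambda_i$ is $\phi$-periodic (with period dividing $n_r$), hence lies in $\mathcal L(\phi)$ over $H_r$, and conversely any $\Lambda\in p_\phi^{-1}(H_r)$, viewed as a $\phi^{s}$-attracting lamination over some $P_i^{(r)}$, equals $\Lambda_i$. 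Hence $p_\phi^{-1}(H_r)=\{\Lambda_1,\dots,\Lambda_{n_r}\}$ with the stated uniqueness property.

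Assertion (4) is then immediate: $\mathcal L(\phi)=\bigsqcup_{H_r\in\mathrm{EG}}p_\phi^{-1}(H_r)$ is a finite union (since $T$ is a finite graph of groups) of finite sets by (2) and (3). For (5), I would take a relative train track $F'$ for $\phi^{\ell}$; its EG strata all become aperiodic after passing to a further power divisible by all periods, and the eigenray construction attaches to them exactly the laminations $\Lambda_i$ coming from the blocks of the non aperiodic EG strata of $F$ together with those coming from the aperiodic EG strata of $F$ — that is, precisely $\mathcal L(\phi)$ — so $\mathcal L(\phi^{\ell})=\mathcal L(\phi)$. The hard part, and the step I expect to be the main obstacle, is the first paragraph: setting up the eigenray construction and the no-cancellation estimates for reduced images under a relative train track map, and verifying that the resulting generic leaves realize the correct highest stratum and coincide exactly under the aperiodicity criterion. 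These estimates, going back to \cite{BesFeiHan00} and carried out for free products in \cite{Lyman2022CT}, are what legitimize the bookkeeping in (1)--(5).
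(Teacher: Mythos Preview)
The paper does not prove this proposition at all: it is stated as a list of facts with pointers to \cite{Lyman2022CT} (and ultimately \cite{BesFeiHan00}), so there is no in-paper argument to compare against. Your sketch is exactly the standard eigenray construction carried out in those references and is correct in outline.

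Two small points worth tightening. In (3), the inequality $\Lambda_i\neq\Lambda_j$ follows directly from the fact that a generic leaf of $\Lambda_i$ crosses edges of $P_i^{(r)}$ and none of $P_j^{(r)}$; the element supplied by Lemma~\ref{lem:periodicEGstratum}(2) distinguishes the \emph{free factor systems} $\mathcal F(T_{r-1}\cup P_i^{(r)})$, which is what that lemma is about, but is not what you need to separate the laminations. And for (5), the definition of an attracting lamination (in \cite{BesFeiHan00,Lyman2022CT}) only requires an attracting neighbourhood for \emph{some iterate} of $\phi$, so $\mathcal L(\phi)=\mathcal L(\phi^\ell)$ is essentially definitional; there is no need to build a separate relative train track for $\phi^\ell$, and invoking this first also removes the apparent circularity where your argument for (3) already uses $\mathcal L(\phi^s)\subseteq\mathcal L(\phi)$.
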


\begin{lem}\label{lem:attractinglaminationassociatedmaxfree}
Let $(G,\mathcal{G})$ be a free product and let $\phi \in \Out(G,\mathcal{G})$. For every $\mathcal{F} \in \mathrm{NS}(\phi)$, there exists a unique $\Lambda_\mathcal{F} \in \mathcal{L}(\phi)$ such that $\Lambda_\mathcal{F} \nsubseteq \partial^2(\mathcal{F},\mathcal{G})$. In particular, the map $\rho_\phi \colon \mathrm{NS}(\phi) \to \mathcal{L}(\phi)$ sending $\mathcal{F}$ to $\Lambda_\mathcal{F}$ is $\phi$-equivariant.
\end{lem}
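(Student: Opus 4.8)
The plan is to use a relative train track representative $F\colon T\to T$ of $\phi$ given by Theorem~\ref{thm:existencereltraintracks}, chosen so that some element $T_i$ of the filtration satisfies $\mathcal{F}(T_i)=\mathcal{F}$; this is possible since $\mathcal{F}$ is $\phi$-periodic, after replacing $\phi$ by a power (note that $\mathcal{L}(\phi^\ell)=\mathcal{L}(\phi)$ by Proposition~\ref{prop:attractinglaminations}(5), and being fixed by a power versus by $\phi$ will be reconciled separately — actually since we only need the map $\rho_\phi$ to be well-defined and $\phi$-equivariant here, working with $\phi$ directly suffices once the train track exists, which it does by the moreover clause of Theorem~\ref{thm:existencereltraintracks}). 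The key point is that $\mathcal{F}$ being proper, maximal among $\phi$-periodic $(G,\mathcal{G})$-free factor systems, and nonsporadic forces there to be a well-defined ``top'' exponentially growing stratum whose lamination is not carried by $\mathcal{F}$.

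First I would argue that there exists at least one EG stratum $H_r$ with $\mathcal{F}(T_{r-1})\sqsupseteq \mathcal{F}$ (or rather with $T_{r-1}$ lying weakly below $T_i$) such that $H_r$ is not contained in $T_i$: otherwise all of $T$ above $T_i$ would consist of NEG strata, and a standard argument (every NEG stratum adds either a valence-one edge, a loop, or behaves ``polynomially'') would produce a $\phi$-invariant free factor system strictly above $\mathcal{F}$, or show $\mathcal{F}$ is not maximal, or that $\xi(G,\mathcal{F})\le 2$ contradicting nonsporadicity. Among all EG strata $H_r$ not contained in $T_i$, let $H_{r_0}$ be the lowest one, so $T_{r_0-1}$ and $T_i$ together generate all of $T_{r_0}$ up to NEG strata; one then checks $\mathcal{F}(T_{r_0-1})$ is (conjugate into) $\mathcal{F}$, i.e. the stratum $H_{r_0}$ sits directly on top of $\mathcal{F}$. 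By Lemma~\ref{lem:genericline}(2)--(3), the attracting lamination(s) associated to $H_{r_0}$ have generic lines whose highest stratum is $H_{r_0}$, hence crossing edges of $H_{r_0}$; since those edges are not carried by $T_i$, such a lamination is not contained in $\partial^2(\mathcal{F},\mathcal{G})$.

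Next I would establish uniqueness. If $H_{r_0}$ is aperiodic, Proposition~\ref{prop:attractinglaminations}(2) gives $|p_\phi^{-1}(H_{r_0})|=1$ and we take $\Lambda_\mathcal{F}$ to be that lamination; any lamination associated to a strictly higher EG stratum $H_s$ would be carried by some $\phi$-periodic free factor system $\mathcal{F}(T_{s-1})$ strictly above $\mathcal{F}$, contradicting maximality of $\mathcal{F}$ — but I must be careful: $\mathcal{F}(T_{s-1})$ need only be $\phi$-periodic, which is fine. If $H_{r_0}$ is not aperiodic, Lemma~\ref{lem:periodicEGstratum} and Proposition~\ref{prop:attractinglaminations}(3) give a partition $EH_{r_0}=P_1^{(r_0)}\amalg\cdots\amalg P_{n_{r_0}}^{(r_0)}$ with pairwise distinct $\mathcal{F}(T_{r_0-1}\cup P_i^{(r_0)})$ and a bijection between the $P_i^{(r_0)}$ and $p_\phi^{-1}(H_{r_0})$; the point is that maximality of $\mathcal{F}$ forces $\mathcal{F}(T_{r_0-1})=\mathcal{F}$ and that $\mathcal{F}$ is strictly below each $\mathcal{F}(T_{r_0-1}\cup P_i^{(r_0)})$, so $\mathcal{F}$ cannot carry any of the $n_{r_0}$ laminations $\Lambda$ in $p_\phi^{-1}(H_{r_0})$. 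To pin down a \emph{single} $\Lambda_\mathcal{F}$, I would use that $\phi$ permutes these $n_{r_0}$ laminations cyclically while $\phi$ fixes $\mathcal{F}$ (by hypothesis $\mathcal{F}\in\mathrm{NS}(\phi)$ is $\phi$-periodic — here precisely one needs either to know $\phi$ fixes $\mathcal{F}$ already, or to only claim $\phi$-equivariance of the resulting map) — actually the cleaner route is: the laminations in $p_\phi^{-1}(H_{r_0})$ are indexed by the $P_i^{(r_0)}$, hence by the distinct free factor systems $\mathcal{F}(T_{r_0-1}\cup P_i^{(r_0)})$, and $\mathcal{F}$ is carried by none of these; any EG stratum strictly above $H_{r_0}$ has its laminations carried by a $\phi$-periodic free factor system strictly above $\mathcal{F}$, excluded by maximality. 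So the set of $\Lambda\in\mathcal{L}(\phi)$ with $\Lambda\nsubseteq\partial^2(\mathcal{F},\mathcal{G})$ is exactly $p_\phi^{-1}(H_{r_0})$; to get uniqueness within this set I would show $n_{r_0}=1$, i.e. that $H_{r_0}$ must be aperiodic, because if $n_{r_0}\ge 2$ then each $T_{r_0-1}\cup P_i^{(r_0)}$ is $\phi^{n_{r_0}}$-invariant but the distinct family $\{\mathcal{F}(T_{r_0-1}\cup P_i^{(r_0)})\}$ would be a $\phi$-periodic free factor system refining to something strictly above $\mathcal{F}$ — one checks maximality of $\mathcal{F}$ is violated unless $n_{r_0}=1$. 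Finally, $\phi$-equivariance of $\rho_\phi$ is automatic: $\phi$ sends $\mathcal{F}$ to a conjugate free factor system $\phi(\mathcal{F})$, also in $\mathrm{NS}(\phi)$, with $\partial^2(\phi(\mathcal{F}),\mathcal{G})=\phi\cdot\partial^2(\mathcal{F},\mathcal{G})$, and $\phi$ acts on $\mathcal{L}(\phi)$, so the defining property (``not carried by'') is preserved, giving $\rho_\phi(\phi(\mathcal{F}))=\phi(\Lambda_\mathcal{F})$.

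\textbf{Main obstacle.} The delicate step is showing that among the (possibly several) laminations associated to the relevant stratum $H_{r_0}$, exactly one is not carried by $\mathcal{F}$ — equivalently, handling the non-aperiodic case and ruling out $n_{r_0}\ge 2$. This is where maximality of $\mathcal{F}$ does all the work: one must verify that if $H_{r_0}$ were genuinely periodic with $n_{r_0}\ge 2$ blocks, the family $\{\mathcal{F}(T_{r_0-1}\cup P_i^{(r_0)})\}_i$ (or a suitable join/meet thereof) would be a $\phi$-periodic $(G,\mathcal{G})$-free factor system strictly above $\mathcal{F}$, contradicting maximality; making precise ``strictly above'' and checking that the join of the $P_i$-blocks does not collapse back to $\mathcal{F}$ requires the second clause of Lemma~\ref{lem:periodicEGstratum} (the $\mathcal{F}(T_{r_0-1}\cup P_i^{(r_0)})$ are pairwise distinct) and a short argument that a nonsporadic $\mathcal{F}$ genuinely admits room above it. I would also need to double-check the interplay between ``$\phi$-periodic'' and ``$\phi$-fixed'' for $\mathcal{F}$ itself so that the statement is about $\phi$ and not merely a power; since $\mathcal{F}\in\mathrm{NS}(\phi)\subseteq\mathcal{F}_{\mathrm{max}}(\phi)$ and the eventual goal (Proposition~\ref{prop:maxperiodffsfixed}) is to show $\phi$ fixes $\mathcal{F}$, here I only claim the assignment $\mathcal{F}\mapsto\Lambda_\mathcal{F}$ is well-defined and $\phi$-equivariant, which is consistent.
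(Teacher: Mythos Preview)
Your approach is essentially the paper's, but you miss one simplification that removes most of the complications you flag. Since $\mathcal{F}$ is maximal and $\phi^\ell$-invariant, $\phi^\ell$ is \emph{fully irreducible relative to $\mathcal{F}$}; the paper uses this to choose the relative train track so that $\mathcal{F}=\mathcal{F}(T_{n-1})$, i.e.\ there is exactly \emph{one} stratum $H_n$ above. Nonsporadicity then forces $H_n$ to be EG, and the aperiodicity argument (which you correctly identify as the crux, via Lemma~\ref{lem:periodicEGstratum}) goes through exactly as you outline. This eliminates your ``lowest EG stratum $H_{r_0}$'' machinery, the separate treatment of possible NEG strata above $T_i$, and the need to rule out higher EG strata.

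Your uniqueness phrasing in the aperiodic case is also slightly garbled: you write that a lamination of a higher EG stratum $H_s$ ``would be carried by $\mathcal{F}(T_{s-1})$ strictly above $\mathcal{F}$, contradicting maximality''. But such a lamination is \emph{not} carried by $\mathcal{F}(T_{s-1})$ (its generic lines cross $H_s$); the actual contradiction is simply that $\mathcal{F}(T_{s-1})$ is itself a proper $\phi^\ell$-invariant free factor system strictly above $\mathcal{F}$. With the paper's simplification this issue disappears, since there is nothing above $H_n$. The train-track-independent characterisation of $\Lambda_\mathcal{F}$ (the unique $\Lambda\in\mathcal{L}(\phi)$ with $\Lambda\nsubseteq\partial^2(\mathcal{F},\mathcal{G})$) and the $\phi$-equivariance argument match the paper.
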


\begin{proof}
Let $\mathcal{F} \in \mathrm{NS}(\phi)$. Let $\ell >0$ be such that $\phi^\ell$ preserves $\mathcal{F}$. Let $F^\ell \colon T \to T$ be a relative train track representing $\phi^\ell$, and a filtration $T_0 \subseteq \ldots \subseteq T_n$ such that there exists an element $T_{r-1}$ of the filtration of $T$ with $\mathcal{F}(T_{r-1})=\mathcal{F}$.

Note that, since $\mathcal{F}$ is maximal, $\phi^\ell$ is fully irreducible relative to $\mathcal{F}$. In particular, one may choose the relative train track $F^\ell \colon T \to T$ so that $r=n$. Moreover, since $\mathcal{F}$ is nonsporadic, the stratum $H_r$ is an EG stratum.

We claim that $H_r$ is aperiodic. Indeed, suppose towards a contradiction that $H_r$ is not aperiodic and let $$EH_r=P_{1}^{(r)} \amalg \ldots P_{n_r}^{(r)}$$ be the associated partition of edges given by Lemma~\ref{lem:periodicEGstratum}, with $n_r \geq 2$. Recall that $F^\ell$ induces a permutation of the set $\{P_1^{(r)},\ldots,P_{n_r}^{(r)}\}$. Then $F^{\ell n_r!}$ preserves every element of the set $\{P_1^{(r)},\ldots,P_{n_r}^{(r)}\}$. Thus, $F^{\ell n_r!}$ preserves a proper filtration $$T_1 \subseteq \ldots \subseteq T_{r-1} \subseteq T_{r,1} \subsetneq \ldots \subsetneq T_{r,n_r}=T$$ of $T$. 

Recall that, by Proposition~\ref{prop:attractinglaminations}$(3)$, for every $i \in \{1,\ldots,n_r\}$, one may associate an attracting lamination $\Lambda_i$ to $T_{r,i}$. Moreover, by Lemma~\ref{lem:genericline}, for every generic line $\ell$ of $\Lambda_i$, $\ell$ crosses $T_{r,i}$. Thus, there do not exist a generic line $\ell$ of $\Lambda_i$ and a conjugacy class $[A] \in \mathcal{F}=\mathcal{F}(T_{r-1})$ such that $\ell \in \partial^2(A,\mathcal{G}_{|A})$. Therefore, for every $i \in \{1,\ldots,n_r\}$, we have $\mathcal{F}=\mathcal{F}(T_{r-1})<\mathcal{F}(T_{r,i})$. Since $n_r \geq 2$, by Lemma~\ref{lem:periodicEGstratum}$(2)$, for every $i \in \{1,\ldots,n_r-1\}$, we have $\mathcal{F}(T_{r,i})< \{[G]\}$. In particular, one may find a proper $\phi^\ell$-periodic $(G,\mathcal{G})$-free factor system $\mathcal{F} \nsqsubseteq \mathcal{F}'$. This contradicts the maximality of $\mathcal{F}$. Hence $H_r$ is aperiodic. 

Thus, by Proposition~\ref{prop:attractinglaminations}$(2)$ one can associate to $\mathcal{F}$ an attracting lamination of $\mathcal{L}(\phi^\ell)=\mathcal{L}(\phi)$ by sending $\mathcal{F}$ to the unique attracting lamination $\Lambda_\mathcal{F}$ associated with $H_r$. This attracting lamination does not depend on the choice of the relative train track because of the following facts. Note that $H_r$ is the unique EG stratum which is not contained in $T_{r-1}$. Therefore, by Lemma~\ref{lem:genericline}, a generic line of an attracting lamination $\Lambda \in \mathcal{L}(\phi)$ is contained in $T_{r-1}$ if and only if $\Lambda \neq \Lambda_\mathcal{F}$. As $\mathcal{F}(T_{r-1})=\mathcal{F}$, this implies that $\Lambda \subseteq \partial^2(\mathcal{F},\mathcal{G})$ if and only if $\Lambda \neq \Lambda_\mathcal{F}$. Thus the attracting lamination $\Lambda_\mathcal{F}$ only depends on $\mathcal{F}$ and we have a well-defined $\phi$-equivariant map $\rho_\phi \colon \mathrm{NS}(\phi) \to \mathcal{L}(\phi)$. 
\end{proof}

We now describe the preimage of a lamination by $\rho_\phi$. We first need some technical lemmas. The first one, called the \emph{bounded cancellation lemma} is well-known to experts. Its proof in the free group case is due to Cooper~\cite[Bounded Cancellation]{Cooper87}.

\begin{lem}[{\cite[Lemma~1.8]{Lyman2022CT}}]\label{Lem Bounded cancellation lemma}
Let $(G,\mathcal{G})$ be a free product, let $\phi \in \Out(G,\mathcal{G})$, let $T$ be a Grushko $(G,\mathcal{G})$-free splitting and let $F \colon T \to T$ be a morphism associated with $\phi$. There exists a constant $C(F) \geq 0$ such that for any reduced path $\rho=\rho_1\rho_2$ in $T$ we have $$\ell([F(\rho)]) \geq \ell([F(\rho_1)]) + \ell([F(\rho_2)])-2C(F).$$
\end{lem}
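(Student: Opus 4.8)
The statement to prove is the bounded cancellation lemma (Lemma~1.8 of~\cite{Lyman2022CT}), and the plan is to reduce it to the classical free-group case via the action on the Grushko tree, following Cooper's original argument adapted to graphs of groups.

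\textbf{Setup and reduction.} First I would fix the morphism $F \colon T \to T$ associated with $\Phi$ and recall that a reduced path $\rho = \rho_1 \rho_2$ in $T$ is sent to an edge path $F(\rho) = F(\rho_1) F(\rho_2)$, which need not be reduced: the failure of reducedness happens only at the single concatenation point $p$ where $\rho_1$ meets $\rho_2$. So $\ell([F(\rho)]) = \ell([F(\rho_1)]) + \ell([F(\rho_2)]) - 2c$, where $c$ is the length of the maximal common terminal segment of $[F(\rho_1)]$ cancelled against the maximal common initial segment of $[F(\rho_2)]$. The entire content is to bound $c$ by a constant $C(F)$ independent of $\rho$, $\rho_1$, $\rho_2$. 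The key observation is that the cancelled segment is a subpath of $[F(\rho_1)]$ that can also be read (reversed) inside $[F(\rho_2)]$, and both of these sit in the tree starting near $F(p)$.

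\textbf{The core estimate.} Write $p'$ for the vertex $F(p)$. The cancelled segment $\sigma$ of length $c$ is an initial segment, based at $p'$, of both $[F(\rho_2)]$ and $[F(\bar\rho_1)]$ (the image of the reversed path). Let $v$ be the last vertex of $\rho_1$ before $p$ and $w$ the first vertex of $\rho_2$ after $p$ lying in the orbits of vertices of $T/G$; since $F$ sends vertices to vertices and there are finitely many $G$-orbits of edges, $F$ of a single edge has length at most some $M = M(F)$, so $[F(\rho_1)]$ and $[F(\rho_2)]$ each contain a point within distance $M$ of $p'$ lying on the relevant axis/geodesic. More precisely: if $c > 2M$, then $\sigma$ contains the $F$-image of an entire edge of $\rho_1$ adjacent to $p$ as well as (the reverse of) the $F$-image of an edge of $\rho_2$ adjacent to $p$; iterating, $\sigma$ of length $c$ forces the existence of long initial segments of $[F(e_1 e_2 \cdots e_j)]$ for the first edges of $\rho_1$ (reversed) and of $\rho_2$ that agree. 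I would then argue that, because $T/G$ has finitely many orbits of edges and $F$ is $\Phi$-equivariant, there is a uniform bound: pick a spanning finite subtree / finite set of edge representatives, compute the pairwise maximal common prefixes of their $F$-images, and let $C(F)$ be (roughly) the maximum of these lengths plus $M$. The point is that any common prefix longer than this would propagate — using that $\rho$ is \emph{reduced}, so $e_1$ (last edge of $\rho_1$) and the first edge of $\rho_2$ form a nondegenerate turn at $p$ — to give a common prefix of $F$-images of two distinct edges emanating from a single $G$-orbit of vertices, which is controlled by the finite data.

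\textbf{Implementation and the main obstacle.} Concretely, I expect the cleanest route is: (1) pass to the quotient graph of groups $\mathbb{X} = G\backslash T$ and the induced map $\bar F$ on $\overline{\mathbb{X}}$, noting $\overline{\mathbb{X}}$ is finite; (2) observe that cancellation in $T$ projects to "illegal-turn" behavior controlled by the finitely many directions at vertices of $\overline{\mathbb{X}}$; (3) set $C(F) = \max_e \ell(\bar F(e))$ summed appropriately over a longest non-backtracking chain, or — following Cooper verbatim — define $C(F)$ via the supremum, shown finite by a compactness/finiteness argument, of cancellation lengths over all reduced length-two paths, and prove the sup is attained among "short" paths because once the cancelled segment exceeds the image-length of one edge the configuration repeats. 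The main obstacle is bookkeeping the vertex-group contributions: an edge of $T$ is determined by a coset of an edge group, and $F$ on such an edge involves the automorphism $\Phi$ restricted to incident vertex groups, so I must be careful that "the $F$-image of an edge has bounded length" genuinely holds — it does, because $F$ is a fixed map sending the finitely many $G$-orbits of edges to fixed finite edge paths — and that the propagation argument does not secretly require infinitely many cases. Once that finiteness is pinned down, the inequality $\ell([F(\rho)]) \geq \ell([F(\rho_1)]) + \ell([F(\rho_2)]) - 2C(F)$ is immediate from the definition of $c \le C(F)$.
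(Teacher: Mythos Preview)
The paper does not prove this lemma at all --- it is stated with a citation to \cite[Lemma~1.8]{Lyman2022CT}, and the text just before it notes that the free-group case is due to Cooper. So there is no ``paper's own proof'' to compare against; your sketch is being measured against the standard argument in the literature.

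That said, your sketch has a genuine gap. The ``propagation / configuration repeats'' mechanism in your Implementation paragraph does not work as stated. Even though $\overline{\mathbb{X}}$ has finitely many edges, the cancelled segment $\sigma$ is an initial segment of $[F(\rho_2)]$, which is the \emph{already reduced} image --- the internal folding inside $F(\rho_2)=F(f_1)F(f_2)\cdots$ can interact with the cancellation against $[F(\rho_1)]$ in a way that is not determined by the single turn at $p$, nor by the maximal common prefixes of $F$-images of pairs of edges. There is no pigeonhole forcing the process to terminate after boundedly many edges, and the sentence ``once the cancelled segment exceeds the image-length of one edge the configuration repeats'' is simply not justified.

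What you are missing is the one idea that actually makes the lemma go through, and it is the idea in Cooper's original proof (and Lyman's): use the inverse. Since $\Phi$ is an automorphism there is a morphism $G'\colon T\to T$ associated to $\Phi^{-1}$; both $F$ and $G'$ are Lipschitz because there are finitely many $G$-orbits of edges, and $G'\circ F$ is $G$-equivariant, hence at bounded distance from the identity by cocompactness. Thus $F$ is a quasi-isometry. Since $T$ is a tree ($0$-hyperbolic), the Morse lemma (or its elementary tree version) says the $F$-image of a geodesic lies in a bounded neighbourhood of the geodesic between the image endpoints, which is exactly bounded cancellation. Your Setup paragraph is fine; the missing ingredient is precisely this coarse lower Lipschitz bound coming from the homotopy inverse --- finiteness of the quotient alone does not suffice.
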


The second lemma deals with iterates by a relative train track of a legal subpath in some illegal path. It is implicit in the work of Bestvina--Feighn--Handel~\cite[Lemma~5.5]{BesFeiHan97} in the free group case.

\begin{lem}[{\cite[Corollary 6.1.8]{FrancaMarSyri2021}}]\label{lem:iteratelegal}
Let $(G,\mathcal{G})$ be a nonsporadic free product. Let $\phi \in \Out(G,\mathcal{G})$ be fully irreducible and let $F \colon T \to T$ be a relative train track map for $\phi$. Let $g \in G$ be $(G,\mathcal{G})$-nonperipheral and let $L_g \subseteq T$ be the associated geodesic line. Suppose that the sequence $\{\ell_T(\phi^n([g])\}_{n \in \NN}$ is unbounded. For every $C>0$, there exists $M >0$ such that the reduced path $[F^M(L_g)]$ contains a legal subpath of length greater than $C$.
\end{lem}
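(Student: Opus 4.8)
The plan is to run the classical Bestvina--Feighn--Handel argument (as in~\cite{BesFeiHan97}), adapted to a relative train track $F\colon T\to T$ representing $\phi$, keeping track of the unique exponentially growing stratum. First I would fix such an $F$ together with a maximal filtration $\varnothing=T_0\subseteq\ldots\subseteq T_n=T$ (Theorem~\ref{thm:existencereltraintracks}). Since $T_i$ is $F$-invariant, each $\mathcal{F}(T_i)$ is a $\phi$-invariant $(G,\mathcal{G})$-free factor system, so by full irreducibility relative to $\mathcal{G}$ it equals $\mathcal{G}$ or $\{[G]\}$; nonsporadicity guarantees that $\phi$ has at least one EG stratum, and the previous dichotomy then forces $F$ to have a \emph{unique} EG stratum $H_r$, with $\mathcal{F}(T_{r-1})=\mathcal{G}$ (everything below $H_r$ realizes $\mathcal{G}$, everything at or above it realizes $\{[G]\}$). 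Because $g$ is $(G,\mathcal{G})$-nonperipheral it is not carried by any proper free factor system containing $\mathcal{G}$, so the line $L_g$ must cross an edge of $H_r$; in fact the hypothesis that $\{\ell_T(\phi^n[g])\}_{n}$ is unbounded is exactly this condition. Let $\lambda=\lambda_r>1$ be the Perron--Frobenius eigenvalue of the transition matrix $M_r$, and let $C=C(F)$ be the bounded cancellation constant of Lemma~\ref{Lem Bounded cancellation lemma}.

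The heart of the argument consists of two standard facts about iterating $F$ on the reduced line $w_n:=[F^n(L_g)]$, read on a fundamental domain for $\langle g\rangle$. First, the number $i_r(\gamma)$ of height-$r$ illegal turns is non-increasing under $\gamma\mapsto[F(\gamma)]$: a single $H_r$-edge is a (trivially) $r$-legal path whose tightened image is again $r$-legal by conditions~(1) and~(3) of the definition of a relative train track, an edge of $T_{r-1}$ maps into $T_{r-1}$, so no new height-$r$ illegal turn can be created except at the image of an already illegal turn. Hence $i_r(w_n)$ eventually stabilizes to some value $k$, and after replacing $g$ by a representative of $\phi^{N_0}([g])$ I may assume $i_r(w_n)=k$ for all $n\ge 0$. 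Second, letting $\mathbf v_n$ be the integer vector counting $G$-orbits of $H_r$-edges in a fundamental domain of $w_n$, one has $\mathbf v_{n+1}\ge M_r\mathbf v_n-\mathbf e_n$, where $\mathbf e_n$ records the $H_r$-edges lost to cancellation during tightening; using Lemma~\ref{Lem Bounded cancellation lemma} together with the facts that cancellation at an $r$-legal turn between two $H_r$-edges does not consume $H_r$-edges and that the connecting subpaths in $T_{r-1}$ survive tightening (condition~(2)), one gets $\|\mathbf e_n\|_1\le 2kC$. Since $M_r$ is Perron--Frobenius with $\lambda>1$ and $\mathbf v_0\neq 0$, it follows that $\|\mathbf v_n\|_1\to\infty$, i.e.\ the total $H_r$-length of $w_n$ tends to infinity.

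To conclude, I would observe that in a fundamental domain of $w_n$ the number of maximal $r$-legal subpaths is at most $\max(k,1)$, since two consecutive maximal $r$-legal subpaths are separated exactly by a height-$r$ illegal turn. Combined with $\|\mathbf v_n\|_1\to\infty$, the pigeonhole principle yields a maximal $r$-legal subpath of $w_n$ whose number of $H_r$-edges, and hence whose length, exceeds any prescribed bound once $n$ is large; taking $M=n$ (and undoing the reduction $g\rightsquigarrow\phi^{N_0}([g])$) produces the required legal subpath of $[F^M(L_g)]$. The main obstacle is the bookkeeping in the second fact: one must be careful that the bounded cancellation occurring during tightening destroys $H_r$-edges only near the boundedly many illegal turns, so that the exponential growth produced by $M_r$ is not swamped, and this is precisely where the fine structure of relative train tracks for free products (especially condition~(2)) is needed. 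An alternative, more geometric route would pass through the limit $\RR$-tree $X_\phi$ of Theorem~\ref{theo:limittreeiwip}, in which $g$ acts loxodromically with trivial arc stabilizers, and extract the growth of legal length from the convergence $\lambda^{-n}\phi^nT\to X_\phi$; in any case, the statement is exactly~\cite[Corollary~6.1.8]{FrancaMarSyri2021}, established by the combinatorial method above, and I would ultimately invoke that reference.
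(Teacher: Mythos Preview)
The paper does not prove this lemma at all: it is stated with the attribution \cite[Corollary~6.1.8]{FrancaMarSyri2021} and used as a black box. Your proposal ultimately invokes the same reference, so in that sense it agrees with the paper; everything preceding your final citation is extra content that the paper does not attempt.

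Your sketch of the underlying argument is broadly correct and follows the standard Bestvina--Feighn--Handel line. Two small points are worth tightening. First, the sentence ``because $g$ is $(G,\mathcal{G})$-nonperipheral it is not carried by any proper free factor system containing $\mathcal{G}$'' is false as stated (nonperipheral elements can lie in proper free factors); what you actually need, and what does hold, is that since $\mathcal{F}(T_{r-1})=\mathcal{G}$ the subgraph $T_{r-1}$ consists only of vertices, so any loxodromic element of $T$ crosses $H_r$. Second, your pigeonhole step produces a long \emph{$r$-legal} subpath, whereas the lemma asks for a \emph{legal} one; these coincide here precisely because $T_{r-1}$ has no edges, so every turn has height $r$ and $F$ is in fact a genuine train track map, but you should say so explicitly. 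With these clarifications the outline is sound, and citing \cite{FrancaMarSyri2021} as you do is exactly what the paper does.
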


Let $\mathcal{F} \in \mathrm{NS}(\phi)$ and let $\ell>0$ be the minimal integer such that $\phi^\ell(\mathcal{F})=\mathcal{F}$. By maximality of $\mathcal{F}$, the outer automorphism $\phi^\ell$, considered as an element of $\Out(G,\mathcal{F})$, is fully irreducible relative to $\mathcal{F}$. We denote by $T_\mathcal{F}$ the $\RR$-tree equipped with a $G$-action given by Theorem~\ref{theo:limittreeiwip}. Let $\mathcal{A}_\mathcal{F}$ be the set of conjugacy classes of nontrivial point stabilisers in $T_\mathcal{F}$.

\begin{lem}\label{lem:preimagerho}
Let $\Lambda \in \mathcal{L}(\phi)$.
\begin{enumerate}
    \item Let $\mathcal{F},\mathcal{F}' \in \rho_\phi^{-1}(\Lambda)$. Every $(G,\mathcal{F}')$-peripheral element is elliptic in $T_\mathcal{F}$.
    \item For any $\mathcal{F},\mathcal{F}' \in \rho_\phi^{-1}(\Lambda)$, we have $\mathcal{A}_\mathcal{F}=\mathcal{A}_{\mathcal{F}'}$.
    \item Suppose that $G$ satisfies the \hyperlink{SA}{Standing Assumptions}. Suppose that $\phi \in \IA(G,\mathcal{G},3)$ and that $\phi(\Lambda)=\Lambda$. Then $\phi$ fixes elementwise $\rho_\phi^{-1}(\Lambda)$. 
\end{enumerate}
\end{lem}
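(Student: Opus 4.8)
The plan is to prove the three assertions in order, since each builds on the previous one. For assertion~(1), the key point is that $T_\mathcal{F}$ is the limit tree $X_{\phi^\ell}$ of Theorem~\ref{theo:limittreeiwip} (applied to the fully irreducible element $\phi^\ell \in \Out(G,\mathcal{F})$), so its point stabilisers are exactly $\mathcal{F} \cup \{[H]\}$ for some cyclic $H$. A $(G,\mathcal{F}')$-peripheral element $g$ lies in some $[A']$ with $[A'] \in \mathcal{F}'$, hence in a generic-line-free factor; the hypothesis $\mathcal{F},\mathcal{F}' \in \rho_\phi^{-1}(\Lambda)$ says both free factor systems carry the same attracting lamination $\Lambda$, and in particular $\Lambda_{\mathcal{F}'} = \Lambda = \Lambda_\mathcal{F}$, so $\Lambda \nsubseteq \partial^2(\mathcal{F}',\mathcal{G})$ but every other lamination of $\phi^\ell$ is inside $\partial^2(\mathcal{F}',\mathcal{G})$. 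First I would argue that $g$ being $(G,\mathcal{F}')$-peripheral forces the sequence $\{\ell_{T}(\phi^{n\ell}([g]))\}$ to be bounded (otherwise, by Lemma~\ref{lem:iteratelegal}, iterates of $L_g$ would develop long legal segments in the top EG stratum $H_r$ of a relative train track for $\phi^\ell$ relative to $\mathcal{F}$, forcing $[g]$ to be attracted to $\Lambda = \Lambda_\mathcal{F}$, contradicting that $g$ is peripheral in $\mathcal{F}'$ and $\Lambda_{\mathcal{F}'}=\Lambda$). A conjugacy class with bounded length under $\phi^\ell$-iteration cannot translate in $X_{\phi^\ell} = \lim \frac{1}{\lambda^n}\phi^{n\ell}T$ with positive translation length, so $g$ is elliptic in $T_\mathcal{F}$.

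For assertion~(2), I would use (1) symmetrically together with the structure of $T_\mathcal{F}$. By (1), every $[A'] \in \mathcal{F}'$ fixes a point of $T_\mathcal{F}$, and conversely every $[A] \in \mathcal{F}$ fixes a point of $T_{\mathcal{F}'}$. Since arc stabilisers of $T_\mathcal{F}$ are trivial (Theorem~\ref{theo:limittreeiwip}(1)), a subgroup fixing a point of $T_\mathcal{F}$ is contained in a unique point stabiliser, and point stabilisers of $T_\mathcal{F}$ have conjugacy classes exactly $\mathcal{G} \cup \mathcal{A}_\mathcal{F}$ together with the $(G,\mathcal{F})$-peripheral ones (note $\mathcal{A}_\mathcal{F}$ is the set of \emph{nontrivial} point stabilisers, which after removing the conjugates of the $G_i$ are the "new" ones coming from $\mathcal{F}$). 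The elements of $\mathcal{F}$ and of $\mathcal{F}'$ are maximal among non-peripheral elliptic subgroups (by maximality of the free factor systems and arationality of the limit trees), so comparing the two limit trees $T_\mathcal{F}$ and $T_{\mathcal{F}'}$ and using that each dominates the elliptic subgroups of the other yields $\mathcal{A}_\mathcal{F} = \mathcal{A}_{\mathcal{F}'}$; more directly, one shows $\mathcal{A}_\mathcal{F}$ is intrinsic to $\Lambda$ (it is the set of conjugacy classes of maximal non-peripheral subgroups $B$ with $\partial^2(B,\mathcal{G}_{|B})$ disjoint from $\Lambda$), hence independent of which $\mathcal{F} \in \rho_\phi^{-1}(\Lambda)$ we picked.

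For assertion~(3), suppose $\phi \in \IA(G,\mathcal{G},3)$ and $\phi(\Lambda)=\Lambda$; I want $\phi(\mathcal{F})=\mathcal{F}$ for every $\mathcal{F} \in \rho_\phi^{-1}(\Lambda)$. The idea is to recover $\mathcal{F}$ from $\Lambda$ plus homological data. By (2), $\mathcal{A} := \mathcal{A}_\mathcal{F}$ depends only on $\Lambda$, hence is $\phi$-invariant; write $\mathcal{A} = \{[B_1],\ldots,[B_s]\}$. Each $B_j$ is $\phi$-periodic (its conjugacy class is permuted within the finite set $\mathcal{A}$) and finitely generated, so by Theorem~\ref{thm:invariantfamilycyclicfixed} applied to $\mathcal{A}$ we get $\phi \in \Out(G,\mathcal{A}^{(t)})$; in particular $\phi$ fixes each $[B_j]$. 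Now $T_\mathcal{F}$ is a one-ended-relative-to-$(\mathcal{G}\cup\mathcal{A})$ type object and $\mathcal{F}$ is obtained from $\mathcal{A}$ by a free-splitting operation: concretely, $\mathcal{F}$ is the $(G,\mathcal{G})$-free factor system whose elements are the conjugacy classes of the non-cyclic point stabilisers of $T_\mathcal{F}$ together with... — more cleanly, I would take a Grushko-type free splitting $S$ adapted so that the vertex groups with nontrivial stabiliser realise $\mathcal{G} \cup \mathcal{A}$, giving a $(G,\mathcal{G})$-free factor system $\mathcal{F}_0 \sqsupseteq \mathcal{F}$ refining the data, and then use that among free factor systems carrying $\Lambda$ and containing $\mathcal{A}$ the maximal one $\mathcal{F}$ is unique (maximality being part of $\mathrm{NS}(\phi)$). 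Since $\phi$ fixes $\mathcal{A}$, fixes $\mathcal{G}$, and preserves $\Lambda$, it preserves this uniquely-determined $\mathcal{F}$; then $\phi(\mathcal{F})=\mathcal{F}$ follows and, applying Lemma~\ref{lem:periodic freefactorsystem}, $\phi$ fixes each element of $\mathcal{F}$, so $\phi$ fixes $\rho_\phi^{-1}(\Lambda)$ pointwise (the fiber being a single point once we know maximality determines $\mathcal{F}$ from $\Lambda$).

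The main obstacle I expect is assertion~(3): making precise the claim that $\mathcal{F}$ is canonically reconstructible from $\Lambda$ together with $\mathcal{A}_\mathcal{F}$ and $\mathcal{G}$. One must rule out the possibility that two distinct maximal nonsporadic $\phi^\ell$-periodic free factor systems share the same lamination and the same set $\mathcal{A}$ but differ; the resolution should come from arationality of $T_\mathcal{F}$ (every proper free factor system carrying $\Lambda$ and $\mathcal{A}$ must be elliptic in $T_\mathcal{F}$, and $T_\mathcal{F}$ has a unique maximal such system, namely the conjugacy classes of its non-peripheral point stabilisers, which is $\mathcal{F}$ itself). Once that canonicity is established, the homological input $\phi \in \IA(G,\mathcal{G},3)$ enters only through Theorem~\ref{thm:invariantfamilycyclicfixed} (to fix $\mathcal{A}$ elementwise) and through $\phi \in \Out^0(G,\mathcal{G})$ (to fix $\mathcal{G}$), and the rest is formal.
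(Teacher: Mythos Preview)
Your approach to assertion~(1) is essentially the paper's: argue by contradiction, use unbounded translation length in $T_\mathcal{F}$ to produce long legal segments via Lemma~\ref{lem:iteratelegal} and Lemma~\ref{Lem Bounded cancellation lemma}, and conclude that iterates of $L_g$ accumulate on $\Lambda$, forcing $\Lambda \subseteq \partial^2(\mathcal{F}',\mathcal{G})$, which contradicts $\rho_\phi(\mathcal{F}')=\Lambda$. Your sketch of~(2) is vaguer than the paper's but follows the same symmetric use of~(1); the one point you leave implicit is why the extra cyclic class $[H]$ in $\mathcal{A}_\mathcal{F}=\mathcal{F}\cup\{[H]\}$ is elliptic in $T_{\mathcal{F}'}$: the paper observes that since $\mathcal{F}'\nsqsubseteq\mathcal{F}$ (both being maximal and distinct) while every $\mathcal{F}'$-group is elliptic in $T_\mathcal{F}$, some conjugate of $H$ must contain an $\mathcal{F}'$-group, so $H$ is $(G,\mathcal{F}')$-peripheral.

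There is, however, a genuine gap in your argument for~(3). You write that each $B_j\in\mathcal{A}$ is ``$\phi$-periodic (its conjugacy class is permuted within the finite set $\mathcal{A}$) and finitely generated, so by Theorem~\ref{thm:invariantfamilycyclicfixed} applied to $\mathcal{A}$ we get $\phi\in\Out(G,\mathcal{A}^{(t)})$.'' But Theorem~\ref{thm:invariantfamilycyclicfixed} requires the groups in $\mathcal{H}$ to be \emph{periodic subgroups} of $\phi$ in the sense of the paper: some power of $\phi$ must act on them by an inner automorphism. Having a periodic \emph{conjugacy class} is strictly weaker, and the non-cyclic free factors in $\mathcal{F}\subseteq\mathcal{A}$ have no reason to satisfy this (indeed $\phi^\ell|_A$ is typically of infinite order in $\Out(A)$). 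So you cannot invoke Theorem~\ref{thm:invariantfamilycyclicfixed} on all of $\mathcal{A}$.

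The paper avoids this and, in doing so, also dissolves the ``canonical reconstruction'' obstacle you flag. The key observation you are missing is Theorem~\ref{theo:limittreeiwip}(2): $\mathcal{A}_\mathcal{F}=\mathcal{F}\cup\{[H]\}$ where $H$ is \emph{cyclic} (possibly trivial). Since $[H]$ has periodic conjugacy class and $\Out(H)$ is finite, $H$ \emph{is} a periodic subgroup, so Theorem~\ref{thm:invariantfamilycyclicfixed} applies to $\{[H]\}$ alone and gives $\phi([H])=[H]$. Then $\phi$ preserves $\mathcal{F}=\mathcal{A}_\mathcal{F}\setminus\{[H]\}$ setwise, and Lemma~\ref{lem:periodic freefactorsystem} upgrades this to fixing $\mathcal{F}$ elementwise. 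Finally, by~(2) every $\mathcal{F}'\in\rho_\phi^{-1}(\Lambda)$ satisfies $\mathcal{F}'\subseteq\mathcal{A}_{\mathcal{F}'}=\mathcal{A}_\mathcal{F}$, so once $\phi$ fixes $\mathcal{A}_\mathcal{F}$ elementwise it fixes every such $\mathcal{F}'$; no uniqueness of the fiber is needed, and your worry about two distinct maximal systems sharing $\Lambda$ and $\mathcal{A}$ is moot.
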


\begin{proof}
The first statement of Lemma~\ref{lem:preimagerho} is stable under taking powers of $\phi$, so we may suppose that $\phi(\mathcal{F})=\mathcal{F}$ and $\phi(\mathcal{F}')=\mathcal{F}'$. 

Suppose towards a contradiction that there exists a $(G,\mathcal{F}')$-peripheral element $g$ which is not elliptic in $T_\mathcal{F}$. Let $F \colon T \to T$ be a train track representative of the fully irreducible outer automorphism $\phi \in \Out(G,\mathcal{F})$ and let $\lambda$ be its Perron--Froebenius eigenvalue. Up to taking a power of $\phi$, we may suppose that $\lambda>2$. By Theorem~\ref{theo:limittreeiwip}, we have $\ell_{T_\mathcal{F}}(g)=\lim_{n \to \infty} \frac{1}{\lambda^n}\ell_{T}(\phi^n([g]))$. 

Let $C(F)$ be the bounded cancellation constant for $F$ given by Lemma~\ref{Lem Bounded cancellation lemma}. Since $\ell_{T_\mathcal{F}}(g)>0$, the sequence $\{\ell_T(\phi^n([g])\}_{n \in \NN}$ is unbounded. By Lemma~\ref{lem:iteratelegal}, there exists $M >0$ such that, $[F^M(L_g)]$ contains a legal subpath $\gamma$ of length greater than $2C(F)+1$. 

Write $\gamma=\sigma_1 e \sigma_2$, where $\ell(\sigma_1),\ell(\sigma_2) \geq C(F)$ and $e$ is a nontrivial edge. Since $\gamma$ is legal, for every $n \in \NN$, the path $[F^n(\gamma)]$ has length greater than $\lambda^n(2C(F)+1) \geq 2^n(2C(F)+1)$ and there exist reduced paths $\gamma_1^n,\gamma_2^n$ of lengths at least $2^nC(F)$ such that $[F^n(\gamma)]=\gamma_1^n[F^n(e)]\gamma_2^n$. Since $C(F)$ is the bounded cancellation constant of $F$, by Lemma~\ref{Lem Bounded cancellation lemma}, for every $n \in \NN$, the subpath of $[F^n(\gamma)]$ contained in $[F^n(L_g)]$ is obtained from $[F^n(\gamma)]$ by erasing an initial and a terminal segment of length at most $2^nC(F)$. In particular, for every $n \geq M$, the path $[F^{n-M}(e)]$ is contained in $[F^n(L_g)]$. 

Recall that $T$ is a train track representative of a fully irreducible outer automorphism, so that $T$ has a unique stratum and its transition matrix is primitive. In particular, for every edge $e' \in ET$ and every $n \geq 0$, there exists $h_n \in G$ such that the path $h_n[F^n(e')]$ is contained in some reduced iterate of $e$ by $F$. Thus, the edge $e$ satisfies the conclusion of Lemma~\ref{lem:genericline}$(3)$, that is, there exists a generic line $\ell_\Lambda$ of $\Lambda$ such that any finite reduced edge path in $\ell_\Lambda$ is contained in the $G$-orbit of some reduced iterate of $e$ by $F$. As $[F^{n-M}(e)]$ is a subpath of $[F^n(L_g)]$ for every $n \geq M$, every finite subpath of $\ell_\Lambda$ is contained in the $G$-orbit of some reduced iterate of $L_g$ by $F$. Thus, for every open neighbourhood $V$ of $\ell_\Lambda$, there exist $n \geq 1$ and $h \in G$ such that $h[F^n(L_g)] \in V$. This shows that the closure $\overline{G\{[F^n(L_g)]\}_{n \in \NN}}$ contains $\Lambda=\overline{G \ell_\Lambda}$. 

Since $\phi(\mathcal{F}')=\mathcal{F}'$ and $g$ is $(G,\mathcal{F}')$-peripheral, for every $n \in \NN$, the line $[F^n(L_g)]$ represents an element in $\partial^2(\mathcal{F}',\mathcal{G})$. Since $\partial^2(\mathcal{F}',\mathcal{G})$ is $G$-invariant and closed, we have $$\Lambda \subseteq  \overline{G\{[F^n(L_g)]\}_{n \in \NN}} \subseteq \partial^2(\mathcal{F}',\mathcal{G}).$$ This contradicts Lemma~\ref{lem:attractinglaminationassociatedmaxfree}. Thus, every $(G,\mathcal{F}')$-peripheral element is elliptic in $T_\mathcal{F}$.

We now prove the second assertion. Let $\mathcal{F},\mathcal{F}' \in \rho_\phi^{-1}(\Lambda)$. We assume that $\mathcal{F}\neq \mathcal{F}'$ as otherwise the statement is immediate. The sets $\mathcal{A}_\mathcal{F}$ and $\mathcal{A}_{\mathcal{F}'}$ consist of conjugacy classes of malnormal subgroups with trivial pairwise intersection as they are the set of point stabilisers of an action of $G$ on an $\RR$-tree with trivial arc stabilisers (see~Theorem~\ref{theo:limittreeiwip}). Hence, by malnormality of the groups in $\mathcal{A}_\mathcal{F}$ and $\mathcal{A}_{\mathcal{F}'}$, it suffices to prove that, for every $[A] \in \mathcal{A}_\mathcal{F}$, the group $A$ is elliptic in $T_{\mathcal{F}'}$. A symmetric argument will then show that for every $[A'] \in \mathcal{A}_{\mathcal{F}'}$, the group $A'$ is elliptic in $T_{\mathcal{F}}$. This will conclude the proof of the second assertion. 

By Theorem~\ref{theo:limittreeiwip}, there exists a (possibly trivial) cyclic subgroup $H$ of $G$ such that $\mathcal{A}_\mathcal{F}=\mathcal{F} \cup \{[H]\}$. By the first assertion, for every $[A] \in \mathcal{F}$, the group $A$ is elliptic in $T_{\mathcal{F}'}$. 

We now prove that $H$ is elliptic in $T_{\mathcal{F}'}$. Note that, by the first assertion, every subgroup whose conjugacy class is in $\mathcal{F}'$ is elliptic in $T_\mathcal{F}$. Since $\mathcal{F} \neq\mathcal{F'}$, and since $\mathcal{F}'$ is maximal, we have $\mathcal{F}' \nsqsubseteq \mathcal{F}$. Since $\mathcal{A}_{\mathcal{F}}=\mathcal{F} \cup \{[H]\}$, we see that $H$ is $(G,\mathcal{F}')$-peripheral. Thus, by Theorem~\ref{theo:limittreeiwip}, $H$ is elliptic in $T_{\mathcal{F}'}$. This proves the second assertion.

We finally prove the third assertion. Let $\mathcal{F} \in \rho_{\phi}^{-1}(\Lambda)$. By the second assertion, the set $\mathcal{A}_\mathcal{F}$ is an invariant of $\rho_{\phi}^{-1}(\Lambda)$, so that $\phi$ preserves $\mathcal{A}_\mathcal{F}$. 

By Theorem~\ref{theo:limittreeiwip}, there exists a (possibly trivial) cyclic subgroup $H$ of $G$ such that $\mathcal{A}_\mathcal{F}=\mathcal{F} \cup \{[H]\}$. By Theorem~\ref{thm:invariantfamilycyclicfixed}, since $H$ is cyclic and $\phi \in \IA(G,\mathcal{G},3)$, we see that $\phi([H])=H$, so that $\phi(\mathcal{F})=\mathcal{F}$. By Lemma~\ref{lem:periodic freefactorsystem}, $\phi$ fixes elementwise $\mathcal{F}$. By the second assertion and Theorem~\ref{theo:limittreeiwip}, every element of $\rho_{\phi}^{-1}(\Lambda)$ is obtained by taking a finite subset of $\mathcal{A}_\mathcal{F}$. Therefore, we see that $\phi$ fixes elementwise $\rho_{\phi}^{-1}(\Lambda)$.
\end{proof}

We now prove the main technical result of this section.

\begin{prop}\label{prop:periodiclaminationfixed}
Let $G$ be a group satisfying the \hyperlink{SA}{Standing Assumptions}. Let $\phi \in \IA(G,\mathcal{G},3)$ and let $\Lambda \in \mathcal{L}(\phi)$. Then $\Lambda$ is fixed by $\phi$.
\end{prop}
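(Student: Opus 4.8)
The plan is to reduce the statement, via a relative train track representative $F\colon T\to T$ of $\phi$ given by Theorem~\ref{thm:existencereltraintracks}, to a combinatorial statement about the $\phi$-action on the finite set $\mathrm{EG}$ of exponentially growing strata of $T$, and then to leverage Lemma~\ref{lem:preimagerho}$(3)$ once we know the associated EG stratum is $\phi$-fixed. First I would invoke Proposition~\ref{prop:attractinglaminations}$(4)$--$(5)$ to note that $\mathcal{L}(\phi)$ is finite and equals $\mathcal{L}(\phi^\ell)$ for every $\ell>0$; hence there is $\ell>0$ such that $\phi^\ell$ fixes $\Lambda$, and we want to upgrade this to $\phi(\Lambda)=\Lambda$. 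Choose a relative train track $F\colon T\to T$ for $\phi^\ell$ and let $H_r=p_{\phi^\ell}(\Lambda)$ be the EG stratum associated with $\Lambda$ via Lemma~\ref{lem:genericline}$(2)$. The first main step is to argue that the attracting lamination $\Lambda$ determines, and is determined by, a maximal $\phi^\ell$-periodic $(G,\mathcal G)$-free factor system in $\rho_{\phi^\ell}^{-1}(\Lambda)$: namely, collapsing all strata below $H_r$ and all strata above $H_r$ whose laminations differ produces a free factor system $\mathcal F\in\mathrm{NS}(\phi^\ell)$ with $\rho_{\phi^\ell}(\mathcal F)=\Lambda$, using Lemma~\ref{lem:attractinglaminationassociatedmaxfree} and the genericity description in Lemma~\ref{lem:genericline}. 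So $\rho_{\phi^\ell}^{-1}(\Lambda)\neq\varnothing$.

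Next I would study the $\phi$-action on $\mathrm{NS}(\phi)$ (for the full $\phi$, not $\phi^\ell$). Since $\mathrm{NS}(\phi)$ is a $\phi$-periodic-orbit decomposition of a finite set, pick $\mathcal F\in\mathrm{NS}(\phi)$ on the orbit giving rise to $\Lambda$; its $\phi$-orbit $\{\mathcal F,\phi(\mathcal F),\ldots,\phi^{m-1}(\mathcal F)\}$ has some length $m\mid\ell$, and each $\phi^j(\mathcal F)$ lies in $\rho_{\phi^\ell}^{-1}(\phi^j(\Lambda))$. The point is to show $\phi$ permutes the elements of $\mathcal{L}(\phi)$ through the equivariant maps $\rho_\phi$ and $p_\phi$, and that the subset $\rho_{\phi^\ell}^{-1}(\Lambda)\subseteq \mathrm{NS}(\phi^\ell)$ is carried to $\rho_{\phi^\ell}^{-1}(\phi(\Lambda))$ by $\phi$. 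Now apply Lemma~\ref{lem:preimagerho}$(3)$ to $\phi^\ell$: it gives that $\phi^\ell$ fixes elementwise $\rho_{\phi^\ell}^{-1}(\Lambda)$. The key leverage is then that the free factor system $\mathcal F$ (or better, the associated $\RR$-tree $T_\mathcal F$ and its set of point stabilisers $\mathcal A_\mathcal F$ from Theorem~\ref{theo:limittreeiwip}) is a $\phi^\ell$-fixed invariant of $\Lambda$; since $\phi$ normalises $\langle\phi^\ell\rangle$ and $\phi$ permutes the finitely many laminations, I would show that the $\RR$-tree $T_{\phi(\mathcal F)}$ equals (up to equivariant isometry) $\Phi\cdot T_\mathcal F$ for any $\Phi\in\phi$, and then use that $\phi$ sends a generic line of $\Lambda$ to a generic line of $\phi(\Lambda)$, so that $\phi(\Lambda)$ is the unique attracting lamination not contained in $\partial^2(\phi(\mathcal F),\mathcal G)$, exactly as in Lemma~\ref{lem:attractinglaminationassociatedmaxfree}.

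To actually force $\phi(\Lambda)=\Lambda$, I would argue as follows. Because $\phi\in\IA(G,\mathcal G,3)$, Lemma~\ref{lem:periodic freefactorsystem} shows that whenever $\phi$ preserves a $(G,\mathcal G)$-free factor system it fixes every element of it; applied to the $\phi$-invariant free factor system $\mathcal F\cap\phi(\mathcal F)\cap\cdots\cap\phi^{m-1}(\mathcal F)$ (a common coarsening, which is $\phi$-invariant), together with the maximality of $\mathcal F$ in $\mathrm{NS}(\phi)$, this should pin down $\phi(\mathcal F)=\mathcal F$ unless the orbit genuinely has length $m>1$. The remaining case $m>1$ is excluded by combining Lemma~\ref{lem:genericline} with the homological constraint: the EG stratum $H_r$ and the induced subspace of $H_1(G,\ZZ/3\ZZ)$ coming from $\mathcal F$ (via Lemma~\ref{lem:MayerVietoris} and Lemma~\ref{lem:projectionhomology}) must be preserved by $\phi$ since $\phi$ acts trivially on $H_1(G,\ZZ/3\ZZ)$, and distinct free factor systems in the orbit would induce distinct homology data, contradicting $m>1$. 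Then $\phi$ fixes $\mathcal F$, hence $\phi$ fixes $\mathcal A_\mathcal F$ and $T_\mathcal F$, hence $\phi$ fixes $\Lambda$.

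\textbf{Main obstacle.} I expect the delicate point to be the last step: ruling out a genuine nontrivial $\phi$-orbit on the set of maximal periodic nonsporadic free factor systems lying over the $\phi^\ell$-orbit of $\Lambda$. Lemma~\ref{lem:preimagerho}$(3)$ only handles the situation once we already know the lamination is $\phi$-fixed, so there is a real risk of circularity; the way out is to extract a genuinely $\phi$-invariant object (the common coarsening free factor system, or equivalently a $\phi$-invariant subspace of $H_1(G,\ZZ/3\ZZ)$ detected by the EG stratum) to which the $\IA(G,\mathcal G,3)$-hypothesis and Lemma~\ref{lem:periodic freefactorsystem} apply directly. Making the homological bookkeeping precise — in particular checking that distinct free factor systems in a putative orbit really do give distinct summands of $H_1(G,\ZZ/3\ZZ)$, using Lemma~\ref{lem:periodicEGstratum}$(2)$ to separate them — will be the technical heart of the argument.
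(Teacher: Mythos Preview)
Your proposal has a genuine gap, and you have correctly identified where it lies: the circularity you flag is real and your workarounds do not close it. Lemma~\ref{lem:preimagerho}$(3)$ takes $\phi(\Lambda)=\Lambda$ as a \emph{hypothesis}, so it cannot be invoked to establish that conclusion. Your two proposed escapes both fail. First, the ``common coarsening'' $\mathcal{F}\cap\phi(\mathcal{F})\cap\cdots$ is not a well-defined free factor system as written, and even if you pass to the meet in the poset of free factor systems, maximality of $\mathcal{F}$ in $\mathrm{NS}(\phi)$ does not force the orbit to have length $1$: the meet will simply be a smaller $\phi$-invariant system, and Lemma~\ref{lem:periodic freefactorsystem} then tells you its elements are fixed, not that $\mathcal{F}$ itself is. Second, the claim that distinct free factor systems in the orbit induce distinct subspaces of $H_1(G,\ZZ/3\ZZ)$ is false in general: two distinct maximal free factor systems can easily have the same homological image. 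There is also a surjectivity issue you pass over: not every $\Lambda\in\mathcal{L}(\phi)$ need lie in the image of $\rho_\phi$, so reducing the problem to $\mathrm{NS}(\phi)$ is not justified.

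The paper's proof is structurally different and more direct. It takes a relative train track $F\colon T\to T$ for $\phi$ itself (not a power), lets $H_r$ be the EG stratum carrying $\Lambda$, and assumes toward contradiction that $H_r$ is not aperiodic, so that there is the partition $EH_r=P_1^{(r)}\amalg\cdots\amalg P_{n_r}^{(r)}$ of Lemma~\ref{lem:periodicEGstratum} with $n_r\geq 2$, cyclically permuted by $F$. One then collapses each component of $\overline{G\backslash T_{r-1}}$ to a point, obtaining a finite graph $\mathbb{X}_r$ in which the subgraphs $\mathbb{X}^i$ coming from $T_{r-1}\cup P_i^{(r)}$ meet only in vertices. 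A short combinatorial claim shows each $\mathbb{X}^i$ either contains an embedded circle or an arc joining two distinct collapsed vertices. In the first case the circle gives a class in $H_1(\mathbb{X}_r,\ZZ/3\ZZ)$ that no other $\mathbb{X}^j$ can carry, and triviality of the $\phi$-action on $H_1$ (via Lemma~\ref{lem:actionhomograph}) forces $F$ to preserve $P_i^{(r)}$, a contradiction. In the second case the arcs for varying $i$ assemble into an $\overline{F}$-invariant multigraph with two vertices and $n_r$ edges, to which Lemma~\ref{lem:graphautotrivial} applies and again forces the permutation to be trivial. The homological input is thus applied not to free factor systems but to explicit cycles in a quotient graph of the train track, which is what makes the argument go through. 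In the paper's logical order this proposition is proved \emph{before} anything about $\mathrm{NS}(\phi)$; Lemma~\ref{lem:preimagerho}$(3)$ is then used downstream to deduce Proposition~\ref{prop:maxperiodffsfixed}, the reverse of what you attempt.
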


\begin{proof}
The proof follows~\cite[Lemma~II.4.2]{HandelMosher20}. Let $n$ be the size of the orbit of $\Lambda$ under iteration of $\phi$. Let $F \colon T \to T$ be a relative train track representative of $\phi$. By Proposition~\ref{prop:attractinglaminations}, there exists an EG stratum $H_r$ with the following properties. 

\begin{itemize}
\item The top stratum crossed by every generic line of $\Lambda$ is $H_r$.
\item If $H_r$ is aperiodic, then $n=1$.
\item If $H_r$ is not aperiodic, there exists a $G$-invariant partition $$EH_r=P_{1}^{(r)} \amalg \ldots P_{n_r}^{(r)}$$ of the edges of $H_r$ such that, for every $i \in \{1,\ldots,n_r\}$ and every edge $e \in P_i^{(r)}$ the path $[F(e)]$ is contained in $T_{r-1} \cup P_{i+1}^{(r)}$, where indices are taken modulo $n_r$.
\end{itemize}

Suppose towards a contradiction that $n_r \geq 2$. Note that, since $F(T_{r-1})=T_{r-1}$, we have $\phi(\mathcal{F}(T_{r-1}))=\mathcal{F}(T_{r-1})$. By Lemma~\ref{lem:periodic freefactorsystem}, $\phi$ fixes every element of $\mathcal{F}(T_{r-1})$, so that $F$ preserves every connected component of $\overline{G \backslash T_{r-1}}$. Let $T'$ be the tree obtained from $T$ by collapsing every connected component of $\overline{G \backslash T_{r-1}}$ to a point. Then the filtration of $T$ induces a filtration of $T'$. Let $\mathbb{X}_r$ be the underlying graph of $G \backslash T_r'$ and let $\mathbb{X}_{r-1}$ be the underlying graph of $G \backslash T_{r-1}'$. Then $\mathbb{X}_{r-1}$ is a set of vertices. Moreover, $F$ induces a homotopy equivalence $\overline{F} \colon \mathbb{X}_r \to \mathbb{X}_r$ which fixes every vertex of $\mathbb{X}_{r-1}$. Since $\phi$ acts trivially on $H_1(\overline{G \backslash T},\ZZ/3\ZZ)$ by Lemma~\ref{lem:actionhomograph}, it also acts trivially on $H_1(\mathbb{X}_r,\ZZ/3\ZZ)$.

The map $F$ permutes the distinct graphs $\{T_{r-1} \cup P_{i}^{(r)}\}_{i \in \{1,\ldots,n_r\}}$. Let $i,j \in \{1,\ldots,n_r\}$ be distinct. By Lemma~\ref{lem:periodicEGstratum}, we have $$\mathcal{F}(T_{r-1} \cup P_i^{(r)}) \neq \mathcal{F}(T_{r-1} \cup P_j^{(r)}) \text { and } EP_{i}^{(r)} \cap EP_{j}^{(r)}=\varnothing.$$ Let $\mathbb{X}^i$ and $\mathbb{X}^j$ be the underlying subgraphs of $\mathbb{X}_r$ induced by $T_{r-1} \cup P_i^{(r)}$ and $T_{r-1} \cup P_j^{(r)}$. Since $EP_{i}^{(r)} \cap EP_{j}^{(r)}=\varnothing$ and since the partition of $EH_r$ is $G$-invariant, the intersection $\mathbb{X}^i \cap \mathbb{X}^j=\mathbb{X}_{r-1}$ consists only of vertices.

\medskip

\noindent{\bf Claim. } Let $i \in \{1,\ldots,n_r\}$. One of the following hold.

\begin{enumerate}
    \item The graph $\mathbb{X}^i$ contains an embedded circle.
    \item There exist two distinct vertices $v,w \in \mathbb{X}^i$ corresponding to two distinct connected components of $\overline{G \backslash T_{r-1}}$ such that $v$ and $w$ are joined by an arc in $\mathbb{X}^i$.
\end{enumerate}

\begin{proof}
Assume that $\mathbb{X}^i$ does not contain an embedded circle. Let $\Lambda_i$ be the attracting lamination of $\phi$ associated with $T_{r-1} \cup P_{i}^{(r)}$ and let $\ell_i$ be a generic line for $\Lambda_i$. By Proposition~\ref{prop:attractinglaminations}, the line $\ell_i$ crosses $T_{r-1} \cup P_i^{(r)}$, so that $\ell_i$ projects to an immersed line $\overline{\ell}_i$ in $\mathbb{X}^i$ which contains edges of $\mathbb{X}^i$. Since $\overline{\ell}_i$ is connected and immersed, and since there is no embedded circle in $\mathbb{X}^i$, some subpath of $\overline{\ell}_i$ is an arc in $\mathbb{X}^i$ between two vertices of $\mathbb{X}^i$ corresponding to two distinct connected components of $\overline{G \backslash T_{r-1}}$.
\end{proof}

We now distinguish between the two cases of the claim. Suppose first that there exists an embedded circle $\gamma$ in $\mathbb{X}^i$. Then $\gamma$ induces a nontrivial element $[\gamma]\in H_1(\mathbb{X}_r,\ZZ/3\ZZ)$. Moreover, since $\gamma \nsubseteq \mathbb{X}_{r-1}$ and since $\mathbb{X}^i \cap \mathbb{X}^j=\mathbb{X}_{r-1}$, any embedded circle in $\mathbb{X}^j$ will induce an element of $H_1(\mathbb{X}_r,\ZZ/3\ZZ)$ distinct from $[\gamma]$. Since $\phi \in \IA(G,\mathcal{G},3)$ acts trivially on $H_1(\mathbb{X}_r,\ZZ/3\ZZ)$ by Lemma~\ref{lem:actionhomograph}, we see that $\overline{F}$ preserves $\mathbb{X}^i$ and that $F$ preserves $P_i^{(r)}$, a contradiction.

Suppose now that $\mathbb{X}^i$ has no embedded circle and that there exist two distinct vertices $v,w \in V\mathbb{X}^i$ corresponding to two distinct connected components of $\overline{G \backslash T_{r-1}}$ such that $v$ and $w$ are joined by an arc $\gamma_i$ in $\mathbb{X}^i$. This arc is unique as $\mathbb{X}^i$ does not contain any embedded circle. For every $m >0$, the graph $\overline{F}^m(\mathbb{X}^i)=\mathbb{X}^{i+m}$ has a unique arc between $v$ and $w$ since $\overline{F}$ fixes both $v$ and $w$ (here and thereafter, $i+m$ is taken modulo $n_r$). 

For $m \geq 0$, let $\gamma_{i+m}$ be the arc in $\mathbb{X}^{i+m}$ between $v$ and $w$. Then $\gamma=\{\gamma_{i+m}\}_{m \in \NN}$ is an $\overline{F}$-invariant subgraph of $\mathbb{X}_r$ with exactly two vertices, $v$ and $w$, and $n_r$ arcs between them. Since $n_r \geq 2$, the graph $\gamma$ has no leaf. As $\phi \in \IA(G,\mathcal{G},3)$ it acts trivially on $H_1(\mathbb{X}_r,\ZZ/3\ZZ)$. As $\gamma$ is an $\overline{F}$-invariant subgraph of $\mathbb{X}_r$, we see that $\phi$ fixes pointwise $H_1(\gamma,\ZZ/3\ZZ)$. 

By Lemma~\ref{lem:actionhomograph}, either $\overline{F}$ is homotopy equivalent to the identity map on $\gamma$, or else $n_r=2$ and $\overline{F}$ is homotopy equivalent to a rotation. As $\overline{F}$ fixes $v$ and $w$, in both cases, we see that $\overline{F}$ is homotopy equivalent to the identity map on $\gamma$. This shows that, for any distinct $i,j \in \{1,\ldots,n_r\}$, the graph $\mathbb{X}^i$ cannot be sent to $\mathbb{X}^j$ by $\overline{F}$, a contradiction. 

Thus, $H_r$ is aperiodic and $\phi$ acts trivially on $\mathcal{L}(\phi)$.
\end{proof}

\begin{proof}[Proof of Proposition~\ref{prop:maxperiodffsfixed}]
Let $\phi \in \IA(G,\mathcal{G},3)$ and let $\mathcal{F}$ be a maximal $\phi$-periodic $(G,\mathcal{G})$-free factor system which is nonposradic. By Lemma~\ref{lem:attractinglaminationassociatedmaxfree}, there exists a unique lamination $\Lambda_{\mathcal{F}}$ associated with $\mathcal{F}$. Moreover, by Lemma~\ref{lem:preimagerho}$(3)$, $\mathcal{F}$ is fixed by $\phi$ if and only if $\Lambda_{\mathcal{F}}$ is fixed by $\phi$. The result now follows from Proposition~\ref{prop:periodiclaminationfixed}.
\end{proof}

We now have all the necessary tools to prove Theorem~\ref{thm:periodicfreefactorfixed}.

\begin{proof}[Proof of Theorem~\ref{thm:periodicfreefactorfixed}.] Let $\phi \in \IA(G,\mathcal{G},3)$ and let $\mathcal{F}$ be a $\phi$-periodic proper $(G,\mathcal{G})$-free factor system. We prove by induction on $\xi(G,\mathcal{G})$ that $\mathcal{F}$ is fixed by $\phi$. The case $\xi(G,\mathcal{G})=1$ is immediate since in this case either $\mathcal{F}=\{[G]\}$ or else $G=\ZZ$ and $\phi=\mathrm{Id}$.

Suppose that $\xi(G,\mathcal{G}) \geq 2$. Let $\mathcal{F}'$ be a maximal proper $\phi$-periodic $(G,\mathcal{G})$-free factor system such that $\mathcal{F}\sqsubseteq \mathcal{F}'$. We claim that $\mathcal{F}'$ is fixed by $\phi$. Indeed, if $\mathcal{F}'$ is nonsporadic, this follows from Proposition~\ref{prop:maxperiodffsfixed}. If $\mathcal{F}'$ is sporadic, it induces a canonical $(G,\mathcal{G})$-free splitting $T_{\mathcal{F}'}$. By Theorem~\ref{thm:periodicfreesplittingfixed}, the free splitting $T_{\mathcal{F}'}$ is fixed by $\phi$. Hence $\mathcal{F}'$ is fixed by $\phi$.

In both cases, the $(G,\mathcal{G})$-free factor system $\mathcal{F}'$ is fixed by $\phi$. Let $[A] \in \mathcal{F}'$. By Lemma~\ref{lem:periodic freefactorsystem}, the conjugacy class $[A]$ is fixed by $\phi$. Moreover, since $A$ is a $(G,\mathcal{G})$-free factor, by Lemma~\ref{lem:IA_passes_free_factor}, the outer automorphism $\phi_{|A}$ of $\Out(A)$ induced by $\phi$ is contained in $\IA(A,\mathcal{G}_{|A},3)$. Let $\mathcal{F}_{|A}$ be the $(A,\mathcal{G}_{|A})$-free factor system induced by $\mathcal{F}$. By induction, we see that $\mathcal{F}_{|A}$ is fixed by $\phi_{|A}$. Since $\mathcal{F} \sqsubseteq \mathcal{F}'$, for every $[H] \in \mathcal{F}$, there exist $[A] \in \mathcal{F'}$ and $g \in G$ such that $gHg^{-1} \subseteq A$ and $[gHg^{-1}]_A \in \mathcal{F}_{|A}$. Thus there exists $\Phi_{|A} \in \phi_{|A}$ which preserves $gHg^{-1}$. As $\Phi_A$ is the restriction to $A$ of some $\Phi \in \phi$, the automorphism $\Phi$ preserves $gHg^{-1}$. Therefore, we see that $\mathcal{F}$ is fixed by $\phi$.
\end{proof} 

\section{Aperiodic finite index subgroups of outer automorphism groups of toral relatively hyperbolic groups}\label{Section:Toral}

Let $H$ be a \emph{toral relatively hyperbolic group}, that is a torsion free hyperbolic group relative to a finite set $\mathcal{P}$ of finitely generated abelian groups. In this section, we show that $H$ satisfies the \hyperlink{SA}{Standing Assumptions} (see in particular Corollary~\ref{coro:IAtoralrelativelyhyperbolicgroup}). Note that the hypotheses are only related to the one-ended factors of the Grushko decomposition of $H$, so we may assume that $H$ is one-ended. We split the proof into two sections, whether $H$ is free abelian or not. 

\subsection{Finitely generated free abelian groups}

Let $n >0$. Consider the group $\mathrm{GL}_n(\ZZ)=\Aut(\ZZ^n)=\Out(\ZZ^n)$. Let $\IA(\ZZ^n,3)$ be the kernel of the natural homomorphism $$\mathrm{GL}_n(\ZZ) \to \mathrm{GL}_n(\ZZ/3\ZZ),$$ that is, $\IA(\ZZ^n,3)$ is the level $3$ principal congruence subgroup of $\mathrm{GL}_n(\ZZ)$. The group $\IA(\ZZ^n,3)$ is torsion free. Recall that the periodic subgroup $\mathrm{Per}(\Phi)$ of $\Phi$ is the subgroup of $\ZZ^n$ consisting of all $\Phi$-periodic elements.

\begin{prop}\label{prop:abeliancase}
Suppose that $\Phi \in \IA(\ZZ^n,3)$. Then every element of the periodic subgroup of $\Phi$ is fixed by $\Phi$. 
\end{prop}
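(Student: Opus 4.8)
The plan is to reduce to a purely linear-algebraic statement about matrices in the level $3$ congruence subgroup of $\mathrm{GL}_n(\ZZ)$. Fix $\Phi \in \IA(\ZZ^n,3)$ and identify $\Phi$ with a matrix $A \in \mathrm{GL}_n(\ZZ)$ with $A \equiv I_n \pmod 3$. An element $v \in \ZZ^n$ is $\Phi$-periodic precisely when $A^\ell v = v$ for some $\ell > 0$, i.e.\ when $v \in \ker(A^\ell - I_n)$ for some $\ell$, and it is fixed precisely when $v \in \ker(A - I_n)$. So the statement is equivalent to the equality $\bigcup_{\ell \geq 1} \ker(A^\ell - I_n) = \ker(A - I_n)$ inside $\ZZ^n$ (the inclusion $\supseteq$ being trivial).

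\textbf{Key steps.} First I would observe that it suffices to show: if $A^\ell v = v$ for some $\ell > 0$ and some $v \in \ZZ^n$, then $Av = v$. Since $v$ is $\Phi$-periodic, the orbit $\{v, Av, \ldots, A^{\ell-1}v\}$ spans a finite-rank $A$-invariant subgroup $L \subseteq \ZZ^n$ on which $A$ acts with finite order (it permutes a generating set of a lattice, or one argues directly that $A|_L$ satisfies $X^\ell - 1$ and preserves the lattice $L$, hence lies in $\mathrm{GL}(L) \cong \mathrm{GL}_m(\ZZ)$ and has finite order). The crucial point is that $A|_L$ still reduces to the identity modulo $3$: indeed $A \equiv I_n \pmod 3$ implies $A$ acts trivially on $\ZZ^n/3\ZZ^n$, hence on the image of $L$ in $\ZZ^n/3\ZZ^n$, and since $L$ is a direct summand up to finite index one checks (or simply: $A$ fixes $L$ setwise and $(A-I_n)L \subseteq 3\ZZ^n \cap L$; as $L$ is pure, $3\ZZ^n \cap L = 3L$, so $A|_L \equiv I_m \pmod 3$). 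Now $A|_L$ is a finite-order element of the level $3$ congruence subgroup of $\mathrm{GL}_m(\ZZ)$, which is torsion free by Minkowski's lemma (cited in the excerpt), so $A|_L = I_m$. In particular $Av = v$.

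\textbf{Main obstacle.} The only delicate point is the purity argument: making sure that when one restricts to the subgroup $L$ generated by the periodic orbit of $v$, the reduction of $A|_L$ modulo $3$ is still trivial, so that Minkowski's torsion-freeness applies to $\mathrm{GL}(L)$ rather than to $\mathrm{GL}_n(\ZZ)$. I would handle this by replacing $L$ by its purification $\bar L = (L \otimes \QQ) \cap \ZZ^n$, which is a direct summand of $\ZZ^n$ and still $A$-invariant (since $A$ is invertible over $\QQ$ and preserves $L$), and on which $A$ still acts with finite order. For a direct summand $\bar L$, the inclusion $(A - I_n)\ZZ^n \subseteq 3\ZZ^n$ restricts to $(A|_{\bar L} - I)\bar L \subseteq 3\ZZ^n \cap \bar L = 3\bar L$, giving $A|_{\bar L} \equiv I \pmod 3$; then Minkowski forces $A|_{\bar L} = I$, hence $Av = v$. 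Everything else is routine; I do not expect the write-up to exceed a short paragraph of actual argument.
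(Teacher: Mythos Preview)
Your argument is correct and follows essentially the same line as the paper's proof: both restrict $\Phi$ to an $A$-invariant direct summand of $\ZZ^n$ on which it has finite order, check that the restriction still lies in the level~$3$ congruence subgroup, and then invoke Minkowski's torsion-freeness. The only cosmetic difference is that the paper works globally with the entire periodic subgroup $\mathrm{Per}(\Phi)$, observing directly that it is root-closed (uniqueness of $m$-th roots in $\ZZ^n$) and hence a direct summand, whereas you work locally with a single periodic element and purify the sublattice generated by its orbit; the two are interchangeable and neither buys anything over the other.
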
 

\begin{proof}
 Note that, for every $m>0$ and every $v \in \ZZ^n$, the element $v$ has at most one $m$-th root. This implies that the subgroup $\mathrm{Per}(\Phi)$ is root--closed, so a direct summand of $\ZZ^n$. 

Therefore, the image of $\Phi$ in $\Aut(\mathrm{Per}(\Phi))$ given by restriction is a finite order automorphism contained in $\IA(\mathrm{Per}(\Phi),3)$. Since $\IA(\mathrm{Per}(\Phi),3)$ is torsion free, the image of $\Phi$ in $\Aut(\mathrm{Per}(\Phi))$ is trivial. Therefore, $\Phi$ acts trivially on $\mathrm{Per}(\Phi)$ and every element of the periodic subgroup of $\Phi$ is fixed by $\Phi$.
\end{proof}

\subsection{One-ended toral relatively hyperbolic groups}

Let $H$ be a nonelementary one-ended \emph{toral relatively hyperbolic group} and let $\mathcal{P}$ be its peripheral structure. Then $\mathcal{P}$ consists of finitely generated free abelian groups. Note that, in this case, elementary subgroups of $H$ are free abelian. The aim of this section is to construct a finite index subgroup of $\Out(H)$ satisfying the \hyperlink{SA}{Standing Assumptions}. First, we need some general lemmas regarding vertex stabilisers of an elementary splitting. It is similar to~\cite[Lemma~3.8]{GuirardelLevitt2015}, but the later is only written for JSJ elementary splittings.

\begin{lem}\label{lem:vertexperipheral}
Let $H$ be a toral one-ended relatively hyperbolic group, let $\mathcal{P}$ be its peripheral structure and let $S$ be an $(H,\mathcal{P})$-elementary splitting. For every $v\in VS$, the group $H_v$ is toral relatively hyperbolic. 

Let $\mathcal{P}_v$ be the set of maximal parabolic subgroups contained in $H_v$ and let $\mathrm{Inc}_v$ be the set of incident edge stabilisers. A peripheral structure of $H_v$ is $\mathcal{P}_v \cup \mathrm{Inc}_v$.
\end{lem}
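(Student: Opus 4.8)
The statement asserts two things about a vertex group $H_v$ of an $(H,\mathcal{P})$-elementary splitting $S$: first, that $H_v$ is toral relatively hyperbolic; second, that a peripheral structure is given by $\mathcal{P}_v \cup \mathrm{Inc}_v$, where $\mathcal{P}_v$ collects the maximal parabolic subgroups of $H$ landing inside $H_v$ and $\mathrm{Inc}_v$ collects the incident edge stabilisers. The natural approach is to invoke a combination-theoretic result: vertex groups of a splitting of a relatively hyperbolic group over relatively quasiconvex subgroups are themselves relatively hyperbolic, with peripheral structure inherited from the parabolics that meet the vertex group together with the edge groups. In the toral setting every elementary subgroup is either cyclic (hence quasiconvex) or parabolic abelian (hence a peripheral subgroup, automatically quasiconvex), so edge stabilisers of $S$ are relatively quasiconvex in $H$. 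Hence the machinery of Bigdely--Wise \cite{BigWise2013} (or the relevant results from Dahmani's thesis, or Guirardel--Levitt \cite[Lemma~3.8]{GuirardelLevitt2015}) applies.

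\textbf{Key steps in order.} First I would reduce to the one-edge case: an arbitrary $(H,\mathcal{P})$-elementary splitting $S$ can be obtained from a sequence of edge collapses/expansions, and $H_v$ is a vertex group of a one-edge splitting of $H$ obtained by collapsing all edges of $S$ not adjacent to $v$ (being slightly careful: collapsing produces a one-edge splitting in which $H_v$ is one of the two vertex groups, or is a vertex group after further collapse). Equivalently, one can argue directly: $H_v$ acts on the Bass--Serre tree $S$ with a fixed point, and the edge groups incident to $v$ are relatively quasiconvex in $H$ since they are elementary. Second, I would apply the combination theorem for relatively hyperbolic groups: since $H$ is hyperbolic relative to $\mathcal{P}$ and splits as a graph of groups with relatively quasiconvex edge groups, each vertex group $H_v$ is hyperbolic relative to the collection consisting of (i) the maximal parabolic subgroups of $H$ that are conjugate into $H_v$, i.e.\ $\mathcal{P}_v$, together with (ii) the incident edge groups $\mathrm{Inc}_v$ — this is precisely the content of \cite[Lemma~3.8]{GuirardelLevitt2015} (stated there for JSJ elementary splittings but the proof only uses relative quasiconvexity of edge groups, so it carries over verbatim to any $(H,\mathcal{P})$-elementary splitting; one should remark this explicitly). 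Third, to conclude that $H_v$ is \emph{toral}, I would check the two defining conditions: torsion-freeness is inherited from $H$ (subgroups of torsion-free groups are torsion-free), and the peripheral subgroups in $\mathcal{P}_v \cup \mathrm{Inc}_v$ are finitely generated free abelian — the groups in $\mathcal{P}_v$ are subgroups of the free abelian groups in $\mathcal{P}$ hence free abelian, and the groups in $\mathrm{Inc}_v$ are elementary subgroups of a toral relatively hyperbolic group, hence either cyclic or parabolic abelian, in both cases finitely generated free abelian (using Theorem~\ref{thm:JSJrelhyp}$(4)$ for finite generation of edge groups, or directly from the hypothesis that $S$ is a splitting of a finitely generated group). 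A minor point to address: when an incident edge group is itself contained in (or commensurable with) a maximal parabolic $P \in \mathcal{P}_v$, one should note that the peripheral structure $\mathcal{P}_v \cup \mathrm{Inc}_v$ may be redundant but is still a valid peripheral structure (relative hyperbolicity is insensitive to adding quasiconvex, in particular parabolic, subgroups to the peripheral family, up to the usual conventions); alternatively one takes the "maximal" version and absorbs such edge groups into $\mathcal{P}_v$.

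\textbf{Main obstacle.} The real work is the combination theorem itself, but since the paper explicitly cites \cite[Lemma~3.8]{GuirardelLevitt2015} and only needs to observe that the proof there does not use the JSJ hypothesis, the main (modest) obstacle is simply \emph{verifying that edge stabilisers of an arbitrary $(H,\mathcal{P})$-elementary splitting are relatively quasiconvex in $H$} — which in the toral case is immediate because elementary means cyclic or parabolic, and both classes are relatively quasiconvex (cyclic loxodromic subgroups are quasiconvex in any relatively hyperbolic group; parabolic subgroups are quasiconvex essentially by definition). With that in hand the lemma follows. I would phrase the proof as: "This is proved exactly as in \cite[Lemma~3.8]{GuirardelLevitt2015}; the only point to check is that edge stabilisers of $S$ are relatively quasiconvex in $H$, which holds since every elementary subgroup of $H$ is either cyclic or parabolic, hence relatively quasiconvex, and the proof of \cite[Lemma~3.8]{GuirardelLevitt2015} uses only this property of the edge groups. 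Torsion-freeness and free abelianness of the peripheral subgroups are inherited from $H$."
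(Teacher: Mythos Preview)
Your proposal is correct and follows essentially the same approach as the paper. The paper cites \cite[Proposition~3.4]{GuirardelLevitt2015} to obtain that vertex stabilisers are relatively quasiconvex (hence toral relatively hyperbolic), then invokes Hruska's theorem \cite[Theorem~9.1]{Hruska10} to identify the peripheral structure of $H_v$ as the intersections $H_v\cap gPg^{-1}$, and finally uses the tree structure to show each such intersection is either a full maximal parabolic (giving $\mathcal{P}_v$) or an edge stabiliser (giving part of $\mathrm{Inc}_v$), with the remaining cyclic edge groups added harmlessly; your route via \cite[Lemma~3.8]{GuirardelLevitt2015} packages the same ingredients, and your reduction to the one-edge case is unnecessary.
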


\begin{proof}
Since edge stabilisers are elementary, by~\cite[Proposition~3.4]{GuirardelLevitt2015}, vertex stabilisers are relatively quasi-convex, hence they are toral relatively hyperbolic. 

We now explicit the peripheral structure of $H_v$. By~\cite[Theorem~9.1]{Hruska10}, a peripheral structure of $H_v$ is given by the groups $H_v \cap gPg^{-1}$, with $g \in H$ and $[P]\in \mathcal{P}$. Since $gPg^{-1}$ is elliptic in $S$ and since edge stabilisers are elementary, either $gPg^{-1} \subseteq H_v$, or else $gPg^{-1} \cap H_v$ is an edge stabiliser. Conversely, any edge stabiliser in $\mathrm{Inc}_v$ which is not equal to some $H_v \cap gPg^{-1}$, with $g \in H$ and $[P]\in \mathcal{P}$ is infinite cyclic. Such groups may be added to the peripheral structure.
\end{proof}

By Theorem~\ref{thm:JSJrelhyp}, there exists a canonical $(H,\mathcal{P})$-elementary splitting $T_\mathcal{P}$ preserved by $\Aut(H)$ and $\Out(H)$.

Let $\Out^0(H)$ be the finite index subgroup of $\Out(H)$ acting trivially on the quotient graph $\overline{H \backslash T_{\mathcal{P}}}$ and fixing the conjugacy class of every group in $\mathcal{P}$. We have natural restriction homomorphisms $$p_H^V \colon \Out^0(H) \to \prod_{v \in VT_\mathcal{P}} \Out(H_v)$$ and 
$$p_H^E \colon \Out^0(H) \to \prod_{e \in ET_\mathcal{P}} \Out(H_e).$$

By Theorem~\ref{thm:JSJrelhyp}$(7)$, the image of $p_H^E$ is finite. Let $\Out^1(H)=\ker(p_H^E)$. The next lemma describes the kernel of $p_H^V$, and applies more generally for elementary splittings of toral relatively hyperbolic group.

\begin{lem}\label{lem:Kernelabelian}
Let $H$ be a nonelementary one-ended toral relatively hyperbolic group, let $\mathcal{P}$ be its peripheral structure and let $S$ be an $(H,\mathcal{P})$-elementary splitting with the following properties. 
\begin{enumerate}
    \item For every edge $e \in ES$, there exists an endpoint $v$ of $e$ such that $H_e$ is malnormal in $H_v$.
    \item For every edge $e \in ES$, there exists an endpoint $v$ of $e$ such that $H_v$ is abelian.
\end{enumerate}
 The kernel $\mathcal{T}(S)$ of the natural homomorphism $$\mathcal{K}(S) \to \prod_{v \in VS} \Out(H_v),$$ is a finitely generated free abelian group.
\end{lem}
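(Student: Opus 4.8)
The goal is to identify $\mathcal{T}(S)$ with a finitely generated free abelian group by analyzing the twist subgroup of an elementary splitting of a toral relatively hyperbolic group. The plan is to use Levitt's description of the kernel $\mathcal{T}(S)$ of the restriction map $\mathcal{K}(S) \to \prod_{v \in VS} \Out(H_v)$ in terms of twists and bitwists around edges (Proposition~\ref{lem:grouptwists} gives the free-splitting case; here we need the general cyclic/elementary analogue). First I would recall that, by Levitt's work, $\mathcal{T}(S)$ is generated by bitwists around the edges of $S$, and that the group of bitwists around a single edge $e$ with endpoints $v,w$ is a quotient of a subgroup of $N_{H_v}(H_e) \times N_{H_w}(H_e)$ consisting of pairs $(z,z')$ acting the same way by conjugation on $H_e$, modulo the diagonal copy of the elements that act trivially. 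The product decomposition $\mathcal{T}(S) = \prod_{e \in ES} \mathcal{B}_e(S)$ reduces the problem to showing each $\mathcal{B}_e(S)$ is finitely generated free abelian.

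Next I would use the two hypotheses on $S$. By hypothesis $(1)$, for each edge $e$ there is an endpoint $v$ with $H_e$ malnormal in $H_v$; malnormality forces $N_{H_v}(H_e) = H_e$, so the twisting freedom at that endpoint is only by elements of $H_e$ itself, which act on $H_e$ by (inner) conjugation and hence contribute nothing new — the corresponding twists are trivial in $\Out$. By hypothesis $(2)$, there is an endpoint $w$ with $H_w$ abelian; then $N_{H_w}(H_e) = C_{H_w}(H_e) = H_w$ (since $H_w$ is abelian and $H_e \subseteq H_w$), and every element of $H_w$ acts trivially by conjugation on $H_e$. So the bitwists around $e$ are parametrized (after quotienting by the images of $H_e$) by $H_w / H_e$-type data — more precisely by $H_w$ modulo the subgroup of elements whose twist is inner, which is a subgroup of the finitely generated free abelian group $H_w$. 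Hence $\mathcal{B}_e(S)$ is a quotient of a subgroup of a finitely generated free abelian group, therefore itself finitely generated free abelian (subgroups and torsion-free quotients of f.g. free abelian groups are f.g. free abelian; torsion-freeness will follow from Lemma~\ref{lem:G/ZGtorsionfree}-style arguments or directly from the fact that $H$ is torsion free and the twist group embeds appropriately).

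Then I would assemble: $\mathcal{T}(S) = \prod_{e \in ES} \mathcal{B}_e(S)$ is a finite product of finitely generated free abelian groups, hence finitely generated free abelian, which is the claim. Along the way I need to be careful that the two distinguished endpoints in hypotheses $(1)$ and $(2)$ may or may not coincide for a given edge: if the \emph{same} endpoint $v$ is both the malnormal one and the abelian one, then $H_e$ malnormal in abelian $H_v$ forces $H_e = H_v$, which is a degenerate situation (the edge could be collapsed or the vertex is the edge group); if they are distinct endpoints $v$ (malnormal) and $w$ (abelian), the analysis above applies cleanly. I would also need to check that when $H_w$ is abelian but $H_w \neq H_e$, the resulting twists are genuinely nontrivial and abelian, which is straightforward. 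The main obstacle I anticipate is bookkeeping with Levitt's bitwist formalism — precisely matching the generating set of $\mathcal{B}_e(S)$ to the group $\{(z,z') \in N_{H_v}(H_e) \times N_{H_w}(H_e) : \mathrm{ad}_z|_{H_e} = \mathrm{ad}_{z'}|_{H_e}\}/(\text{diagonal})$ and correctly identifying which of these become inner in $\Out(H)$ — rather than any deep structural difficulty; once the hypotheses $(1)$ and $(2)$ are plugged in, the free abelian conclusion is essentially forced.
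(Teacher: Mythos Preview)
Your approach is essentially the paper's: both identify $\mathcal{T}(S)$ with Levitt's group of (bi)twists, use hypothesis~(1) to force $N_{H_v}(H_e)=C_{H_v}(H_e)=H_e$ at the malnormal endpoint, and use hypothesis~(2) to get $C_{H_w}(H_e)=H_w$ at the abelian endpoint. The only difference is organizational: rather than your edge-by-edge product $\prod_e \mathcal{B}_e(S)$, the paper plugs directly into Levitt's formula $\mathcal{T}(S)\cong\bigl(\prod_{\vec e}C_{H_{o(e)}}(H_e)\bigr)/j\bigl(\prod_v Z(H_v)\times\prod_e Z(H_e)\bigr)$ (using $Z(H)=\{1\}$ since $H$ is nonelementary) and, after the simplifications above, regroups by abelian vertex to obtain $\prod_{v\colon H_v\text{ abelian}} H_v^{|\mathrm{Inc}_v|}/\Delta(H_v)$. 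This vertex-based form makes the torsion-freeness step you flagged automatic, since the diagonal $\Delta(H_v)$ is a direct summand of $H_v^{|\mathrm{Inc}_v|}$; in your edge-by-edge version that step genuinely requires an extra argument, because a quotient of a free abelian group need not be torsion-free and the ``which twists are inner'' relation at an abelian vertex couples all its incident edges.
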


\begin{proof}
We use the definitions and notations from~\cite{GuirardelLevitt2015}. Since every edge group is malnormal in the stabiliser of one of its endpoints, by~\cite[Lemma~2.12~(2)]{GuirardelLevitt2015}, the group $\mathcal{T}(S)$ is the \emph{group of twists of $S$} (see~\cite{levitt2005}).

The rest of the proof follows~\cite[Proof of Corollary~4.4]{GuirardelLevitt2015}. Since $H$ is nonelementary, its centre is trivial (see for instance~\cite{Farb1994}). Thus, by \cite[Proposition~3.1]{levitt2005}, the group $\mathcal{T}(S)$ is isomorphic to a quotient of $\prod_{e \in \vec{E}(\overline{H \backslash S})} C_{H_{o(e)}} (H_e)$ (where $\vec{E}(\overline{H \backslash S})$ is the set of oriented edges of $\overline{H \backslash S}$) by the image of the natural diagonal map 
\[ j\colon \prod_{v \in V(\overline{H \backslash S})} Z(H_v) \times \prod_{e \in E(\overline{H \backslash S})} Z(H_e) \to \prod_{e \in \vec{E}(\overline{H \backslash S})} C_{H_{o(e)}} (H_e).\] Let $v \in  V(\overline{H \backslash S})$ and let $e$ be an edge of $\overline{H \backslash S}$ adjacent to $v$. Suppose that $H_v$ is nonelementary. Then $H_v$ is not abelian and $H_e$ is malnormal in $H_v$ by Lemma~\ref{lem:vertexperipheral}. Moreover, the centre of $H_v$ is trivial. Thus, we have $C_{H_v}(H_e)=H_e$ and $Z(H_v)=\{1\}$. 

Hence, for every $e \in E(\overline{H \backslash S})$, the edge relation induced by $j(\prod_{e \in E(\overline{H \backslash S})} Z(H_e))$ identifies the two copies of $H_e$ coming from a chosen orientation of $e$. Moreover, the group $j(\prod_{v \in V(\overline{H \backslash S})} Z(H_v))$ is equal to $j(\prod_{v \in V(\overline{H \backslash S}), \; H_v \text{ abelian }} Z(H_v))$.

This shows that $\mathcal{T}(S)$ is isomorphic to the direct product 
\[\prod_{v \in V(\overline{H \backslash S}), \; H_v \text{ abelian }} \left( \prod_{e \in \mathrm{Inc}_v} C_{H_v}(H_e) \right)/Z(H_v), \] where $Z(H_v)$ embeds diagonally. Let $v \in V(\overline{H \backslash S})$ with $H_v$ abelian. Then, for every $e \in \mathrm{Inc}_v$, we have $C_{H_v}(H_e)=Z(H_v)=H_v$, so that the group $\left( \prod_{e \in \mathrm{Inc}_v} C_{H_v}(H_e) \right)/Z(H_v)$ is free abelian.
\end{proof}

Let $v \in VT_{\mathcal{P}}$. We now construct a finite index subgroup $\IA(H_v,3)$ of the image of $\Out^1(H)$ in $\Out(H_v)$. 

\bigskip

\noindent {\it Suppose that $v$ is rigid. }

\medskip

By Theorem~\ref{thm:JSJrelhyp}$(8)$, the image of $\Out^1(H)$ is finite. We set $\IA(H_v,3)=\{\mathrm{id}\}$.

\bigskip

\noindent {\it Suppose that $v$ is elementary. }

\medskip

In this case, there exists $n \geq 1$ such that the group $\Out(H_v)$ is isomorphic to $\mathrm{GL}_n(\ZZ)$. Let $\IA(H_v,3)$ be the intersection of the image of $\Out^1(H)$ with the level $3$ principal congruence subgroup of $\Out(H_v)$.

\bigskip

\noindent {\it Suppose that $v$ is flexible. }

\medskip

By Theorem~\ref{thm:JSJrelhyp}$(5)$, there exists a compact hyperbolic surface $\Sigma_v$ such that the group $H_v$ is isomorphic to $\pi_1(\Sigma_v)$. Moreover, the image of $\Out^1(H)$ in $\Out(H_v)$ is isomorphic to $\mathrm{MCG}(\Sigma_v)$, the mapping class group of $\Sigma_v$. By a result of Ivanov~\cite[Theorem~1.7]{Ivanov92}, there exists a finite index subgroup $\IA(H_v,3)$ of $\mathrm{MCG}(\Sigma_v)$ with the following properties.

\begin{enumerate}
\item The group $\IA(H_v,3)$ is torsion free.
\item For every $\phi \in \IA(H_v,3)$ and every periodic subgroup $\mathrm{Per}(\phi)$ of $\phi$, we have $\phi \in \Out(H_v,[\mathrm{Per}(\phi)]^{(t)})$.
\item Suppose that $K$ is the fundamental group of a subsurface of $\Sigma_v$. If the conjugacy class $[K]$ of $K$ is $\phi$-periodic, then $[K]$ is fixed by $\phi$.
\item If $\Phi \in \Aut(H_v)$ is an automorphism whose image is contained in $\IA(H_v,3)$, then every $\Phi$-periodic element of $H_v$ is fixed by $\Phi$.
\end{enumerate}

Let $\IA^1(H,3)$ be the preimage in $\Out^1(H)$ of $\prod_{v \in VT_{\mathcal{P}}} \IA(H_v,3)$. The group $\IA^1(H,3)$ is a finite index subgroup of $\Out^1(H)$, hence a finite index subgroup of $\Out(H)$. 

Let $\IA(H,3)$ be the finite index subgroup of $\IA^1(H,3)$ whose action on $H_1(H,\ZZ/3\ZZ)$ is trivial. We now prove that the group $\IA(H,3)$ satisfies the hypotheses of the \hyperlink{SA}{Standing Assumptions}.

\begin{lem}\label{lem:IAHtorsionfree}
The group $\IA(H,3)$ is torsion free.
\end{lem}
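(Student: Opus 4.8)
The plan is to mimic the proof of Proposition~\ref{prop:IAtorsionfree}, using the canonical elementary splitting $T_{\mathcal{P}}$ instead of a Grushko free splitting. First I would take a finite subgroup $Q$ of $\IA(H,3)$; since $\IA(H,3) \subseteq \Out^1(H)$ acts trivially on $\overline{H \backslash T_{\mathcal{P}}}$ (here one uses that $T_{\mathcal{P}}$ is $\Aut(H)$-invariant by Theorem~\ref{thm:JSJrelhyp}$(6)$, and that $\Out^0(H)$ was defined to act trivially on the quotient graph), $Q$ is contained in $\mathcal{K}(T_{\mathcal{P}})$ and there are well-defined restriction homomorphisms $p_H^V$ and $p_H^E$. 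Since $Q \subseteq \IA^1(H,3)$, the image of $Q$ under $p_H^V$ lands in $\prod_{v} \IA(H_v,3)$, and each $\IA(H_v,3)$ is torsion free by construction (for rigid $v$ it is trivial; for elementary $v$ it is a level-$3$ congruence subgroup of $\mathrm{GL}_n(\ZZ)$, hence torsion free; for flexible $v$ it is torsion free by Ivanov, Item~$(1)$ above). Hence $p_H^V(Q)$ is trivial, so $Q$ is contained in $\mathcal{T}(T_{\mathcal{P}})$, the kernel of $p_H^V$ restricted to $\mathcal{K}(T_{\mathcal{P}})$.

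The next step is to verify that Lemma~\ref{lem:Kernelabelian} applies to $S = T_{\mathcal{P}}$, so that $\mathcal{T}(T_{\mathcal{P}})$ is a finitely generated free abelian group. Condition~$(1)$ of that lemma — that every edge of $T_{\mathcal{P}}$ has an endpoint in which the edge group is malnormal — follows from Theorem~\ref{thm:JSJrelhyp}$(1)$ together with the structure of vertex groups: every edge has a nonelementary endpoint $v$, and there $H_e$ is an infinite elementary subgroup contained in $\mathcal{P}_v \cup \mathrm{Inc}_v$, which is malnormal in $H_v$ by relative quasi-convexity (Lemma~\ref{lem:vertexperipheral} and malnormality of parabolics/incident edge groups in a toral relatively hyperbolic group). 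Actually it is cleaner to note that in a toral relatively hyperbolic group, maximal elementary subgroups are malnormal, and by $2$-acylindricity (Theorem~\ref{thm:JSJrelhyp}$(2)$) and Theorem~\ref{thm:JSJrelhyp}$(1)$ one checks the edge group is its own normalizer in the nonelementary endpoint. Condition~$(2)$ — that every edge has an abelian endpoint — follows because by Theorem~\ref{thm:JSJrelhyp}$(1)$ every edge has one endpoint with elementary stabilizer, and elementary subgroups of a toral relatively hyperbolic group are free abelian (either cyclic or conjugate into some $P_i \in \mathcal{P}$, which is free abelian). Hence $\mathcal{T}(T_{\mathcal{P}})$ is torsion free, so $Q$ is trivial.

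Finally I would need to handle the possibility that $T_{\mathcal{P}}$ is degenerate, i.e.\ reduced to a point. This happens precisely when $H$ admits no nontrivial elementary splitting relative to $\mathcal{P}$; then $H$ is itself either two-ended, conjugate into $\mathcal{P}$ (i.e.\ free abelian, but we reduced to the one-ended nonelementary case), QH, or rigid. In the free abelian case we appeal directly to the torsion-freeness of the level-$3$ congruence subgroup of $\mathrm{GL}_n(\ZZ)$; in the QH (surface) case to Ivanov's Item~$(1)$; in the rigid case $\IA(H,3)$ is trivial. This degenerate case is in fact subsumed by interpreting the construction of $\IA(H,3)$ when $T_{\mathcal{P}}$ is a point: then $\IA^1(H,3)$ already equals the relevant $\IA(H_v,3)$ with $H_v = H$, which is torsion free, and intersecting with the kernel on $H_1(H,\ZZ/3\ZZ)$ keeps it torsion free.

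The main obstacle I anticipate is the careful verification of the two hypotheses of Lemma~\ref{lem:Kernelabelian} for $T_{\mathcal{P}}$, specifically establishing malnormality of edge groups in the nonelementary endpoint vertex groups and confirming that $\mathcal{T}(T_{\mathcal{P}})$ coincides with the group of twists (which requires exactly that malnormality, via~\cite[Lemma~2.12(2)]{GuirardelLevitt2015}); everything else is a direct diagram chase through the restriction homomorphisms.
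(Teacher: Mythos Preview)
Your proposal is correct and follows exactly the same route as the paper's proof: take a finite subgroup, observe that its image under $p_H^V$ lands in the torsion-free product $\prod_v \IA(H_v,3)$ and is therefore trivial, then invoke Lemma~\ref{lem:Kernelabelian} to conclude that $\ker(p_H^V)=\mathcal{T}(T_{\mathcal{P}})$ is free abelian. The paper's proof is two lines and leaves the verification of the hypotheses of Lemma~\ref{lem:Kernelabelian} for $T_{\mathcal{P}}$ (and the degenerate case) entirely implicit; your careful check of both conditions via Theorem~\ref{thm:JSJrelhyp}$(1)$ is exactly what is needed and presents no genuine obstacle.
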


\begin{proof}
Let $F$ be a finite subgroup of $\IA(H,3)$. Since $p_H^V(\IA(H,3))$ is torsion free by construction, the group $F$ is contained in $\ker(p_H^V)$, which is a torsion free abelian group by Lemma~\ref{lem:Kernelabelian}. Hence $F=\{1\}$. 
\end{proof}

\begin{prop}\label{prop:IAHperiodicconjclass}
Let $\phi \in \IA(H,3)$ and let $H' \subseteq H$ be a $\phi$-periodic finitely generated subgroup. Then $\phi \in \Out(H,[H']^{(t)})$.
\end{prop}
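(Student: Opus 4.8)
\textbf{Proof plan for Proposition~\ref{prop:IAHperiodicconjclass}.}
The plan is to verify that the group $\IA(H,3)$ constructed above fits the framework of Theorem~\ref{thm:invariantfamilycyclicfixed}. More precisely, the strategy is to show that, when one applies the general machinery of the \hyperlink{SA}{Standing Assumptions} with $G=H$, $\mathcal{G}=\varnothing$ (recall $H$ is assumed one-ended, hence has trivial Grushko decomposition), the group playing the role of $\IA(G,\mathcal{G},3)$ is exactly $\IA(H,3)$. This is not literally an instance of that theorem, because $H$ is not itself a free product with nontrivial peripheral factors; rather, $\IA(H,3)$ is defined directly via the canonical $(H,\mathcal{P})$-elementary splitting $T_\mathcal{P}$. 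So the real content is to rerun the proof scheme of Theorem~\ref{thm:invariantfamilycyclicfixed} relative to the splitting $T_\mathcal{P}$ and the relative one given by the JSJ of $(H,\mathcal{P}\cup\mathcal{H})$ where $\mathcal{H}=\{[H']\}$.

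First I would reduce, by an inductive argument on the "complexity" of $(H,\mathcal{H})$ exactly as in the proof of Theorem~\ref{thm:invariantfamilycyclicfixed}, to the case where $H$ is one-ended relative to $\mathcal{P}\cup\mathcal{H}$; here the reduction uses that $\phi$ preserves the minimal $(H,\mathcal{P})$-free factor system making $H'$ peripheral, but since $H$ is already one-ended that system is trivial, so this step is essentially vacuous and we may pass directly to the JSJ $(H,\mathcal{P}\cup\mathcal{H})$-elementary splitting $T_{\mathcal{P}\cup\mathcal{H}}$ of Theorem~\ref{thm:JSJrelhyp}. Then I would argue, following Proposition~\ref{lem:trivialactionJSJsplitting} verbatim (the hypotheses there are: torsion free, one-ended relative to $\mathcal{P}\cup\mathcal{H}$, $\phi\in\IA$ preserving $\mathcal{H}$, $\mathcal{H}$ periodic — all of which hold, noting that the proof of that proposition only uses the properties of $\IA(G,\mathcal{G},3)$ that we have arranged for $\IA(H,3)$), that $\phi$ acts trivially on the quotient graph $\overline{H\backslash T_{\mathcal{P}\cup\mathcal{H}}}$. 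This produces vertex and edge automorphisms $\phi_v\in\Out(H_v)$, $\phi_e\in\Out(H_e)$.

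Next, for each edge $e$ with $H_e\simeq\ZZ$ I would show $\phi_e=\mathrm{id}$, imitating the Claim inside the proof of Theorem~\ref{thm:invariantfamilycyclicfixed}: a generator $t$ of $H_e$ is $\Phi_e$-periodic; if $t$ is parabolic it lies in a unique maximal parabolic $P$, and since $\phi$ fixes $[P]$ and the image of $\phi$ in $\mathrm{GL}(P)$ lies in the level $3$ congruence subgroup, Proposition~\ref{prop:abeliancase} forces $\Phi_e(t)=t$; if $t$ is loxodromic in the relatively hyperbolic sense, one uses the analogue of Lemma~\ref{lem:nonperipheralnotconjinverse} (an element loxodromic for the action on a Bass--Serre tree of a free splitting — but here $H$ is one-ended, so instead one uses that $t$ lies in a unique maximal elementary subgroup, which is infinite cyclic, and root-closedness of that subgroup) together with the inductive hypothesis applied on a proper subgroup in which $t$ becomes peripheral. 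Then, vertex by vertex: if $H_v$ is abelian, $\phi_v$ acts trivially on $H_v$ because some incident $H_e$ is of finite index in $H_v$ and $\phi_e=\mathrm{id}$; if $v$ is rigid, $\IA(H_v,3)=\{\mathrm{id}\}$ by construction, so $\phi_v=\mathrm{id}$; if $v$ is flexible, $H_v\simeq\pi_1(\Sigma_v)$ and $\phi_v\in\IA(H_v,3)$ by construction, so property $(2)$ of Ivanov's subgroup gives $\phi_v\in\Out(H_v,[H'\cap H_v]^{(t)})$ after noting $H'\cap H_v$ lies in a boundary subgroup. Assembling these, every $H'$ fixes a vertex, $\phi$ fixes that vertex and acts trivially (or by the prescribed $(t)$-action) on its stabiliser, whence $\phi\in\Out(H,[H']^{(t)})$.

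The main obstacle I anticipate is bookkeeping around the difference between "$\phi_v=\mathrm{id}$" and "$\phi_v$ lies in $\mathcal{K}$-kernel up to a twist": the proof of Proposition~\ref{lem:trivialactionJSJsplitting} and of Lemma~\ref{lem:rigidvertexacttrivially} was written for the free-product group $\IA(G,\mathcal{G},3)$ and invoked the \hyperlink{SA}{Standing Assumptions} on the peripheral factors; here the roles are interchanged — the one-ended $H$ plays the role of a vertex group, and its abelian peripheral subgroups $\mathcal{P}$ play the role of the $\mathcal{G}_i$. I will need to check that the abelian case (Proposition~\ref{prop:abeliancase}) and the surface case (Ivanov's Theorem~1.7 packaged as properties $(1)$--$(4)$ above) supply precisely the four items of the \hyperlink{SA}{Standing Assumptions} needed as input, and that the finiteness of the edge-group images ($\Out^1(H)=\ker p_H^E$, Theorem~\ref{thm:JSJrelhyp}$(7)$) and of rigid-vertex images (Theorem~\ref{thm:JSJrelhyp}$(8)$) are what let us kill the residual twists — exactly as Lemma~\ref{lem:G/ZGtorsionfree} and Proposition~\ref{lem:grouptwists} were used in the free-product setting, now replaced by Lemma~\ref{lem:Kernelabelian}. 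Once that dictionary is set up the argument should run in parallel; I expect the surface and abelian vertices to be routine and the only genuinely delicate point to be ensuring $\phi_e=\mathrm{id}$ on cyclic edge groups, since that is where the homology hypothesis $\phi\in\IA(H,3)$ interacts with the passage to proper subgroups through Lemma~\ref{lem:IAHtorsionfree} and the inductive hypothesis.
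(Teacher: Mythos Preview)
Your plan has a structural confusion that makes several key steps fail: you repeatedly treat vertices of $T_{\mathcal{P}\cup\mathcal{H}}$ as if they were vertices of $T_{\mathcal{P}}$. The groups $\IA(H_v,3)$ were \emph{defined} only for vertices $v$ of $T_{\mathcal{P}}$, so statements like ``if $v$ is rigid, $\IA(H_v,3)=\{\mathrm{id}\}$ by construction'' or ``$\phi_v\in\IA(H_v,3)$ by construction'' are meaningless for a vertex $v$ of $T_{\mathcal{P}\cup\mathcal{H}}$. A rigid vertex of $T_{\mathcal{P}\cup\mathcal{H}}$ need not correspond to any single vertex of $T_{\mathcal{P}}$ at all; its stabiliser may well act on a nontrivial subtree of $T_{\mathcal{P}}$. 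Likewise, your treatment of $\phi_e=\mathrm{id}$ for a loxodromic generator invokes an ``inductive hypothesis applied on a proper subgroup in which $t$ becomes peripheral'', but no such induction is available: $H$ is one-ended, so there is no Grushko rank to induct on and no proper free factor to pass to. The analogue of Lemma~\ref{lem:cyclicsplittinghasvertexfreefactor} simply does not hold here, so the proof of Proposition~\ref{lem:trivialactionJSJsplitting} cannot be rerun verbatim.

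What the paper actually does, and what your plan is missing, is a bridge between the two trees: one uses that $T_{\mathcal{P}}$ and $T_{\mathcal{P}\cup\mathcal{H}}$ are \emph{compatible} (\cite[Theorem~9.18(3)]{guirardel2016jsj}) and passes to their minimal common refinement $U$, with its $\phi$-equivariant collapse maps $p_{\mathcal{P}}$ and $p_{\mathcal{P}\cup\mathcal{H}}$. Every edge or vertex stabiliser one encounters in $T_{\mathcal{P}\cup\mathcal{H}}$ can then be pushed via $U$ into some vertex group $H_x$ of $T_{\mathcal{P}}$, where the \emph{defining} properties of $\IA(H_x,3)$ (Ivanov for QH, level-$3$ congruence for abelian, triviality for rigid) are available by construction. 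This is how one proves $\phi$ acts trivially on $\overline{H\backslash T_{\mathcal{P}\cup\mathcal{H}}}$ and, for each rigid vertex $v$ of $T_{\mathcal{P}\cup\mathcal{H}}$, that $\phi_v=\mathrm{id}$: one studies the minimal $H_v$-subtree $U_{H_v}\subseteq U$, shows $\phi_v$ acts trivially on its quotient, and then uses the restriction to each vertex $w$ of $U_{H_v}$ (which now \emph{does} sit inside some $H_x$ with $x\in VT_{\mathcal{P}}$) together with Lemma~\ref{lem:Kernelabelian} to kill the finite-order $\phi_v$. Your anticipated ``dictionary'' is not mere bookkeeping; the refinement $U$ is the missing idea.
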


\begin{proof}
Let $\mathcal{H}=\{\phi^\ell[H']\}_{\ell \geq 0}$. Since $[H']$ is $\phi$-periodic, the set $\mathcal{H}$ is finite. Let $T_{\mathcal{P} \cup \mathcal{H}}$ be the associated JSJ $(H,\mathcal{P} \cup \mathcal{H})$-elementary splitting given by Theorem~\ref{thm:JSJrelhyp}. Then $T_{\mathcal{P}\cup \mathcal{H}}$ is preserved by $\phi$. Moreover, the trees $T_\mathcal{P}$ and $T_{\mathcal{P} \cup \mathcal{H}}$ are compatible by~\cite[Theorem~9.18~(3)]{guirardel2016jsj}. 

Let $U$ be the minimal common refinement of $T_\mathcal{P}$ and $T_{\mathcal{P} \cup \mathcal{H}}$ (see~\cite[Proposition~A.26]{guirardel2016jsj}). The tree $U$ has the following property. Let $p_\mathcal{P} \colon U \to T_\mathcal{P}$ and let $p_{\mathcal{P}\cup \mathcal
{H}} \colon U \to T_{\mathcal{P}\cup \mathcal{H}}$ be the natural $H$-equivariant projections. Then there does not exist an edge $e$ of $U$ such that both $p_\mathcal{P}(e)$ and $p_{\mathcal{P}\cup \mathcal{H}}(e)$ are vertices.
By minimality, since $\phi$ preserves both $T_\mathcal{P}$ and $T_{\mathcal{P} \cup \mathcal{H}}$, we see that $\phi$ preserves $U$ and the projections $p_\mathcal{P}$ and $p_{\mathcal{P}\cup \mathcal
{H}}$ are $\phi$-equivariant.

\medskip

\noindent{\bf Claim. } The graph automorphism of $\overline{H \backslash T_{\mathcal{P} \cup \mathcal{H}}}$ induced by $\phi$ is trivial.

\medskip

\begin{proof}
As usual, we want to apply Lemma~\ref{lem:graphautotrivial}. We proceed in several steps.

\medskip

\noindent{\it Step 1. } Let $v$ be a vertex of $T_{\mathcal{P} \cup \mathcal{H}}$ with nonelementary stabiliser. Then $\phi$ preserves the $H$-orbit of $v$.

\begin{proofblack}
Since $H_v$ is nonelementary, the vertex $v$ is the unique point in $T_{\mathcal{P} \cup \mathcal{H}}$ fixed by $H_v$. Thus, $\phi$ preserves the $H$-orbit of $v$ if and only if $\phi$ preserves the $H$-conjugacy class of $H_v$. We will frequently use this equivalence during the proof.

Consider the preimage $p_{\mathcal{P}\cup \mathcal
{H}}^{-1}(v)$ in $U$. Suppose first that $p_{\mathcal{P}\cup \mathcal
{H}}^{-1}(v)$ contains an edge $\widetilde{e}$. By equivariance of $p_{\mathcal{P}\cup \mathcal
{H}}$, if $\phi$ preserves the $H$-orbit of $\widetilde{e}$, then $\phi$ preserves the $H$-orbit of $v$, so we focus on the $H$-orbit of $\widetilde{e}$. By minimality of $U$, denoting by $\mathring{\widetilde{e}}$ the interior of $\widetilde{e}$, the projection $p_\mathcal{P}$ restricts to a homeomorphism on $\mathring{\widetilde{e}}$. Since $\phi \in \Out^0(H)$, $\phi$ preserves the $H$-orbit of $p_\mathcal{P}(\mathring{\widetilde{e}})$. By equivariance of $p_\mathcal{P}$, $\phi$ preserves the $H$-orbit of $\widetilde{e}$ and hence the $H$-orbit of $v$.

Suppose now that $p_{\mathcal{P}\cup \mathcal
{H}}^{-1}(v)$ is a vertex $\widetilde{v}$. Then $H_{\widetilde{v}}=H_v$. Thus, $H_v$ fixes the vertex $w=p_\mathcal{P}(\widetilde{v})$ of $T_\mathcal{P}$. Since $H_v$ is nonelementary, so is $w$. We distinguish between two cases, according to the nature of $w$.

\bigskip

\noindent{\it Case 1. } The vertex $w$ is rigid.

\medskip

In this case, since $\phi \in \IA(H,3)$, the outer automorphism $\phi_w \in \Out(H_w)$ is trivial. Thus, $\phi$ acts as a global conjugation on $H_v \subseteq H_w$. Therefore, $\phi$ preserves the $H$-orbit of $v$.

\bigskip

\noindent{\it Case 2. } The vertex $w$ is flexible.

\medskip

Then $H_w$ is the fondamental group of a compact hyperbolic surface $\Sigma_w$. Moreover, since elementary subgroups of $H_w$ are cyclic, the minimal subtree $U_{H_w} \subseteq p_\mathcal{P}^{-1}(w)$ of $H_w$ in $U$ is an $H_w$-cyclic splitting. By~\cite[Proposition~5.4]{guirardel2016jsj}, the tree $U_{H_w}$ is dual to a decomposition of $\Sigma_w$ along closed geodesics. Since $\phi_w \in \IA(H_w,3)$, this decomposition is preserved by $\phi_w$. Hence $\phi_w$ acts trivially on the graph $\overline{H_w \backslash U_{H_w}}$. As $H_v \subseteq H_w$, the tree $U_{H_w}$ contains the unique vertex $\widetilde{v}$ of $U$ fixed by $H_v$. Hence $\phi_w$ preserves the $H_w$-orbit of $\widetilde{v}$, so that $\phi$ preserves the $H$-orbit of $\widetilde{v}$. Since $H_{\widetilde{v}}=H_v$, $\phi$ preserves the $H$-conjugacy class of $H_v$, hence the $H$-orbit of $v$. As we have ruled out every case, this concludes the proof of Step~1.
\end{proofblack}

\noindent{\it Step~2. } The outer class $\phi$ fixes every leaf of $\overline{H \backslash T_{\mathcal{P \cup \mathcal{H}}}}$.

\begin{proofblack}
Let $v \in VT_{\mathcal{P \cup \mathcal{H}}}$ which projects to a leaf of $\overline{H \backslash T_{\mathcal{P \cup \mathcal{H}}}}$. By Step~1, if $H_v$ has nonelementary stabiliser, then the $H$-orbit of $v$ is fixed by $\phi$. 

So we may suppose that $H_v$ is elementary. Let $e \in ET_{\mathcal{P \cup \mathcal{H}}}$ be an edge adjacent to $v$. Since $v$ projects to a leaf of $\overline{H \backslash T_{\mathcal{P \cup \mathcal{H}}}}$, by minimality of the action of $H$ on $T_{\mathcal{P \cup \mathcal{H}}}$, we have $H_e \subsetneq H_v$. Hence $H_v$ fixes a unique vertex of $T_{\mathcal{P \cup \mathcal{H}}}$, so it suffices to prove that $\phi$ preserves the $H$-conjugacy class of $H_v$.

Suppose that $[H_v] \in \mathcal{P}$. Since $\phi \in \Out^0(H)$, we see that $\phi$ preserves $[H_v]$. So the only case remaining is when $H_v$ is infinite cyclic. As $H$ is one-ended, the group $H_e$ has finite index in $H_v$. Since $T_\mathcal{P}$ and $T_{\mathcal{P} \cup \mathcal{H}}$ are compatible, the group $H_e$ fixes a point in $T_\mathcal{P}$. Since $H_v$ is infinite cyclic, the group $H_v$ also fixes a point $w$ in $T_\mathcal{P}$. Hence $H_v$ is a periodic subgroup of $\phi_w \in \IA(H_w,3)$. By definition of $\IA(H_w,3)$, $\phi$ fixes the $H$-conjugacy class of $H_v$. This concludes the proof of Step~2.
\end{proofblack}

By Step~2, $\phi$ fixes every leaf of $\overline{H \backslash T_{\mathcal{P \cup \mathcal{H}}}}$. Since $\phi \in \IA(H,3)$, $\phi$ acts trivially on $H_1(H,\ZZ/3\ZZ)$. By Lemma~\ref{lem:actionhomograph}, $\phi$ acts trivially on $H_1(\overline{H \backslash T_{\mathcal{P \cup \mathcal{H}}}},\ZZ/3\ZZ)$. By Lemma~\ref{lem:graphautotrivial}, $\phi$ acts trivially on $\overline{H \backslash T_{\mathcal{P \cup \mathcal{H}}}}$ or else $\overline{H \backslash T_{\mathcal{P \cup \mathcal{H}}}}$ is a circle and $\phi$ acts as a rotation on it. Therefore, in order to show that $\phi$ acts trivially on $\overline{H \backslash T_{\mathcal{P \cup \mathcal{H}}}}$, it suffices to prove that $\phi$ fixes a point in $\overline{H \backslash T_{\mathcal{P \cup \mathcal{H}}}}$. 

Since $H$ is nonelementary, the tree $T_{\mathcal{P \cup \mathcal{H}}}$ contains a vertex with nonelementary stabiliser. By Step~1, $\phi$ fixes the $H$-orbit of this vertex. Thus, $\phi$ fixes a point in $\overline{H \backslash T_{\mathcal{P \cup \mathcal{H}}}}$. This concludes the proof of the claim.
\end{proof}

\bigskip

By the claim, the outer automorphism $\phi$ acts trivially on $\overline{H \backslash T_{\mathcal{P}\cup \mathcal{H}}}$. Since $\phi \in \IA(H,3)$, it also acts trivially on $\overline{H \backslash T_{\mathcal{P}}}$. Moreover, since every edge of $U$ has nontrivial image in either $T_\mathcal{P}$ or $T_{\mathcal{P} \cup \mathcal{H}}$, $\phi$ also acts trivially on the graph $\overline{H \backslash U}$.

Let $[K] \in \mathcal{H}$. Then $K$ fixes a point in $T_{\mathcal{P} \cup \mathcal{H}}$. Since the action of $K$ on $T_{\mathcal{P} \cup \mathcal{H}}$ is acylindrical (Theorem~\ref{thm:JSJrelhyp}$(2)$), its fixed point set is bounded. Let $v_K$ be the centre of the fixed point set of $K$. Let $\ell >0$ and let $\Phi \in \phi^\ell$ be such that $\Phi(K)=K$ and $\Phi_{|K}=\id_K$. By uniqueness of $v_K$, the automorphism $\Phi$ fixes $v_K$. Thus, $K$ is also a periodic subgroup of $\phi_{v_K} \in \Out(H_{v_K})$. This shows that, in order to prove Proposition~\ref{prop:IAHperiodicconjclass}, it remains to prove that, for every $v \in VT_{\mathcal{P}\cup \mathcal{H}}$, and every finitely generated $\phi_v$-periodic subgroup $H_v'$ of $H_v$ such that $[H_v'] \in \mathcal{H}$, we have $\phi_v \in \Out(H_v,[H_v']^{(t)})$. We therefore fix a vertex $v \in VT_{\mathcal{P}\cup \mathcal{H}}$ and a $\phi_v$-periodic finitely generated subgroup $H_v' \subseteq H_v$ such that $[H_v'] \in \mathcal{H}$. Once again, we distinguish between several cases, according to the nature of $v$.

\bigskip

\noindent{\it Case 1. } Suppose that $v$ is elementary noncyclic.

\medskip

Then $H_v'$ also fixes a point in $T_\mathcal{P}$. By Lemma~\ref{lem:elemornonelem}, there exists a canonical point $w$ in $T_\mathcal{P}$ fixed by $H_v'$, so that any automorphism in $\phi$ which preserves $H_v'$ also preserves $H_w$. 

Since $\phi \in \IA(H,3)$, the image of $\phi$ in $\Out(H_w)$ is contained in $\IA(H_w,3)$. In particular, for every $\phi_w$-periodic finitely generated subgroup $H_w'$ of $H_w$, we have $\phi_w \in \Out(H_w,[H_w']^{(t)})$. Thus, we have $\phi_w \in \Out(H_w,[H_v']^{(t)})$. Since $H_v$ is malnormal in $H$, any automorphism of $H_w$ fixing $H_v'$ pointwise also preserves $H_v$. Thus, we have $\phi_v \in \Out(H_v,[H_v']^{(t)})$.

\bigskip

\noindent{\it Case 2. } Suppose that $v$ is rigid.

\medskip

By Theorem~\ref{thm:JSJrelhyp}$(8)$, the outer automorphism $\phi_v$ of $\Out(H_v)$ induced by $\phi$ is of finite order. Let $U_{H_v} \subseteq p_{\mathcal{P}\cup \mathcal{H}}^{-1}(v)$ be the minimal subtree of $H_v$ in $U$ (if $H_v$ is elliptic, we assume that $U_{H_v}$ is the centre of the bounded fixed subtree of $H_v$). The $H_v$-splitting $U_{H_v}$ is $\phi_v$-invariant. 

We claim that $\phi_v$ acts trivially on $\overline{H_v \backslash U_{H_v}}$. Indeed, let $e \in EU_{H_v}$. We need to prove that $\phi_v(e)$ is in the $H_v$-orbit of $e$. Since $\phi$ acts trivially on $\overline{H \backslash U}$, there exists $h \in H$ such that $\phi_v(e)=he$. But $\phi_v$ preserves $U_{H_v} \subseteq p_{\mathcal{P}\cup \mathcal{H}}^{-1}(v)$, so that $e \in Ep_{\mathcal{P}\cup \mathcal{H}}^{-1}(v)$ and $he \in Ep_{\mathcal{P}\cup \mathcal{H}}^{-1}(v)$. By $H$-equivariant of $p_{\mathcal{P} \cup \mathcal{H}}$, we have $hv=v$ and $h \in H_v$. Hence $\phi_v$ preserves the $H_v$-orbit of $e$ and $\phi_v$ acts trivially on $\overline{H_v \backslash U_{H_v}}$. This proves the claim.

The above claim implies that we have a restriction homomorphism $$\langle \phi_v \rangle \to \prod_{w \in VU_{H_v}} \Out(H_w).$$

We claim that the kernel of the above homomorphism is a free abelian group. Indeed, by Lemma~\ref{lem:vertexperipheral}, the group $H_v$ is toral relatively hyperbolic. Let $e \in EU_{H_v}$. By minimality of $U$, since $p_{\mathcal{P} \cup \mathcal{H}}(e)$ is a point, the image of $e$ in $T_\mathcal{P}$ is an edge. By Lemma~\ref{lem:vertexperipheral}, the group $H_e$ is malnormal in the endpoint of $p_\mathcal{P}(e)$ with nonelementary stabiliser. Then $H_e$ is malnormal in the vertex stabiliser of the endpoint of $e$ which projects to it. Moreover, one of the endpoints of $e$ has abelian stabiliser since one of the endpoints of $p_\mathcal{P}(e)$ has abelian stabiliser. Thus, by Lemma~\ref{lem:Kernelabelian}, the kernel of the above homomorphism is free abelian. Since $\langle \phi_v \rangle$ is a finite group, the above homomorphism is injective. 

We now prove that the image of $\phi_v$ in $\prod_{w \in VU_{H_v}} \Out(H_w)$ is trivial. Let $w \in VU_{H_v}$. Then the group $H_w$ is contained in the vertex stabiliser $H_x$ of a vertex $p_\mathcal{P}(w)=x \in VT_\mathcal{P}$. Since $\phi \in \IA(H,3)$, for every $\phi_x$-periodic finitely generated subgroup $H_x'$ of $H_x$, we have $\phi_x \in \Out(H_x,[H_x']^{(t)})$. As the image of $\phi_v$ in $\Out(H_w)$ is of finite order, the group $H_w$ is a $\phi_x$-periodic subgroup. Since edge stabilisers in $T_\mathcal{P}$ and $T_{\mathcal{P} \cup \mathcal{H}}$ are finitely generated by Theorem~\ref{thm:JSJrelhyp}$(4)$, edge stabilisers in $U$ are finitely generated. Thus, the group $H_w$ is also finitely generated. Hence $\phi_x \in \Out(H_x,[H_w]^{(t)})$ and the image of $\phi_v$ in $\Out(H_w)$ is trivial.

Since the image of $\langle \phi_v \rangle $ in $\Out(H_w)$ is trivial for every $w \in VU_{H_v}$, the image of $$\langle \phi_v \rangle \to \prod_{w \in VU_{H_v}} \Out(H_w)$$ is trivial. This shows that $\phi_v=\mathrm{id}$.

\bigskip

\noindent{\it Case 3. } Suppose that $v$ is flexible.

\medskip

By Theorem~\ref{thm:JSJrelhyp}$(5)$, $v$ is a QH vertex and the group $H_v$ is the fondamental group of a compact hyperbolic surface $\Sigma_v$. Moreover, the subgroup $H_v'$ is conjugate into a boundary subgroup $C_{H_v'}$ of $\Sigma_v$. Then, it suffices to prove that $\phi_v$ preserves the conjugacy class of a generator of $C_{H_v'}$. We prove this fact using Lemma~\ref{lem:flexiblevertexacttrivially}. In order to apply it, it suffices to prove that for every $e \in ET_{\mathcal{P} \cup \mathcal{H}}$ adjacent to $v$, the outer automorphism $\phi_e$ is trivial.

Let $e \in ET_{\mathcal{P} \cup \mathcal{H}}$ be adjacent to $v$. Note that $H_e$ fixes a vertex $x$ of $T_\mathcal{P}$ since it fixes a vertex of $U$. Let $\Phi \in \phi$ be such that $\Phi(e)=e$. Then $\Phi$ fixes $x$ by $\Phi$-equivariance of $p_\mathcal{P}$. Since every $\Phi_{|H_x}$-periodic element of $H_x$ is fixed by $\Phi_{|H_x}$, a generator of $H_e$ is fixed by $\Phi$, so that $\Phi$ fixes $H_e$ elementwise. By Lemma~\ref{lem:flexiblevertexacttrivially}, $\phi_v$ preserves the $H_v$-conjugacy class of a generator of $C_{H_v'}$. 

\bigskip

\noindent{\it Case 4. } Suppose that $H_v$ is infinite cyclic.

\medskip

Let $e$ be an edge adjacent to $v$. Then $H_e$ and $H_v$ are commensurable. Thus, it suffices to prove that $\phi_e$ is trivial. Let $\Phi \in \phi$ be such that $\Phi(e)=e$, so that $\Phi(H_e)=H_e$. Let $w$ be the endpoint of $e$ distinct from $v$. Then $w$ has nonelementary stabiliser, $H_e \subseteq H_w$ and $\Phi(H_w)=H_w$. 

If $w$ is rigid, by Case $2$, $\Phi$ acts as a global conjugation on $H_w$. Since $H_e$ is malnormal in $H_w$ by Lemma~\ref{lem:vertexperipheral}, we see that $\Phi$ fixes elementwise $H_e$. 

If $w$ is flexible, then $H_w$ is contained in a boundary subgroup of the surface $\Sigma_w$ associated with $w$. By Case $3$, $\phi$ preserves the $H_w$-conjugacy class of a generator of every boundary component of $\Sigma_w$. In particular, $\Phi$ fixes a generator of $H_e$. This concludes the proof of the last case and the proof of the proposition. 
\end{proof}

\begin{prop}\label{prop:IAHautperiodicelementfixed}
Let $H$ be a one--ended toral relatively hyperbolic group. Let $\Phi \in \Aut(H)$ with $\phi=[\Phi] \in \IA(H,3)$. Every $\Phi$-periodic element of $H$ is fixed by $\Phi$.
\end{prop}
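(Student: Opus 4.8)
The plan is to reduce Proposition~\ref{prop:IAHautperiodicelementfixed} to the vertex groups of the canonical elementary splitting $T_\mathcal{P}$, exactly as in the proof of Proposition~\ref{prop:IAHperiodicconjclass} and of Corollary~\ref{coco:periodicelementfixed}. First I would fix a $\Phi$-periodic element $g \in H$ and set $H'=\langle \Phi^\ell(g)\rangle_{\ell\in\NN}$, which is a finitely generated $\Phi$-invariant subgroup since $[g]$ (indeed $g$ itself) is $\Phi$-periodic. Let $\mathcal{H}=\{[H']\}$; this is a finite $\phi$-invariant set of conjugacy classes of finitely generated subgroups, and $\phi \in \IA(H,3)$. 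As in the proof of Proposition~\ref{prop:IAHperiodicconjclass}, I invoke Theorem~\ref{thm:JSJrelhyp} to get the JSJ $(H,\mathcal{P}\cup\mathcal{H})$-elementary splitting $T_{\mathcal{P}\cup\mathcal{H}}$, which is $\phi$-invariant; the Claim in that proof shows $\phi$ acts trivially on $\overline{H\backslash T_{\mathcal{P}\cup\mathcal{H}}}$, so for each vertex $v$ and edge $e$ there are well-defined induced outer automorphisms $\phi_v\in\Out(H_v)$ and $\phi_e\in\Out(H_e)$, and by that proof $\phi_e$ is trivial whenever $H_e$ is cyclic.

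Next I would choose the right vertex. By Lemma~\ref{lem:elemornonelem}, $H'$ (hence $g$) fixes either a unique elementary vertex or a unique nonelementary vertex $v$ of $T_{\mathcal{P}\cup\mathcal{H}}$; in either case there is a canonical such $v$, so any $\Psi\in\phi^m$ with $\Psi(H')=H'$ satisfies $\Psi(H_v)=H_v$, and it suffices to prove the $\Aut$-statement inside $H_v$. I then split into the four cases for $v$, following Corollary~\ref{coco:periodicelementfixed}. If $H_v$ is elementary noncyclic, then $[H_v]\in\mathcal{P}$ and $H_v$ is free abelian; the image of $\Phi$ in $\Aut(H_v)$ lies in $\IA(H_v,3)$ (a level-$3$ congruence subgroup of $\mathrm{GL}_n(\ZZ)$, by construction of $\IA(H,3)$), so by Proposition~\ref{prop:abeliancase} every periodic element of $H_v$, in particular $g$, is fixed. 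If $H_v$ is infinite cyclic: pick an edge $e$ adjacent to $v$; then $H_e$ is commensurable with $H_v$ and cyclic, so $\phi_e$ is trivial, forcing $\Phi$ to fix a generator of $H_v$ and hence $g$. If $v$ is rigid, Lemma~\ref{lem:rigidvertexacttrivially} (applicable since $\phi$ fixes every cyclic incident edge group elementwise) gives $\phi_v=\mathrm{id}$, so a representative $\Phi_v$ of $\phi$ acts on $H_v$ as a conjugation by some $h\in H_v$; then $\Phi^\ell(g)=h^\ell g h^{-\ell}$ for all $\ell$, and $\Phi$-periodicity of $g$ gives $h^m\in C_{H_v}(g)$ for some $m>0$. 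If $g$ is $(H,\mathcal{P})$-nonperipheral its centraliser is infinite cyclic and root-closed by Lemma~\ref{lem:nonperipheralnotconjinverse}, hence $h\in C_{H_v}(g)$ and $\Phi(g)=g$; if $g$ lies in some maximal parabolic $P\in\mathcal{P}$, then $P$ is $\Phi$-invariant and this reduces to the free abelian case already handled. Finally if $v$ is flexible, Theorem~\ref{thm:JSJrelhyp}$(5)$ gives a compact hyperbolic surface $\Sigma_v$ with $H_v\simeq\pi_1(\Sigma_v)$ and $H'$ (hence $g$) conjugate into a boundary subgroup; since $\phi$ acts trivially on every cyclic incident edge group (Claim in Proposition~\ref{prop:IAHperiodicconjclass}), Lemma~\ref{lem:flexiblevertexacttrivially} shows $\Phi$ fixes elementwise that boundary subgroup, so $\Phi(g)=g$.

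The main obstacle is verifying carefully that the bookkeeping from Proposition~\ref{prop:IAHperiodicconjclass} transfers from the outer-automorphism statement to the honest-automorphism statement: one must make sure that the chosen vertex $v$ admits a representative $\Phi$ of $\phi$ with $\Phi(H_v)=H_v$ and $\Phi_{|H_v}$ landing in $\Aut^*$-type subgroups compatibly with the conjugation, and that in the rigid case the element $h$ realising $\phi_v$ as an inner automorphism can be chosen so that the resulting identity $\Phi^\ell(g)=h^\ell g h^{-\ell}$ genuinely holds on the nose rather than up to conjugacy. This is routine once one notes, as in Corollary~\ref{coco:periodicelementfixed}, that nonelementary vertex groups are self-normalising in $H$ (they fix a unique vertex of $T_{\mathcal{P}\cup\mathcal{H}}$, by $2$-acylindricity), so the ambiguity in choosing $\Phi$ is absorbed into $H_v$ itself; I expect no new difficulty beyond assembling the cases.
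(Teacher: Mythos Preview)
Your overall architecture is right and mirrors Corollary~\ref{coco:periodicelementfixed}, but the rigid case contains a genuine gap. You invoke Lemma~\ref{lem:rigidvertexacttrivially} to get $\phi_v=\mathrm{id}$, yet that lemma is stated for a group satisfying the \hyperlink{SA}{Standing Assumptions} and for the JSJ tree $T_{\mathcal{G}\cup\mathcal{H}}$ relative to the \emph{Grushko} factors $\mathcal{G}$, not the peripheral structure~$\mathcal{P}$. Here $H$ is one-ended, so its Grushko decomposition is trivial; the tree $T_{\mathcal{P}\cup\mathcal{H}}$ you are working in is a different object, and more importantly Proposition~\ref{prop:IAHautperiodicelementfixed} is precisely Item~(4) of the Standing Assumptions for~$H$, so assuming those hypotheses is circular. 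The proof of Lemma~\ref{lem:rigidvertexacttrivially} also makes essential use of a Grushko $(G_v,\mathcal{G}_v)$-free splitting and of Item~(4) for the factors, neither of which is available to you.

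The paper sidesteps this by appealing to the already-proved Proposition~\ref{prop:IAHperiodicconjclass}: since $\phi_v$ has finite order by Theorem~\ref{thm:JSJrelhyp}$(8)$, the finitely generated group $H_v$ is itself a $\phi$-periodic subgroup, hence $\phi\in\Out(H,[H_v]^{(t)})$; as $H_v$ is nonelementary it fixes a unique vertex, so $\phi_v=\mathrm{id}$, and then $\Phi|_{H_v}=\mathrm{ad}_{h_v}$ for some $h_v\in H_v$. At this point the paper concludes directly from the fact that centralisers in a toral relatively hyperbolic group are abelian and root-closed, which handles both the peripheral and nonperipheral subcases at once. Note also that your citation of Lemma~\ref{lem:nonperipheralnotconjinverse} for the centraliser of a $(H,\mathcal{P})$-nonperipheral element is a misattribution: that lemma is about free products and uses a Grushko free splitting in its proof; the fact you need is true for toral relatively hyperbolic groups, but for different reasons.
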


\begin{proof}
Recall that $\mathrm{Per}(\Phi)$ is the periodic subgroup of $\Phi$. Then $\mathrm{Per}(\Phi)$ is finitely generated by~\cite[Theorem~5.22]{AndGueHug2023}. Let $\mathcal{H}=\{[\mathrm{Per}(\Phi)]\}$ and let $T_{\mathcal{P}\cup\mathcal{H}}$ be the corresponding JSJ $(H,\mathcal{P}\cup\mathcal{H})$-elementary splitting given by Theorem~\ref{thm:JSJrelhyp}. Then $T_{\mathcal{P}\cup\mathcal{H}}$ is preserved by $\Phi$. Since the action of $H$ on $T_{\mathcal{P}\cup\mathcal{H}}$ is acylindrical (Theorem~\ref{thm:JSJrelhyp}$(2)$), the group $\mathrm{Per}(\Phi)$ fixes a bounded subtree $T$ in $T_{\mathcal{P}\cup\mathcal{H}}$. Thus, its centre $v \in VT$ is fixed by $\mathrm{Per}(\Phi)$ and $H_v$ is preserved by $\Phi$. As before, we examine all possible cases.

Suppose that $v$ is parabolic. By Proposition~\ref{prop:abeliancase}, since $\phi \in \IA(H,3)$, we have $\mathrm{Per}(\Phi)=\mathrm{Fix}(\Phi)$.

If $v$ is rigid, then the outer automorphism $\phi_v$ of $H_v$ induced by $\phi$ is finite. By Proposition~\ref{prop:IAHperiodicconjclass}, we have $\phi \in \Out(H,[H_v]^{(t)})$. As $H_v$ is nonelementary, $v$ is the unique fixed point of $H_v$, so that any automorphism in $\phi$ fixing $H_v$ elementwise also fixes $v$. This shows that $\phi_v$ is the trivial outer automorphism. Thus, $\Phi$ acts on $H_v$ by a global conjugation by an element $h_v$ of $H_v$. Since $H$ is toral relatively hyperbolic, the centraliser of a subgroup is root-closed and abelian. As a power of $h_v$ centralises $\mathrm{Per}(\Phi)$, we see that $h_v$ centralises $\mathrm{Per}(\Phi)$ and $\Phi$ fixes pointwise $\mathrm{Per}(\Phi)$.

\medskip

\noindent{\bf Claim. } Suppose that $\mathrm{Per}(\Phi)$ is contained in a cyclic subgroup $C$ of $H$ preserved by $\Phi$. Then $\Phi$ fixes $C$ and $\mathrm{Per}(\Phi)$ elementwise.

\begin{proof}
Let $t$ be a generator of $C$. By Proposition~\ref{prop:IAHperiodicconjclass}, $\Phi$ preserves the $H$-conjugacy class of $t$. Since $H$ is toral relatively hyperbolic, there does not exists $h \in H$ such that $hth^{-1}=t^{-1}$. In particular, $\Phi$ fixes $C$ and $\mathrm{Per}(\Phi)$ elementwise.
\end{proof}

The claim therefore deals with the case where $H_v$ is infinite cyclic.

Finally, suppose that $v$ is flexible and let $\Sigma_v$ be the associated compact surface. Then $\mathrm{Per}(\Phi)$ is contained in a boundary subgroup $C$ of $\Sigma_v$ and $C$ is preserved by $\Phi$. By the Claim, $\Phi$ fixes $\mathrm{Per}(\Phi)$ elementwise. This concludes the proof.
\end{proof}

We then obtain the following immediate corollary, which implies Theorem~\ref{thm:IntroToral}.

\begin{coro}\label{coro:IAtoralrelativelyhyperbolicgroup}
Let $G$ be a toral relatively hyperbolic group. There exists a finite index subgroup $\IA(G,3)$ of $\Out(G)$ with the following properties.

\begin{enumerate}
\item The group $\IA(G,3)$ is torsion free.
\item For every $\phi \in \IA(G,3)$ and every $\phi$-periodic finitely generated subgroup $H \subseteq G$, we have $\phi \in \Out(G,[H]^{(t)})$.
\item If $\phi \in \IA(G,3)$, every $\phi$-periodic conjugacy class of $G$-free factors is fixed by $\phi$.
\item If $\phi \in \IA(G,3)$, every $\phi$-periodic $G$-free splitting is fixed by $\phi$. 
\item If $\Phi \in \Aut(G)$ is such that $[\Phi] \in \IA(G,3)$, every $\Phi$-periodic element of $G$ is fixed by $\Phi$. 
\end{enumerate}
\end{coro}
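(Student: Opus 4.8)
The plan is to deduce Corollary~\ref{coro:IAtoralrelativelyhyperbolicgroup} from the main theorem (Theorem~\ref{thm:Intromain}) by verifying that every toral relatively hyperbolic group $G$ satisfies the \hyperlink{SA}{Standing Assumptions}. Since the \hyperlink{SA}{Standing Assumptions} only constrain the one-ended factors of the Grushko decomposition of $G$, and since a toral relatively hyperbolic group is torsion free and finitely presented, it suffices to treat a one-ended toral relatively hyperbolic group $G$. There are two cases. If $G$ is free abelian, say $G \simeq \ZZ^n$, take $\Out^*(\ZZ^n)=\IA(\ZZ^n,3)$, the level-$3$ principal congruence subgroup of $\mathrm{GL}_n(\ZZ)$: it is finite index and torsion free by Minkowski, Items~$(3)$ and~$(4)$ follow from Proposition~\ref{prop:abeliancase} together with the fact that periodic subgroups of elements of $\Out^*$ are root-closed direct summands. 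If $G$ is nonelementary, I would take $\Out^*(G)=\IA(G,3)$, the finite index subgroup constructed in Section~\ref{Section:Toral} out of the $\Aut(G)$-invariant canonical elementary splitting $T_\mathcal{P}$ (Theorem~\ref{thm:JSJrelhyp}): Item~$(1)$ holds by construction, Item~$(2)$ is Lemma~\ref{lem:IAHtorsionfree}, Item~$(3)$ is Proposition~\ref{prop:IAHperiodicconjclass}, and Item~$(4)$ is Proposition~\ref{prop:IAHautperiodicelementfixed}.

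Once the \hyperlink{SA}{Standing Assumptions} are established, the five assertions follow directly. Since $G$ is torsion free and hyperbolic relative to finitely generated abelian groups, if every $G_i$ is one-ended then $\Out(G)=\Out(G,\mathcal{G})$ where $\mathcal{G}=\{[G_1],\ldots,[G_k]\}$ is the Grushko peripheral structure; moreover, as each $G_i$ is one-ended, no nontrivial element of $\Out(G,\mathcal{G})$ permutes the $[G_i]$ nontrivially, so $\Out^0(G,\mathcal{G})=\Out(G,\mathcal{G})$. Set $\IA(G,3)=\IA(G,\mathcal{G},3)$; this is a finite index subgroup of $\Out(G)$. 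Then Item~$(1)$ is Proposition~\ref{prop:IAtorsionfree}, Item~$(2)$ is Theorem~\ref{thm:invariantfamilycyclicfixed} applied to $\mathcal{H}=\{\phi^n[H]\}_{n\in\NN}$ (which is finite since $[H]$ is $\phi$-periodic), Item~$(3)$ is Theorem~\ref{thm:periodicfreefactorfixed} combined with the observation that $(G,\mathcal{G})$-free factors are precisely the $G$-free factors here, Item~$(4)$ is Theorem~\ref{thm:periodicfreesplittingfixed}, and Item~$(5)$ is Corollary~\ref{coco:periodicelementfixed}.

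The main obstacle in the whole chain is verifying Item~$(3)$ of the \hyperlink{SA}{Standing Assumptions} for a nonelementary one-ended toral relatively hyperbolic group $G$, that is, proving Proposition~\ref{prop:IAHperiodicconjclass}. The difficulty is that one must control the action of $\phi$ on the JSJ elementary splitting $T_{\mathcal{P}\cup\mathcal{H}}$ relative to the enlarged peripheral structure, which need not be $\Aut(G)$-invariant; the resolution is to pass to the minimal common refinement $U$ of the $\Aut(G)$-invariant tree $T_\mathcal{P}$ and $T_{\mathcal{P}\cup\mathcal{H}}$ (these are compatible by~\cite[Theorem~9.18~(3)]{guirardel2016jsj}), show by a leaf-and-homology argument (Lemmas~\ref{lem:actionhomograph},~\ref{lem:graphautotrivial}) that $\phi$ acts trivially on $\overline{H\backslash U}$, and then analyse the induced action on each vertex group of $T_{\mathcal{P}\cup\mathcal{H}}$ case by case (elementary noncyclic, rigid, flexible QH, infinite cyclic) using the properties of $\IA(H_w,3)$ at the vertices $w$ of $T_\mathcal{P}$, the key inputs being Ivanov's surface result~\cite[Theorem~1.7]{Ivanov92}, the abelian case Proposition~\ref{prop:abeliancase}, Lemma~\ref{lem:Kernelabelian} on twist groups, and Lemma~\ref{lem:flexiblevertexacttrivially} for boundary curves. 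The remaining routine bookkeeping — malnormality of parabolics, root-closedness of centralisers in toral relatively hyperbolic groups, finite generation of periodic subgroups~\cite[Theorem~5.22]{AndGueHug2023} — is standard and causes no real trouble.
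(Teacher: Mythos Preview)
Your proposal is correct and follows exactly the paper's approach: verify the Standing Assumptions for the one-ended Grushko factors via Proposition~\ref{prop:abeliancase} (abelian case) and Lemma~\ref{lem:IAHtorsionfree}, Propositions~\ref{prop:IAHperiodicconjclass} and~\ref{prop:IAHautperiodicelementfixed} (nonelementary case), then invoke Proposition~\ref{prop:IAtorsionfree}, Theorems~\ref{thm:invariantfamilycyclicfixed},~\ref{thm:periodicfreefactorfixed},~\ref{thm:periodicfreesplittingfixed} and Corollary~\ref{coco:periodicelementfixed}. One small slip: one-endedness of the $G_i$ does \emph{not} force $\Out^0(G,\mathcal{G})=\Out(G,\mathcal{G})$ (isomorphic one-ended factors can still be swapped), but this is harmless since $\IA(G,\mathcal{G},3)$ is finite index in $\Out(G,\mathcal{G})=\Out(G)$ regardless.
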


\begin{proof}
Let $G=G_1 \ast \ldots \ast G_k \ast F_N$ be the Grushko decomposition of $G$ and let $i \in \{1,\ldots,k\}$. By Lemma~\ref{lem:IAHtorsionfree} and Propositions~\ref{prop:IAHperiodicconjclass} and~\ref{prop:IAHautperiodicelementfixed}, the group $\Out(G_i)$ has a finite index subgroup satisfying the \hyperlink{SA}{Standing Assumptions}. The assertions of Corollary~\ref{coro:IAtoralrelativelyhyperbolicgroup} then follow from Theorems~\ref{thm:invariantfamilycyclicfixed},~\ref{thm:periodicfreefactorfixed},~\ref{thm:periodicfreesplittingfixed} and Corollary~\ref{coco:periodicelementfixed}.
\end{proof}

\bibliographystyle{alphanum}
\bibliography{bibliographie}

\noindent \begin{tabular}{l}
Yassine Guerch \\
Université Caen Normandie \\
CNRS \\
Normandie Univ \\
LMNO UMR6139 \\
F-14000 CAEN, FRANCE \\
{\it e-mail: yassine.guerch@cnrs.fr}
\end{tabular}

\end{document}